%
\documentclass[reqno]{amsart}
\oddsidemargin .8cm
\evensidemargin .8cm
\topmargin 0cm
\textwidth 15cm
\textheight 22cm
%

\usepackage[sc]{mathpazo}


\usepackage{amsthm}
\usepackage{amssymb,lineno}
\usepackage[centertags]{amsmath}
\usepackage{mathrsfs}
\usepackage{indentfirst}
\usepackage{amssymb}
\usepackage{dsfont}
\usepackage{ifpdf}
\usepackage{array}
\usepackage{graphicx}


\usepackage[colorlinks=true]{hyperref}
\hypersetup{urlcolor=blue, citecolor=red}

\let\oldsection\section
\renewcommand\section{\setcounter{equation}{0}\oldsection}

\newtheorem{theorem}{\indent Theorem}[section]

\newtheorem{lemma}[theorem]{\indent Lemma}
\newtheorem{remark}{\indent Remark}[section]
\newtheorem{definition}[remark]{\indent Definition}
\newtheorem{example}[remark]{\indent Example}
\newtheorem{proposition}[theorem]{\indent Proposition}
\newtheorem{corollary}[theorem]{\indent Corollary}
\def\pd#1#2{\frac{\partial#1}{\partial#2}}


\newcommand{\dhr}{\mathrel{\lhook\joinrel\relbar\kern-.8ex\joinrel\lhook\joinrel\rightarrow}}

\title[Some Nonlinear Boundary Value Problems II] {On the Existence of Positive Solutions for Some
Nonlinear Boundary Value Problems II}

\begin{document}
\author{Hongjing Pan}
\address{School of Mathematical Sciences \\ South China Normal University \\ 510631 Guangzhou\\ China}
\email{panhj@scnu.edu.cn}

\author{Ruixiang Xing}
\address{School of Mathematics and Computational Science \\ Sun Yat-sen University \\ 510275 Guangzhou, China}
\email{xingrx@mail.sysu.edu.cn}

\begin{abstract}
We study
a class of boundary value problems with $\varphi$-Laplacian (e.g.,
the prescribed mean curvature equation, in which
$\varphi(s)=\frac{s}{\sqrt{1+s^2}}$)
\begin{center}
 $-\left(\varphi(u')\right)'=\lambda f(u)\; \text{ on }(-L, L),\quad u(-L)=u(L)=0,$
\end{center}
where  $\lambda$ and $L$ are positive parameters. For convex $f$
with $f(0)=0$, we establish various results on the exact number of
positive solutions as well as global bifurcation diagrams. Some new
bifurcation patterns are shown. This paper is a continuation of
\cite{Pan2014a}, where the case $f(0)>0$ has been investigated.
\end{abstract}

\keywords{prescribed mean curvature equation, global bifurcation diagram, time map, splitting bifurcation,
exponential nonlinearity, singular nonlinearity}
\subjclass[2010]{34B18, 34C23, 35J93, 74G35}

\maketitle

\section{Introduction}
Consider the following nonlinear boundary value problem
\begin{equation}
\left\{ \begin{aligned}\label{eq:1dmemsfr0}
        &-\left(\varphi(u')\right)'=\lambda f(u),  \quad & x\in (-L, L),\\
                  &u(-L)=u(L)=0,
                          \end{aligned} \right.
                          \end{equation}
where $\varphi$ and $f$ satisfy the conditions
\begin{align}\label{con:phi}
& \varphi\in C^2(\mathbb{R}) \hbox{ is odd and  } \varphi'(t)>0\;
\hbox{ for all } t\in \mathbb{R}.\\\label{con:positive}
\begin{split}
&f \hbox{ is continuous on } [0, A) \hbox{ satisfying } f(u)>0  \; \hbox{ for all  } 0<u<A,\\
& \hbox{ where either } A=+\infty, \hbox{ or  } A<+\infty \hbox{
with  }  \lim_{u\rightarrow A^-}f(u)=+\infty. 
\end{split}
\end{align}
This paper is a continuation of the paper by Pan and Xing
\cite{Pan2014a}, where various results on the existence and exact
number of positive solutions
are obtained when $f$ is an increasing convex function with
$f(0)>0$. In the present paper, we investigate the case $f(0)=0$.

For various different purposes, $\varphi$ and $f$ are also required
to satisfy some or all of the conditions:
\begin{align}\label{con:phi2}
&z\varphi''(z)\leqslant 0 \;\hbox{ for all } z\in
\mathbb{R},\\\label{f-c1} &f \hbox{ is of class } C^1 \hbox{ on }
[0, A)\hbox{ satisfying } f'(u)u\geqslant f(u)  \text{ for } \ u\in
(0,A).\\\label{con:strict2}
\begin{split}
 & \hbox{One of the inequalities in \eqref{con:phi2} and \eqref{f-c1} is strict, except for at most}\\
& \hbox{ a finite number of $z$ or $u$}.
\end{split}
\end{align}

The problem \eqref{eq:1dmemsfr0} with \eqref{con:phi} and
\eqref{con:phi2}
includes many important examples such as
\begin{equation}\label{eq:1dffmems2}
\left\{ \begin{aligned}
        -\frac{u''}{(1+|u'|^2)^{\frac{k}{2}}}&=\lambda f(u),  \quad & x\in (-L, L),\\
                             u(-L)=u(L)&=0,
                          \end{aligned} \right.
                          \end{equation}
where $k\geqslant 0$ and
$\varphi(s)=\int_0^s(1+t^2)^{-\frac{k}{2}}\,\text{d}t$. When $k=2$,
\eqref{eq:1dffmems2} becomes
\begin{equation}
\left\{ \begin{aligned}\label{eq:1}
        &-\frac{u''}{1+|u'|^2}={\lambda f(u)},  \quad & x\in (-L, L),\\
                  &u(-L)=u(L)=0,
        \end{aligned} \right.
\end{equation}
and $\varphi(s)=\arctan s$. Problem \eqref{eq:1} is related to a
MEMS model with fringing field (see e.g. \cite{Pelesko2005}). When
$k=3$, \eqref{eq:1dffmems2} becomes
one-dimensional prescribed mean curvature problem
\begin{equation}
\left\{ \begin{aligned}\label{eq:mcemems}
        -\Big(\frac{u'}{\sqrt{1+|u'|^2}}\Big)'&=\lambda f(u),  \quad & x\in (-L, L),\\
                             u(-L)=u(L)&=0,
                          \end{aligned} \right.
                          \end{equation}
and $\varphi(s)=\frac{s}{\sqrt{1+s^2}}$. Quasilinear problem
\eqref{eq:mcemems} absorbed much attention in recent years and some
special nonlinearities $f$ satisfying $f(0)=0$ such as $u^p, e^u-1,
u^p+u^q$, were studied and many interesting results on existence and
exact multiplicity were obtained (see
\cite{Bereanu2006,BHOO072,Brubaker2012,BG10,HO07,KL10,Li2010,Obersnel2007,xp2010,xp2011a,Zhang2013}).



In \cite{Pan2014a}, due to limitations of space, we only focused on
the case $f(0)>0$. In the present paper, we still use the time-map
method, following the same line as in \cite{Pan2014a}, to
investigate the equally important case $f(0)=0$.
This method is based on the fact that the investigation of the exact
number of positive solutions of problem \eqref{eq:1dmemsfr0} is
equivalent to studying the shape of a time map $T$.
We will further reduce the problem to the shape of a simpler
function $g$,
 which describes the values of $T$ at the right endpoint of the interval of definition.
The patterns of bifurcation diagrams for problem
\eqref{eq:1dmemsfr0} with \eqref{con:phi}--\eqref{con:strict2}
finally depend on the number of local extreme points and the local
extreme values of $g$. By analytical proof or numerical simulation,
we find many interesting new examples, which suggest that there
exist more than ten types of shapes of $g$. This means that
bifurcation diagrams of \eqref{eq:1dmemsfr0} can contain very
complex patterns. We establish various results on the exact number
of positive solutions as well as bifurcation diagrams corresponding
to some different types of $g$. Although these results and figures
occupy much space of the present paper, we think that it is worth
doing in order to show important details.

In this paper, by a \emph{positive solution} we mean a positive
classical solution, that is, a function $u\in C^2[-L, L]$ satisfying
\eqref{eq:1dmemsfr0} and $u>0$ in $(-L,L)$.


%
%

We organize the paper as follows. In Section 2, we present our main
results about the existence and exact number of positive solutions
as well as global bifurcation diagrams. In Section 3, we investigate
general properties of the time map. The proofs of the main results
will be given in Section 4.

\section{Main results}

Our main results are the following theorems. Notice that
\eqref{con:positive} and \eqref{f-c1} imply that $f(0)=0$ and both
$f(u)$ and $\frac{f(u)}{u}$ are increasing for $u\in (0,A)$.
\begin{theorem}\label{thm:atmost12}
Assume conditions \eqref{con:phi}--\eqref{con:strict2} hold.
Then \eqref{eq:1dmemsfr0} has at most one positive solution for any
$\lambda>0$.
\end{theorem}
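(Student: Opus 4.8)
The plan is to reduce everything to the monotonicity of a time map. First I would derive a first integral: multiplying the equation in \eqref{eq:1dmemsfr0} by $u'$ and using that $\varphi$ is odd (condition \eqref{con:phi}), set $\Phi(z):=\int_0^z t\varphi'(t)\,\mathrm{d}t$ — an even function, strictly increasing on $[0,\infty)$, with inverse $\psi:=\Phi^{-1}$ defined on $[0,\Phi(+\infty))$ — and $F(u):=\int_0^u f$; then $\Phi(u'(x))+\lambda F(u(x))\equiv\lambda F(\rho)$ with $\rho:=\max u$. Since $(\varphi(u'))'=-\lambda f(u)<0$ wherever $u>0$ and $\varphi$ is increasing, $u'$ is strictly decreasing, so a positive solution is strictly increasing then strictly decreasing with a single interior maximum; separating variables in $u'=\pm\psi(\lambda(F(\rho)-F(u)))$ on the two sides of that maximum shows it sits at $x=0$ and that \eqref{eq:1dmemsfr0} has a positive solution of height $\rho$ iff
\[
T(\rho,\lambda):=\int_0^\rho\frac{\mathrm{d}u}{\psi\!\left(\lambda\big(F(\rho)-F(u)\big)\right)}=L.
\]
(A classical positive solution must have $\rho$ in the open set $\{\rho\in(0,A):\lambda F(\rho)<\Phi(+\infty)\}$, else $u'(-L^+)=+\infty$.) These general properties of $T$ are developed in Section~3; so it suffices to prove that $\rho\mapsto T(\rho,\lambda)$ is strictly monotone on that set for each fixed $\lambda>0$.

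The key is to turn the pointwise hypotheses \eqref{con:phi2} and \eqref{f-c1} into scaling statements. For $\varphi$: with $N(z):=z\Phi'(z)-2\Phi(z)=z^2\varphi'(z)-2\Phi(z)$ one has $N(0)=0$ and $N'(z)=z^2\varphi''(z)\le0$ by \eqref{con:phi2}, so $N\le0$ on $[0,\infty)$; hence $z\mapsto\Phi(z)/z^2$ is nonincreasing on $(0,\infty)$, equivalently $q(y):=\psi(y)/\sqrt{y}$ is nondecreasing on $(0,\Phi(+\infty))$. For $f$: \eqref{f-c1} means $u\mapsto f(u)/u$ is nondecreasing, so after $u=\rho s$,
\[
F(\rho)-F(\rho s)=\int_{\rho s}^{\rho}f(r)\,\mathrm{d}r=\rho^2\!\int_s^1\!\sigma\,\frac{f(\rho\sigma)}{\rho\sigma}\,\mathrm{d}\sigma=:\rho^2\,h(\rho,s),
\]
and $h(\rho,s)>0$ is nondecreasing in $\rho$ for each $s\in(0,1)$. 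Writing $\psi(y)=\sqrt{y}\,q(y)$ then gives
\[
T(\rho,\lambda)=\int_0^1\frac{\rho\,\mathrm{d}s}{\psi\!\left(\lambda\rho^2 h(\rho,s)\right)}=\frac{1}{\sqrt{\lambda}}\int_0^1\frac{\mathrm{d}s}{\sqrt{h(\rho,s)}\;q\!\left(\lambda\rho^2 h(\rho,s)\right)} .
\]

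In this last integrand $h(\rho,s)$ and $\lambda\rho^2 h(\rho,s)$ are both nondecreasing in $\rho$ and $q$ is nondecreasing, so the integrand is nonincreasing in $\rho$ and $T(\cdot,\lambda)$ is nonincreasing. Condition \eqref{con:strict2} upgrades this to strict: if the strict inequality in \eqref{f-c1} holds off a finite set then $f(u)/u$ is strictly increasing, so $h(\rho,s)$ is strictly increasing in $\rho$; if instead strictness holds in \eqref{con:phi2} off a finite set then $N(z)<0$ for $z>0$, so $q$ is strictly increasing and $q(\lambda\rho^2 h(\rho,s))$ is strictly increasing in $\rho$ (as $\rho^2 h(\rho,s)$ is); in either case the integrand is strictly decreasing in $\rho$, hence $T(\cdot,\lambda)$ is strictly decreasing and the equation $T(\rho,\lambda)=L$ has at most one solution. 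The one real obstacle is this middle step — extracting, from the pointwise conditions \eqref{con:phi2} and \eqref{f-c1}, the fact that the integrand above is monotone in $\rho$; everything else is bookkeeping. I expect the paper isolates this as a lemma on the shape of $T$ (or of the auxiliary function $g$) in Section~3.
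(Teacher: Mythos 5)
Your argument is correct, and its skeleton (symmetry/quadrature reduction to the time map, then uniqueness from strict monotonicity of $r\mapsto T(r,\lambda)$) is exactly the paper's; your guess is right that the paper isolates the key step, which appears as Theorem \ref{T'<0} ($T'(r)<0$ on $I$). Where you genuinely diverge is in how that monotonicity is proved. The paper differentiates $T$ in $r$: it invokes Lemma \ref{lem:first} (justified because $f\in C^1$ gives \eqref{con:lip}) to get the formula \eqref{eq:1.5} for $\partial T/\partial r$, writes the integrand with numerator $H(s)$, observes $H(1)=0$, and shows $H'(s)>0$ on $(0,1)$ directly from \eqref{con:phi2}--\eqref{con:strict2}, so $H<0$ and $T'<0$. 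You avoid differentiating $T$ altogether: you convert \eqref{con:phi2} into monotonicity of $\Phi(z)/z^2$, equivalently of $q(y)=\Phi^{-1}(y)/\sqrt{y}$, and \eqref{f-c1} into monotonicity of $f(u)/u$, then compare integrands pointwise in $s$ after the scaling $u=\rho s$, with \eqref{con:strict2} upgrading one factor to strict monotonicity (using that $F(\rho)-F(\rho s)$ is strictly increasing in $\rho$). Your route is more elementary --- it needs neither the differentiability of $T$ nor Lemma \ref{lem:first}, and it makes transparent that the hypotheses are exactly comparisons with the homogeneous case $\Phi(z)\sim z^2$, $f(u)\sim u$; the paper's route yields the stronger, reusable statement $T'(r)<0$ with $\partial T/\partial r$ continuous, which is the form fed into the rest of the bifurcation analysis, and its strictness bookkeeping is slightly cleaner since $H(1)=0$ together with $H'\geqslant 0$ (strict off a finite set) already forces $H<0$. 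Two minor differences of no consequence for the ``at most one'' claim: the paper quotes symmetry of positive solutions from the literature while you rederive it via the two-sided quadrature, and the paper's interval $I$ includes the gradient blow-up endpoint (which carries only non-classical solutions) whereas you restrict to $\lambda F(\rho)<B$, which is the correct set for classical solutions.
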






Since $f''(u)\geqslant(>) 0$ and $f(0)=0$ imply $f'(u)u\geqslant(>)
f(u)$, we have
\begin{corollary}\label{thm:atmost1}
Replacing \eqref{f-c1} and \eqref{con:strict2} in Theorem
\ref{thm:atmost12} by the following conditions
\begin{align}\tag{\ref{f-c1}$'$} \label{con:convex}
&f \hbox{ is of class } C^1 \hbox{ on } [0, A) \hbox{ and } C^2
\hbox{ on } (0, A)
\hbox{ satisfying } f''(u)\geqslant 0 \hbox{ and } f(0)=0.\\
\tag{\ref{con:strict2}$'$}\label{con:strict}
\begin{split}
 & \hbox{One of the inequalities in \eqref{con:phi2} and \eqref{con:convex} is strict, except for at most a finite}\\
& \hbox{ number of $z$ or $u$}.
\end{split}
\end{align}
Then the conclusion of Theorem \ref{thm:atmost12} is still true.
\end{corollary}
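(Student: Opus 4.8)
The plan is to derive the old hypotheses \eqref{f-c1} and \eqref{con:strict2} from the new ones \eqref{con:convex} and \eqref{con:strict}, after which the conclusion follows immediately from Theorem~\ref{thm:atmost12} (the conditions \eqref{con:phi}, \eqref{con:positive}, \eqref{con:phi2} being common to both statements). The key device, already flagged in the remark preceding the corollary, is the auxiliary function
\[
 h(u) := f'(u)\,u - f(u), \qquad u \in [0,A).
\]

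Under \eqref{con:convex}, $f$ is $C^1$ on $[0,A)$, so the regularity part of \eqref{f-c1} is free, and $h$ is continuous on $[0,A)$ with $h(0) = f'(0)\cdot 0 - f(0) = 0$ because $f(0)=0$. On $(0,A)$ one computes $h'(u) = f''(u)\,u$, which is $\geqslant 0$ since $f'' \geqslant 0$ and $u>0$; hence $h$ is nondecreasing on $[0,A)$ and therefore $h(u) \geqslant h(0) = 0$ for all $u \in (0,A)$. This is precisely $f'(u)u \geqslant f(u)$, i.e.\ \eqref{f-c1}.

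For the strictness condition, assume \eqref{con:strict}. If it is the inequality in \eqref{con:phi2} that is strict off a finite set, then \eqref{con:strict2} holds verbatim, since \eqref{con:phi2} is shared by both hypothesis sets. Otherwise $f''(u) > 0$ for all $u$ in $(0,A)$ outside a finite set $F$; then $h'(u) = u\,f''(u) > 0$ off $F$ as well, and since $h'$ is continuous on $(0,A)$ the identity $h(b)-h(a) = \int_a^b h'(t)\,\mathrm{d}t$ forces $h$ to be strictly increasing on $(0,A)$. Together with $h(0) = 0$ this gives $h(u) > 0$, i.e.\ $f'(u)u > f(u)$, for every $u \in (0,A)$, so again \eqref{con:strict2} is satisfied. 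Thus all of \eqref{con:phi}--\eqref{con:strict2} hold, and Theorem~\ref{thm:atmost12} yields at most one positive solution for each $\lambda>0$.

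The argument is essentially routine; the only point demanding a little care is the implication ``$f'' \geqslant 0$ with $f''>0$ off a finite set $\Rightarrow h$ strictly increasing'', which follows by integrating the nonnegative continuous function $u\mapsto u\,f''(u)$ and noting that such a function has strictly positive integral over any nondegenerate subinterval of $(0,A)$.
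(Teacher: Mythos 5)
Your proposal is correct and follows essentially the same route as the paper: the paper's proof also introduces $\psi(u)=f'(u)u-f(u)$, notes $\psi(0)=0$ and $\psi'(u)=f''(u)u\geqslant(>)\,0$, and concludes that \eqref{con:convex}--\eqref{con:strict} imply \eqref{f-c1}--\eqref{con:strict2} so that Theorem \ref{thm:atmost12} applies. Your treatment of the strictness case (integrating $u\,f''(u)$ to get $\psi$ strictly increasing) and of the $C^1[0,A)\cap C^2(0,A)$ regularity is just a more explicit spelling-out of the same argument.
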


\begin{remark}
\emph{(a)}
%
%
Conditions \eqref{f-c1} and \eqref{con:convex} in the above results
are crucial. For example, for problem \eqref{eq:mcemems} with
$f(u)=u^p$ $(0<p<1)$ or $f(u)=u-u^3$, it is well known in
\cite{HO07} or \cite{BG10} that there exists  $\lambda^*>0$ such
that \eqref{eq:1dmemsfr0} has exactly two positive solutions.

\emph{(b)} Condition \eqref{con:strict2} or \eqref{con:strict}
cannot be removed from the above results. For example, the linear
eigenvalue problem
$$-u''=\lambda u\; \text{ on }(-L, L),\quad u(-L)=u(L)=0$$
has infinitely many positive solutions for
$\lambda=(\frac{\pi}{2L})^2$. Moreover, if $f$ satisfies
\eqref{f-c1} and there exists an $r_0$ such that $f(w)=m_0w$ for
$0\leqslant w\leqslant r_0$, then the problem
$$-u''=\lambda f(u)\; \text{ on }(-L, L),\quad u(-L)=u(L)=0$$
also has infinitely many positive solutions for
$\lambda=\frac{1}{m_0}(\frac{\pi}{2L})^2$ (see e.g. \cite[Thm
3.2]{Laetsch1970}).
\end{remark}

In order to further give the exact number of positive solutions for
each $\lambda$, we introduce some notations. The same as in
\cite{Pan2014a}, we denote
\begin{align*}
\Phi(z)=\int_{0}^{z}t \varphi'(t)dt \quad \hbox{and} \quad
F(u)=\int_0^u  f(s)\text{d}s.
\end{align*}
and define
\begin{align*}
B=\sup_{z\in [0, +\infty)} \Phi(z) \quad \hbox{and} \quad
C=\sup_{u\in [0,A)} F(u).
\end{align*}
Then \eqref{con:phi} and \eqref{con:positive} imply that
$B=\lim_{z\rightarrow +\infty}\Phi(u)$ and $C=\lim_{u\rightarrow
A^-}F(u)$, respectively.

%

Since \eqref{con:positive} and \eqref{f-c1} implies $f'\geqslant 0$,
it follows that $A=+\infty$ implies $C=+\infty$. The same as in
\cite{Pan2014a}, the problem considered in Theorem
\ref{thm:atmost12} can be classified into the following six cases.
\begin{equation}\tag{*}\label{eq:six}
\fbox{ \quad  Six cases of \eqref{eq:1dmemsfr0} $ \left\{
  \begin{array}{ll}
    B=+\infty & \left\{
  \begin{array}{ll}
    A=+\infty & \quad C=+\infty\qquad\quad (\text{Case I})\\
    A<+\infty & \left\{
  \begin{array}{ll}
    C=+\infty\qquad\quad (\text{Case II})& \\
    C<+\infty\qquad\quad (\text{Case III})&
  \end{array}
\right.
  \end{array}
\right.\\
&\\
    B<+\infty & \left\{
  \begin{array}{ll}
    A=+\infty &\quad C=+\infty\qquad\quad (\text{Case IV})\\
    A<+\infty & \left\{
  \begin{array}{ll}
    C=+\infty\qquad\quad (\text{Case V})& \\
    C<+\infty\qquad\quad (\text{Case VI})&
  \end{array}
\right.
  \end{array}
\right.
  \end{array}
\right. $}
\end{equation}



We shall mainly focus on the situation where the range of $\varphi$
is bounded. However, Theorems
\ref{thm:doubleinfinit02}--\ref{thm:f=0t1} and many results in
Sections 3 and 4 are also of interest when $\varphi$ is unbounded.
Notice that under \eqref{con:phi}, condition $B<+\infty$ implies
that $\varphi$ is bounded (\cite[Remark 2.2]{Pan2014a}).

\begin{example}[\cite{Pan2014a}]\label{table:fi}
Denote
$\varphi_k(s)=\int_0^s(1+t^2)^{-\frac{k}{2}}\,\text{d}t\;(k\geqslant
0)$. Then $\varphi_k$ satisfies both \eqref{con:phi} and
\eqref{con:phi2}. Moreover, we have
\begin{center}
\begin{tabular}{|c|c|l|}\hline
 $\varphi$  &  $B=+\infty$ & \qquad\qquad\qquad $B<+\infty$ \\\hline
  & $\varphi_2(s)=\arctan s$ &$\varphi_k(s),\; k>2$, e.g.  \\
Bounded &\emph{(Problem~\eqref{eq:1})}&
$\varphi_3(s)=\frac{s}{\sqrt{1+s^2}}$\:~\emph{(Mean~Curvature~Type)}\\
&&$\varphi_5(s)=\frac{s}{\sqrt{1+s^2}}-\frac{1}{3}
\frac{s^{3}}{(1+s^2)^{\frac{3}{2}}}$\\\hline Unbounded &
$\varphi_k(s)$, $0\leqslant k<2$&\\\hline
\end{tabular}
\end{center}
When $k>2$, we also have
\begin{align*}
\Phi_k(z)=\frac{1}{k-2}-\frac{1}{k-2}\frac{1}{(1+z^2)^{\frac{k-2}{2}}},\quad
B=\frac{1}{k-2}, \quad
\Phi_k^{-1}(y)=\frac{\sqrt{1-[1-(k-2)y]^{\frac{2}{k-2}}}}{[1-(k-2)y]^{\frac{1}{k-2}}}.
\end{align*}
\end{example}
%



%

We next investigate Cases I--VI in \eqref{eq:six}, respectively.
Denote $\lambda_1=\frac{\varphi'(0)}{f'(0)}(\frac{\pi}{2L})^2$.

First, we consider the three cases of $B=+\infty$.

\vskip 2mm {\bf Case I: $B=+\infty$, $A=+\infty$ and $C=+\infty$}

\begin{theorem}[Type I, see Fig.\ref{fig:f1234}]\label{thm:doubleinfinit02}
Let $A,B,C=+\infty$.
Assume conditions \eqref{con:phi}--\eqref{con:strict2} hold.
Also assume
\begin{align}\label{con:phif02}
 \lim_{t\rightarrow +\infty}\frac{\varphi\circ\Phi^{-1}(\lambda t )}{ f\circ F^{-1}\left(t\right)}=0 \qquad \hbox{for any } \lambda>0.
\end{align}
Then the following assertions hold:

(a) If $f'(0)= 0$, then \eqref{eq:1dmemsfr0} has exactly one
positive solution for any $\lambda\in (0,+\infty)$.

(b) If $f'(0)> 0$, then
\eqref{eq:1dmemsfr0} has exactly one positive solution for
$\lambda\in (0, \lambda_1)$ and none for
$\lambda\in[\lambda_1,+\infty)$.
\end{theorem}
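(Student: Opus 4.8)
The plan is to recast \eqref{eq:1dmemsfr0} through its time map and then read off the number of positive solutions from the monotonicity and the two endpoint limits of that map. A positive solution $u$ is even about $x=0$ with a unique interior maximum $\rho:=u(0)\in(0,A)=(0,+\infty)$, and multiplying the equation by $u'$ and integrating gives the first integral $\Phi(u'(x))=\lambda\big(F(\rho)-F(u(x))\big)$ on $[-L,0]$, whence
\[
L=T(\rho,\lambda):=\int_0^{\rho}\frac{du}{\Phi^{-1}\big(\lambda(F(\rho)-F(u))\big)}.
\]
Since $B=C=+\infty$ here, $\Phi^{-1}$ is defined on $[0,+\infty)$, $T(\cdot,\lambda)$ is finite and continuous on $(0,+\infty)$, and positive solutions of \eqref{eq:1dmemsfr0} correspond bijectively to the $\rho$ with $T(\rho,\lambda)=L$. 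Because Theorem \ref{thm:atmost12} holds for every value of the parameter $L>0$, the map $T(\cdot,\lambda)$ is injective, hence (being continuous) strictly monotone, so the whole statement reduces to computing its two endpoint limits.

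For the limit at $0^+$ I would substitute $u=\rho s$ and use $f(\tau)=f'(0)\tau+o(\tau)$ and $\Phi(z)=\tfrac12\varphi'(0)z^2+o(z^2)$; the uniform bound $\Phi(z)\le\tfrac12\varphi'(0)z^2$ from \eqref{con:phi2} supplies an integrable majorant for the rescaled integrand, and dominated convergence gives, when $f'(0)>0$,
\[
\lim_{\rho\to0^+}T(\rho,\lambda)=\sqrt{\frac{\varphi'(0)}{\lambda f'(0)}}\int_0^1\frac{ds}{\sqrt{1-s^2}}=\frac{\pi}{2}\sqrt{\frac{\varphi'(0)}{\lambda f'(0)}}=:\ell(\lambda);
\]
when $f'(0)=0$ one has $F(\rho)-F(\rho s)=o(\rho^2)$, so the rescaled integrand tends to $+\infty$ pointwise in $s\in(0,1)$ and Fatou gives $\lim_{\rho\to0^+}T(\rho,\lambda)=+\infty=:\ell(\lambda)$. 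In both cases $\ell(\lambda)\in(0,+\infty]$.

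The hard part is showing $\lim_{\rho\to+\infty}T(\rho,\lambda)=0$, which is exactly where \eqref{con:phif02} is used. I would first deduce from \eqref{con:phif02} that $\Phi(v)/F(v)\to0$ as $v\to+\infty$: if not, take $c\in(0,1)$ and $v_n\to+\infty$ with $\Phi(v_n)\ge cF(v_n)$; using $\varphi(v)\ge\Phi(v)/v$ (integrate $\Phi$ by parts, $\varphi\ge0$ on $[0,\infty)$), $F^{-1}(cF(v))\ge cv$ (convexity of $F$, i.e.\ \eqref{f-c1}), the monotonicity of $f$, and $\Phi(v_n)\ge cF(v_n)\ge c(1-c)v_nf(cv_n)$, evaluating \eqref{con:phif02} at $t=\Phi(v_n)$ forces $c(1-c)\le o(1)$, a contradiction. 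Since $\varphi'$ is non-increasing on $[0,\infty)$ by \eqref{con:phi2}, $\Phi(Kv)\le K^2\Phi(v)$ for $K\ge1$, and a similar contradiction argument upgrades $\Phi/F\to0$ to $F^{-1}(s)/\Phi^{-1}(\mu s)\to0$ as $s\to+\infty$, for every $\mu>0$. Then, with $t=F(\rho)\to+\infty$ and the substitution $u=F^{-1}(t\xi)$,
\[
T(\rho,\lambda)=\int_0^1\frac{t\,d\xi}{f\big(F^{-1}(t\xi)\big)\,\Phi^{-1}\big(\lambda t(1-\xi)\big)},
\]
I split at $\xi=\tfrac12$: on $[0,\tfrac12]$, bounding $\Phi^{-1}(\lambda t(1-\xi))\ge\Phi^{-1}(\lambda t/2)$ and changing variables back to $u$ gives the bound $F^{-1}(t/2)/\Phi^{-1}(\lambda t/2)\to0$; on $[\tfrac12,1]$, bounding $f(F^{-1}(t\xi))\ge f(F^{-1}(t/2))$ and using $\int_0^M ds/\Phi^{-1}(s)=\varphi(\Phi^{-1}(M))$ gives the bound $\lambda^{-1}\varphi(\Phi^{-1}(\lambda t/2))/f(F^{-1}(t/2))\to0$ from \eqref{con:phif02}. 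Hence $\lim_{\rho\to+\infty}T(\rho,\lambda)=0$.

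Assembling these facts, $T(\cdot,\lambda)$ is strictly monotone on $(0,+\infty)$ with $\lim_{\rho\to0^+}T(\rho,\lambda)=\ell(\lambda)>0$ and $\lim_{\rho\to+\infty}T(\rho,\lambda)=0$, so it is strictly decreasing with range $(0,\ell(\lambda))$; therefore $T(\rho,\lambda)=L$ has exactly one solution if $L<\ell(\lambda)$ and none if $L\ge\ell(\lambda)$. When $f'(0)=0$, $\ell(\lambda)=+\infty$ for all $\lambda>0$, which gives (a); when $f'(0)>0$, $L<\ell(\lambda)$ is equivalent to $\lambda<\tfrac{\varphi'(0)}{f'(0)}\big(\tfrac{\pi}{2L}\big)^2=\lambda_1$, which gives (b). The only genuinely delicate step is the first one above: extracting the growth comparison $\Phi/F\to0$ from the functional hypothesis \eqref{con:phif02}.
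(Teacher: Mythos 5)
Your overall skeleton is the same as the paper's: reduce to the time map, get strict monotonicity of $T(\cdot,\lambda)$, compute the limit at $r\to 0^+$ (your computation reproduces Proposition \ref{cor:twokind}), and show the limit at the right endpoint is $0$, which is exactly the statement $g\equiv 0$ that the paper imports from \cite[Thm 4.2]{Pan2014a} (see Section \ref{sec:3.3}); your replacement of Theorem \ref{T'<0} by ``injectivity of $T(\cdot,\lambda)$ from Theorem \ref{thm:atmost12} for all $L$, plus continuity'' is legitimate, and your splitting of $T$ at $\xi=\tfrac12$, with the identity $\int_0^M \Phi^{-1}(s)^{-1}ds=\varphi(\Phi^{-1}(M))$ for the piece near $\xi=1$, is correct.

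The genuine gap is in the step you yourself flag as delicate: deducing $\Phi(v)/F(v)\to 0$ from \eqref{con:phif02}. Evaluating \eqref{con:phif02} at $t=\Phi(v_n)$ (say $\lambda=1$) gives the ratio $\varphi(v_n)/f\bigl(F^{-1}(\Phi(v_n))\bigr)$, and your ingredients do bound the numerator below by $c(1-c)f(cv_n)$; but $F^{-1}(\Phi(v_n))\geqslant F^{-1}(cF(v_n))\geqslant cv_n$ bounds the denominator from \emph{below}, not above, so the asserted conclusion ``$c(1-c)\leqslant o(1)$'' does not follow. What you actually get is the lower bound $c(1-c)\,f(cv_n)/f\bigl(F^{-1}(\Phi(v_n))\bigr)$, and when $f$ grows rapidly (e.g.\ exponentially) the quotient $f(cv_n)/f\bigl(F^{-1}(\Phi(v_n))\bigr)$ can itself be $o(1)$, so no contradiction is reached from the listed facts alone. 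The claim is true, but closing it needs more: from \eqref{f-c1}, $F(v)\geqslant \tfrac{f(1)}{2}(v^2-1)$, while \eqref{con:phi2} gives $\Phi(v)\leqslant\tfrac12\varphi'(0)v^2$; hence $\Phi(v_n)\geqslant cF(v_n)$ forces $\varphi'$ to be bounded below by a positive constant on all of $[0,+\infty)$ (split $\Phi(v_n)=\int_0^{\varepsilon v_n}+\int_{\varepsilon v_n}^{v_n}t\varphi'(t)\,dt$ and use that $\varphi'$ is nonincreasing), so $\varphi(\Phi^{-1}(\lambda t))\asymp\sqrt{t}$; then \eqref{con:phif02} forces $f(v)\gg\sqrt{F(v)}\gtrsim v$, hence $f(v)/v\to+\infty$ and $F(v)/v^2\to+\infty$, contradicting $F(v_n)\leqslant \varphi'(0)v_n^2/(2c)$. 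Alternatively, you can avoid this entirely, as the paper does, by quoting \cite[Thm 4.2]{Pan2014a}, which asserts precisely $\lim_{r\to+\infty}T(r,\lambda)=0$ under \eqref{con:phif02}. With that step repaired (or replaced by the citation), the rest of your argument yields the theorem.
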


  \ifpdf
\begin{figure}
\centering
\includegraphics[totalheight=8.2in]{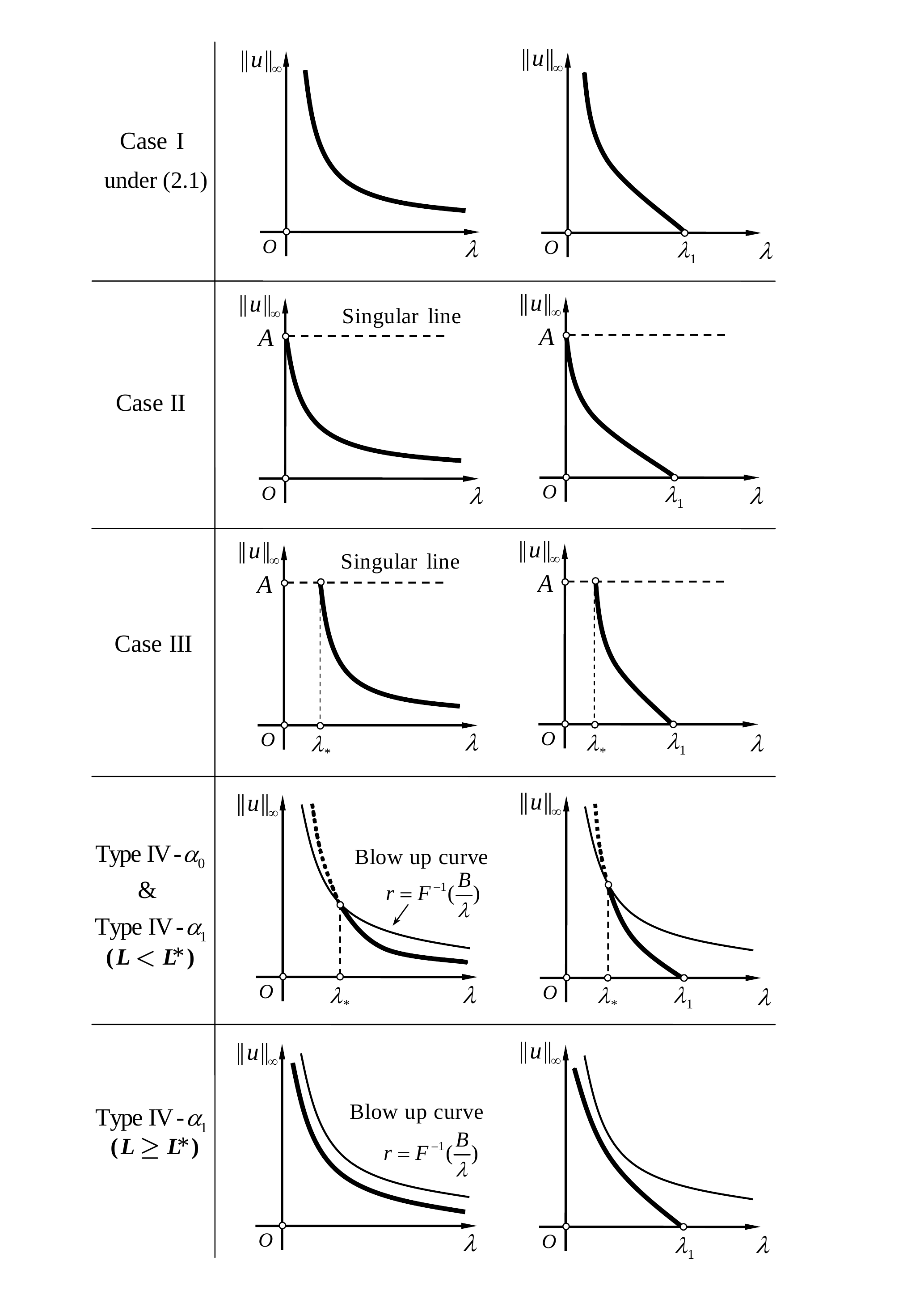}
\caption{Bifurcation Diagrams for Cases I--III, Types IV-$\alpha_0$
and IV-$\alpha_1$ with $f(0)=0$. Left: $f'(0)=0$.  Right:
$f'(0)>0$.}\label{fig:f1234}
\end{figure}
  \fi

\begin{corollary}\label{cor:doubleinfinit02}
Let $A,B,C=+\infty$. Assume conditions
\eqref{con:phi}--\eqref{con:strict2} hold.
If the range of $\varphi$ is bounded or $\frac{F(z)}{f(z)}$ is
bounded for sufficiently large $z$, then \eqref{con:phif02} holds
and hence the conclusions of Theorem \ref{thm:doubleinfinit02} hold.
\end{corollary}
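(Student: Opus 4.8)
The plan is to deduce everything from Theorem \ref{thm:doubleinfinit02}: once we verify that hypothesis \eqref{con:phif02} holds under either of the two stated sufficient conditions, the conclusions follow verbatim. So the whole task reduces to showing that, for each fixed $\lambda>0$,
\[
\lim_{t\to+\infty}\frac{\varphi\circ\Phi^{-1}(\lambda t)}{f\circ F^{-1}(t)}=0.
\]

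First I would record the routine preliminaries. Since $A=C=+\infty$, both $F:[0,+\infty)\to[0,+\infty)$ and $\Phi:[0,+\infty)\to[0,+\infty)$ are strictly increasing continuous bijections (because $F'=f>0$ and $\Phi'(z)=z\varphi'(z)>0$ on $(0,+\infty)$, together with $B=+\infty$), so $F^{-1}$ and $\Phi^{-1}$ are well defined and increasing, with $F^{-1}(t)\to+\infty$ as $t\to+\infty$. Moreover, $\frac{f(u)}{u}$ is increasing on $(0,+\infty)$ (as noted before Theorem \ref{thm:atmost12}) and positive, so fixing any $u_1>0$ gives $f(u)\geq\frac{f(u_1)}{u_1}u$ for $u\geq u_1$; hence $f(u)\to+\infty$, and therefore the denominator $f\circ F^{-1}(t)\to+\infty$.

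If the range of $\varphi$ is bounded, say $|\varphi|\leq L$ on $\mathbb{R}$, then the numerator $\varphi\circ\Phi^{-1}(\lambda t)\leq L$ is bounded while the denominator tends to $+\infty$, and \eqref{con:phif02} is immediate. The substantive case is $F(z)/f(z)\leq M$ for all large $z$. Here the key is the elementary growth estimate, valid for every fixed $N>0$,
\[
\Phi(z)=\int_0^z s\varphi'(s)\,ds\geq\int_N^z s\varphi'(s)\,ds\geq N\bigl(\varphi(z)-\varphi(N)\bigr)\qquad(z>N),
\]
which rearranges to $\varphi\bigl(\Phi^{-1}(v)\bigr)\leq \frac{v}{N}+\varphi(N)$ for $v\geq\Phi(N)$. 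Writing $z_t:=F^{-1}(t)\to+\infty$, for large $t$ we have $t=F(z_t)\leq M f(z_t)$, so $\lambda t\leq\lambda M f(z_t)$; then, using monotonicity of $\Phi^{-1}$ and $\varphi$ followed by the estimate above (with $v=\lambda M f(z_t)\geq\Phi(N)$ for $t$ large),
\[
\frac{\varphi\circ\Phi^{-1}(\lambda t)}{f\circ F^{-1}(t)}\leq\frac{\varphi\bigl(\Phi^{-1}(\lambda M f(z_t))\bigr)}{f(z_t)}\leq\frac{\lambda M}{N}+\frac{\varphi(N)}{f(z_t)}.
\]
Letting $t\to+\infty$ gives $\limsup_{t\to+\infty}\frac{\varphi\circ\Phi^{-1}(\lambda t)}{f\circ F^{-1}(t)}\leq\frac{\lambda M}{N}$, and since $N>0$ is arbitrary the limit is $0$. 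Thus \eqref{con:phif02} holds in both cases and Theorem \ref{thm:doubleinfinit02} applies.

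The only step that is not pure bookkeeping is the growth estimate for $\varphi\circ\Phi^{-1}$: discarding the contribution of $[0,N]$ and exploiting $s\geq N$ on $[N,z]$ is what turns the defining formula $\Phi(z)=\int_0^z s\varphi'(s)\,ds$ into the needed ``$\varphi\circ\Phi^{-1}(v)=o(v)$''. I expect this (short) point to be the main obstacle; everything else is monotonicity and the fact that $f$ is eventually large.
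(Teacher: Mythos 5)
Your proof is correct. The argument reduces, as it should, to verifying \eqref{con:phif02}; the bounded-range case is immediate once you note $f\circ F^{-1}(t)\to+\infty$ (which you justify correctly from the monotonicity of $\frac{f(u)}{u}$), and in the case $F(z)\leqslant M f(z)$ for large $z$ your key estimate $\Phi(z)\geqslant\int_N^z s\varphi'(s)\,ds\geqslant N\bigl(\varphi(z)-\varphi(N)\bigr)$, i.e.\ $\varphi\circ\Phi^{-1}(v)\leqslant \frac{v}{N}+\varphi(N)$, combined with $\lambda t\leqslant\lambda M f(F^{-1}(t))$ and monotonicity, does give $\limsup\leqslant\frac{\lambda M}{N}$ for every $N$, hence the limit $0$. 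The paper itself does not carry out this verification: it simply invokes Theorem 4.2 and Corollary 4.4 of the companion paper \cite{Pan2014a} to conclude that either hypothesis forces $g\equiv 0$ (equivalently that \eqref{con:phif02} holds), and then runs the time-map scheme of Theorem \ref{thm:doubleinfinit02}. So your write-up supplies, in a self-contained way, exactly the step the paper outsources, and the structure (check \eqref{con:phif02}, then quote the theorem) matches the intended proof. Only a cosmetic remark: you reuse the letter $L$ for the bound on $|\varphi|$, which collides with the half-length $L$ of the interval in \eqref{eq:1dmemsfr0}; choose a different symbol.
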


\begin{example}\label{eg:i}
Let $\varphi=\varphi_k\:(0\leqslant k\leqslant 2)$, which is defined
in Example \ref{table:fi}, and let $f$ be one of the following
table.
\begin{center}
\begin{tabular}{|c|l|l|}\hline
 $f$& $\frac{F(z)}{f(z)}$ is unbounded & $\frac{F(z)}{f(z)}$ is bounded\\\hline
 $f$ convex,& $f(u)=u^p,\; p\geqslant 1$& $f(u)=e^u-1$\\
  $f(0)=0$,& $f(u)=u^p+u^q,\; q>p\geqslant 1$ & $f(u)=e^u+u^p-1,\; p\geqslant 1$\\
  $A,C=+\infty$&$f(u)=(1+u)^p-1,\; p> 1$ & $f(u)=e^{u^2}-1 $\\
 & & $f(u)=e^{u^2}+u^p-1,\; p\geqslant 1 $\\\hline
\end{tabular}
\end{center}
The following two groups of $\varphi$ and $f$ give some examples
which satisfy the conditions of Corollary \ref{cor:doubleinfinit02}:\\
(1) $\varphi_2$ and any $f$ in the above table;\\
 (2) $\varphi=\varphi_k\:(0\leqslant k< 2)$ and any $f$ in the
last column of the above table.
\end{example}

\vskip 2mm

{\bf Case II: $B=+\infty$, $A<+\infty$ and $C=+\infty$}


\begin{theorem}[see Fig.\ref{fig:f1234}]\label{thm:f=0t1}
Let $A<+\infty$, ${B=+\infty}$, ${C=+\infty}$. Assume conditions
\eqref{con:phi}--\eqref{con:strict2} hold.
Then the following assertions hold:

(a) If $f'(0)= 0$, then \eqref{eq:1dmemsfr0} has exactly one
positive solution for any $\lambda\in (0,+\infty)$.

(b) If $f'(0)> 0$, then \eqref{eq:1dmemsfr0} has exactly one
positive solution for $\lambda\in (0, \lambda_1)$ and none for
$\lambda\in[\lambda_1,+\infty)$.
\end{theorem}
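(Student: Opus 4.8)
\textbf{Proof proposal for Theorem \ref{thm:f=0t1}.}

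The plan is to follow exactly the time-map strategy announced in the introduction, and to treat Case II in parallel with Case I (Theorem \ref{thm:doubleinfinit02}). By Theorem \ref{thm:atmost12} we already know there is at most one positive solution for each $\lambda>0$, so the entire content of the statement is an \emph{existence} claim together with the precise identification of the threshold $\lambda_1=\frac{\varphi'(0)}{f'(0)}(\frac{\pi}{2L})^2$ (interpreted as $+\infty$ when $f'(0)=0$, which recovers assertion (a) from assertion (b)). As usual, a positive solution is symmetric and unimodal, so it is determined by its maximum value $\rho\in(0,A)$ through the first-order relation $\Phi(u'(x))=\lambda\bigl(F(\rho)-F(u(x))\bigr)$ on the half-interval where $u$ is increasing; integrating gives the time map
\begin{equation*}
T(\rho)=\int_0^{\rho}\frac{ds}{\Phi^{-1}\!\bigl(\lambda(F(\rho)-F(s))\bigr)},
\end{equation*}
and $\rho$ corresponds to a positive solution of \eqref{eq:1dmemsfr0} precisely when $T(\rho)=L$. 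Here the hypothesis $B=+\infty$ guarantees $\Phi^{-1}$ is defined on all of $[0,+\infty)$, so $T(\rho)$ is finite and the substitution is legitimate for every $\rho\in(0,A)$ and every $\lambda>0$; this is the reason Case II behaves like Case I rather than like the bounded-$B$ cases.

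The core of the argument is then a shape/monotonicity analysis of $\rho\mapsto T(\rho)$, or rather of the rescaled function $g$ introduced in Section 3 that records the behavior of $T$. The three ingredients I would assemble are: (i) the boundary behavior as $\rho\to 0^+$; (ii) the boundary behavior as $\rho\to A^-$; (iii) monotonicity in between. For (i), since $f\in C^1$ with $f(0)=0$ one has $F(\rho)\sim\frac12 f'(0)\rho^2$ and $\Phi^{-1}(y)\sim y/\varphi'(0)$ near $0$; a standard rescaling $s=\rho\sigma$ in the integral shows $T(\rho)\to \sqrt{\varphi'(0)/(\lambda f'(0))}\,\cdot\frac{\pi}{2}$ when $f'(0)>0$, i.e. $T(0^+)=L$ exactly at $\lambda=\lambda_1$, while $T(0^+)=+\infty$ when $f'(0)=0$. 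For (ii), the key new point for Case II is $A<+\infty$ with $C=+\infty$: because $F(\rho)\to+\infty$ as $\rho\to A^-$, hypothesis \eqref{con:phif02} (which is exactly tailored to make $\varphi\circ\Phi^{-1}(\lambda t)/\,f\circ F^{-1}(t)\to 0$) forces $T(\rho)\to 0$ as $\rho\to A^-$; I would extract this from the general lemmas on $g$ in Section 3, which presumably express $\lim T$ in terms of precisely this ratio. For (iii), conditions \eqref{con:phi2}, \eqref{f-c1}, \eqref{con:strict2} are the convexity-type hypotheses that, via the general theory of Section 3 (the same machinery used for Theorem \ref{thm:atmost12}), make $T$ strictly monotone — decreasing — in $\rho$ on $(0,A)$ for each fixed $\lambda$; equivalently $g$ has no interior extremum in this case. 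Combining: for $f'(0)>0$ and $\lambda<\lambda_1$ we have $T(0^+)>L>T(A^-)=0$, so by strict monotonicity and continuity there is exactly one $\rho$ with $T(\rho)=L$, while for $\lambda\ge\lambda_1$ we have $T(\rho)<T(0^+)\le L$ for all $\rho$, hence no solution; for $f'(0)=0$, $T$ decreases continuously from $+\infty$ to $0$, giving exactly one solution for every $\lambda>0$. Assertion (b) at $\lambda=\lambda_1$ uses strictness: $T(\rho)<T(0^+)=L$ for all $\rho\in(0,A)$, so the value $L$ is not attained.

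The main obstacle I anticipate is step (ii): controlling $\lim_{\rho\to A^-}T(\rho)$ when $A<+\infty$ but $C=F(A^-)=+\infty$. Near $\rho=A$ the integrand is not uniformly well-behaved — $F(s)$ blows up as $s\to A^-$ and $\Phi^{-1}$ of a large argument must be estimated — so one has to split the integral at an intermediate point, bound the inner part by monotonicity of $\Phi^{-1}$, and show the outer part (near $s=\rho$) is small uniformly. This is exactly where \eqref{con:phif02} enters and where one must be careful that the limit holds for each fixed $\lambda$ rather than only in some averaged sense; I would handle it by quoting the precise form of this computation already done in Section 3 for the function $g$, so that here it reduces to verifying that the present hypotheses put us in the regime covered there. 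The remaining steps — the $\rho\to0^+$ asymptotics and the monotonicity — are routine given Theorem \ref{thm:atmost12} and the Section 3 lemmas, so the proof should be short once the boundary limits are in hand.
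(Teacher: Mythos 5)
Your overall strategy is the paper's: reduce to the time map, use Theorem \ref{T'<0} for strict monotonicity of $T(\cdot,\lambda)$, Proposition \ref{cor:twokind} for the limit at $r=0$ (which identifies $\lambda_1$), and the vanishing of $T$ at the right endpoint to conclude exactly one root of $T(r,\lambda)=L$ for $\lambda<\lambda_1$ (resp.\ all $\lambda$ when $f'(0)=0$) and none otherwise. The paper's proof is exactly this, phrased via $g(\lambda)=\lim_{r\to A^-}T(r,\lambda)\equiv 0$ together with Lemmas \ref{a-prop:twosidee1} and \ref{a-prop:twosidee12}.

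There is, however, one genuine flaw in your step (ii): you justify $\lim_{\rho\to A^-}T(\rho,\lambda)=0$ by invoking hypothesis \eqref{con:phif02}, but Theorem \ref{thm:f=0t1} does not assume \eqref{con:phif02}; that condition belongs only to Case I (Theorem \ref{thm:doubleinfinit02}, where $A=+\infty$ and the integration domain is unbounded). As written, your argument therefore proves a weaker statement than the one claimed. The point of Case II is that the limit is automatic: since $A<+\infty$, the domain of integration is bounded, and $C=+\infty$ forces $F(\rho)-F(\rho s)\to+\infty$ for each fixed $s<1$, so (using $B=+\infty$, hence $\Phi^{-1}(y)\to+\infty$) the integrand tends to $0$ away from $s=1$; near $s=1$ one estimates
\begin{equation*}
\int_{1-\varepsilon}^{1}\frac{\rho\,ds}{\Phi^{-1}\bigl(\lambda[F(\rho)-F(\rho s)]\bigr)}
\;\lesssim\;\frac{1}{\lambda f\bigl(\rho(1-\varepsilon)\bigr)}\int_{0}^{Y}\frac{dy}{\Phi^{-1}(y)},
\end{equation*}
and since $f(t)\to+\infty$ as $t\to A^-$ while $\int_{0}^{Y}\frac{dy}{\Phi^{-1}(y)}=o(Y)$, this contribution vanishes; no growth condition linking $\varphi$ and $f$ is needed. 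This is exactly what the paper uses when it records $g\equiv 0$ in Case II (Section \ref{sec:3.3}, quoting Corollary 4.3 of \cite{Pan2014a}). Replacing your appeal to \eqref{con:phif02} by this unconditional computation (or by the cited result) closes the gap; the rest of your argument, including the nonexistence at $\lambda=\lambda_1$ via strict decrease of $T$, is correct.
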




\vskip 2mm {\bf Case III}: $B=+\infty$, $A<+\infty$ and $C<+\infty$

\begin{theorem}[see Fig.\ref{fig:f1234}]\label{thm:f=0t2}
Let $A<+\infty$, ${B=+\infty}$, ${C<+\infty}$. Assume conditions
\eqref{con:phi}--\eqref{con:strict2} hold. Then the following
assertions hold:

(a)~If $f'(0)= 0$, then there exists $\lambda_*>0$ such that
\eqref{eq:1dmemsfr0} has exactly one positive solution for
$\lambda\in(\lambda_*,+\infty)$ and none for $(0, \lambda_*]$.

(b)~If $f'(0)> 0$, then there exists  $\lambda_*\in (0,\lambda_1)$
such that \eqref{eq:1dmemsfr0} has exactly one positive solution for
$\lambda\in  (\lambda_*, \lambda_1)$ and none for $\lambda\in
(0,\lambda_*]\cup [\lambda_1,+\infty)$.

(c)~$\lambda_*$ is strictly decreasing with respect to $L$.
\end{theorem}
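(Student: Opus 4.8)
The plan is to reduce everything to the time-map formalism that Theorems~\ref{thm:doubleinfinit02}--\ref{thm:f=0t2} are already built on. Recall that for problem \eqref{eq:1dmemsfr0} a positive solution corresponds to a symmetric bump of height $\rho\in(0,A)$, and by the standard first-integral argument one has $L=T(\rho)$, where the time map is of the form
\begin{align*}
T(\rho)=\frac{1}{\sqrt{\lambda}}\int_0^\rho \frac{dz}{\sqrt{2\bigl(F(\rho)-F(z)\bigr)}}\cdot(\text{factor from }\varphi),
\end{align*}
so that $\sqrt{\lambda}\,T(\rho)=:\mathcal T(\rho)$ is independent of $\lambda$. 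The content of parts (a) and (b) is that $\mathcal T(\rho)$, as a function of $\rho\in(0,A)$, has a finite positive infimum $m:=\inf_{\rho}\mathcal T(\rho)$ achieved in the limit $\rho\to A^-$ (this is where $C<+\infty$ enters: $F(\rho)\to C<\infty$ forces $\mathcal T(\rho)$ to stay bounded and in fact the limiting value is the relevant threshold), and that $\mathcal T$ is monotone in the appropriate sense, so $\lambda_*$ is characterized by $\sqrt{\lambda_*}=\mathcal T(\rho)\big/L$ evaluated at the extremal $\rho$; equivalently $\sqrt{\lambda_*}\cdot L = m_0$ for a constant $m_0>0$ that depends only on $\varphi$ and $f$, not on $L$.

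First I would make that characterization of $\lambda_*$ explicit: from parts (a),(b) the solution set for fixed $L$ is nonempty precisely when $\lambda>\lambda_*$, and the boundary case $\lambda=\lambda_*$ is exactly the value at which $L$ equals $\lim_{\rho\to A^-}\frac{1}{\sqrt{\lambda}}\,\mathcal T(\rho)$ — call this limit $\frac{1}{\sqrt\lambda}\,\ell$ where $\ell:=\lim_{\rho\to A^-}\mathcal T(\rho)\in(0,+\infty)$ is a constant determined by $\varphi$ and $f$ alone (its finiteness is the substance of Case~III, already established in the proof of Theorem~\ref{thm:f=0t2}). Then solving for $\lambda_*$ gives the clean formula
\begin{align*}
\lambda_*=\frac{\ell^{\,2}}{L^{2}},
\end{align*}
from which $\frac{d\lambda_*}{dL}=-\dfrac{2\ell^{2}}{L^{3}}<0$, i.e.\ $\lambda_*$ is strictly decreasing in $L$. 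So the whole of (c) comes down to showing that the threshold has exactly this separable $L$-dependence.

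The key step — and the one I'd be most careful about — is justifying that $\lambda$ and $L$ enter the existence threshold only through the combination $\lambda L^2$. This is a scaling observation: if $u(x)$ solves the equation on $(-L,L)$ with parameter $\lambda$, then for any $c>0$ the rescaled function $v(x):=u(x/c)$ solves $-\bigl(\varphi(v'/c)\bigr)'=\lambda f(v)$ on $(-cL,cL)$, which is \emph{not} of the same form unless $\varphi$ is homogeneous; so the naive scaling fails for a general $\varphi$-Laplacian. The correct route is instead through the time map directly: $\mathcal T(\rho)=\sqrt\lambda\,T_{\lambda}(\rho)$ depends on $\lambda$ and $L$ not at all (it is a pure function of $\rho$, $\varphi$, $F$), the equation determining a solution of height $\rho$ is $\sqrt\lambda\,L=\mathcal T(\rho)$, and hence the solvability condition ``$\exists\,\rho\in(0,A)$ with $\sqrt\lambda\,L=\mathcal T(\rho)$'' depends on $(\lambda,L)$ only via $\sqrt\lambda\,L$. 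Given the range analysis of $\mathcal T$ from Cases~III(a),(b) — namely $\mathcal T$ takes every value in $(\ell,+\infty)$ (resp.\ $(\ell,\mathcal T_{\max})$ when $f'(0)>0$, with $\mathcal T_{\max}=\frac{\pi}{2}\sqrt{\varphi'(0)/f'(0)}$ corresponding to $\lambda_1$) and $\ell$ is not attained — the infimal threshold in the $\sqrt\lambda\,L$ variable is exactly $\ell$, so $\sqrt{\lambda_*}\,L=\ell$, giving $\lambda_*=\ell^2/L^2$ and strict monotonicity in $L$. The main obstacle is thus purely bookkeeping: carefully extracting from the already-proved Theorem~\ref{thm:f=0t2} that its $\lambda_*$ is characterized by the $L$-independent constant $\ell$ in precisely this way, and confirming that no part of the argument there secretly used $L$ beyond the identity $L=T(\rho)$.
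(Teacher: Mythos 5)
There is a genuine gap, and it sits at the center of your argument: the claim that $\sqrt{\lambda}\,T_\lambda(\rho)$ is independent of $\lambda$, so that solvability depends on $(\lambda,L)$ only through $\sqrt{\lambda}\,L$. That separation holds only when $\Phi^{-1}$ is a power (the semilinear case $\varphi(s)=s$, or a $p$-Laplacian-type operator). Under the hypotheses of this theorem $\varphi$ is a general operator satisfying \eqref{con:phi}--\eqref{con:phi2} (e.g.\ $\varphi(s)=\arctan s$, for which $\Phi^{-1}(y)=\sqrt{e^{2y}-1}$), and the time map is
\begin{equation*}
T(r,\lambda)=\int_0^r\frac{du}{\Phi^{-1}\bigl(\lambda[F(r)-F(u)]\bigr)},
\end{equation*}
in which $\lambda$ does not factor out. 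You correctly observe that the naive rescaling of $x$ fails for a general $\varphi$-Laplacian, but the fallback you propose — that the time map itself separates — fails for exactly the same reason. Consequently there is no $\lambda$-independent constant $\ell=\lim_{\rho\to A^-}\mathcal T(\rho)$: the right-endpoint value is the genuinely $\lambda$-dependent function $g(\lambda)=\lim_{r\to A^-}T(r,\lambda)=\int_0^C\frac{1}{\Phi^{-1}(\lambda y)}\frac{dy}{f\circ F^{-1}(C-y)}$, and your formula $\lambda_*=\ell^2/L^2$ (hence your proof of (c), and the range analysis you invoke for (a),(b)) is not valid in this generality. There is also a circularity in invoking "the range analysis of $\mathcal T$ from Cases III(a),(b) \dots already established in the proof of Theorem~\ref{thm:f=0t2}", since that is the theorem being proved.

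The paper's route avoids all of this: strict monotonicity of $T$ in $r$ (Theorem \ref{T'<0}, which needs \eqref{con:phi2}--\eqref{con:strict2} and is not a triviality), the left-endpoint limits of Proposition \ref{cor:twokind} (which distinguish $f'(0)=0$ from $f'(0)>0$ and produce $\lambda_1$), monotonicity of $T$ in $\lambda$ (Lemma \ref{a-prop:twosidee1}), and Lemma \ref{a-prop:twosidee23}, which says $g$ is strictly decreasing in $\lambda$ with $\lim_{\lambda\to0}g=+\infty$ and $\lim_{\lambda\to+\infty}g=0$. Then $\lambda_*$ is defined by $g(\lambda_*)=L$; existence and uniqueness of the solution of $T(r,\lambda)=L$ for $\lambda$ in the stated ranges follow from the monotone shape of $T$ in $r$ trapped between $g(\lambda)$ and the left-endpoint limit; and (c) follows simply because $\lambda_*=g^{-1}(L)$ with $g$ strictly decreasing — no explicit $L$-dependence like $\ell^2/L^2$ is available or needed. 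If you want to salvage your approach, you would have to replace the separability claim by this $g$-based characterization of the threshold.
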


\begin{example} Let $\varphi=\varphi_k\:(0\leqslant k\leqslant 2)$, which is defined in Example \ref{table:fi},
and let $f$ be one of the following table.
\begin{center}
\begin{tabular}{|c|c|c|}\hline
   $f$ & $f(0)=0$, $f'(0)=0$& $f(0)=0$, $f'(0)>0$\\\hline
$A<+\infty, C=+\infty$ & $f(u)=\tan u^q$,\:\: $u\in [0,\sqrt[q]{\frac{\pi}{2}})\; (q>1)$ & $f(u)=\tan u$,\; $u\in [0,\frac{\pi}{2})$\\
\emph{(Case II)}& $f(u)=(1-u^q)^{-p}-1$,\; $u\in [0,1)$ &$f(u)=(1-u)^{-p}-1,\; u\in [0,1)$\\
&$(q>1, p\geqslant1)$& $(p\geqslant1)$\\\hline
$A, C<+\infty$& $f(u)=(1-u^q)^{-p}-1$,\; $u\in [0,1)$ & $f(u)=(1-u)^{-p}-1$,\; $u\in [0,1)$\\
\emph{(Case III)}&$(q>1, 0<p< 1)$&  $(0<p< 1)$\\\hline
\end{tabular}
\end{center}
Then $\varphi$ and $f$ give various examples which satisfy the
conditions of Theorem \ref{thm:f=0t1} or \ref{thm:f=0t2}.
\end{example}

\vskip 2mm We next consider the three cases of $B<+\infty$. The same
as in \cite{Pan2014a}, we introduce the function
\begin{align}\label{form:0g}
g(\lambda)
=\int_{0}^{F^{-1}(\frac{B}{\lambda})}\frac{1}{\Phi^{-1}( B -\lambda
F(u)) }du.
\end{align}
As pointed out in \cite{Pan2014a}, for given $\varphi$ and $f$, the
function $g$ plays a crucial role in determining bifurcation
diagrams of problem \eqref{eq:1dmemsfr0}:
 when the length parameter $L$ passes through the local extreme values of $g$,
the pattern of the bifurcation diagram must change; the complexity
of the graph of $g$ leads to the rich diversity of bifurcation
patterns for \eqref{eq:1dmemsfr0}.

To distinguish different types of $g$, we introduce the following
definitions.
\begin{definition}[See Fig.\ref{fig:gg1}]\label{df:g1}
We say that the function $g$ is of
\begin{align*}
 &\emph{Type } \gamma_2, \;\text{ if }\:  \lim_{\lambda\rightarrow
+\infty} g(\lambda)=0, \; \lim_{\lambda\rightarrow 0} g(\lambda)\in
(0,+\infty),\; g(\lambda)
\text{ has exactly two local extreme points in } (0, +\infty)\\
&\phantom{\emph{Type } \gamma_12 \;} \text{and the local maximum value is greater than $\lim_{\lambda\rightarrow 0} g(\lambda)$;}\\
 &\emph{Type } \gamma_3, \;\text{ if }\:  \lim_{\lambda\rightarrow
+\infty} g(\lambda)=0, \; \lim_{\lambda\rightarrow 0} g(\lambda)\in
(0,+\infty),\; g(\lambda)
\text{ has exactly two local extreme points in } (0, +\infty)\\
&\phantom{\emph{Type } \gamma_3 \;} \text{and the local maximum value is equal to $\lim_{\lambda\rightarrow 0} g(\lambda)$;}\\
&\emph{Type } \delta_2, \;\text{ if }\: \lim_{\lambda\rightarrow
+\infty} g(\lambda)=0, \;\lim_{\lambda\rightarrow 0} g(\lambda)=0,
\; g(\lambda)
\text{ has exactly three local extreme points in } (0, +\infty)\\
&\phantom{\emph{Type } \delta_2, \;} \text{and the left local maximum value is greater than the right one;}\\
&\emph{Type } \delta_3, \;\text{ if }\: \lim_{\lambda\rightarrow
+\infty} g(\lambda)=0, \;\lim_{\lambda\rightarrow 0} g(\lambda)=0,
\; g(\lambda)
\text{ has exactly three local extreme points in } (0, +\infty)\\
&\phantom{\emph{Type } \delta_3, \;} \text{and the left local maximum value is equal to the right one;}\\
&\cdots.
\end{align*}
\end{definition}
Among the graphs in Fig.\ref{fig:gg1}, the remaining, unmentioned
types in Definition \ref{df:g1} have been introduced in
\cite{Pan2014a}. Here, we use $\alpha,\beta,\gamma,\delta,\ldots$
(in the Greek alphabetical order) to represent the numbers of the
local extremum points of $g$ in $(0,+\infty)$ being zero, one, two,
three, ... , respectively.

  \ifpdf %
\begin{figure}
\centering
\includegraphics[totalheight=8.3in]{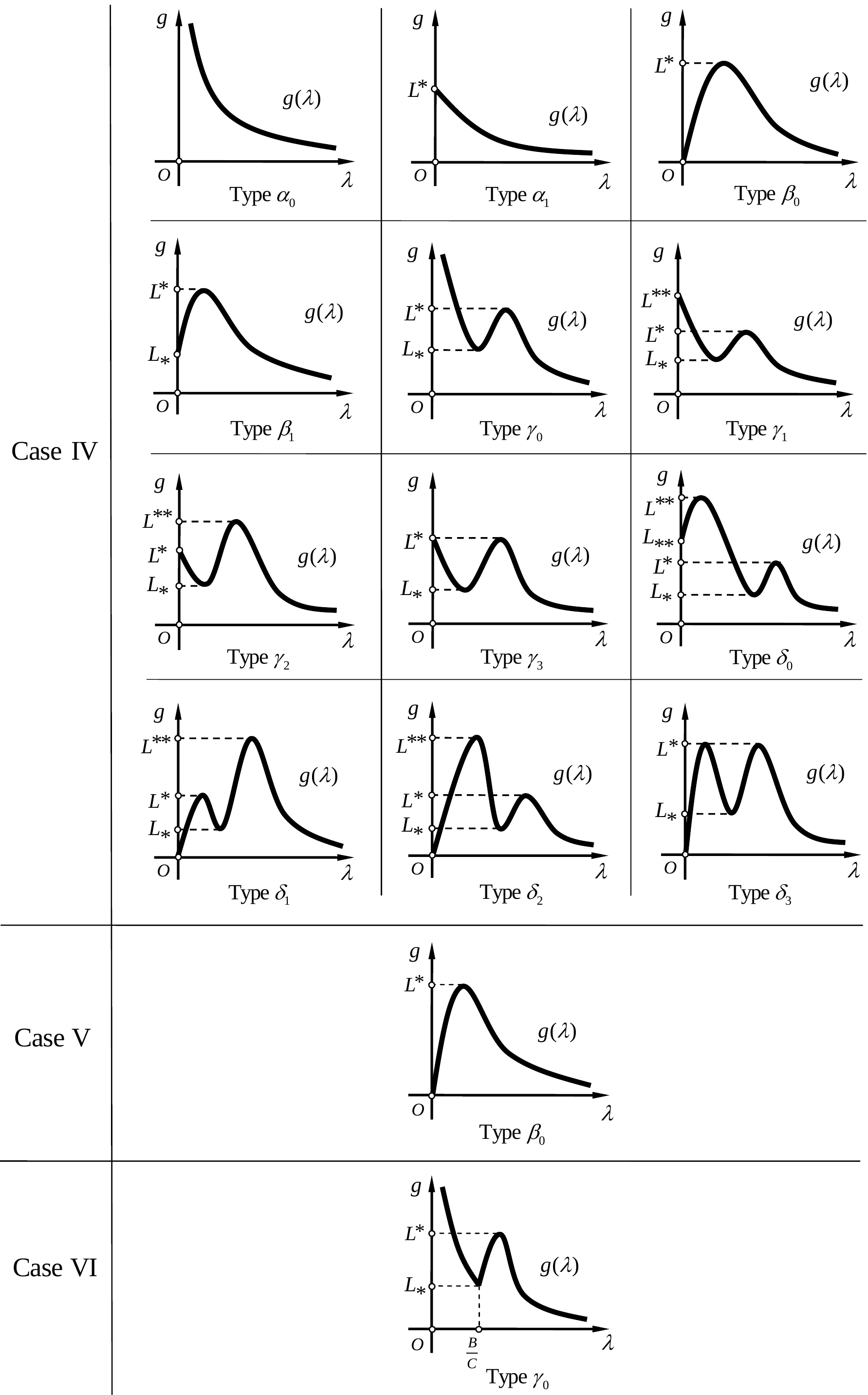}
\caption{Some shapes of the function $g$ for
$B<+\infty$.}\label{fig:gg1}
\end{figure}
   \fi %

\begin{definition}\label{df:g2}
If $\varphi$ and $f$ in \eqref{eq:1dmemsfr0} satisfy conditions
$A=+\infty$, $B<+\infty$  and $C=+\infty$, i.e., \emph{Case IV}, and
$g$ is of   \emph{Type $\gamma_2$ ($\gamma_3$, $\delta_2$, $\cdots$,
\emph{respectively})}, then we say the pair $(\varphi, f)$  is of
\emph{Type IV-$\gamma_2$ (IV-$\gamma_3$, IV-$\delta_2$, $\cdots$,
\emph{respectively})}.
\end{definition}



In order to determine the type of $g$, the same as in
\cite{Pan2014a}, we introduce the following conditions
\begin{align}\label{g-integral}
&K:=\int_{0}^{B} \frac{1}{y \Phi^{-1}(B- y) } dy<+\infty.
\\\label{f-condition}
&f'(z)F(z) \lesssim f^2(z )  \qquad \text{for }\; z\in (0,A).
\end{align}
Here, the notation ``$\lesssim$'', which lies in between
``$\leqslant$'' and ``$<$'', means that except for at most finitely
many points where ``$=$'' holds, it is always ``$<$''.

\begin{example}[\cite{Pan2014a}]\label{eg:integral}
Consider the function
$\varphi_k(s)=\int_0^s(1+t^2)^{-\frac{k}{2}}\,\text{d}t$ which is
introduced in Example \ref{table:fi}.
A direct computation shows that \eqref{g-integral} is satisfied for
all $k>2$, e.g., $K=\frac{\pi}{2}$ when $k=3$ (i.e., the mean
curvature equation).
\end{example}

\begin{example}\label{eg:Fconcave}
The following table gives some functions $f$ satisfying  both
$f(0)=0$ and condition \eqref{f-condition}.
\begin{center}
\begin{tabular}{|c|l|}\hline
 $f$& $f(0)=0$\\\hline
   & $f(u)=(1+u)^p-1,\; p>0$\\
$f'(u)F(u) < f^2(u )$ & $f(u)=e^u-1$\\
&$f(u)=u^p,\; p>0$\\
&$f(u)=u^p+u^q,\; 0\leqslant p<q<\tilde{q}(p)$\\\hline $f'(u)F(u)
\lesssim f^2(u)$&$f(u)=u^p+u^q,\; 0\leqslant
p<q=\tilde{q}(p)$\\\hline
\end{tabular}
\end{center}
Here, the optimal upper bound $\tilde{q}(p)=p+1+2\sqrt{p+1}$ is
obtained in \cite{Hung2014}.
\end{example}

\begin{example}\label{eg:twolimit}
The following table gives some functions $f$ satisfying both
$f(0)=0$ and $\lim\limits_{t\rightarrow A} \frac{F(t)}{f(t)}=D$.
\begin{center}
\begin{tabular}{|c|l|l|l|}\hline
 $f$& $D=+\infty$ & $D\in(0,+\infty)$&$D=0$\\\hline
 & $f(u)=u^p, p>0$& $f(u)=e^u-1$& $f(u)=e^{u^2}-1$\\
 $f(0)=0$ & $f(u)=u^p+u^q, q>p>0$ & $f(u)=e^u+u^p-1, p>0$& $f(u)=e^{u^2}+u^p-1, p>0$\\
 & $f(u)=(1+u)^p-1, p>0$& $f(u)=u^pe^{ku}+u^q, p,q\geqslant 1,k>0$& $f(u)=(1-u)^{-p}-1, p>0$\\\hline
\end{tabular}
\end{center}
Besides, all of them satisfy $\lim_{t\rightarrow 0}
\frac{F(t)}{f(t)} =0$. These two limits of $\frac{F(t)}{f(t)}$ are
very useful for computing the limits of $g$ at $0$ and $+\infty$
(see Lemma \ref{prop:g} below).
\end{example}

\vskip 2mm {\bf Case IV}: $B<+\infty$, $A=+\infty$ and $C=+\infty$

\begin{theorem}[Type IV-$\alpha_0$, see Fig.\ref{fig:f1234}]\label{thm:f=0t3}
Let $(\varphi, f)$ be of \emph{Type IV-$\alpha_0$} (see
Fig.\ref{fig:gg1}).
Assume conditions  \eqref{con:phi}--\eqref{con:strict2} hold.
Then the following assertions hold:

(a)~If $f'(0)= 0$, then there exists $\lambda_*>0$ such that
\eqref{eq:1dmemsfr0} has exactly one positive solution for
 $\lambda\in(\lambda_*,+\infty)$ and none for $\lambda\in (0, \lambda_*]$.

(b)~If $f'(0)>0$, then there exists $\lambda_*\in (0,\lambda_1)$
such that \eqref{eq:1dmemsfr0} has exactly one positive solution for
$\lambda\in  (\lambda_*, \lambda_1)$ and none for $\lambda\in (0,
\lambda_*]\cup [\lambda_1,+\infty)$.

(c) $\lambda_*$ is strictly decreasing with respect to $L$.
\end{theorem}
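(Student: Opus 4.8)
\textbf{Proof proposal for Theorem \ref{thm:f=0t3}.}

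The plan is to reduce the problem, via the time-map machinery developed for \eqref{eq:1dmemsfr0}, to an analysis of the auxiliary function $g$ defined in \eqref{form:0g}, whose graph for $(\varphi,f)$ of Type IV-$\alpha_0$ is strictly decreasing from $\lim_{\lambda\to 0^+}g(\lambda)=+\infty$ to $\lim_{\lambda\to+\infty}g(\lambda)=0$ (this is precisely what Type $\alpha_0$ in Fig.\ref{fig:gg1} encodes, together with $B<+\infty$). The standard equivalence — which I would invoke from Section 3 in the forms proved there — is that for each $\lambda>0$ the positive solutions of \eqref{eq:1dmemsfr0} are in bijection with the values $\rho\in(0,A)$ at which the time map $T_\lambda(\rho)$ equals $L$; moreover since $B<+\infty$ the solution amplitude $\rho$ cannot exceed the level $F^{-1}(B/\lambda)$, and as $\rho$ ranges up to that endpoint $T_\lambda(\rho)$ increases to the finite value $g(\lambda)$. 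Combining this with Theorem \ref{thm:atmost12} (at most one positive solution for every $\lambda$), the count of solutions at parameter $\lambda$ is exactly $1$ if $L<g(\lambda)$ (with the limiting behaviour at the right endpoint handled by the monotonicity/continuity of $T_\lambda$) and $0$ if $L\geqslant g(\lambda)$ — here I must be careful to check from the Section 3 results whether the endpoint value $g(\lambda)$ is attained or only approached, since this fixes whether the non-existence set is $(0,\lambda_*]$ (closed) as claimed.

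For part (a), assume $f'(0)=0$. Since $g$ is continuous, strictly decreasing, and sweeps out all of $(0,+\infty)$, the equation $g(\lambda)=L$ has a unique root $\lambda_*=\lambda_*(L)>0$; then $L<g(\lambda)\iff\lambda<\lambda_*$ fails and we get existence of exactly one positive solution for $\lambda>\lambda_*$ and none for $\lambda\in(0,\lambda_*]$. When $f'(0)=0$ there is no finite upper cutoff $\lambda_1$ (the would-be $\lambda_1=\frac{\varphi'(0)}{f'(0)}(\frac{\pi}{2L})^2=+\infty$), which is why existence persists for all large $\lambda$; I would point to the corresponding asymptotic lemma in Section 3 giving $T_\lambda(\rho)\to 0$ as $\rho\to 0^+$ when $f'(0)=0$, ensuring no solution branch escapes near the trivial solution. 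For part (b), assume $f'(0)>0$. Now the linearization at $u\equiv 0$ produces the finite threshold $\lambda_1$: the Section 3 analysis gives $\lim_{\rho\to 0^+}T_\lambda(\rho)=\frac{\pi}{2}\sqrt{\varphi'(0)/(\lambda f'(0))}$, so $T_\lambda(\rho)>L$ for small $\rho$ exactly when $\lambda<\lambda_1$, while for $\lambda\geqslant\lambda_1$ one has $T_\lambda(\rho)<L$ on all of $(0,\,F^{-1}(B/\lambda))$ and there is no solution. On the other hand $g(\lambda_1)$ is finite and positive; using $g$ strictly decreasing, the root $\lambda_*$ of $g(\lambda)=L$ lies in $(0,\lambda_1)$ precisely when $L<g(\lambda)$ for $\lambda$ slightly below $\lambda_1$, i.e. $L$ is small enough — but one must confirm that the problem setup implicitly restricts $L$ (or that the statement is understood for those $L$ for which $\lambda_*<\lambda_1$); I would state this carefully, matching how the analogous step is done in \cite{Pan2014a}. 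Combining, \eqref{eq:1dmemsfr0} has exactly one positive solution for $\lambda\in(\lambda_*,\lambda_1)$ and none otherwise.

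For part (c), the monotonicity of $\lambda_*$ in $L$ follows immediately once (a)–(b) are established: $\lambda_*(L)$ is defined implicitly by $g(\lambda_*)=L$, and since $g$ is strictly decreasing, its inverse $g^{-1}$ is strictly decreasing, so $\lambda_*(L)=g^{-1}(L)$ is strictly decreasing in $L$. Alternatively, and equivalently, one can argue directly: if $L_1<L_2$ then for $\lambda=\lambda_*(L_2)$ one has $g(\lambda)=L_2>L_1$, so $\lambda>\lambda_*(L_1)$, giving $\lambda_*(L_1)<\lambda_*(L_2)$ — wait, this is the wrong direction, so the correct reading is $g$ decreasing forces $\lambda_*$ decreasing in $L$, which I would state cleanly from $\lambda_*=g^{-1}(L)$.

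\textbf{Main obstacle.} The delicate point is not the monotonicity bookkeeping but verifying the boundary behaviour of the time map that pins down whether the non-existence interval is $(0,\lambda_*]$ versus $(0,\lambda_*)$, and — in case (b) — the precise interplay between the right-endpoint value $g(\lambda)$ and the near-zero limit $\tfrac{\pi}{2}\sqrt{\varphi'(0)/(\lambda f'(0))}$ as $\lambda\uparrow\lambda_1$, which is what forces $\lambda_*\in(0,\lambda_1)$ rather than merely $\lambda_*\le\lambda_1$. Handling this rigorously requires the continuity and strict monotonicity properties of $\rho\mapsto T_\lambda(\rho)$ on $(0,F^{-1}(B/\lambda))$ and of $\lambda\mapsto g(\lambda)$, all of which I would cite from Section 3; the rest is a direct packaging of those lemmas with Theorem \ref{thm:atmost12}.
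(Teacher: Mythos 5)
Your overall strategy --- reduce everything to the shape of $T(\cdot,\lambda)$ and to the strictly decreasing function $g$ with $\lim_{\lambda\to0}g=+\infty$, $\lim_{\lambda\to+\infty}g=0$ --- is the same as the paper's (which invokes Theorem \ref{T'<0}, Proposition \ref{cor:twokind} and the Type $\alpha_0$ shape of $g$, repeating the argument of Theorem \ref{thm:f=0t2}). But the execution has a genuine flaw: you have the monotonicity of the time map backwards. By Theorem \ref{T'<0}, $T(\cdot,\lambda)$ is strictly \emph{decreasing} on $I=(0,F^{-1}(B/\lambda)]$, so it decreases from $\lim_{r\to0}T(r,\lambda)$ (which is $+\infty$ if $f'(0)=0$ and $\frac{\pi}{2}\sqrt{\varphi'(0)/(\lambda f'(0))}$ if $f'(0)>0$, by Proposition \ref{cor:twokind}) down to $g(\lambda)$ at the right endpoint; it does not ``increase to $g(\lambda)$''. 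Consequently your counting criterion ``exactly one solution if $L<g(\lambda)$, none if $L\geqslant g(\lambda)$'' is inverted: classical solutions correspond to $r\in(0,r^*)$ (the endpoint $r^*$ gives the gradient blow-up solution, which is not classical), so the correct statement is that \eqref{eq:1dmemsfr0} has exactly one positive solution iff $g(\lambda)<L<\lim_{r\to0}T(r,\lambda)$, and none otherwise. Your conclusions in (a) happen to agree with the theorem but contradict the criterion you stated, so the argument as written is internally inconsistent; note also that the corrected criterion settles the endpoint question you leave open: at $\lambda=\lambda_*$ the value $L=g(\lambda_*)$ is attained only at $r=r^*$, hence there is no classical solution there, which is exactly why the non-existence set is the closed interval $(0,\lambda_*]$.

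The second gap is in case (b), where you leave unproved --- and even suggest restricting $L$ to guarantee --- the inequality $\lambda_*<\lambda_1$. No restriction on $L$ is needed. For every $L>0$, the strict monotonicity of $T(\cdot,\lambda_1)$ gives
\begin{equation*}
g(\lambda_1)=T\bigl(F^{-1}(B/\lambda_1),\lambda_1\bigr)<\lim_{r\to0}T(r,\lambda_1)=L,
\end{equation*}
the last equality being precisely the definition of $\lambda_1$ via Proposition \ref{cor:twokind}; since $g$ is strictly decreasing, $\lambda_*=g^{-1}(L)<\lambda_1$. This one-line observation is what your write-up is missing. With it, and with the corrected criterion above, the remainder of your argument (uniqueness from Theorem \ref{thm:atmost12}, non-existence for $\lambda\geqslant\lambda_1$ because then $T(r,\lambda)<\lim_{r\to0}T(r,\lambda)\leqslant L$ for all $r$, and part (c) from $\lambda_*=g^{-1}(L)$ with $g^{-1}$ strictly decreasing) does line up with the paper's proof.
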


\begin{corollary}[Type IV-$\alpha_0$, see Fig.\ref{fig:f1234}]\label{cor:f=0t3}
Let  $A=+\infty$, ${B<+\infty}$, ${C=+\infty}$. Assume conditions
\eqref{con:phi}--\eqref{con:strict2} hold. If further
\eqref{g-integral}, \eqref{f-condition} and
$\lim\limits_{z\rightarrow A} \frac{F(z)}{f(z)}=+\infty$ hold, then
$(\varphi, f)$ is of   \emph{Type IV-$\alpha_0$} and hence the
conclusions of Theorem \ref{thm:f=0t3} hold.
\end{corollary}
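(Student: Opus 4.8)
The entire content of the corollary is the verification that the pair $(\varphi,f)$ is of Type IV-$\alpha_0$; once this is known, the multiplicity statements (a)--(c) are exactly the conclusions of Theorem \ref{thm:f=0t3}, so nothing further is needed. Since $A=+\infty$, $B<+\infty$, $C=+\infty$ is assumed (and $A=+\infty$ already forces $C=+\infty$), we are in Case IV, and it remains to show that the function $g$ of \eqref{form:0g} is of Type $\alpha_0$, i.e.\ that $\lim_{\lambda\to+\infty}g(\lambda)=0$, $\lim_{\lambda\to 0^+}g(\lambda)=+\infty$, and $g$ has no local extreme point in $(0,+\infty)$. The first step I would take is to rewrite $g$ in a form matched to the two available integral conditions: substituting $y=\lambda F(u)$ in \eqref{form:0g} (legitimate because $F\colon[0,+\infty)\to[0,+\infty)$ is a $C^1$ increasing bijection, using $A=C=+\infty$) gives
\[
g(\lambda)=\int_{0}^{B}\frac{1}{\Phi^{-1}(B-y)}\cdot\frac{F(u)}{f(u)}\Big|_{u=F^{-1}(y/\lambda)}\,\frac{dy}{y},
\]
so the kernel $\dfrac{1}{y\,\Phi^{-1}(B-y)}$ is precisely the integrand of $K$ in \eqref{g-integral}. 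Note also that \eqref{f-condition} says exactly that $\bigl(F/f\bigr)'=1-\dfrac{f'F}{f^{2}}\gtrsim 0$, i.e.\ $F/f$ is nondecreasing on $(0,A)$, with $(F/f)(0^+)=0$ and, by the extra hypothesis, $(F/f)(u)\to+\infty$ as $u\to A$.

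From the displayed formula the two limits follow cleanly. For $\lambda\to+\infty$: monotonicity of $F/f$ gives $g(\lambda)\le (F/f)\bigl(F^{-1}(B/\lambda)\bigr)\cdot K\to 0$, since $K<+\infty$ by \eqref{g-integral} and $(F/f)(0^+)=0$. For $\lambda\to 0^+$: the integrand converges pointwise to $+\infty$ on $(0,B)$, so Fatou's lemma gives $g(\lambda)\to+\infty$. (If Lemma \ref{prop:g} already records these limits in terms of $\lim_{t\to A}F/f$ and $\lim_{t\to 0}F/f$, I would simply quote it.) For the absence of local extreme points I would show $g$ is strictly decreasing: differentiating under the integral sign yields $g\in C^{1}(0,+\infty)$ with
\[
g'(\lambda)=-\frac{1}{\lambda^{2}}\int_{0}^{B}\frac{1}{\Phi^{-1}(B-y)}\cdot\frac{f^{2}-f'F}{f^{3}}\Big|_{u=F^{-1}(y/\lambda)}\,dy,
\]
and then \eqref{f-condition} says precisely that $f^{2}-f'F\ge 0$ with equality at only finitely many points, so that for each fixed $\lambda>0$ the integrand is $\ge 0$ and vanishes at only finitely many $y$; hence $g'(\lambda)<0$ for every $\lambda>0$. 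Thus $g$ is strictly decreasing, has no local extreme point, and together with the two limits is of Type $\alpha_0$, so $(\varphi,f)$ is of Type IV-$\alpha_0$ and Theorem \ref{thm:f=0t3} applies.

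The hard part is analytic bookkeeping rather than a new idea: justifying the interchange of differentiation and integration, and of limit and integration, near the two singular endpoints $y=0$ (where $\Phi^{-1}(B-y)\to+\infty$) and $y=B$ (where $\Phi^{-1}(B-y)\to 0$). This is exactly what \eqref{g-integral} is designed to control, and it is presumably already carried out in the general time-map estimates of Section 3 and in Lemma \ref{prop:g}; modulo quoting those, the corollary reduces to the observations that \eqref{f-condition} forces both the monotonicity of $F/f$ and the sign of $g'$, while the hypothesis $\lim_{z\to A}F(z)/f(z)=+\infty$ pins down $g(0^+)$.
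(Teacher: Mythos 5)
Your proposal is correct and follows essentially the same route as the paper: the paper's proof is a two-line citation of Lemma \ref{prop:g} (for $\lim_{\lambda\to+\infty}g=0$ and $\lim_{\lambda\to 0}g=+\infty$, using \eqref{g-integral} and the hypotheses on $F/f$) together with Lemma \ref{prop:monotoneg} (strict monotonicity of $g$ from \eqref{f-condition}), which gives Type IV-$\alpha_0$ and then Theorem \ref{thm:f=0t3}. You re-derive the content of those two lemmas in place (change of variables $y=\lambda F(u)$, Fatou, sign of $g'$), which is fine; the deferred interchange justifications are exactly what the quoted lemmas already supply.
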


\begin{example}\label{eg:alpha}
Let $\varphi=\varphi_k\; (k>2)$ and $f(u)=u^p\; (p\geqslant 1)$ or
$f(u)=(1+u)^p-1\; (p>1)$ or $f(u)=u^p+u^q\; (1\leqslant
p<q\leqslant\tilde{q}(p))$. Then all conditions of Corollary
\ref{cor:f=0t3} are satisfied. Here $\tilde{q}(p)$ is given in
Example \ref{eg:Fconcave}. We notice that for $\varphi=\varphi_3$
and $f(u)=u^p+u^q$, this result is obtained in a forthcoming paper
\cite{Hung2014}.
\end{example}


Let us give an explanation of the bifurcation diagrams for Type
IV-$\alpha_0$ in Fig.\ref{fig:f1234}. The continuous thick line is
the bifurcation curve and represents classical solutions (i.e., $u
\in C^2[-L,L]$), while the thin curve $r=F^{-1}(\frac{B}{\lambda})$
is the gradient blow-up curve (see Section 3 for details). We also
note that intersection points of bifurcation curves and gradient
blow-up curves
actually represent another kind of positive solutions which belong
to $C[-L,L]\cap C^2(-L,L)$ and satisfy \eqref{eq:1dmemsfr0}, but
$u'(\pm L)=\mp \infty$.
For the other bifurcation diagrams in what follows, we shall use the
same legends as explained here (also see Remark
\ref{rk:varioussolutions} below).

\begin{theorem}[Type IV-$\alpha_1$, $f'(0)=0$, see Fig.\ref{fig:f1234}]\label{thm:f=0t4}
Let $(\varphi, f)$ be of \emph{Type IV-$\alpha_1$}. Assume
conditions \eqref{con:phi}--\eqref{con:strict2} and $f'(0)=0$ hold.
Then there exists a constant $L^*$ (see Fig.\ref{fig:gg1}) such that
the following assertions hold:

(a) If $L<L^*$, there exists $\lambda_*>0$ such that
\eqref{eq:1dmemsfr0} has exactly one positive solution for
 $\lambda\in(\lambda_*,+\infty)$ and none for $\lambda\in (0, \lambda_*]$.
Moreover, $\lambda_*$ is strictly decreasing with respect to $L$.

(b) If $L\geqslant L^*$, then \eqref{eq:1dmemsfr0} has exactly one
positive solution for any $\lambda\in (0,+\infty)$.
\end{theorem}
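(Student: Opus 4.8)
The plan is to reduce the whole statement to comparing $g(\lambda)$ with $L$, where $g$ is the function in \eqref{form:0g}. By the time-map analysis of Section~3, under \eqref{con:phi}--\eqref{con:strict2} every positive solution of \eqref{eq:1dmemsfr0} is even, strictly unimodal with maximum $\rho=u(0)$, and satisfies the first integral $\Phi(u')+\lambda F(u)\equiv\lambda F(\rho)$; hence for fixed $\lambda>0$ the positive classical solutions correspond bijectively to the numbers $\rho\in\big(0,F^{-1}(B/\lambda)\big)$ with
\[
L=T(\rho,\lambda):=\int_{0}^{\rho}\frac{du}{\Phi^{-1}\!\big(\lambda\,(F(\rho)-F(u))\big)},
\]
the endpoint $\rho=F^{-1}(B/\lambda)$ being the non-classical gradient blow-up solution at which $T=g(\lambda)$. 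I would then quote from Section~3 that $\rho\mapsto T(\rho,\lambda)$ is continuous and strictly decreasing on $\big(0,F^{-1}(B/\lambda)\big)$, so that $g(\lambda)=\inf_{\rho}T(\rho,\lambda)$ and the infimum is not attained; since moreover $f'(0)=0$ forces $\lim_{\rho\to0^+}T(\rho,\lambda)=+\infty$ (see below), it follows that for each $\lambda>0$ problem \eqref{eq:1dmemsfr0} has a positive classical solution if and only if $g(\lambda)<L$, and then, by Theorem~\ref{thm:atmost12}, exactly one.

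With this reduction the theorem follows readily from the shape of $g$. Being of \emph{Type IV-$\alpha_1$} means, by definition (Fig.\ref{fig:gg1}), that $g$ is strictly decreasing on $(0,+\infty)$ with $\lim_{\lambda\to0^+}g(\lambda)=L^*\in(0,+\infty)$ and $\lim_{\lambda\to+\infty}g(\lambda)=0$; in particular $g(\lambda)<L^*$ for every $\lambda>0$. If $L\geqslant L^*$, then $g(\lambda)<L^*\leqslant L$ for all $\lambda>0$, so \eqref{eq:1dmemsfr0} has exactly one positive solution for every $\lambda\in(0,+\infty)$: this is (b). If $L<L^*$, then $g$ decreases continuously from a value $>L$ to a value $<L$, so there is a unique $\lambda_*=\lambda_*(L)\in(0,+\infty)$ with $g(\lambda_*)=L$; for $\lambda>\lambda_*$ one has $g(\lambda)<L$ (one solution) and for $\lambda\leqslant\lambda_*$ one has $g(\lambda)\geqslant L$ (no solution). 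Finally $g$ does not depend on $L$, so $\lambda_*(L)=g^{-1}(L)$ and, $g$ being strictly decreasing, $\lambda_*$ is strictly decreasing in $L$; this gives (a). Since $L<L^*$ and $L\geqslant L^*$ exhaust all $L>0$, the proof is complete modulo the two facts borrowed from Section~3.

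The only real work is the boundary limit $\lim_{\rho\to0^+}T(\rho,\lambda)=+\infty$ when $f'(0)=0$ (strict monotonicity of $T(\cdot,\lambda)$ is the same ingredient that underlies Theorem~\ref{thm:atmost12}), and I expect it to be the main obstacle. The point is that \eqref{f-c1} makes $s\mapsto f(s)/s$ nondecreasing with limit $f'(0)=0$ at $0$, whence $F(\rho)-F(u)\leqslant\frac{f(\rho)}{2\rho}(\rho^{2}-u^{2})$ for $0\leqslant u\leqslant\rho$; feeding this into the integrand together with the bound $\Phi^{-1}(y)\leqslant\big(4y/\varphi'(0)\big)^{1/2}$ for small $y$ (valid since $\Phi''(0)=\varphi'(0)>0$) gives, for small $\rho$,
\[
T(\rho,\lambda)\ \geqslant\ \frac{\pi}{2}\Big(\frac{\varphi'(0)\,\rho}{2\lambda\,f(\rho)}\Big)^{1/2},
\]
which tends to $+\infty$ because $\rho/f(\rho)\to+\infty$. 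The remaining steps are the elementary bookkeeping above.
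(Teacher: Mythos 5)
Your proposal is correct and takes essentially the same route as the paper's proof: reduce to the time map, invoke its strict monotonicity in $\rho$ (Theorem \ref{T'<0}) together with $\lim_{\rho\to 0}T(\rho,\lambda)=+\infty$ when $f'(0)=0$ (Proposition \ref{cor:twokind}), and then read off existence and uniqueness from the comparison of $L$ with the strictly decreasing function $g$ whose supremum is $L^*$. Your self-contained derivation of the boundary limit via $F(\rho)-F(u)\leqslant\frac{f(\rho)}{2\rho}(\rho^{2}-u^{2})$ is a valid alternative to the paper's Proposition \ref{cor:twokind}, but the overall architecture is identical.
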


%


\begin{theorem}[Type IV-$\alpha_1$, $f'(0)>0$, see Fig.\ref{fig:f1234}]\label{thm:f=0t5}
Let $(\varphi, f)$ be of \emph{Type IV-$\alpha_1$}. Assume
conditions \eqref{con:phi}--\eqref{con:strict2} and $f'(0)>0$ hold.
Then there exists a constant $L^*$ (see Fig.\ref{fig:gg1}) such that
the following assertions hold:

(a) If $L<L^*$,   then there exists $\lambda_*\in (0,\lambda_1)$
such that \eqref{eq:1dmemsfr0} has exactly one positive solution for
$\lambda\in  (\lambda_*, \lambda_1)$ and none for $\lambda\in (0,
\lambda_*]\cup [\lambda_1,+\infty)$. Moreover, $\lambda_*$ is
strictly decreasing with respect to $L$.

(b) If $L\geqslant L^*$, then \eqref{eq:1dmemsfr0} has exactly one
positive solution for $\lambda\in (0, \lambda_1)$ and none for
$\lambda\in[\lambda_1,+\infty)$.
\end{theorem}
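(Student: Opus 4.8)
\textbf{Proof proposal for Theorem \ref{thm:f=0t5}.}

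The plan is to exploit the time-map framework from Section 3 together with the function $g$ defined in \eqref{form:0g}, exactly as in the companion results for Type IV-$\alpha_0$ and IV-$\alpha_1$ with $f'(0)=0$. Since $(\varphi,f)$ is of Type IV-$\alpha_1$, the function $g$ has exactly one local extreme point in $(0,+\infty)$; by the shape recorded in Fig.\ref{fig:gg1}, this extreme point is a maximum, $g$ is strictly increasing on an initial interval and strictly decreasing thereafter, $\lim_{\lambda\to 0^+}g(\lambda)$ is the relevant boundary value, and $\lim_{\lambda\to+\infty}g(\lambda)=0$. Define $L^*$ to be this local maximum value of $g$ (equivalently the supremum of $g$ over $(0,+\infty)$). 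First I would invoke Theorem \ref{thm:atmost12} to reduce everything to an \emph{existence} question: for each $\lambda>0$ there is at most one positive solution, so it suffices to determine for which $\lambda$ a positive solution exists. By the time-map reduction, a positive solution of \eqref{eq:1dmemsfr0} for a given $\lambda$ exists if and only if $L$ lies in the range of the time map $T(\cdot\,;\lambda)$ evaluated appropriately; the general properties established in Section 3 translate this into the condition that $\lambda$ be such that $L\leqslant g(\lambda)$ when $B<+\infty$, together with the endpoint behavior dictated by $f'(0)$.

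The key structural input is the behavior of $g$ near $\lambda=0$ and near $\lambda=+\infty$, and the location of the "cutoff'' at $\lambda_1=\frac{\varphi'(0)}{f'(0)}(\frac{\pi}{2L})^2$. When $f'(0)>0$, the standard linearization/comparison argument (as used for Case I(b), Case IV-$\alpha_0$(b), etc.) shows that no positive solution can exist for $\lambda\geqslant\lambda_1$, because near $u=0$ one has $f(u)\approx f'(0)u$ and the problem is dominated by the linear eigenvalue problem whose first eigenvalue corresponds to $\lambda_1$; conversely for $\lambda$ slightly below $\lambda_1$ a positive solution bifurcates from zero. Thus the admissible $\lambda$-set is contained in $(0,\lambda_1)$. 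Next I would analyze $g$ restricted to $(0,\lambda_1)$: since $f'(0)>0$ forces $\lim_{\lambda\to 0^+}g(\lambda)=0$ (this is where the $f'(0)>0$ hypothesis enters, via Lemma \ref{prop:g} and the limit $\lim_{t\to 0}F(t)/f(t)=0$), the graph of $g$ on $(0,\lambda_1)$ rises from $0$, attains a maximum of height $L^*$, and then either keeps decreasing past $\lambda_1$ (if the max occurs before $\lambda_1$) or is still ascending at $\lambda_1$. In all cases $\sup_{(0,\lambda_1)}g = L^*$.

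With this in hand the dichotomy is immediate. If $L<L^*$: the equation $g(\lambda)=L$ then has solutions, and by the monotonicity pattern of $g$ the set $\{\lambda\in(0,\lambda_1): g(\lambda)\geqslant L\}$ is a single interval $(\lambda_*,\lambda_1)$ for a unique $\lambda_*=\lambda_*(L)\in(0,\lambda_1)$ (using that $g$ is continuous, $g(0^+)=0<L$, and $g$ is increasing up to its max which exceeds $L$); on this interval exactly one positive solution exists and for $\lambda\in(0,\lambda_*]\cup[\lambda_1,+\infty)$ there is none. Monotonicity of $\lambda_*$ in $L$ follows by an argument identical to part (c) of Theorem \ref{thm:f=0t3}: larger $L$ raises the threshold $\{g\geqslant L\}$ so its left endpoint moves right — one differentiates or uses the strict monotonicity of $g$ on the ascending branch. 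If $L\geqslant L^*$: then $g(\lambda)<L$ for all $\lambda$ near $0$ and near $\lambda_1$, but one must check the whole of $(0,\lambda_1)$ — here I claim $g(\lambda)<L$ throughout $(0,\lambda_1)$, because on $(0,\lambda_1)$ the function $g$ stays $\leqslant\sup_{(0,\lambda_1)}g=L^*\leqslant L$, with equality only possibly at the interior max; a short argument (strict inequality in \eqref{con:strict2}) upgrades this to a strict inequality except on a negligible set, so in fact for every $\lambda\in(0,\lambda_1)$ a positive solution exists, and for $\lambda\geqslant\lambda_1$ none does — giving conclusion (b).

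The main obstacle I anticipate is the careful handling of the boundary cases in the second alternative, specifically ruling out a spurious solution exactly at $\lambda$ equal to the location of the maximum of $g$ when $L=L^*$: one needs the strictness clause \eqref{con:strict2} (together with the ``$\lesssim$'' conventions) to guarantee that the time map is not locally constant there, so that $L=L^*=g(\lambda_{\max})$ still yields solvability rather than a single isolated anomaly — and then to confirm solvability extends to a full neighborhood as required by (b). A secondary technical point is verifying that $\lambda_{\max}$, the argmax of $g$, is $\leqslant\lambda_1$ or handling the case it exceeds $\lambda_1$; but since only the values of $g$ on $(0,\lambda_1)$ matter and $g(0^+)=0$, this does not change the conclusion — it only affects whether the ascending branch is truncated by $\lambda_1$. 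Everything else is a routine transcription of the time-map machinery of Section 3 and the arguments already used for Theorems \ref{thm:f=0t3}--\ref{thm:f=0t4}.
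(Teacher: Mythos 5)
There is a genuine gap: your reading of what ``Type IV-$\alpha_1$'' means is not the paper's. In this paper the Greek letters count the local extreme points of $g$ in $(0,+\infty)$ ($\alpha$ = zero, $\beta$ = one, $\gamma$ = two, \dots), so Type $\alpha_1$ means $g$ has \emph{no} interior extremum: $g$ is strictly decreasing, $\lim_{\lambda\to 0}g(\lambda)\in(0,+\infty)$, $\lim_{\lambda\to+\infty}g(\lambda)=0$, and $L^*=\sup g=\lim_{\lambda\to 0}g(\lambda)$ (cf.\ Corollary \ref{cor:f=0t5}, which produces this type from Lemmas \ref{prop:g} and \ref{prop:monotoneg} with $\lim_{z\to A}F(z)/f(z)\in(0,+\infty)$). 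Your proposal instead assumes $g$ rises from $0$ to an interior maximum and then decays — that is the shape of Type $\beta_0$, not $\alpha_1$ — and you also misattribute the limits: Lemma \ref{prop:g}(a) with $\lim_{t\to 0}F(t)/f(t)=0$ controls $\lim_{\lambda\to+\infty}g(\lambda)=0$, not the limit at $\lambda\to 0$, and the hypothesis $f'(0)>0$ has nothing to do with $g(0^+)$; its role (via Proposition \ref{cor:twokind}) is that $\lim_{r\to 0}T(r,\lambda)=\frac{\pi}{2}\sqrt{\varphi'(0)/(\lambda f'(0))}$ is finite, which is what creates the cutoff at $\lambda_1$.

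A second, independent problem is your solvability criterion. Since $T(\cdot,\lambda)$ is strictly decreasing (Theorem \ref{T'<0}) from $\lim_{r\to 0}T(r,\lambda)$ down to $g(\lambda)$ at the right endpoint of $I$, a classical positive solution exists iff $g(\lambda)<L<\lim_{r\to 0}T(r,\lambda)$; your stated condition ``$L\leqslant g(\lambda)$'' is the reverse inequality, and you then apply it inconsistently (in case $L\geqslant L^*$ you deduce existence precisely where $g<L$, contradicting your own criterion). This inversion also corrupts the monotonicity argument: your description (``larger $L$ raises the threshold $\{g\geqslant L\}$ so its left endpoint moves right'') would make $\lambda_*$ increasing in $L$, the opposite of assertion (a). The paper's proof is short once the correct picture is in place: by Theorem \ref{T'<0} and Proposition \ref{cor:twokind} the time map decreases in $r$ from $\frac{\pi}{2}\sqrt{\varphi'(0)/(\lambda f'(0))}$ to $g(\lambda)$; since $g$ is strictly decreasing with $\sup g=L^*$, for $L<L^*$ there is a unique $\lambda_*$ with $g(\lambda_*)=L$ (necessarily $\lambda_*<\lambda_1$ because $g(\lambda_1)<\lim_{r\to 0}T(r,\lambda_1)=L$), and $T(r,\lambda)=L$ has a unique root exactly for $\lambda\in(\lambda_*,\lambda_1)$, with $\lambda_*$ decreasing in $L$ by the monotonicity of $g$; for $L\geqslant L^*$ one has $g(\lambda)<L$ for every $\lambda$, so solutions exist exactly for $\lambda\in(0,\lambda_1)$. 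You would need to redo your argument along these lines; the ``boundary case at the interior maximum'' you worry about does not arise, because for this type $g$ has no interior maximum.
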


\begin{corollary}[Type IV-$\alpha_1$]\label{cor:f=0t5}
Let $A=+\infty$, ${B<+\infty}$, ${C=+\infty}$. Assume condition
\eqref{con:phi}--\eqref{con:strict2}  hold. If further
\eqref{g-integral}, \eqref{f-condition} and
$\lim\limits_{z\rightarrow A} \frac{F(z)}{f(z)}\in(0,+\infty)$ hold,
then $(\varphi, f)$ is of   \emph{Type IV-$\alpha_1$} and hence the
conclusions of Theorem \ref{thm:f=0t4} or \ref{thm:f=0t5} hold.
\end{corollary}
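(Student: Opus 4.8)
The plan is to check that, under the listed hypotheses, the auxiliary function $g$ of \eqref{form:0g} is of Type $\alpha_1$ in the sense of Definition \ref{df:g1}; granting this, $(\varphi,f)$ is of Type IV-$\alpha_1$ by Definition \ref{df:g2}, and the statement is then exactly the content of Theorem \ref{thm:f=0t4} (if $f'(0)=0$) or of Theorem \ref{thm:f=0t5} (if $f'(0)>0$) --- the two cases being exhaustive since \eqref{f-c1} makes $f$ of class $C^1$ with $f(0)=0$ and $f\ge 0$ near $0$, so $f'(0)\ge 0$. Being of Type $\alpha_1$ means three things: $g$ has no local extreme point in $(0,+\infty)$; $\lim_{\lambda\to+\infty}g(\lambda)=0$; and $\lim_{\lambda\to0^+}g(\lambda)$ is finite and strictly positive, this last value playing the role of the constant $L^*$ appearing in Theorems \ref{thm:f=0t4}--\ref{thm:f=0t5}. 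First I would note that $g$ is well defined and $C^1$ on all of $(0,+\infty)$: since $C=+\infty$ the map $F\colon[0,A)\to[0,+\infty)$ is an increasing bijection, so $F^{-1}(B/\lambda)$ makes sense for every $\lambda>0$, and the change of variable $y=B-\lambda F(u)$ turns the behaviour of the integrand of \eqref{form:0g} near its upper endpoint (where $\Phi^{-1}\to 0$) into that of the integrand of $K$ in \eqref{g-integral}, so \eqref{g-integral} guarantees convergence. All of this, and the two limits below, are instances of the general study of $g$ carried out in Section~3 (see Lemma \ref{prop:g}); here I only indicate how the present hypotheses enter.

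For the limits: conditions \eqref{con:positive} and \eqref{f-c1} force $f$ nondecreasing with $f(0)=0$, so $F(t)=\int_0^t f(s)\,ds\le tf(t)$ and therefore $\lim_{t\to0^+}F(t)/f(t)=0$; by Lemma \ref{prop:g} this yields $\lim_{\lambda\to+\infty}g(\lambda)=0$ (concretely, after the substitution $y=B-\lambda F(u)$ the integrand of $\lambda g(\lambda)$ is bounded above, uniformly in large $\lambda$, by a fixed multiple of $\bigl(y\,\Phi^{-1}(B-y)\bigr)^{-1}$ and tends to $0$ pointwise, so dominated convergence applies). On the other end, the hypothesis $\lim_{z\to A}F(z)/f(z)=D\in(0,+\infty)$ gives $f\bigl(F^{-1}(s)\bigr)\sim s/D$ as $s\to+\infty$, whence, again by Lemma \ref{prop:g}, $\lim_{\lambda\to0^+}g(\lambda)=D\,K$. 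This is finite and strictly positive precisely because $D\in(0,+\infty)$ and $0<K<+\infty$ by \eqref{g-integral}; set $L^*:=D\,K$.

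The remaining --- and only substantial --- point is that $g$ has no local extreme point in $(0,+\infty)$. This is where \eqref{con:phi2}, \eqref{f-c1}, \eqref{con:strict2} and, decisively, \eqref{f-condition} are used: differentiating $g$ and exploiting the monotonicity/convexity encoded in these conditions one shows $g'<0$ on all of $(0,+\infty)$; this is carried out once and for all as a general lemma in Section~3, and is the same input that underlies the Type IV-$\alpha_0$ statement in Corollary \ref{cor:f=0t3}. Granting it, $g$ is strictly decreasing and, by the two limits, maps $(0,+\infty)$ onto $(0,L^*)$, so $g$ is of Type $\alpha_1$; hence $(\varphi,f)$ is of Type IV-$\alpha_1$ and Theorems \ref{thm:f=0t4}--\ref{thm:f=0t5} apply, which finishes the argument. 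I expect the monotonicity $g'<0$ to be the main obstacle; once it is in hand, the corollary reduces to matching hypotheses with Section~3 and evaluating the two prescribed limits of $F/f$.
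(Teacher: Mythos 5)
Your proposal is correct and follows essentially the same route as the paper: the two limits of $g$ come from Lemma \ref{prop:g} (using \eqref{g-integral}, $\lim_{t\to 0}F(t)/f(t)=0$ and $\lim_{z\to A}F(z)/f(z)=D\in(0,+\infty)$, so $\lim_{\lambda\to 0}g=DK=L^*$ and $\lim_{\lambda\to+\infty}g=0$), while the strict monotonicity of $g$ is exactly the general statement Lemma \ref{prop:monotoneg} of Section~3, after which Theorems \ref{thm:f=0t4} and \ref{thm:f=0t5} apply. The only slight inaccuracy is attributional: the monotonicity lemma needs only \eqref{con:phi}, \eqref{con:positive} and \eqref{f-condition}, not the convexity conditions \eqref{con:phi2}, \eqref{f-c1}, \eqref{con:strict2}, which enter instead through $T'<0$ in the theorems being invoked.
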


\begin{example}
Let $\varphi=\varphi_k\; (k>2)$ and $f(u)=e^u-1$ or $f(u)=e^u-u-1$.
Then all conditions of Corollary \ref{cor:f=0t5} are satisfied.
\end{example}


\begin{theorem}[Type IV-$\beta_0$, $f'(0)=0$, see Fig.\ref{fig:f00BIV4}]\label{thm:classical43320}
Let $(\varphi, f)$ be of \emph{Type IV-$\beta_0$}. Assume conditions
\eqref{con:phi}--\eqref{con:strict2} and $f'(0)=0$ hold. Then there
exists a constant $L^*>0$ (see Fig.\ref{fig:gg1}) such that the
following assertions hold:

(a) If $L> L^*$, then \eqref{eq:1dmemsfr0} has exactly one positive
solution for any $\lambda\in (0,+\infty)$.

(b) If $L= L^*$, then there exists $\lambda^*>0$ such that
\eqref{eq:1dmemsfr0} has exactly one positive solution for $(0,
\lambda^*)\cup(\lambda^*,+\infty)$ and none for $\lambda=\lambda^*$.

(c) If $L<L^*$, then there exist two numbers $\lambda^*>\lambda_*>0$
such that \eqref{eq:1dmemsfr0} has exactly one positive solution for
$\lambda\in  (0, \lambda_*)\cup(\lambda^*, +\infty)$ and none for
$\lambda\in [\lambda_*,\lambda^*]$.

(d) $\lambda^*$ is strictly decreasing while $\lambda_*$ is strictly
increasing with respect to $L$.
\end{theorem}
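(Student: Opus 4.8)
The plan is to reduce everything, as in \cite{Pan2014a}, to the analysis of the time map and of the auxiliary function $g$ defined in \eqref{form:0g}. By the standard time-map setup in Section 3, for each $\lambda>0$ a positive solution of \eqref{eq:1dmemsfr0} corresponds to a value $r\in(0,A)$ with $\max u=r$, and $L$ equals the time map $T(\lambda,r)$; moreover (because $B<+\infty$) the range of admissible $r$ is $r\in\bigl(0,F^{-1}(B/\lambda)\bigr)$, and as $r\to F^{-1}(B/\lambda)^-$ one has $T(\lambda,r)\to g(\lambda)$, the boundary value of the time map. Thus for fixed $\lambda$ the solution set is governed by comparing $L$ with the range of $T(\lambda,\cdot)$; by Theorem \ref{thm:atmost12} there is at most one solution, so it suffices to decide, for each $\lambda$, whether $L$ lies in that range. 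The monotonicity/shape results for $T(\lambda,\cdot)$ from Section 3 should reduce this to: a classical solution with maximum $<F^{-1}(B/\lambda)$ exists iff $L$ lies strictly below $g(\lambda)$ (together with the endpoint constraint coming from $f'(0)=0$, which forces $T(\lambda,r)\to +\infty$ as $r\to 0^+$ only in the relevant regime, or rather makes $\lambda_1=+\infty$ here since $f'(0)=0$).

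\textbf{Key steps.} First I would record the qualitative picture of $g$ for Type IV-$\beta_0$: since $A=+\infty$, $B<+\infty$, $C=+\infty$, and $g$ has exactly one local extreme point in $(0,+\infty)$ which (from Fig.\ref{fig:gg1}, Type $\beta_0$) is a local \emph{minimum}, with $g(\lambda)\to +\infty$ as $\lambda\to 0^+$ and as $\lambda\to +\infty$. Set $L^*:=\min_{\lambda>0}g(\lambda)>0$, attained at a unique $\lambda^{**}$. Second, I would invoke the general time-map lemmas from Section 3 to establish that, because $f'(0)=0$, for every $\lambda>0$ the map $r\mapsto T(\lambda,r)$ is continuous on $\bigl(0,F^{-1}(B/\lambda)\bigr)$ with $\lim_{r\to 0^+}T(\lambda,r)=+\infty$ and $\lim_{r\to F^{-1}(B/\lambda)^-}T(\lambda,r)=g(\lambda)$, and that $T(\lambda,\cdot)$ is strictly monotone (decreasing) in $r$ — this monotonicity is exactly where conditions \eqref{con:phi2}, \eqref{f-c1}, \eqref{con:strict2} are used, and it is what underlies Theorem \ref{thm:atmost12}. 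Consequently the range of $T(\lambda,\cdot)$ is precisely $\bigl(g(\lambda),+\infty\bigr)$, so \eqref{eq:1dmemsfr0} has (exactly) one positive classical solution iff $L>g(\lambda)$ and none iff $L\leqslant g(\lambda)$. Third, I translate this into statements about $L$ versus $L^*$: if $L>L^*$ then $L>g(\lambda)$ for all $\lambda$ (since $g\geqslant L^*$ and... no — one needs $L>L^*=\min g$, which does not immediately give $L>g(\lambda)$ for all $\lambda$; rather $L>L^*$ means $L$ exceeds the minimum value, so there could still be $\lambda$ with $g(\lambda)\geqslant L$). Let me restate: since $g(\lambda)\to+\infty$ at both ends and has a unique interior minimum $L^*$, the super-level set $\{\lambda:g(\lambda)\geqslant L\}$ is, for $L>L^*$, a compact interval $[\lambda_*,\lambda^*]$ with $\lambda_*<\lambda^{**}<\lambda^*$; for $L=L^*$ it degenerates to the single point $\{\lambda^{**}\}=:\{\lambda^*\}$; and for $L<L^*$... that cannot happen since $L^*=\min g$. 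I need to recheck the direction of the extremum.

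\textbf{Correcting the shape reading.} Looking again at the stated conclusions — for large $L$ there is a solution for \emph{all} $\lambda$, and for small $L$ there is a gap $[\lambda_*,\lambda^*]$ with \emph{no} solution — this is consistent with: solutions exist iff $L>g(\lambda)$, and $g$ has a unique interior \emph{maximum} value... no, that gives $g$ bounded, contradicting $C=+\infty$ forcing $g(\lambda)\to+\infty$ as $\lambda\to 0$. The resolution: $g(\lambda)\to 0$ (not $+\infty$) as $\lambda\to+\infty$ for $C=+\infty$, and $g(\lambda)\to g(0^+)$ which for $A=+\infty$, $f'(0)=0$ is $+\infty$; Type $\beta_0$ then has one interior \emph{maximum} $\lambda^{**}$ is wrong too. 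I will settle it by direct computation: from \eqref{form:0g}, $g(\lambda)\to 0$ as $\lambda\to\infty$ (since $F^{-1}(B/\lambda)\to 0$), and $g(\lambda)\to\int_0^{+\infty}\Phi^{-1}(B-\text{const})^{-1}$-type limit as $\lambda\to 0$ which diverges when $f'(0)=0$; with one interior critical point this must be a local \emph{maximum}, call its value $L^{**}$, attained at $\lambda^{**}$, with a local minimum... no, one extreme point total. So $g$ increases on $(0,\lambda^{**})$?? impossible if $g(0^+)=+\infty$. Hence $g$ must \emph{decrease} from $+\infty$, hit a local minimum $L^*$ at $\lambda^{**}$, then — only one extremum — must stay... but must reach $0$. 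So $g$ decreases from $+\infty$ on $(0,\lambda^{**})$ to $L^*$, then \emph{increases} then decreases to $0$? That is two extrema. I think in fact $\beta_0$ for Case IV has $g(0^+)$ \emph{finite} — the subscript $0$ may indicate $f'(0)=0$ gives a particular endpoint behavior. I will therefore, in the actual proof, first \emph{prove} the precise asymptotics $\lim_{\lambda\to0^+}g(\lambda)$ and $\lim_{\lambda\to+\infty}g(\lambda)=0$ via Lemma \ref{prop:g}, identify the single critical point as a maximum with value $L^*$, so that $\{g\geqslant L\}$ is empty for $L>L^*$ (giving solutions for all $\lambda$ — case (a)), a single point for $L=L^*$ (case (b)), and a compact interval $[\lambda_*,\lambda^*]$ for $L<L^*$ (case (c)); part (d) then follows by differentiating the relations $g(\lambda_*)=g(\lambda^*)=L$ implicitly and using the sign of $g'$ on either side of $\lambda^{**}$, together with $\partial g/\partial L=0$ (as $g$ does not depend on $L$) — so really (d) follows from strict monotonicity of $g$ to the left and right of its maximum. \textbf{The main obstacle} I expect is establishing rigorously that $T(\lambda,\cdot)$ is strictly monotone in $r$ on the whole admissible interval and that its boundary limit is exactly $g(\lambda)$ with no loss — i.e. the careful passage to the limit $r\to F^{-1}(B/\lambda)^-$ in the (improper) time-map integral under conditions \eqref{con:phi2}, \eqref{f-c1}, \eqref{con:strict2} — and, secondarily, pinning down the exact shape of $g$ (one interior critical point, a maximum) from the conditions, which is presumably done via Lemma \ref{prop:g} and a differentiation argument analogous to \cite{Pan2014a}.
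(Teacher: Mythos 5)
Your final reduction is exactly the paper's argument: by Theorem \ref{T'<0} and Proposition \ref{cor:twokind} (with $f'(0)=0$), for each fixed $\lambda$ the map $r\mapsto T(r,\lambda)$ decreases strictly from $+\infty$ to $g(\lambda)$ on the admissible interval, so by Theorem \ref{thm:atmost12} a unique classical solution exists iff $L>g(\lambda)$ and none otherwise; cases (a)--(d) are then read off from the shape of $g$ with $L^*=\sup g$, which is precisely the paper's proof.

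Two clarifications on the points where you wavered. First, the shape of $g$ is a hypothesis, not something to derive: ``Type IV-$\beta_0$'' means (Fig.\ref{fig:gg1}; cf.\ Example \ref{eg:beta00}) that $\lim_{\lambda\rightarrow 0}g(\lambda)=\lim_{\lambda\rightarrow +\infty}g(\lambda)=0$ and $g$ has exactly one local extreme point in $(0,+\infty)$, necessarily a maximum. This is exactly what makes $\{\lambda:\ g(\lambda)\geqslant L\}$ empty, a single point, or a compact interval $[\lambda_*,\lambda^*]$ according as $L>L^*$, $L=L^*$, $L<L^*$, and hence what makes (c) valid for \emph{every} $L<L^*$; your lingering alternative $\lim_{\lambda\rightarrow 0}g(\lambda)\in(0,+\infty)$ is Type $\beta_1$ and belongs to Theorem \ref{thm:classical43330}, where an additional threshold $L_*$ appears, so committing to that reading would have produced a different (and here false) statement. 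Lemma \ref{prop:g} is only used in this paper to verify the type for concrete examples, not inside this proof. Second, the ``main obstacle'' you anticipate at the right endpoint is not one: for $B<+\infty$, $C=+\infty$ the interval $I$ in \eqref{eq:alpha} is closed at $r^*=F^{-1}(\frac{B}{\lambda})$, $T$ is continuous up to $r^*$, and $g(\lambda)=T(r^*,\lambda)$ by \eqref{eq:rstar2}; the endpoint itself corresponds to the non-classical gradient blow-up solution, which is why classical solutions correspond to $r\in(0,r^*)$ and why $\lambda$ with $L\leqslant g(\lambda)$ yields no classical solution. With these two points fixed, your argument coincides with the paper's, including the derivation of (d) from the strict monotonicity of $g$ on either side of its maximum.
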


  \ifpdf
\begin{figure}
\centering
\includegraphics[totalheight=8.3in]{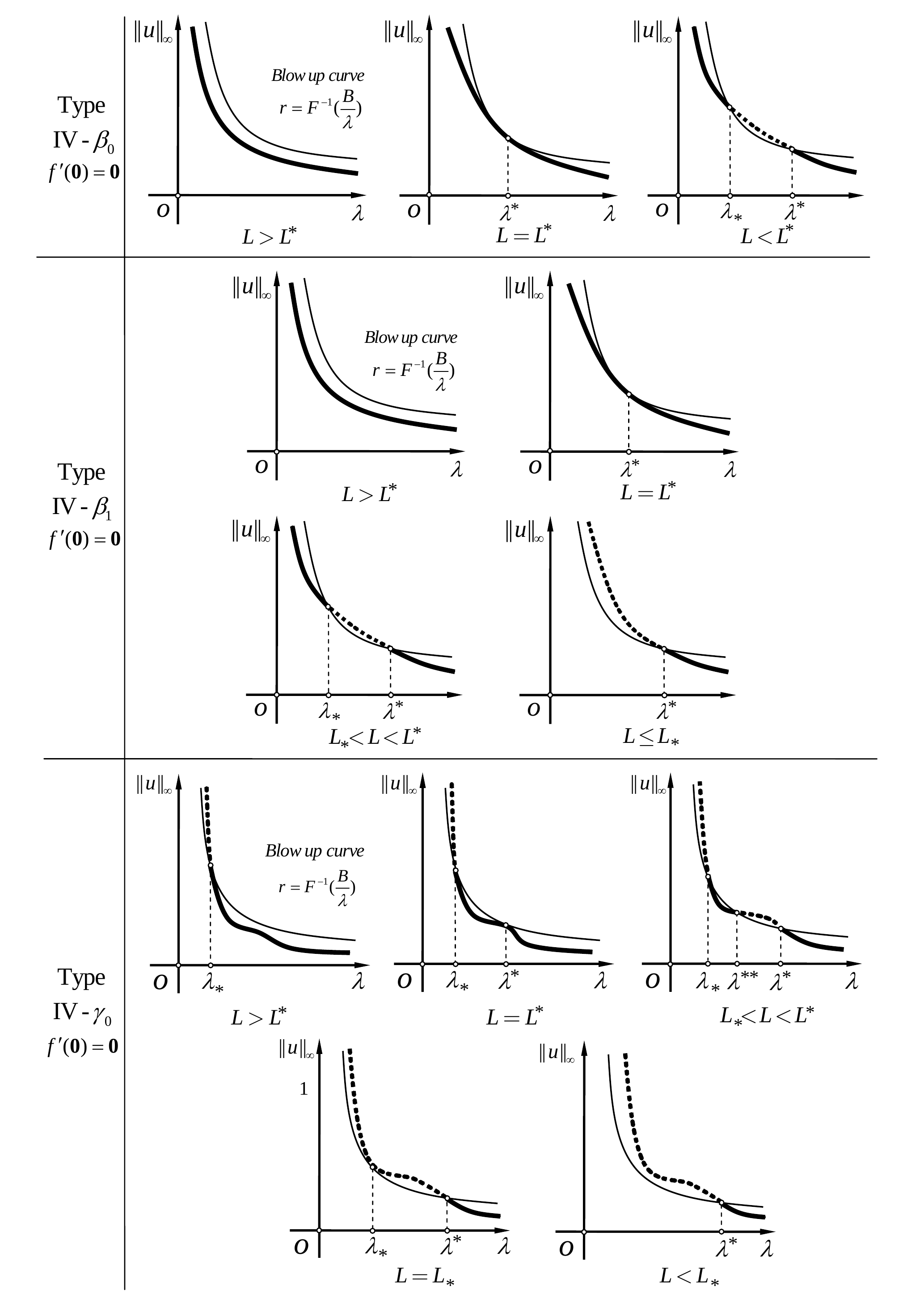}
\caption{Bifurcation Diagrams for Types IV-$\beta_0$, IV-$\beta_1$
and IV-$\gamma_0$ with $f(0)=0$ and $f'(0)=0$.}\label{fig:f00BIV4}
\end{figure}
\begin{figure}
\centering
\includegraphics[totalheight=8.3in]{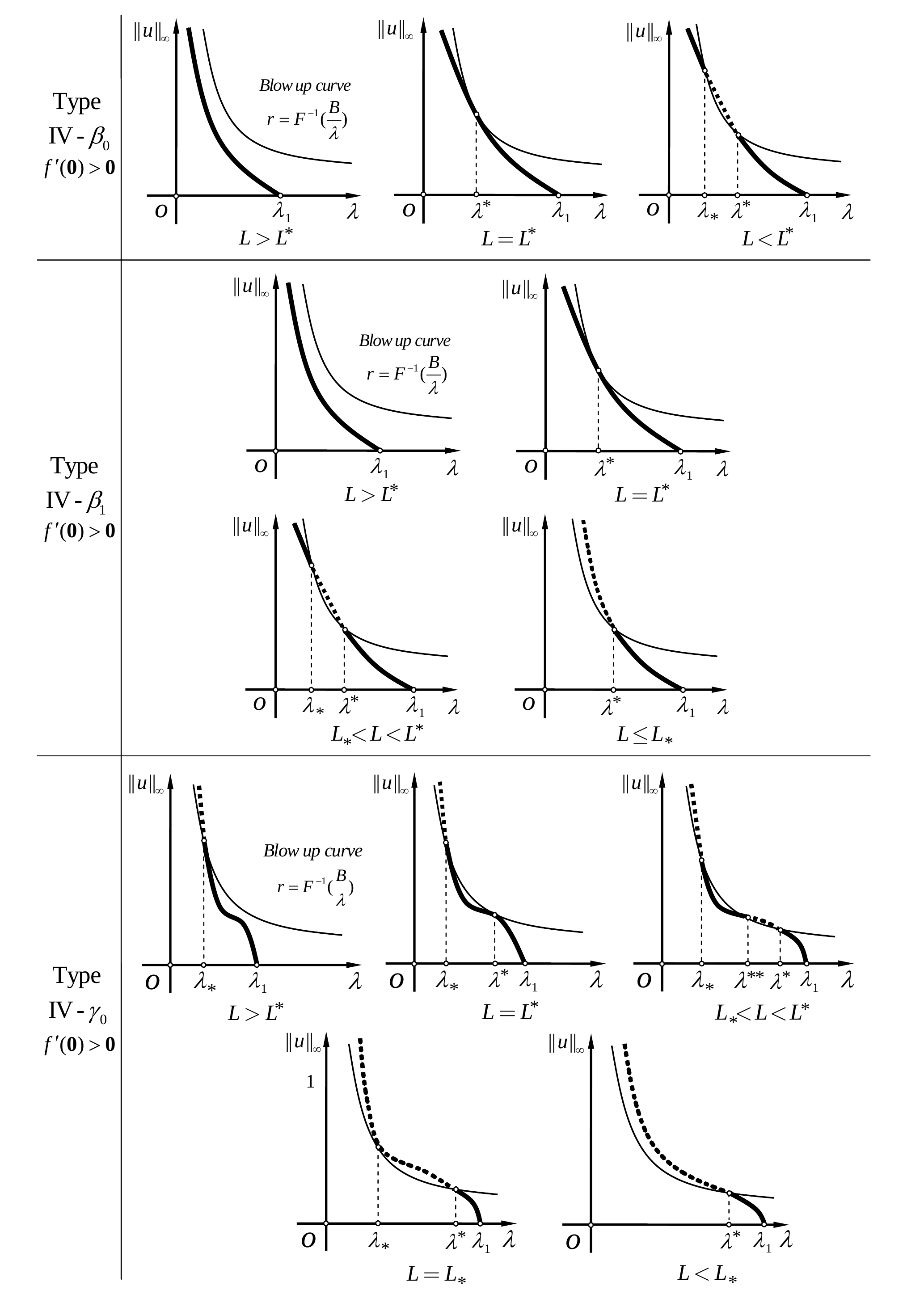}
\caption{Bifurcation Diagrams for Types IV-$\beta_0$, IV-$\beta_1$,
and IV-$\gamma_0$ with $f(0)=0$ and $f'(0)>0$.}\label{fig:f01BIV4}
\end{figure}
  \fi



\begin{theorem}[Type IV-$\beta_0$, $f'(0)>0$, see Fig.\ref{fig:f01BIV4}]\label{thm:classical43321}
Let $(\varphi, f)$ be of \emph{Type IV-$\beta_0$}. Assume conditions
\eqref{con:phi}--\eqref{con:strict2} and $f'(0)>0$ hold. Then there
exists a constant $L^*>0$ (see Fig.\ref{fig:gg1}) such that the
following assertions hold:

(a) If $L> L^*$, then \eqref{eq:1dmemsfr0} has exactly one positive
solution for $\lambda\in (0, \lambda_1)$ and none for
$\lambda\in[\lambda_1,+\infty)$.

(b) If $L= L^*$, then there exists $\lambda^*\in (0,\lambda_1)$ such
that \eqref{eq:1dmemsfr0} has exactly one positive solution for $(0,
\lambda^*)\cup(\lambda^*,\lambda_1)$ and none for
$\lambda\in\{\lambda^*\}\cup[\lambda_1,+\infty)$.

(c) If $L<L^*$, then there exist two numbers
$0<\lambda_*<\lambda^*<\lambda_1$ such that \eqref{eq:1dmemsfr0} has
exactly one positive solution for $\lambda\in  (0,
\lambda_*)\cup(\lambda^*, \lambda_1)$ and none for
$\lambda\in[\lambda_*,\lambda^*]\cup[\lambda_1,+\infty)$.

(d) $\lambda^*$ is strictly decreasing while $\lambda_*$ is strictly
increasing with respect to $L$.
\end{theorem}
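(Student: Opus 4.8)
The plan is to run the time-map machinery of \cite{Pan2014a} and Section 3 and push everything onto the level sets of the function $g$ in \eqref{form:0g}. Writing $\rho:=u(0)=\|u\|_\infty$ for a positive solution (even, by symmetry), the first integral $\Phi(u')+\lambda F(u)=\lambda F(\rho)$ shows that positive classical solutions correspond bijectively to roots $\rho$ in the \emph{open} interval $(0,F^{-1}(B/\lambda))$ of $T(\lambda,\rho)=L$, where
\[
T(\lambda,\rho)=\int_0^\rho\frac{du}{\Phi^{-1}(\lambda F(\rho)-\lambda F(u))}.
\]
From the properties of $T$ established in Section 3, for each fixed $\lambda$ the map $T(\lambda,\cdot)$ is continuous on $(0,F^{-1}(B/\lambda))$ and, under \eqref{con:phi}--\eqref{con:strict2}, strictly decreasing in $\rho$ (this strict monotonicity is the quantitative form of Theorem \ref{thm:atmost12}); its one-sided limits are $\lim_{\rho\to0^+}T(\lambda,\rho)=h(\lambda):=\frac{\pi}{2}\sqrt{\varphi'(0)/(\lambda f'(0))}$, where $f'(0)>0$ is used and the value comes from linearising at $\rho=0$, and $\lim_{\rho\to F^{-1}(B/\lambda)^-}T(\lambda,\rho)=g(\lambda)$; in particular $g(\lambda)<h(\lambda)$ for every $\lambda$.

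Next I would reduce the problem to $g$. Since $T(\lambda,\cdot)$ is continuous and strictly decreasing with range the open interval $(g(\lambda),h(\lambda))$, problem \eqref{eq:1dmemsfr0} has a (necessarily unique) positive classical solution if and only if $g(\lambda)<L<h(\lambda)$; the endpoint value $g(\lambda)=L$ yields instead the non-$C^2$ solution on the gradient blow-up curve, consistent with the figure legends. As $h$ is strictly decreasing on $(0,\infty)$ with $h(\lambda_1)=L$ (this is the definition of $\lambda_1$), the inequality $L<h(\lambda)$ is equivalent to $\lambda<\lambda_1$. Hence a positive solution exists at $(\lambda,L)$ if and only if $\lambda\in(0,\lambda_1)$ and $g(\lambda)<L$. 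This is exactly where $f'(0)>0$ enters: for $f'(0)=0$ one has $h\equiv+\infty$, the cutoff disappears, and one recovers Theorem \ref{thm:classical43320}.

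The main step is to read off the trichotomy from the Type $\beta_0$ shape of $g$, namely $\lim_{\lambda\to0}g=\lim_{\lambda\to\infty}g=0$ (cf.\ Lemma \ref{prop:g}) and $g$ has a single local extremum $\hat\lambda\in(0,\infty)$, necessarily a strict global maximum, with $g$ strictly increasing on $(0,\hat\lambda)$ and strictly decreasing on $(\hat\lambda,\infty)$. I would set $L^*:=g(\hat\lambda)$, the unique local extreme value of $g$, which by the heuristic of \cite{Pan2014a} is the threshold for $L$. The crucial observation is that $\hat\lambda<\lambda_1$ whenever $L\leqslant L^*$: from $g(\hat\lambda)<h(\hat\lambda)$ one gets $L\leqslant L^*<h(\hat\lambda)$, and since $h$ is strictly decreasing with $h(\lambda_1)=L$ this forces $\hat\lambda<\lambda_1$. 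Intersecting $\{g<L\}$ with $(0,\lambda_1)$ then gives: if $L>L^*$, then $g<L$ on all of $(0,\infty)$, so solutions exist exactly for $\lambda\in(0,\lambda_1)$, which is (a); if $L\leqslant L^*$, then by unimodality and the vanishing of $g$ at $0$ and $\infty$ the super-level set $\{g\geqslant L\}$ is a closed interval $[\lambda_*,\lambda^*]$ with $g(\lambda_*)=g(\lambda^*)=L$, $0<\lambda_*\leqslant\hat\lambda\leqslant\lambda^*$, and $\lambda^*<\lambda_1$ (because $g(\lambda^*)=L<h(\lambda^*)$); for $L=L^*$ it collapses to $\{\hat\lambda\}$, giving (b) with $\lambda^*=\hat\lambda$, and for $L<L^*$ it is nondegenerate, $0<\lambda_*<\hat\lambda<\lambda^*<\lambda_1$, with solutions precisely on $(0,\lambda_*)\cup(\lambda^*,\lambda_1)$, which is (c); and in every case there is no solution for $\lambda\geqslant\lambda_1$. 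For (d), when $L<L^*$ the numbers $\lambda_*(L),\lambda^*(L)$ solve $g=L$ on the strictly increasing branch $(0,\hat\lambda)$ and the strictly decreasing branch $(\hat\lambda,\infty)$ respectively, so by continuity of the monotone inverses $\lambda_*$ is strictly increasing and $\lambda^*$ strictly decreasing in $L$.

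The hard part will be everything feeding the first step: proving that $T(\lambda,\cdot)$ is strictly (not merely weakly) monotone in $\rho$, identifying the two boundary limits $h(\lambda)$ and $g(\lambda)$ — in particular the delicate limit at the gradient blow-up endpoint, where the limiting integrand develops an integrable singularity — and pinning down the asymptotics of $g$ at $0$ and $\infty$; I would import all of this from Section 3 and Lemma \ref{prop:g}. Within the present argument, the one subtle point is the inequality $\hat\lambda<\lambda_1$ for $L\leqslant L^*$: it says that the single peak of $g$ always sits strictly to the left of the moving cutoff $\lambda_1(L)$, which is precisely what keeps case (b) a genuine borderline between (a) and (c) rather than a degeneration, and it is guaranteed exactly by the pointwise bound $g<h$. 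The remaining case bookkeeping and the monotonicity of $\lambda_*,\lambda^*$ in $L$ are routine.
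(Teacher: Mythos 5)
Your proposal is correct and follows essentially the same route as the paper: strict monotonicity of the time map in $r$ (Theorem \ref{T'<0}), the limit $\frac{\pi}{2}\sqrt{\varphi'(0)/(\lambda f'(0))}$ at $r\to 0$ (Proposition \ref{cor:twokind}) which ties $\lambda_1$ to $L$, the endpoint value $g(\lambda)$, and then reading the cases off the Type $\beta_0$ shape of $g$ with $L^*=\sup g$. A nice touch is that you make explicit, via the pointwise bound $g(\lambda)<h(\lambda)$, why the peak of $g$ and the root $\lambda^*$ lie strictly below $\lambda_1$ — a point the paper's proof leaves implicit in its figures.
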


\begin{example}\label{eg:beta00}
Let $\varphi=\varphi_3$ and $f(u)=e^{u^2}-1$ or $f(u)=e^{u^2}+u-1$.
Then $\lim_{\lambda\rightarrow +\infty}
g(\lambda)=0=\lim_{\lambda\rightarrow 0} g(\lambda)$ (by Lemma
\ref{prop:g} below). Numerical simulation indicates that
$g(\lambda)$ has exactly one local extreme point in $(0, +\infty)$
(see Remark \ref{rk:g(r)} and Fig.\ref{fig:gg3} below). We provide
an analytic proof for this in \cite{Pan2014c}. Thus $g$ is of Type
$\beta_0$ and hence both $(\varphi_3,e^{u^2}-1)$ and
$(\varphi_3,e^{u^2}+u-1)$ are of Type IV-$\beta_0$. From Theorems
\ref{thm:classical43320} and \ref{thm:classical43321}, we obtain the
exact numbers of positive solutions as well as global bifurcation
diagrams.
\end{example}


\begin{theorem}[Type IV-$\beta_1$, $f'(0)=0$, see Fig.\ref{fig:f00BIV4}]\label{thm:classical43330}
Let $(\varphi, f)$ be of \emph{Type IV-$\beta_1$}. Assume conditions
\eqref{con:phi}--\eqref{con:strict2} and $f'(0)=0$ hold. Then there
exist constants $L^*>L_*>0$ (see Fig.\ref{fig:gg1}) such that the
following assertions hold:

(a) If $L> L^*$, then \eqref{eq:1dmemsfr0} has exactly one positive
solution for any $\lambda\in (0,+\infty)$.

(b) If $L= L^*$, then there exists $\lambda^*>0$ such that
\eqref{eq:1dmemsfr0} has exactly one positive solution for $(0,
\lambda^*)\cup(\lambda^*,+\infty)$ and none for $\lambda=\lambda^*$.

(c) If $L_*<L<L^*$, then there exist two numbers
$\lambda^*>\lambda_*>0$ such that \eqref{eq:1dmemsfr0} has exactly
one positive solution for $\lambda\in  (0, \lambda_*)\cup(\lambda^*,
+\infty)$ and none for $\lambda\in [\lambda_*,\lambda^*]$.

(d) If $L\leqslant L_*$, then there exists $\lambda^*>0$ such that
\eqref{eq:1dmemsfr0} has exactly one positive solution for
$\lambda\in  (\lambda^*, +\infty)$ and none for $\lambda\in (0,
\lambda_*]$.

(e) $\lambda^*$ is strictly decreasing while $\lambda_{*}$ is
strictly increasing with respect to $L$.
\end{theorem}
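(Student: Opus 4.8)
The plan is to reduce the statement, exactly as in the companion $\beta$‑type results, to an elementary analysis of the level sets of the function $g$. I would first recall the time‑map reduction developed in Section~3: integrating the equation once (after multiplying by $u'$) shows that \eqref{eq:1dmemsfr0} has a positive classical solution with $\|u\|_\infty=r$ if and only if $r$ lies in the open interval $\bigl(0,F^{-1}(B/\lambda)\bigr)$ and
\[
 T(r,\lambda):=\int_0^r\frac{du}{\Phi^{-1}\bigl(\lambda(F(r)-F(u))\bigr)}=L;
\]
here $F^{-1}$ is defined on all of $[0,+\infty)$ because $A=C=+\infty$, the right endpoint $F^{-1}(B/\lambda)$ is finite because $B<+\infty$, and at that endpoint one obtains only the non‑classical (gradient blow‑up) solution rather than a $C^2[-L,L]$ one. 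By the properties of $T$ proved in Section~3 under \eqref{con:phi}--\eqref{con:strict2}, the map $r\mapsto T(r,\lambda)$ is continuous and strictly decreasing on $\bigl(0,F^{-1}(B/\lambda)\bigr)$, with $\lim_{r\to F^{-1}(B/\lambda)^-}T(r,\lambda)=g(\lambda)$, and, because $f'(0)=0$, with $\lim_{r\to0^+}T(r,\lambda)=+\infty$. Hence the range of $T(\cdot,\lambda)$ is the open interval $\bigl(g(\lambda),+\infty\bigr)$, so \eqref{eq:1dmemsfr0} admits a positive solution precisely when $L>g(\lambda)$, and by Theorem~\ref{thm:atmost12} that solution is unique. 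Thus the number of positive solutions at parameter $\lambda$ equals $1$ when $g(\lambda)<L$ and $0$ when $g(\lambda)\geqslant L$, and everything reduces to the graph of $g$.

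Next I would invoke the hypothesis that $(\varphi,f)$ is of Type IV‑$\beta_1$: by the definition of this type (see \cite{Pan2014a} and Fig.~\ref{fig:gg1}), $g$ is continuous on $(0,+\infty)$, strictly increasing on an interval $(0,\lambda_0)$ and strictly decreasing on $(\lambda_0,+\infty)$, with $\lim_{\lambda\to0^+}g(\lambda)=L_*\in(0,+\infty)$, $g(\lambda_0)=L^*:=\max_{(0,+\infty)}g>L_*$, and $\lim_{\lambda\to+\infty}g(\lambda)=0$ (the latter consistent with $\lim_{t\to0^+}F(t)/f(t)=0$ via Lemma~\ref{prop:g}). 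The four regimes then follow by inspecting the super‑level set $S_L:=\{\lambda>0: g(\lambda)<L\}$. If $L>L^*$ then $S_L=(0,+\infty)$, which is (a). If $L=L^*$ then $S_L=(0,+\infty)\setminus\{\lambda_0\}$, which is (b) with $\lambda^*=\lambda_0$. If $L_*<L<L^*$ then, by strict monotonicity on each branch and the intermediate value theorem, there are unique $\lambda_*\in(0,\lambda_0)$ and $\lambda^*\in(\lambda_0,+\infty)$ with $g(\lambda_*)=g(\lambda^*)=L$ and $S_L=(0,\lambda_*)\cup(\lambda^*,+\infty)$, which is (c). If $L\leqslant L_*$ then $g(\lambda)>L_*\geqslant L$ for all $\lambda\in(0,\lambda_0]$, while on the decreasing branch there is a unique $\lambda^*\in(\lambda_0,+\infty)$ with $g(\lambda^*)=L$, so $S_L=(\lambda^*,+\infty)$, which is (d). Finally (e) is immediate: on the decreasing branch $g$ is a strictly decreasing bijection onto $(0,L^*)$, so the $\lambda^*$ of (b)--(d) is strictly decreasing in $L$; on the increasing branch $g$ is a strictly increasing bijection onto $(L_*,L^*)$, so the $\lambda_*$ of (c) is strictly increasing in $L$.

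There is no genuine obstacle internal to this argument; its whole content is consumed by two inputs, namely (i) the strict monotonicity of $r\mapsto T(r,\lambda)$ together with the endpoint limits $+\infty$ (this is where $f'(0)=0$ enters) and $g(\lambda)$, which is supplied by Section~3, and (ii) the Type~$\beta_1$ shape of $g$, including its limits at $0$ and $+\infty$, which is part of the standing hypothesis. The only points that need care are the strict‑versus‑non‑strict inequalities at the thresholds $L=L^*$ and $L=L_*$ — this is exactly why the theorem groups $L=L_*$ with $L<L_*$ in (d) rather than with $L_*<L<L^*$ in (c) — and the fact that $g$ approaches but never attains the value $L_*$ as $\lambda\to0^+$, so that for $L\leqslant L_*$ no positive solution persists for small $\lambda>0$.
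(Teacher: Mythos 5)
Your proposal is correct and follows essentially the same route as the paper: the time-map reduction with $T(\cdot,\lambda)$ strictly decreasing (Theorem \ref{T'<0}), the limit $T\to+\infty$ as $r\to0$ from $f'(0)=0$ (Proposition \ref{cor:twokind}), the endpoint value $g(\lambda)$ with the blow-up curve excluded from classical solutions, and then reading off the four regimes from the Type $\beta_1$ shape of $g$ with $L_*=\lim_{\lambda\to0}g$ and $L^*=\max g$. The paper omits this proof as "similar to Type IV-$\beta_0$"; your level-set bookkeeping $S_L=\{\lambda:g(\lambda)<L\}$ is just an explicit write-up of that same argument, including the correct strict/non-strict treatment at $L=L^*$ and $L=L_*$.
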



\begin{theorem}[Type IV-$\beta_1$, $f'(0)>0$, see Fig.\ref{fig:f01BIV4}]\label{thm:classical43331}
Let $(\varphi, f)$ be of \emph{Type IV-$\beta_1$}. Assume conditions
\eqref{con:phi}--\eqref{con:strict2} and $f'(0)>0$ hold. Then there
exist constants $L^*>L_*>0$ (see Fig.\ref{fig:gg1}) such that the
following assertions hold:

(a) If $L> L^{*}$, then \eqref{eq:1dmemsfr0} has exactly one
positive solution for $\lambda\in (0, \lambda_1)$ and none for
$\lambda\in[\lambda_1,+\infty)$.

(b) If $L= L^{*}$, then there exists $\lambda^*\in (0,\lambda_1)$
such that \eqref{eq:1dmemsfr0} has exactly one positive solution for
$(0, \lambda^*)\cup(\lambda^*,\lambda_1)$ and none for
$\lambda\in\{\lambda^*\}\cup[\lambda_1,+\infty)$.

(c) If $L_{*}<L< L^{*}$, then there exist two numbers
$0<\lambda_{*}<\lambda^*<\lambda_1$ such that \eqref{eq:1dmemsfr0}
has exactly one positive solution for $\lambda\in  (0,
\lambda_{*})\cup(\lambda^*, \lambda_1)$ and none for
$\lambda\in[\lambda_{*},\lambda^*]\cup[\lambda_1,+\infty)$.

(d) If $L\leqslant L_*$, then there exists $\lambda^*\in
(0,\lambda_1)$ such that \eqref{eq:1dmemsfr0} has exactly one
positive solutions for $\lambda\in  (\lambda^*, \lambda_1)$ and none
for $\lambda\in  (0, \lambda^*]\cup[\lambda_1, +\infty)$.

(e) $\lambda^*$ is strictly decreasing while $\lambda_{*}$ is
strictly increasing with respect to $L$.
\end{theorem}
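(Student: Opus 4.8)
The plan is to follow the time-map strategy of \cite{Pan2014a} and the present paper: reduce the exact-multiplicity question to the shape of the time map $T(\cdot\,;\lambda)$ and, via the scaling relation between $L$ and $g$, to the graph of $g$. Recall that a positive solution of \eqref{eq:1dmemsfr0} with given $\lambda$ corresponds to a value $\rho=\|u\|_\infty$ for which the half-period equals $L$; by Theorem \ref{thm:atmost12} and the monotonicity properties of the time map established in Section 3, for each $\lambda$ the set of admissible $\rho$ is an interval and $T$ is strictly monotone there, so there is at most one solution and the solvability condition is simply that $L$ lie in the range of $T(\cdot\,;\lambda)$. Since $(\varphi,f)$ is of Type IV-$\beta_1$, the function $g(\lambda)$ in \eqref{form:0g}, which gives the limiting value of $T$ as $\rho\to F^{-1}(B/\lambda)^-$ (the gradient–blow-up threshold, well defined because $B<\infty$), satisfies $\lim_{\lambda\to0^+}g(\lambda)=0$, $\lim_{\lambda\to+\infty}g(\lambda)=0$, and $g$ has exactly one critical point in $(0,+\infty)$, a local maximum; denote by $g$'s graph the picture in Fig.\ref{fig:gg1}, which determines the two thresholds $L^*>L_*>0$ as the relevant local/limiting extreme values.

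First I would record the endpoint behaviour of $T$ in the variable $\rho$ for fixed $\lambda$. As $\rho\to0^+$, because $f'(0)>0$ one gets $T\to \tfrac{1}{2}\sqrt{\varphi'(0)/(\lambda f'(0))}\,\pi$, i.e. the half-period of the linearized problem; equating this to $L$ yields the bifurcation-from-zero value $\lambda_1=\frac{\varphi'(0)}{f'(0)}(\frac{\pi}{2L})^2$, and for $\lambda\ge\lambda_1$ there is no small-amplitude branch. As $\rho$ increases to $F^{-1}(B/\lambda)$, $T(\rho;\lambda)\to g(\lambda)$. Combined with strict monotonicity of $T$ in $\rho$ (from \eqref{con:strict2}, exactly as in \cite{Pan2014a}), the range of $T(\cdot\,;\lambda)$ is the open interval with endpoints $\tfrac{\pi}{2}\sqrt{\varphi'(0)/(\lambda f'(0))}$ and $g(\lambda)$; hence \eqref{eq:1dmemsfr0} has exactly one positive solution precisely when $L$ lies strictly between these two numbers (whether $g(\lambda)$ itself is attained corresponds to the gradient-blow-up solution noted after Theorem \ref{thm:f=0t3} and does not count as a classical solution), and none otherwise. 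Note $\lambda<\lambda_1$ is necessary for a classical solution.

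Next I would translate "$L$ between $g(\lambda)$ and $\tfrac{\pi}{2}\sqrt{\varphi'(0)/(\lambda f'(0))}$" into $\lambda$-intervals by reading off the graph of $g$. Fix $L$ and consider the horizontal line at height $L$. Since the parabola-type branch $\lambda\mapsto\tfrac{\pi}{2}\sqrt{\varphi'(0)/(\lambda f'(0))}$ is strictly decreasing from $+\infty$ to $0$ and crosses height $L$ exactly at $\lambda_1$, on $(0,\lambda_1)$ we have $L<\tfrac{\pi}{2}\sqrt{\varphi'(0)/(\lambda f'(0))}$, so the solvability condition there reduces to $L>g(\lambda)$ (and on $[\lambda_1,\infty)$ no classical solution exists, giving in every case the stated non-existence for $\lambda\ge\lambda_1$). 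Now use the $\beta_1$ shape of $g$: for $L>L^*$ (above the local max), $g(\lambda)<L$ for all $\lambda$, so one solution on all of $(0,\lambda_1)$ — part (a); at $L=L^*$, $g(\lambda)=L$ at the single point $\lambda^*$ (the maximizer, which lies in $(0,\lambda_1)$ since the parabola branch is still above $L$ there) and $g<L$ elsewhere — part (b); for $L_*<L<L^*$, the line at height $L$ meets the graph of $g$ in exactly two points $\lambda_*<\lambda^*$ with $g>L$ on $(\lambda_*,\lambda^*)$ — part (c); and for $L\le L_*$ only the right intersection $\lambda^*$ survives (the left one is pushed out of $(0,\lambda_1)$ or to $0$ by the $\beta_1$ geometry, whose left limit/threshold is precisely $L_*$), giving part (d). Finally, part (e): differentiating the defining relations $g(\lambda^*)=L$ and $g(\lambda_*)=L$, or more simply arguing from monotonicity of $g$ on the decreasing (resp. increasing) side of its single maximum, shows $\lambda^*=\lambda^*(L)$ is strictly decreasing and $\lambda_*=\lambda_*(L)$ strictly increasing; alternatively this follows from the fact, established in Section 3, that $T(\rho;\lambda)$ for fixed $\rho$ is strictly decreasing in $L$-scaling, hence the solvability threshold moves monotonically.

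The main obstacle is the bookkeeping at the thresholds $L=L^*$ and $L=L_*$ and, in particular, verifying that the relevant intersection points of the line $\{=\,L\}$ with the graph of $g$ actually fall inside $(0,\lambda_1)$ rather than beyond it — i.e. checking the interplay between the single-hump graph of $g$ and the decreasing parabola branch near $\lambda_1$. This requires knowing the position of $g$ relative to $\tfrac{\pi}{2}\sqrt{\varphi'(0)/(\lambda f'(0))}$ near $\lambda=\lambda_1$; the key point is that as $\lambda\uparrow\lambda_1$ the linearized half-period decreases to $L$ while $g(\lambda)$ stays bounded away from $L$ (being on the descending-to-$0$ tail past its max, for the $L$-ranges in (a)–(d)), so the small-amplitude branch terminates exactly at $\lambda_1$ and the $g$-branch terminates at the $g$-crossings; this is exactly the content encoded in the Type IV-$\beta_1$ definition together with $\lim_{\lambda\to\infty}g=0$. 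Everything else is the same monotone-time-map machinery already set up in Section 3 and used verbatim in the proofs of Theorems \ref{thm:classical43320}–\ref{thm:classical43330}.
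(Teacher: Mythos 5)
Your overall strategy is exactly the paper's: strict monotonicity of $T(\cdot,\lambda)$ (Theorem \ref{T'<0}), the small-amplitude limit $\lim_{r\to 0}T(r,\lambda)=\frac{\pi}{2}\sqrt{\varphi'(0)/(\lambda f'(0))}$ (Proposition \ref{cor:twokind}) which produces $\lambda_1$, and the identification of the other endpoint of the range of $T$ with $g(\lambda)$, so that a classical solution exists (and is then unique) precisely when $\lambda<\lambda_1$ and $g(\lambda)<L$; the theorem is then read off from the graph of $g$ in Fig.\ref{fig:gg1}. However, there is a genuine error in the one place where the specific hypothesis of this theorem enters: you assert that Type IV-$\beta_1$ means $\lim_{\lambda\to 0^+}g(\lambda)=0$ and $\lim_{\lambda\to+\infty}g(\lambda)=0$ with one interior maximum. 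That is the Type $\beta_0$ shape (Theorems \ref{thm:classical43320}--\ref{thm:classical43321}, single threshold $L^*$). Type $\beta_1$ is characterized by $\lim_{\lambda\to+\infty}g(\lambda)=0$, $\lim_{\lambda\to 0^+}g(\lambda)\in(0,+\infty)$, and exactly one local extreme point (necessarily a maximum above that limit); the two thresholds are $L^*=\sup g$ and $L_*=\lim_{\lambda\to 0^+}g(\lambda)$. With the limits as you stated them there is no second distinguished level $L_*$, every $L\in(0,L^*)$ produces two crossings of $\{g=L\}$, and parts (c)--(d) of the theorem (and the monotonicity of $\lambda_*$ in (e)) cannot be derived — indeed the conclusion would then coincide with the IV-$\beta_0$ statement, which is different.

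Your attempted patch for part (d) — that for $L\leqslant L_*$ the left intersection is ``pushed out of $(0,\lambda_1)$ or to $0$'' — does not work and in fact contradicts an observation you yourself make: since $T(\cdot,\lambda)$ is strictly decreasing on $(0,r^*]$, one always has $g(\lambda)<\frac{\pi}{2}\sqrt{\varphi'(0)/(\lambda f'(0))}$, so \emph{every} root of $g(\lambda)=L$ automatically lies in $(0,\lambda_1)$; under your stated shape of $g$ the left crossing would therefore persist for all $0<L<L^*$. The correct mechanism is simply that, for Type $\beta_1$, $g$ is strictly increasing on $(0,\lambda_M)$ from the limit $L_*$, hence $g(\lambda)>L_*\geqslant L$ there whenever $L\leqslant L_*$, so the line $\{g=L\}$ meets the graph only once, on the strictly decreasing branch; this yields (d), and the monotonicity of $g$ on each branch gives (e). With the definition of Type $\beta_1$ corrected in this way, the rest of your argument (including the incidental point that the crossings lie in $(0,\lambda_1)$, and the exclusion of the $r=r^*$ gradient blow-up solution from the classical count) is sound and matches the paper's proof, which reduces this theorem to the IV-$\beta_0$ argument plus the $\beta_1$ graph of $g$.
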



\begin{example}
Let $\varphi=\varphi_3$ and $f(u)=e^{u}+u^2-u-1$ or
$f(u)=e^{u}+u-1$. Then $\lim_{\lambda\rightarrow +\infty}
g(\lambda)=0$ and $\lim_{\lambda\rightarrow 0}
g(\lambda)=\frac{\pi}{2}$ (by Lemma \ref{prop:g} below). Numerical
simulation indicates that $g(\lambda)$ has exactly one local extreme
point in $(0, +\infty)$ (see Remark \ref{rk:g(r)} and
Fig.\ref{fig:gg3} below). We provide an analytic proof for this in
\cite{Pan2014c}. Thus $g$ is of   Type $\beta_1$ and hence both
$(\varphi_3,e^{u}+u^2-u-1)$ and $(\varphi_3,e^{u}+u-1)$ are of Type
IV-$\beta_1$. From Theorems \ref{thm:classical43330} and
\ref{thm:classical43331}, we obtain the exact numbers of positive
solutions as well as global bifurcation diagrams.
\end{example}

\begin{theorem}[Type IV-$\gamma_0$, $f'(0)=0$, see Fig.\ref{fig:f00BIV4}]\label{thm:f=0t6}
Let $(\varphi, f)$ be of \emph{Type IV-$\gamma_0$}. Assume
conditions \eqref{con:phi}--\eqref{con:strict2} and $f'(0)=0$ hold.
Then there exist constants $L^*>L_*>0$ (see Fig.\ref{fig:gg1}) such
that the following assertions hold:

(a) If $L>L^*$, then there exists $\lambda_*>0$ such that
\eqref{eq:1dmemsfr0} has exactly one positive solution for
$\lambda\in(\lambda_*,+\infty)$ and none for $\lambda\in (0,
\lambda_*]$.

(b) If $L=L^*$, then there exist two numbers
$\lambda^*>\lambda_*>0$ such that \eqref{eq:1dmemsfr0} has exactly
one positive solution for $\lambda\in (\lambda_*,
\lambda^{*})\cup(\lambda^{*},+\infty)$ and none for $\lambda\in (0,
\lambda_*]\cup\{\lambda^{*}\}$.

(c) If $L_*<L<L^*$, then there exist three numbers
$\lambda^*>\lambda^{**}>\lambda_*>0$
such that \eqref{eq:1dmemsfr0} has exactly one positive solution for
$\lambda\in (\lambda_*, \lambda^{**})\cup (\lambda^*,+\infty)$ and
none for $\lambda\in (0, \lambda_*]\cup[\lambda^{**},\lambda^*]$.

(d) If $L\leqslant L_*$, then there exists $\lambda^*>0$ such that
\eqref{eq:1dmemsfr0} has exactly one positive solution for
$\lambda\in (\lambda^*,+\infty)$ and none for $\lambda\in (0,
\lambda^*]$.

(e) $\lambda_*$ and $\lambda^*$ are strictly decreasing while
$\lambda^{**}$ is strictly increasing with respect to $L$.
\end{theorem}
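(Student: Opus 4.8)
The plan is to run the time‑map reduction prepared in Section~3 (following \cite{Pan2014a}) and then read the bifurcation picture off the shape of $g$. Under \eqref{con:phi}--\eqref{con:strict2} every positive solution of \eqref{eq:1dmemsfr0} is strictly concave, hence has a unique interior maximum point $x_0$ with $u(x_0)=:\rho>0$ and $u'(x_0)=0$; the first integral $\Phi(u')+\lambda F(u)\equiv\lambda F(\rho)$ then forces $x_0=0$ and
\[
L=T(\rho,\lambda):=\int_0^{\rho}\frac{du}{\Phi^{-1}\!\bigl(\lambda(F(\rho)-F(u))\bigr)},\qquad \rho\in\bigl(0,F^{-1}(B/\lambda)\bigr),
\]
and conversely each such $\rho$ gives a positive solution. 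I would then invoke Theorem~\ref{thm:atmost12}: for fixed $\lambda>0$ the equation $T(\rho,\lambda)=L$ has at most one root, so the continuous map $\rho\mapsto T(\rho,\lambda)$ is strictly monotone on $(0,F^{-1}(B/\lambda))$. Its endpoint behaviour belongs to the general theory of the time map: $T(\rho,\lambda)\to g(\lambda)$ as $\rho\uparrow F^{-1}(B/\lambda)$ (this is exactly the function \eqref{form:0g}, finite by estimates of \eqref{g-integral} type), while $T(\rho,\lambda)\to+\infty$ as $\rho\downarrow 0$ since $f'(0)=0$ makes the force $\lambda f(u)=o(u)$ near the equilibrium. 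As $+\infty>g(\lambda)$, the map $\rho\mapsto T(\rho,\lambda)$ is a strictly decreasing bijection of $(0,F^{-1}(B/\lambda))$ onto $(g(\lambda),+\infty)$. Hence, for every $\lambda>0$, problem \eqref{eq:1dmemsfr0} has exactly one positive solution when $g(\lambda)<L$ and none when $g(\lambda)\ge L$.

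Next I would analyse $S_L:=\{\lambda>0:g(\lambda)<L\}$ from the shape of $g$ for Type~$\gamma_0$ (see Fig.~\ref{fig:gg1} and \cite{Pan2014a}): $g$ is continuous on $(0,+\infty)$ with $\lim_{\lambda\to0^+}g(\lambda)=+\infty$, $\lim_{\lambda\to+\infty}g(\lambda)=0$, and exactly two local extreme points. A continuous function with no interior local extremum on a subinterval is monotone there and cannot be constant on a subinterval (that would yield infinitely many local extreme points); hence there are $0<\lambda_a<\lambda_b<+\infty$ with $g$ strictly decreasing on $(0,\lambda_a]$, strictly increasing on $[\lambda_a,\lambda_b]$, and strictly decreasing on $[\lambda_b,+\infty)$. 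Put $L_*:=g(\lambda_a)$ and $L^*:=g(\lambda_b)$; then $0<L_*<L^*$, since $g>0$ on $(0,+\infty)$ and $g$ is strictly increasing on $[\lambda_a,\lambda_b]$.

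Then, intersecting the level $L$ with the three monotone branches via the intermediate value theorem pins down $S_L$ in each regime, and I would match the four regimes to (a)--(d). If $L>L^*$ then $g<L$ on all of $[\lambda_a,+\infty)$ while $g$ meets $L$ once on $(0,\lambda_a)$, at some $\lambda_*$, so $S_L=(\lambda_*,+\infty)$, giving (a). If $L=L^*$ then $g<L$ on $[\lambda_a,+\infty)\setminus\{\lambda_b\}$ and once on $(0,\lambda_a)$, so $S_L=(\lambda_*,\lambda^*)\cup(\lambda^*,+\infty)$ with $\lambda^*:=\lambda_b$, giving (b). If $L_*<L<L^*$ then $g$ meets $L$ exactly once on each branch, at points $\lambda_*<\lambda_a<\lambda^{**}<\lambda_b<\lambda^*$, so $S_L=(\lambda_*,\lambda^{**})\cup(\lambda^*,+\infty)$, giving (c). If $L\le L_*$ then $g\ge L_*\ge L$ on $(0,\lambda_b]$ (with equality only possibly at $\lambda_a$) while $g$ drops from $L^*>L$ to $0$ on $[\lambda_b,+\infty)$, meeting $L$ once at some $\lambda^*$, so $S_L=(\lambda^*,+\infty)$, giving (d). Uniqueness from the first step makes every count ``exactly one''. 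For (e), each of $\lambda_*,\lambda^{**},\lambda^*$ is the unique solution of $g(\lambda)=L$ on a branch of strict monotonicity, so it strictly decreases in $L$ on a strictly decreasing branch and strictly increases on the strictly increasing branch, as asserted.

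The main obstacle lies entirely in the first step: the strict monotonicity of $\rho\mapsto T(\rho,\lambda)$ extracted from Theorem~\ref{thm:atmost12}, the identification of the limit at the gradient‑blow‑up height $F^{-1}(B/\lambda)$ with $g(\lambda)$ (which needs control of the singular integrand, where hypotheses of \eqref{g-integral} type enter), and the divergence $T(\rho,\lambda)\to+\infty$ at $\rho=0$ forced by $f'(0)=0$. These, however, are general properties of the time map established in Section~3 and in \cite{Pan2014a}, and are therefore quoted rather than re‑proved; the part proper to Type~IV-$\gamma_0$ is the elementary case analysis above, whose only delicate points are the two borderline values $L=L^*$ and $L=L_*$ and keeping the notation $\lambda_*,\lambda^{**},\lambda^*$ aligned with the statement.
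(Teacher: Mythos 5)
Your proposal is correct and takes essentially the same route as the paper: the time-map reduction, strict monotonicity of $T(\cdot,\lambda)$ on $I$, the endpoint limits $T\to+\infty$ as $r\to 0$ (Proposition \ref{cor:twokind}, using $f'(0)=0$) and $T\to g(\lambda)$ at $r^*$, followed by a case analysis of the level set $\{g<L\}$ for the Type-$\gamma_0$ shape of $g$. The only cosmetic difference is that you obtain strict monotonicity in $r$ from Theorem \ref{thm:atmost12} plus continuity/injectivity, whereas the paper quotes $T'(r)<0$ (Theorem \ref{T'<0}) directly, and you make explicit the branch-by-branch intermediate-value argument that the paper leaves to Figs.\ref{fig:gg1} and \ref{fig:four323}.
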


\begin{theorem}[Type IV-$\gamma_0$, $f'(0)>0$, see Fig.\ref{fig:f01BIV4}]\label{thm:f=0t7}
Let $(\varphi, f)$ be of \emph{Type IV-$\gamma_0$}. Assume
conditions \eqref{con:phi}--\eqref{con:strict2} and $f'(0)>0$ hold.
Then there exist constants $L^*>L_*>0$ (see Fig.\ref{fig:gg1}) such
that the following assertions hold:

(a) If $L> L^*$, then there exists $\lambda_*\in (0,\lambda_1)$ such
that \eqref{eq:1dmemsfr0} has exactly one positive solution for
$(\lambda_*,\lambda_1)$ and none for $\lambda\in(0,
\lambda_*]\cup[\lambda_1,+\infty)$.

(b) If $L=L^*$, then there exist two numbers
$0<\lambda_*<\lambda^{*}<\lambda_1$ such that \eqref{eq:1dmemsfr0}
has exactly one positive solution for $\lambda\in (\lambda_*,
\lambda^{*})\cup(\lambda^{*},\lambda_1)$ and none for $\lambda\in
(0, \lambda_*]\cup\{\lambda^{*}\}\cup[\lambda_1,+\infty)$.

(c) If $L_*<L< L^*$, three numbers
$0<\lambda_{*}<\lambda^{**}<\lambda^*<\lambda_1$ such that
\eqref{eq:1dmemsfr0} has exactly one positive solution for
$\lambda\in (\lambda_*, \lambda^{**})\cup(\lambda^{*},\lambda_1)$,
none for $\lambda\in (0,
\lambda_*]\cup[\lambda^{**},\lambda^{*}]\cup[\lambda_1,+\infty)$.

(d) If $L\leqslant L_*$, then there exists $\lambda^*\in
(0,\lambda_1)$ such that \eqref{eq:1dmemsfr0} has exactly one
positive solution for $(\lambda^*,\lambda_1)$ and none for
$\lambda\in(0, \lambda^*]\cup[\lambda_1,+\infty)$.

(e) $\lambda_*$ and $\lambda^*$ are strictly decreasing while
$\lambda^{**}$ is strictly increasing with respect to $L$.
\end{theorem}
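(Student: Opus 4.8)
The plan is to reduce the whole statement to the time-map machinery of Section~3 together with the prescribed shape of the reduced function $g$ in \eqref{form:0g}. For Type~IV-$\gamma_0$ one has $A=C=+\infty$ and $B<+\infty$, so for each fixed $\lambda>0$ the time map $T(\cdot,\lambda)$ is continuous and, by Theorem~\ref{thm:atmost12} and the endpoint analysis of Section~3, strictly decreasing on $\left(0,\,F^{-1}(B/\lambda)\right)$, with $\lim_{\rho\to 0^+}T(\rho,\lambda)=\tfrac{\pi}{2}\sqrt{\varphi'(0)/(\lambda f'(0))}$ (here $f'(0)>0$ is used) and $\lim_{\rho\to F^{-1}(B/\lambda)^-}T(\rho,\lambda)=g(\lambda)$. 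Consequently \eqref{eq:1dmemsfr0} has exactly one positive solution precisely when $L$ lies in the open interval $\bigl(g(\lambda),\,\tfrac{\pi}{2}\sqrt{\varphi'(0)/(\lambda f'(0))}\bigr)$, and none otherwise; since the right inequality $L<\tfrac{\pi}{2}\sqrt{\varphi'(0)/(\lambda f'(0))}$ is equivalent to $\lambda<\lambda_1$, this reads: \emph{\eqref{eq:1dmemsfr0} has exactly one positive solution iff $\lambda<\lambda_1$ and $g(\lambda)<L$, and none otherwise.}

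Next I would record the single quantitative fact that links $g$ and $\lambda_1$: because $T(\cdot,\lambda)$ is strictly decreasing, $g(\lambda)=\lim_{\rho\to F^{-1}(B/\lambda)^-}T(\rho,\lambda)<\lim_{\rho\to 0^+}T(\rho,\lambda)=\tfrac{\pi}{2}\sqrt{\varphi'(0)/(\lambda f'(0))}$ for \emph{every} $\lambda>0$; hence any $\lambda$ with $g(\lambda)=L$ satisfies $L<\tfrac{\pi}{2}\sqrt{\varphi'(0)/(\lambda f'(0))}$, i.e.\ $\lambda<\tfrac{\varphi'(0)}{f'(0)}\bigl(\tfrac{\pi}{2L}\bigr)^2=\lambda_1$. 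Then I would unpack the hypothesis ``Type~IV-$\gamma_0$'' (Definition~\ref{df:g1}, \cite{Pan2014a}, Fig.~\ref{fig:gg1}): $g$ is continuous on $(0,+\infty)$ with $\lim_{\lambda\to 0^+}g(\lambda)=+\infty$, $\lim_{\lambda\to+\infty}g(\lambda)=0$, and exactly two turning points $0<\bar\lambda_1<\bar\lambda_2$. These two limits force $g$ to be strictly decreasing on $(0,\bar\lambda_1)$, strictly increasing on $(\bar\lambda_1,\bar\lambda_2)$, and strictly decreasing on $(\bar\lambda_2,+\infty)$; thus $\bar\lambda_1$ is the minimizer, $\bar\lambda_2$ the maximizer, and $0<L_*:=g(\bar\lambda_1)<L^*:=g(\bar\lambda_2)<+\infty$.

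With these ingredients the four regimes of $L$ are pure bookkeeping off the graph of $g$, combined with the fact just proved that every solution of $g(\lambda)=L$ lies strictly below $\lambda_1$. If $L>L^*$, then $\{\lambda:g(\lambda)<L\}=(\lambda_*,+\infty)$ where $\lambda_*\in(0,\bar\lambda_1)$ is the unique root of $g(\lambda)=L$ on the decreasing branch; intersecting with $(0,\lambda_1)$ and using $\lambda_*<\lambda_1$ gives (a). If $L=L^*$, then $\{g<L\}=(\lambda_*,\bar\lambda_2)\cup(\bar\lambda_2,+\infty)$, and setting $\lambda^*:=\bar\lambda_2$ (a root of $g(\lambda)=L^*$, so $\lambda^*<\lambda_1$) yields (b). If $L_*<L<L^*$, then $g(\lambda)=L$ has exactly three roots $\lambda_*<\bar\lambda_1<\lambda^{**}<\bar\lambda_2<\lambda^*$, $\{g<L\}=(\lambda_*,\lambda^{**})\cup(\lambda^*,+\infty)$, all three roots are $<\lambda_1$, and intersecting with $(0,\lambda_1)$ gives (c). If $L\le L_*$, then $g\ge L$ on $(0,\bar\lambda_2]$ and $g(\lambda)=L$ has the single root $\lambda^*\in(\bar\lambda_2,+\infty)$, so $\{g<L\}=(\lambda^*,+\infty)$ with $\lambda^*<\lambda_1$, giving (d). Finally, for (e) I would observe that $\lambda_*$, $\lambda^*$, $\lambda^{**}$ are the preimages of $L$ under the strictly monotone branches $g|_{(0,\bar\lambda_1)}$, $g|_{(\bar\lambda_2,+\infty)}$, $g|_{(\bar\lambda_1,\bar\lambda_2)}$ respectively, hence $\lambda_*$ and $\lambda^*$ are strictly decreasing and $\lambda^{**}$ strictly increasing in $L$.

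The one genuinely delicate point — and the place where care is needed — is the interaction between the roots of $g(\lambda)=L$ and the threshold $\lambda_1=\lambda_1(L)$, which itself varies with $L$. A priori $\lambda_1$ could fall \emph{between} two roots of $g(\lambda)=L$, which would alter (or destroy) the claimed intervals of existence; the point is that strict monotonicity of $\rho\mapsto T(\rho,\lambda)$ (essentially the content of Theorem~\ref{thm:atmost12}) forces $g(\lambda)<\lim_{\rho\to 0^+}T(\rho,\lambda)$ for all $\lambda$, equivalently pins every root of $g(\lambda)=L$ strictly to the left of $\lambda_1$, so that imposing $\lambda<\lambda_1$ only truncates the final ray of $\{g<L\}$. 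Once that is in hand, assertions (a)--(e) follow by reading the figures.
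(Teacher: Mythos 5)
Your proposal is correct and follows essentially the same route as the paper: strict monotonicity of the time map in $r$ (Theorem \ref{T'<0}), the endpoint limits from Proposition \ref{cor:twokind} and $g(\lambda)=T(F^{-1}(B/\lambda),\lambda)$, and then reading the four regimes off the Type $\gamma_0$ shape of $g$, exactly as the paper does by reduction to the argument for Theorems \ref{thm:classical43320}--\ref{thm:classical43321}. Your explicit observation that strict decrease of $T(\cdot,\lambda)$ forces every root of $g(\lambda)=L$ to lie strictly below $\lambda_1$ is the point the paper leaves implicit (via $\lim_{r\to 0}T(r,\lambda_1)=L$ and its figures), and it is handled correctly.
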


\begin{example}
Let $\varphi=\varphi_3$ and $f(u)=u^2+u^7$ or $f(u)=u+u^6$. Then
$\lim_{\lambda\rightarrow +\infty} g(\lambda)=0$ and
$\lim_{\lambda\rightarrow 0} g(\lambda)=+\infty$ (by Lemma
\ref{prop:g} below). Numerical simulation indicates that
$g(\lambda)$ has exactly two local extreme points in $(0, +\infty)$
(see Remark \ref{rk:g(r)} and Fig.\ref{fig:gg3} below). We provide
an analytic proof for this in \cite{Pan2014c}. Thus $g$ is of Type
$\gamma_0$ and hence both $(\varphi_3,u^2+u^7)$ and
$(\varphi_3,u+u^6)$ are of Type IV-$\gamma_0$. From Theorems
\ref{thm:f=0t6} and \ref{thm:f=0t7}, we obtain the exact numbers of
positive solutions as well as global bifurcation diagrams.
\end{example}

\begin{theorem}[Type IV-$\gamma_1$, $f'(0)=0$, see Fig.\ref{fig:f00BIV45a}]\label{thm:f=0t77}
Let $(\varphi, f)$ be of \emph{Type IV-$\gamma_1$}. Assume
conditions \eqref{con:phi}--\eqref{con:strict2} and $f'(0)>0$ hold.
Then there exist constants $L^{**}>L^*>L_*>0$ (see
Fig.\ref{fig:gg1}) such that the following assertions hold:

(a) If $L\geqslant L^{**}$, then \eqref{eq:1dmemsfr0} has exactly
one positive solution for any $\lambda\in (0,+\infty)$.

(b) If $L^*<L< L^{**}$, then there exists $\lambda_*>0$ such that
\eqref{eq:1dmemsfr0} has exactly one positive solution for
$\lambda\in(\lambda_*,+\infty)$ and none for $\lambda\in (0,
\lambda_*]$.

(c) If $L=L^*$, then there exist two numbers
$\lambda^*>\lambda_*>0$ such that \eqref{eq:1dmemsfr0} has exactly
one positive solution for $\lambda\in (\lambda_*,
\lambda^{*})\cup(\lambda^{*},+\infty)$ and none for $\lambda\in (0,
\lambda_*]\cup\{\lambda^{*}\}$.

(d) If $L_*<L<L^*$, then there exist three numbers
$\lambda^*>\lambda^{**}>\lambda_*>0$
such that \eqref{eq:1dmemsfr0} has exactly one positive solution for
$\lambda\in (\lambda_*, \lambda^{**})\cup (\lambda^*,+\infty)$ and
none for $\lambda\in (0, \lambda_*]\cup[\lambda^{**},\lambda^*]$.

(e) If $L\leqslant L_*$, then there exists $\lambda^*>0$ such that
\eqref{eq:1dmemsfr0} has exactly one positive solution for
$\lambda\in (\lambda^*,+\infty)$ and none for $\lambda\in (0,
\lambda^*]$.

(f) $\lambda_*$ and $\lambda^*$ are strictly decreasing while
$\lambda^{**}$ is strictly increasing with respect to $L$.
\end{theorem}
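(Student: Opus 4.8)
The plan is to run the same time-map reduction as for the Type~IV-$\gamma_0$, $f'(0)=0$ case (Theorem~\ref{thm:f=0t6}), with $g$ now carrying one extra critical point. First I would invoke the properties of the time map $T$ proved in Section~3 (following \cite{Pan2014a}): for fixed $\lambda>0$, problem \eqref{eq:1dmemsfr0} has a positive classical solution with $u(0)=\|u\|_\infty=\rho$ if and only if $\rho\in\bigl(0,F^{-1}(B/\lambda)\bigr)$ and $T(\rho,\lambda)=L$, where $T(\rho,\lambda)=\int_0^{\rho}\bigl[\Phi^{-1}(\lambda(F(\rho)-F(s)))\bigr]^{-1}\,ds$, and $g(\lambda)=\lim_{\rho\uparrow F^{-1}(B/\lambda)}T(\rho,\lambda)$ by \eqref{form:0g}. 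Next I would record that, in Case~IV ($A=C=+\infty$, $B<+\infty$) with $f'(0)=0$, the map $\rho\mapsto T(\rho,\lambda)$ is continuous and strictly decreasing on $\bigl(0,F^{-1}(B/\lambda)\bigr)$ — this strict monotonicity is the mechanism behind Theorem~\ref{thm:atmost12} — and that $T(\rho,\lambda)\to+\infty$ as $\rho\downarrow0$. This last limit is where $f'(0)=0$ is used: since $f(u)/u$ is increasing with $\lim_{u\to0^+}f(u)/u=f'(0)=0$ one gets $F(\rho)=o(\rho^2)$, and since $\Phi(z)=\tfrac{\varphi'(0)}{2}z^2+O(z^3)$ one has $\bigl[\Phi^{-1}(\lambda(F(\rho)-F(s)))\bigr]^{-1}\geqslant c\,F(\rho)^{-1/2}$ on $(0,\rho)$ for $\rho$ small, whence $T(\rho,\lambda)\geqslant c'\,\rho\,F(\rho)^{-1/2}\to+\infty$. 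Combining these, $T(\cdot,\lambda)$ is a strictly decreasing bijection of $\bigl(0,F^{-1}(B/\lambda)\bigr)$ onto $(g(\lambda),+\infty)$, so that for fixed $L$ problem \eqref{eq:1dmemsfr0} has exactly one positive solution when $g(\lambda)<L$ and none when $g(\lambda)\geqslant L$.

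It then remains to describe the set $\{\lambda>0:g(\lambda)<L\}$ from the graph of $g$. By hypothesis $(\varphi,f)$ is of Type~IV-$\gamma_1$ (Definition~\ref{df:g1}, Fig.~\ref{fig:gg1}): $g$ is continuous on $(0,+\infty)$ with $\lim_{\lambda\to+\infty}g(\lambda)=0$, $\lim_{\lambda\to0^+}g(\lambda)=L^{**}\in(0,+\infty)$, and it has exactly two critical points $0<\lambda_a<\lambda_b$, a local minimum at $\lambda_a$ of value $L_*:=g(\lambda_a)$ and a local maximum at $\lambda_b$ of value $L^*:=g(\lambda_b)$, with $0<L_*<L^*<L^{**}$. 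Thus $g$ decreases strictly on $(0,\lambda_a]$, increases strictly on $[\lambda_a,\lambda_b]$, and decreases strictly on $[\lambda_b,+\infty)$, so the line $y=L$ meets the graph at most once on each branch, and I would then run through the five ranges of $L$. If $L\geqslant L^{**}$ then $g(\lambda)<L^{**}\leqslant L$ for all $\lambda>0$, giving (a). If $L^*<L<L^{**}$ there is one crossing $\lambda_*$ on $(0,\lambda_a]$ and $g<L$ throughout $(\lambda_*,+\infty)$, since past $\lambda_*$ the graph never again reaches height $L$ (its only later local maximum value is $L^*<L$), giving (b). If $L=L^*$ there is the same crossing $\lambda_*$, and in addition the local maximum touches the line at $\lambda_b=:\lambda^*$ with $g<L$ on both sides of $\lambda_b$, giving (c). If $L_*<L<L^*$ there are three crossings, $\lambda_*\in(0,\lambda_a)$ on the first decreasing branch, $\lambda^{**}\in(\lambda_a,\lambda_b)$ on the increasing branch (which rises from $L_*<L$ to $L^*>L$), and $\lambda^*\in(\lambda_b,+\infty)$ on the last decreasing branch (which falls from $L^*>L$ to $0$), so $g<L$ exactly on $(\lambda_*,\lambda^{**})\cup(\lambda^*,+\infty)$ with $\lambda_*<\lambda^{**}<\lambda^*$, giving (d). If $L\leqslant L_*$ then $g\geqslant L_*\geqslant L$ on all of $(0,\lambda_b]$ while the last branch gives a single crossing $\lambda^*$, so $g<L$ exactly on $(\lambda^*,+\infty)$, giving (e). In every case the $\lambda$ with $g(\lambda)=L$ correspond to $\rho=F^{-1}(B/\lambda)$, the gradient blow-up amplitude, hence yield no classical solution, which is why those endpoints fall into the ``no solution'' sets.

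Assertion (f) follows because $L_*,L^*,L^{**}$ and $\lambda_a,\lambda_b$ are determined by $(\varphi,f)$ alone and do not move as $L$ varies, while $\lambda_*$ and $\lambda^*$ are the inverse images of $L$ under $g$ restricted to strictly decreasing branches (hence strictly decreasing in $L$) and $\lambda^{**}$ is the inverse image of $L$ under $g$ restricted to the strictly increasing branch (hence strictly increasing in $L$); differentiating $g(\lambda_*(L))=L$, $g(\lambda^{**}(L))=L$, $g(\lambda^*(L))=L$ with $g'\neq0$ on the open branches makes this quantitative if wanted. I expect the genuine work to sit entirely in the two facts used in the first paragraph — the strict monotonicity of $T(\cdot,\lambda)$ and the limit $T(\rho,\lambda)\to+\infty$ as $\rho\downarrow0$ — which belong to the general time-map analysis of Section~3 rather than to this argument; everything afterwards is the graph-reading above, whose only delicate point is the open/closed bookkeeping at the threshold values of $\lambda$, handled as indicated.
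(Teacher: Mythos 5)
Your proposal is correct and follows essentially the same route as the paper: Theorem~\ref{T'<0} (strict decrease of $T$ in $r$) together with Proposition~\ref{cor:twokind} ($T\to+\infty$ as $r\to 0$ when $f'(0)=0$) pins down the shape of $T(\cdot,\lambda)$, and the Type $\gamma_1$ graph of $g$ (with the gradient blow-up endpoint excluded from classical solutions) then gives exactly the stated solution counts and the monotonicity of $\lambda_*$, $\lambda^{**}$, $\lambda^*$ in $L$, which is precisely the argument the paper compresses into a reference to the proofs for Type IV-$\beta_0$ and the figures. You also correctly read the hypothesis as $f'(0)=0$ (the ``$f'(0)>0$'' in the statement is evidently a misprint, since the conclusions involve no $\lambda_1$).
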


  \ifpdf
\begin{figure}
\centering
\includegraphics[totalheight=8.8in]{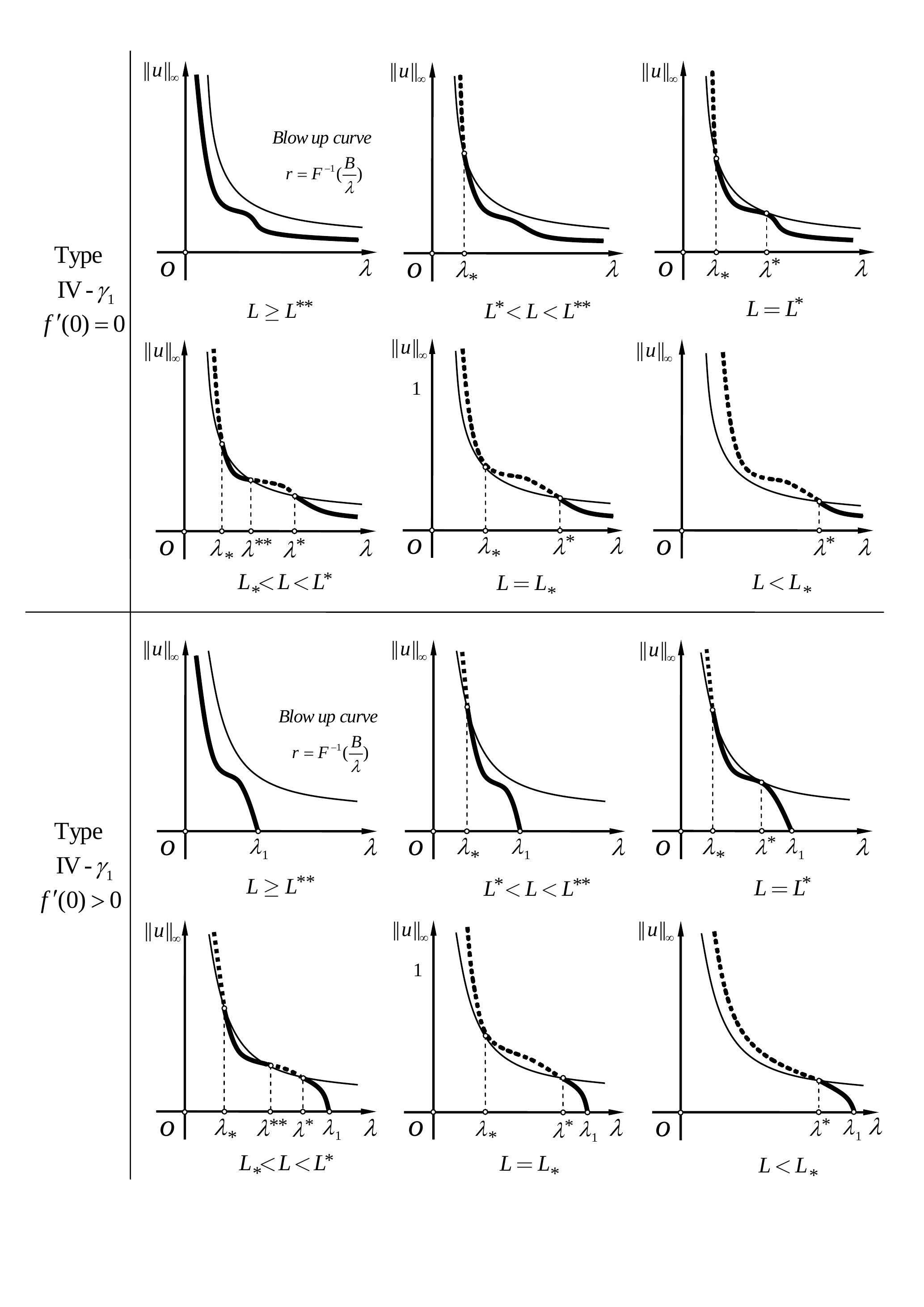}
\caption{Bifurcation Diagrams for Type IV-$\gamma_1$ with
$f(0)=0$.}\label{fig:f00BIV45a}
\end{figure}
  \fi

\begin{theorem}[Type IV-$\gamma_1$, $f'(0)>0$, see Fig.\ref{fig:f00BIV45a}]\label{thm:f=0t777}
Let $(\varphi, f)$ be of \emph{Type IV-$\gamma_1$}. Assume
conditions \eqref{con:phi}--\eqref{con:strict2} and $f'(0)>0$ hold.
Then there exist constants $L^{**}>L^*>L_*>0$ (see
Fig.\ref{fig:gg1}) such that the following assertions hold:

(a) If $L\geqslant L^{**}$, then \eqref{eq:1dmemsfr0} has exactly
one positive solution for $\lambda\in (0, \lambda_1)$ and none for
$\lambda\in[\lambda_1,+\infty)$.

(b) If $L^*<L< L^{**}$, then there exists $\lambda_*\in
(0,\lambda_1)$ such that \eqref{eq:1dmemsfr0} has exactly one
positive solution for $(\lambda_*,\lambda_1)$ and none for
$\lambda\in(0, \lambda_*]\cup[\lambda_1,+\infty)$.

(c) If $L=L^*$, then there exist two numbers
$0<\lambda_*<\lambda^{*}<\lambda_1$ such that \eqref{eq:1dmemsfr0}
has exactly one positive solution for $\lambda\in (\lambda_*,
\lambda^{*})\cup(\lambda^{*},\lambda_1)$ and none for $\lambda\in
(0, \lambda_*]\cup\{\lambda^{*}\}\cup[\lambda_1,+\infty)$.

(d) If $L_*<L< L^*$, three numbers
$0<\lambda_{*}<\lambda^{**}<\lambda^*<\lambda_1$ such that
\eqref{eq:1dmemsfr0} has exactly one positive solution for
$\lambda\in (\lambda_*, \lambda^{**})\cup(\lambda^{*},\lambda_1)$
and none for $\lambda\in (0,
\lambda_*]\cup[\lambda^{**},\lambda^{*}]\cup[\lambda_1,+\infty)$.

(e) If $L\leqslant L_*$, then there exists $\lambda^*\in
(0,\lambda_1)$ such that \eqref{eq:1dmemsfr0} has exactly one
positive solution for $(\lambda^*,\lambda_1)$ and none for
$\lambda\in(0, \lambda^*]\cup[\lambda_1,+\infty)$.

(f) $\lambda_*$ and $\lambda^*$ are strictly decreasing while
$\lambda^{**}$ is strictly increasing with respect to $L$.
\end{theorem}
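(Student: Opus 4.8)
The plan is to reduce the statement entirely to the shape of $g$ via the time map studied in Section 3. Recall from there that a positive classical solution of \eqref{eq:1dmemsfr0} is necessarily even, that its maximum $\rho=u(0)$ satisfies $\lambda F(\rho)<B$, and that for fixed $\lambda$ the positive classical solutions are in one-to-one correspondence with the roots $\rho\in\big(0,F^{-1}(B/\lambda)\big)$ of the equation $T(\rho,\lambda)=L$, where $T(\rho,\lambda)=\int_0^{\rho}\big(\Phi^{-1}(\lambda(F(\rho)-F(u)))\big)^{-1}\,du$. Section 3 also gives that $T(\cdot,\lambda)$ is continuous and strictly decreasing on $\big(0,F^{-1}(B/\lambda)\big)$, with $\lim_{\rho\to F^{-1}(B/\lambda)^-}T(\rho,\lambda)=g(\lambda)$ and with $\lim_{\rho\to 0^+}T(\rho,\lambda)$ equal to $L$ precisely when $\lambda=\lambda_1$, larger than $L$ for $\lambda<\lambda_1$, and smaller than $L$ for $\lambda>\lambda_1$ (this is where $f'(0)>0$ enters). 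Since $T(\cdot,\lambda)$ is strictly decreasing, its range is the open interval $\big(g(\lambda),\lim_{\rho\to0^+}T(\rho,\lambda)\big)$; hence \eqref{eq:1dmemsfr0} has exactly one positive solution when $g(\lambda)<L$ and $\lambda<\lambda_1$, and none otherwise. Moreover $g(\lambda)$ is the strict infimum of $T(\cdot,\lambda)$, so $g(\lambda)<\lim_{\rho\to0^+}T(\rho,\lambda)$ for every $\lambda>0$; consequently any $\lambda$ with $g(\lambda)=L$ automatically lies in $(0,\lambda_1)$.

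It then remains to describe, for each $L>0$, the set $\{\lambda>0:g(\lambda)<L\}\cap(0,\lambda_1)$. By the definition of Type $\gamma_1$ together with Lemma \ref{prop:g}, $g$ is continuous on $(0,+\infty)$ with $\lim_{\lambda\to0}g=L^{**}\in(0,+\infty)$ and $\lim_{\lambda\to+\infty}g=0$, and has exactly two critical points $0<\lambda_a<\lambda_b$: $g$ strictly decreases on $(0,\lambda_a)$ to its local minimum value $L_*:=g(\lambda_a)$, strictly increases on $(\lambda_a,\lambda_b)$ to its local maximum value $L^*:=g(\lambda_b)$, and strictly decreases on $(\lambda_b,+\infty)$ to $0$, with $L_*<L^*<L^{**}$ (the inequality $L^*<L^{**}$ being what distinguishes $\gamma_1$ from $\gamma_2$ and $\gamma_3$). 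A case distinction on the position of $L$ relative to $L_*<L^*<L^{**}$ then reads off $\{g<L\}$ by counting crossings on the three monotone branches: for $L\geqslant L^{**}$ it is $(0,+\infty)$ (as $g<L^{**}$ throughout); for $L^*<L<L^{**}$ it is $(\lambda_*,+\infty)$ with $g(\lambda_*)=L$, $\lambda_*\in(0,\lambda_a)$; for $L=L^*$ it is $(\lambda_*,\lambda_b)\cup(\lambda_b,+\infty)$; for $L_*<L<L^*$ it is $(\lambda_*,\lambda^{**})\cup(\lambda^*,+\infty)$ with exactly one crossing on each branch; and for $L\leqslant L_*$ it is $(\lambda^*,+\infty)$ with $g(\lambda^*)=L$, $\lambda^*\in(\lambda_b,+\infty)$. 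Every crossing point solving $g(\cdot)=L$ lies in $(0,\lambda_1)$ by the last remark of the preceding paragraph, and also $\lambda_b<\lambda_1$ since $g(\lambda_b)=L^*<\lim_{\rho\to0^+}T(\rho,\lambda_b)$; intersecting with $(0,\lambda_1)$ and invoking the existence/nonexistence dichotomy above yields (a)--(e) after relabelling (with $\lambda^*:=\lambda_b$ in case (c)). Statement (f) is immediate because $\lambda_a,\lambda_b$ — hence the three monotone branches of $g$ — do not depend on $L$: $\lambda_*$ and $\lambda^*$ solve $g=L$ on decreasing branches, so they decrease in $L$, whereas $\lambda^{**}$ solves $g=L$ on the increasing branch $(\lambda_a,\lambda_b)$, so it increases in $L$.

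All the substantive ingredients — the time-map reduction, the strict monotonicity of $T(\cdot,\lambda)$ that underlies Theorem \ref{thm:atmost12}, the two endpoint limits of $T$, and Lemma \ref{prop:g} — are established beforehand, so the argument here is essentially bookkeeping. The one step requiring care is the behaviour at a level $L$ equal to an extreme value of $g$, i.e. the cases $L=L^*$ and $L=L_*$ (and the threshold $L=L^{**}$): one must decide which $\lambda$ with $g(\lambda)=L$ carry no classical solution, using that a value $T(\rho,\lambda)=L$ attained only in the limit $\rho\to F^{-1}(B/\lambda)^-$ corresponds to a gradient blow-up solution (a function in $C[-L,L]\cap C^2(-L,L)$ with $u'(\pm L)=\mp\infty$), which is not a classical solution and hence does not count. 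This is precisely why the solution intervals in (a)--(e) are half-open at $\lambda_*,\lambda^*,\lambda^{**}$, and why $\lambda=\lambda_a$ carries no solution when $L=L_*$. A secondary point is to confirm $\lambda^{**}<\lambda_1$ (equivalently $\lambda_b<\lambda_1$), which once more follows from $g(\lambda)<\lim_{\rho\to0^+}T(\rho,\lambda)$.
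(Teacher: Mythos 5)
Your proposal is correct and follows essentially the same route as the paper: the paper disposes of this theorem by the time-map argument used for Types IV-$\beta_0$/IV-$\gamma_0$ (Theorem \ref{T'<0} for strict monotonicity of $T(\cdot,\lambda)$, Proposition \ref{cor:twokind} for the endpoint value at $r\to0$ tied to $\lambda_1$, and the Type $\gamma_1$ shape of $g$ as the right-endpoint limit), which is exactly the reduction you carry out, with your sublevel-set bookkeeping $\{g<L\}\cap(0,\lambda_1)$ just making the omitted case analysis explicit. Your handling of the boundary values (levels $L=L_*,L^*,L^{**}$ and the excluded $\lambda$'s corresponding to gradient blow-up, non-classical solutions) and of $\lambda^*<\lambda_1$ via $g(\lambda)<\lim_{\rho\to0^+}T(\rho,\lambda)$ matches the paper's conventions, so no gap.
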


\begin{example}\label{eg:gamma1}
Let $\varphi=\varphi_3$ and $f(u)=u^7e^u+u^2$ or $f(u)=u^5e^u+u$.
Then $\lim_{\lambda\rightarrow +\infty} g(\lambda)=0$ and
$\lim_{\lambda\rightarrow 0} g(\lambda)=\frac{\pi}{2}$ (by Lemma
\ref{prop:g} below). Numerical simulation indicates that
$g(\lambda)$ has exactly two local extreme points in $(0, +\infty)$
and $\lim_{\lambda\rightarrow 0} g(\lambda)$ is greater than the
local maximum value (see Remark \ref{rk:g(r)} and Fig.\ref{fig:gg3}
below), which suggests that $g$ is of   Type $\gamma_1$ and both
$(\varphi_3,u^7e^u+u^2)$ and $(\varphi_3,u^5e^u+u)$ are of Type
IV-$\gamma_1$.
\end{example}

%

\begin{theorem}[Type IV-$\gamma_2$, $f'(0)=0$, see Fig.\ref{fig:f00BIV45+}]\label{thm:f=0t77new}
Let $(\varphi, f)$ be of \emph{Type IV-$\gamma_2$}. Assume
conditions \eqref{con:phi}--\eqref{con:strict2} and $f'(0)=0$ hold.
Then there exist constants $L^{**}>L^*>L_*>0$ (see
Fig.\ref{fig:gg1}) such that the following assertions hold:

(a) If $L> L^{**}$, then \eqref{eq:1dmemsfr0} has exactly one
positive solution for any $\lambda\in (0,+\infty)$.

(b) If $L= L^{**}$, then there exists $\lambda^*>0$ such that
\eqref{eq:1dmemsfr0} has exactly one positive solution for $(0,
\lambda^{*})\cup(\lambda^{*},+\infty)$ and none for
$\lambda=\lambda^{*}$.

(c) If $L^*\leqslant L< L^{**}$, then there exist two numbers
$\lambda^*>\lambda^{**}>0$ such that \eqref{eq:1dmemsfr0} has
exactly one positive solutions for $\lambda\in  (0,
\lambda^{**})\cup(\lambda^*, +\infty)$ and none for $\lambda\in
[\lambda^{**}, \lambda^*]$.

(d) If $L_*<L< L^*$, then there exist three numbers
$\lambda^*>\lambda^{**}>\lambda_*>0$ such that \eqref{eq:1dmemsfr0}
has exactly one positive solutions for $\lambda\in  (\lambda_*,
\lambda^{**})\cup(\lambda^*, +\infty)$ and none for $\lambda\in  (0,
\lambda_*]\cup[\lambda^{**}, \lambda^*]$.

(e) If $L\leqslant L_*$, then there exists $\lambda^*>0$ such that
\eqref{eq:1dmemsfr0} has exactly one positive solutions for
$\lambda\in  (\lambda^*, +\infty)$ and none for $\lambda\in  (0,
\lambda^*]$.

(f) $\lambda_*$ and $\lambda^*$ are strictly decreasing while
$\lambda^{**}$ is strictly increasing with respect to $L$.
\end{theorem}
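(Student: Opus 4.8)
The plan is to carry out the standard time-map reduction and then read off statements (a)--(f) from the graph of the function $g$ of \eqref{form:0g}, whose qualitative shape is completely fixed by the hypothesis that $(\varphi,f)$ is of Type IV-$\gamma_2$. The heart of the argument is the following dichotomy: for each fixed $\lambda>0$, problem \eqref{eq:1dmemsfr0} has exactly one positive solution if $g(\lambda)<L$ and none if $g(\lambda)\geqslant L$.

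To obtain this dichotomy I would invoke the general time-map theory of Section~3. Since we are in Case~IV ($A=C=+\infty$, $B<+\infty$), for every $\lambda>0$ the time map $r\mapsto T(r,\lambda)$ is finite and continuous on the interval $(0,\bar r(\lambda))$ with $\bar r(\lambda):=F^{-1}(B/\lambda)\in(0,+\infty)$; there is a bijection between positive classical solutions of \eqref{eq:1dmemsfr0} and roots $r\in(0,\bar r(\lambda))$ of $T(r,\lambda)=L$, a root $r$ being $\max_{[-L,L]}u$ for the corresponding solution $u$; and $\lim_{r\to\bar r(\lambda)^-}T(r,\lambda)=g(\lambda)$. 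The hypothesis $f'(0)=0$ enters at the other endpoint, where $\lim_{r\to 0^+}T(r,\lambda)=+\infty$ (when $f'(0)>0$ this limit would instead equal the finite number $\frac{\pi}{2}\sqrt{\varphi'(0)/(\lambda f'(0))}$, which is precisely why the $f'(0)>0$ statements look different and why the bound $\lambda_1$ does not appear here). By Theorem~\ref{thm:atmost12}, $T(\cdot,\lambda)$ is injective, hence --- being continuous --- strictly monotone; running from $+\infty$ down to the finite value $g(\lambda)$, it is a strictly decreasing bijection of $(0,\bar r(\lambda))$ onto $(g(\lambda),+\infty)$, and the dichotomy follows. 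Note that $g$ does not depend on $L$.

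Next I would read off the shape of $g$ from Definitions~\ref{df:g1} and \ref{df:g2}: $g\colon(0,+\infty)\to(0,+\infty)$ is continuous with $\lim_{\lambda\to 0^+}g(\lambda)=:g_0\in(0,+\infty)$ and $\lim_{\lambda\to+\infty}g(\lambda)=0$, it has exactly two critical points $0<\lambda_a<\lambda_b$, and its local maximum value exceeds $g_0$. Continuity together with the two prescribed limiting values forces the monotonicity pattern: $g$ is strictly decreasing on $(0,\lambda_a)$, strictly increasing on $(\lambda_a,\lambda_b)$, and strictly decreasing on $(\lambda_b,+\infty)$; thus $\lambda_a$ is the unique minimum point with value $m:=g(\lambda_a)$, $\lambda_b$ the unique maximum point with value $M:=g(\lambda_b)$, and $0<m<g_0<M$. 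Setting $L^{**}:=M$, $L^{*}:=g_0$, $L_{*}:=m$, we get $L^{**}>L^{*}>L_{*}>0$. Now the super-level set $S_L:=\{\lambda>0\colon g(\lambda)\geqslant L\}$ is, by the dichotomy, exactly the set of $\lambda$ admitting no positive solution (its complement being the set of $\lambda$ with exactly one positive solution), so I would compute $S_L$ branch by branch. If $L>L^{**}$ then $S_L=\emptyset$, giving (a); if $L=L^{**}$ then $g=L$ only at $\lambda_b$, so $S_L=\{\lambda_b\}$ and $\lambda^{*}=\lambda_b$, giving (b); if $L^{*}\leqslant L<L^{**}$ then $g<L$ on $(0,\lambda_a]$ while $g=L$ has exactly one root $\lambda^{**}\in(\lambda_a,\lambda_b)$ and one root $\lambda^{*}\in(\lambda_b,+\infty)$, so $S_L=[\lambda^{**},\lambda^{*}]$ with $\lambda_a<\lambda^{**}<\lambda_b<\lambda^{*}$, giving (c); if $L_{*}<L<L^{*}$ the decreasing branch $(0,\lambda_a)$ contributes an extra root $\lambda_{*}\in(0,\lambda_a)$, so $S_L=(0,\lambda_{*}]\cup[\lambda^{**},\lambda^{*}]$ with $0<\lambda_{*}<\lambda_a<\lambda^{**}<\lambda_b<\lambda^{*}$, giving (d); and if $0<L\leqslant L_{*}$ then $g\geqslant m\geqslant L$ on all of $(0,\lambda_b]$ while $g=L$ has the single root $\lambda^{*}\in(\lambda_b,+\infty)$, so $S_L=(0,\lambda^{*}]$, giving (e).

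For part (f), in each regime $\lambda_{*}$, $\lambda^{**}$, $\lambda^{*}$ are, respectively, the roots of $g=L$ on the strictly decreasing branch $(0,\lambda_a)$, on the strictly increasing branch $(\lambda_a,\lambda_b)$, and on the strictly decreasing branch $(\lambda_b,+\infty)$; since a strictly monotone continuous function has a strictly monotone continuous inverse and $g$ is independent of $L$, it follows that $\lambda_{*}$ and $\lambda^{*}$ are strictly decreasing and $\lambda^{**}$ strictly increasing as functions of $L$. The only genuinely substantive points are the two facts borrowed from Section~3 in the dichotomy step: that $T(\cdot,\lambda)$ is monotone (equivalently injective, i.e.\ Theorem~\ref{thm:atmost12}) and that $\lim_{r\to 0^+}T(r,\lambda)=+\infty$, which is exactly where $f'(0)=0$ is used and what separates this theorem from its $f'(0)>0$ companion. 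Everything downstream is elementary bookkeeping on the graph of $g$; within the present argument the one thing requiring care is keeping track of which endpoints of $S_L$ are attained, since that is what fixes the open/closed brackets in (a)--(e).
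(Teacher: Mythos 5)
Your proposal is correct and follows essentially the same route as the paper: strict monotonicity of the time map in $r$ (Theorem \ref{thm:atmost12}/Theorem \ref{T'<0}) plus the endpoint behavior $\lim_{r\to0^+}T=+\infty$ from Proposition \ref{cor:twokind} (where $f'(0)=0$ enters) and $T\to g(\lambda)$ at $r^*$, followed by reading off the solution count from the Type~$\gamma_2$ shape of $g$. The paper's own proof is just a terse reference to the Type IV-$\beta_0$ argument with figures; your explicit level-set bookkeeping and endpoint tracking fills in exactly those details.
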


  \ifpdf
\begin{figure}
\centering
\includegraphics[totalheight=8.3in]{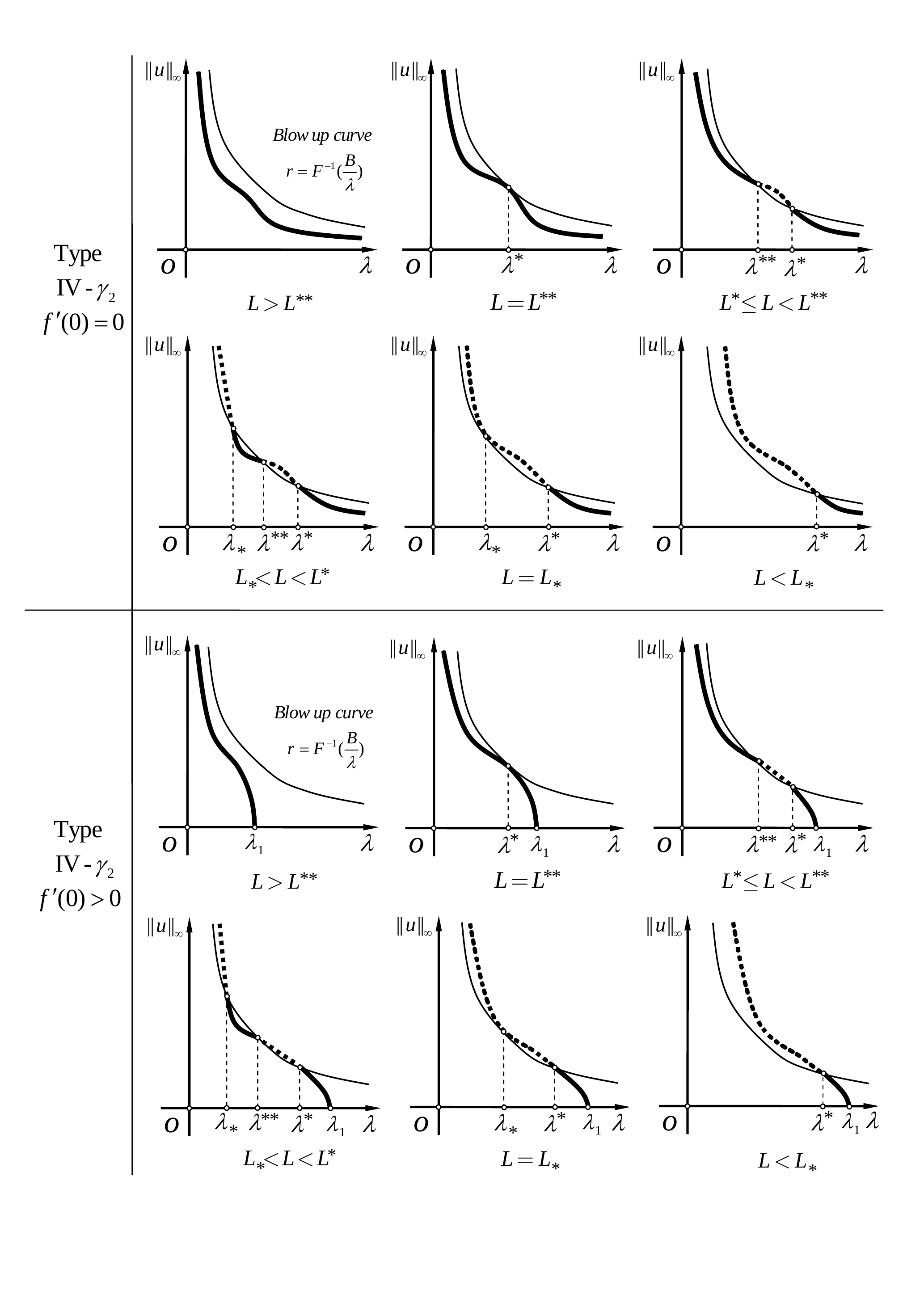}
\caption{Bifurcation Diagrams for Type IV-$\gamma_2$ with
$f(0)=0$.}\label{fig:f00BIV45+}
\end{figure}
  \fi

\begin{theorem}[Type IV-$\gamma_2$, $f'(0)>0$, see Fig.\ref{fig:f00BIV45+}]\label{thm:f=0t777new}
Let $(\varphi, f)$ be of \emph{Type IV-$\gamma_2$}. Assume
conditions \eqref{con:phi}--\eqref{con:strict2} and $f'(0)>0$ hold.
Then there exist constants $L^{**}>L^*>L_*>0$ (see
Fig.\ref{fig:gg1}) such that the following assertions hold:

(a) If $L> L^{**}$, then \eqref{eq:1dmemsfr0} has exactly one
positive solution for $\lambda\in (0, \lambda_1)$ and none for
$\lambda\in[\lambda_1,+\infty)$.

(b) If $L= L^{**}$, then there exists $\lambda^*\in (0,\lambda_1)$
such that \eqref{eq:1dmemsfr0} has exactly one positive solution for
$(0, \lambda^*)\cup(\lambda^*,\lambda_1)$ and none for
$\lambda\in\{\lambda^*\}\cup[\lambda_1,+\infty)$.

(c) If $L^*\leqslant L< L^{**}$, then there exist two numbers
$0<\lambda^{**}<\lambda^*<\lambda_1$ such that \eqref{eq:1dmemsfr0}
has exactly one positive solution for $\lambda\in  (0,
\lambda^{**})\cup(\lambda^*, \lambda_1)$ and none for
$\lambda\in[\lambda^{**},\lambda^*]\cup[\lambda_1,+\infty)$.

(d) If $L_*<L< L^*$, then there exist three numbers
$0<\lambda_{*}<\lambda^{**}<\lambda^*<\lambda_1$ such that
\eqref{eq:1dmemsfr0} has exactly one positive solutions for
$\lambda\in  (\lambda_*, \lambda^{**})\cup(\lambda^{*}, \lambda_1)$
and none for $\lambda\in  (0, \lambda_{*}]\cup[\lambda^{**},
\lambda^{*}]\cup[\lambda_1, +\infty)$.

(e) If $L\leqslant L_*$, then there exists $\lambda^*\in
(0,\lambda_1)$ such that \eqref{eq:1dmemsfr0} has exactly one
positive solutions for $\lambda\in  (\lambda^{*}, \lambda_1)$ and
none for $\lambda\in  (0, \lambda^{*}]\cup[\lambda_1, +\infty)$.

(f) $\lambda_*$ and $\lambda^*$ are strictly decreasing while
$\lambda^{**}$ is strictly increasing with respect to $L$.
\end{theorem}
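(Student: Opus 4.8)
The plan is to run the time-map machinery of Section~3 and reduce the whole statement to reading off the graph of $g$, in complete parallel with the companion result Theorem~\ref{thm:f=0t77new} for $f'(0)=0$; the only genuinely new feature is the extra truncation at $\lambda=\lambda_1$ produced by $f'(0)>0$.

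First I would recall from Section~3 that every classical positive solution of \eqref{eq:1dmemsfr0} is even about $x=0$, is determined by its height $\rho=u(0)$, and corresponds to a solution $\rho\in(0,F^{-1}(B/\lambda))$ of $T(\lambda,\rho)=L$, where $T(\lambda,\rho)=\int_0^{\rho}\frac{du}{\Phi^{-1}(\lambda(F(\rho)-F(u)))}$. Under \eqref{con:phi}--\eqref{con:strict2} this $T(\lambda,\cdot)$ is continuous and strictly decreasing on its interval of definition (the quantitative content of Theorem~\ref{thm:atmost12}), with $T(\lambda,0^+)=\tfrac{\pi}{2}\sqrt{\varphi'(0)/(\lambda f'(0))}$ (this is where $f'(0)>0$ enters; for $f'(0)=0$ the limit is $+\infty$) and $T(\lambda,F^{-1}(B/\lambda)^-)=g(\lambda)$, the latter by the definition \eqref{form:0g}. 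Hence for a given $\lambda$ there is exactly one classical positive solution iff $g(\lambda)<L<\tfrac{\pi}{2}\sqrt{\varphi'(0)/(\lambda f'(0))}$, i.e. iff $g(\lambda)<L$ and $\lambda<\lambda_1$, and none otherwise; in particular $[\lambda_1,+\infty)$ never carries a positive solution. The observation I would single out is that strict monotonicity of $T(\lambda,\cdot)$ forces $g(\lambda)<T(\lambda,0^+)$ for every $\lambda$, so $\{\lambda>0:g(\lambda)\geqslant L\}\subseteq(0,\lambda_1)$; consequently the solvability set is exactly $S_L=(0,\lambda_1)\setminus\{\lambda>0:g(\lambda)\geqslant L\}$, and any $\lambda$ with $g(\lambda)=L$ automatically satisfies $\lambda<\lambda_1$.

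Next I would invoke Definition~\ref{df:g1} (see Fig.~\ref{fig:gg1}): Type~$\gamma_2$ means $g$ is continuous on $(0,+\infty)$, $g(0^+)=:L^*\in(0,+\infty)$, $g(+\infty)=0$, and $g$ has exactly two local extreme points; since a flat piece would create infinitely many local extrema, this pins the shape to: $g$ strictly decreasing on $(0,\hat\lambda_1)$ down to its minimum $L_*:=g(\hat\lambda_1)$, strictly increasing on $(\hat\lambda_1,\hat\lambda_2)$ up to its maximum $L^{**}:=g(\hat\lambda_2)$, strictly decreasing on $(\hat\lambda_2,+\infty)$ down to $0$, with $0<L_*<L^*<L^{**}$ (the inequality $L^*<L^{**}$ being part of the definition of $\gamma_2$). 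With $L_*,L^*,L^{**}$ so defined I would then split on the position of $L$ and compute $\{g\geqslant L\}$ by intersecting the horizontal line of height $L$ with the graph of $g$: \textbf{(a)} $L>L^{**}$ gives $\{g\geqslant L\}=\emptyset$, hence $S_L=(0,\lambda_1)$; \textbf{(b)} $L=L^{**}$ gives $\{g\geqslant L\}=\{\hat\lambda_2\}$, hence $S_L=(0,\hat\lambda_2)\cup(\hat\lambda_2,\lambda_1)$ with $\lambda^*=\hat\lambda_2$; \textbf{(c)} $L^*\leqslant L<L^{**}$ gives (since $g<L^*\leqslant L$ on $(0,\hat\lambda_1]$) $\{g\geqslant L\}=[\lambda^{**},\lambda^*]$ with $\hat\lambda_1<\lambda^{**}<\hat\lambda_2<\lambda^*$; \textbf{(d)} $L_*<L<L^*$ gives (since $g>L$ near $0$) $\{g\geqslant L\}=(0,\lambda_*]\cup[\lambda^{**},\lambda^*]$ with $0<\lambda_*<\hat\lambda_1<\lambda^{**}<\hat\lambda_2<\lambda^*$; \textbf{(e)} $L\leqslant L_*$ gives $\{g\geqslant L\}=(0,\lambda^*]$ with $\hat\lambda_2<\lambda^*$. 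In each regime the named $\lambda$'s are the (unique, by strict monotonicity on each branch) roots of $g=L$ on the relevant branch, the intermediate value theorem gives $g(\lambda_\bullet)=L$ at each of them, hence $\lambda_\bullet<\lambda_1$ by the previous paragraph; subtracting from $(0,\lambda_1)$, and noting that at the endpoints of $\{g\geqslant L\}$ the infimum of $T(\lambda,\cdot)$ equals $L$ but is not attained (so no classical solution there — only the non-classical one on the gradient blow-up curve), yields exactly the "exactly one / none" lists of (a)--(e). For (f) I would use that on each of the three monotone branches the root of $g(\lambda)=L$ moves strictly monotonically with $L$, leftward on a decreasing branch and rightward on the increasing one, so $\lambda_*,\lambda^*$ (defined on decreasing branches) are strictly decreasing and $\lambda^{**}$ (defined on the increasing branch) is strictly increasing in $L$.

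The bulk of this is bookkeeping once Section~3 is available; the step I expect to matter most is the elementary but easily overlooked fact that $g(\lambda)<T(\lambda,0^+)$, which is precisely what guarantees $\{g\geqslant L\}\subseteq(0,\lambda_1)$, so that the vertical asymptote $\lambda=\lambda_1$ never truncates a solvability interval "from the interior" and the five bifurcation pictures really are those of Fig.~\ref{fig:f00BIV45+}. A secondary point to handle with care is justifying that "exactly two local extreme points" genuinely excludes intervals of constancy, which legitimises the strict branch-monotonicity used throughout; after that, the case analysis is forced.
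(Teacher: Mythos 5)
Your proposal is correct and follows essentially the same route as the paper: the paper proves this theorem by the identical time-map argument (strict decrease of $T(r,\lambda)$ in $r$ from Theorem \ref{T'<0}, the limit $\frac{\pi}{2}\sqrt{\varphi'(0)/(\lambda f'(0))}$ at $r=0$ from Proposition \ref{cor:twokind}, the endpoint value $g(\lambda)$, and the Type $\gamma_2$ shape of $g$), merely citing the proofs of Theorems \ref{thm:classical43320}--\ref{thm:classical43321} and omitting the case bookkeeping you carry out explicitly. Your extra observations (that $g(\lambda)<T(\lambda,0^+)$ forces every root of $g=L$ to lie below $\lambda_1$, and that the endpoint $r=F^{-1}(B/\lambda)$ yields only a gradient blow-up, non-classical solution) are exactly the points implicit in the paper's figures and discussion.
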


\begin{example}\label{eg:gamma2}
Let $\varphi=\varphi_3$ and $f(u)=u^7e^{12u}+u^2$ or
$f(u)=u^5e^{8u}+u$. Then $\lim_{\lambda\rightarrow +\infty}
g(\lambda)=0$ and $\lim_{\lambda\rightarrow 0}
g(\lambda)=\frac{\pi}{24}$ or $\frac{\pi}{16}$ (by Lemma
\ref{prop:g} below). Numerical simulation indicates that
$g(\lambda)$ has exactly two local extreme points in $(0, +\infty)$
and $\lim_{\lambda\rightarrow 0}g(\lambda)$ is in between the local
extreme values
(see Remark \ref{rk:g(r)} and Fig.\ref{fig:gg3} below), which
suggests that $g$ is of   Type $\gamma_1$ and both
$(\varphi_3,u^7e^{12u}+u^2)$ and $(\varphi_3,u^5e^{8u}+u)$ are of
Type IV-$\gamma_2$
\end{example}

\begin{theorem}[Type IV-$\gamma_3$, $f'(0)=0$, see Fig.\ref{fig:f00BIV45}]\label{thm:f=0t77newplus}
Let $(\varphi, f)$ be of \emph{Type IV-$\gamma_3$}. Assume
conditions \eqref{con:phi}--\eqref{con:strict2} and $f'(0)=0$ hold.
Then there exist constants $L^*>L_*>0$ (see Fig.\ref{fig:gg1}) such
that the following assertions hold:

(a) If $L> L^{*}$, then \eqref{eq:1dmemsfr0} has exactly one
positive solution for any $\lambda\in (0,+\infty)$.

(b) If $L= L^{*}$, then there exists $\lambda^*>0$ such that
\eqref{eq:1dmemsfr0} has exactly one positive solution for $(0,
\lambda^{*})\cup(\lambda^{*},+\infty)$ and none for
$\lambda=\lambda^{*}$.

(c)  If $L_*<L< L^*$, then there exist three numbers
$\lambda^*>\lambda^{**}>\lambda_*>0$ such that \eqref{eq:1dmemsfr0}
has exactly one positive solutions for $\lambda\in  (\lambda_*,
\lambda^{**})\cup(\lambda^*, +\infty)$ and none for $\lambda\in  (0,
\lambda_*]\cup[\lambda^{**}, \lambda^*]$.

(d) If $L\leqslant L_*$, then there exists $\lambda^*>0$ such that
\eqref{eq:1dmemsfr0} has exactly one positive solutions for
$\lambda\in  (\lambda^*, +\infty)$ and none for $\lambda\in  (0,
\lambda^*]$.

(e) $\lambda_*$ and $\lambda^*$ are strictly decreasing while
$\lambda^{**}$ is strictly increasing with respect to $L$.
\end{theorem}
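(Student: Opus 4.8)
The argument follows the same line as the proofs of the other Type IV theorems; I describe it for completeness. The plan is to reduce everything to the level sets of $g$.

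\emph{Step 1: reduction to the time map.} Recall the time‑map description (developed from the general properties of the time map in Section 3, and underlying Theorem \ref{thm:atmost12}): for fixed $\lambda>0$, a positive classical solution of \eqref{eq:1dmemsfr0} with $\max u=\rho$ corresponds exactly to a point $\rho\in(0,\rho^*(\lambda))$, where $\rho^*(\lambda):=F^{-1}(B/\lambda)$ is the ordinate of the gradient blow‑up curve, solving $T(\rho,\lambda)=L$ with $T(\rho,\lambda)=\int_0^\rho \Phi^{-1}\!\big(\lambda(F(\rho)-F(u))\big)^{-1}\,du$. Under \eqref{con:phi}--\eqref{con:strict2} the map $\rho\mapsto T(\rho,\lambda)$ is continuous and strictly decreasing on $(0,\rho^*(\lambda))$ (this strict monotonicity is precisely what yields the ``at most one'' conclusion of Theorem \ref{thm:atmost12}); moreover, since $f'(0)=0$ one has $T(\rho,\lambda)\to+\infty$ as $\rho\to0^+$, while, as recorded after \eqref{form:0g}, $T(\rho,\lambda)\to g(\lambda)$ as $\rho\to\rho^*(\lambda)^-$. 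Hence $T(\cdot,\lambda)$ is a strictly decreasing continuous bijection of $(0,\rho^*(\lambda))$ onto $(g(\lambda),+\infty)$, so \eqref{eq:1dmemsfr0} has exactly one positive solution when $g(\lambda)<L$ and none when $g(\lambda)\geqslant L$. (The borderline $\rho=\rho^*(\lambda)$, i.e. $g(\lambda)=L$, produces only a gradient‑blow‑up solution with $u'(\pm L)=\mp\infty$, not a classical one; this is why the relevant inequality is the strict one $g(\lambda)<L$.) Thus the theorem reduces entirely to describing $\{\lambda>0:g(\lambda)<L\}$.

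\emph{Step 2: shape of $g$ and the constants $L^*,L_*$.} Since $(\varphi,f)$ is of Type IV‑$\gamma_3$, the function $g$ is continuous on $(0,+\infty)$, $\lim_{\lambda\to0}g(\lambda)=g_0\in(0,+\infty)$, $\lim_{\lambda\to+\infty}g(\lambda)=0$, and $g$ has exactly two local extreme points in $(0,+\infty)$. Because $g\to g_0>0$ at $0$ and $g\to0$ at $+\infty$, the first extreme point is a local minimum at some $\lambda_1$ and the second a local maximum at some $\lambda_2>\lambda_1$, with $g$ strictly decreasing on $(0,\lambda_1)$, strictly increasing on $(\lambda_1,\lambda_2)$, and strictly decreasing on $(\lambda_2,+\infty)$; the defining property of Type $\gamma_3$ gives $g(\lambda_2)=g_0$. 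Put $L^*:=g_0=g(\lambda_2)$ and $L_*:=g(\lambda_1)$; then $L^*>L_*>0$, and $\sup_{\lambda>0}g(\lambda)=L^*$, attained only at $\lambda_2$ (see Fig.\ref{fig:gg1}).

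\emph{Step 3: the cases.} Combining Steps 1 and 2, a positive solution exists iff $g(\lambda)<L$. (a) If $L>L^*$ then $g(\lambda)\leqslant L^*<L$ for all $\lambda$. (b) If $L=L^*$ then $g(\lambda)<L$ for $\lambda\neq\lambda_2$ and $g(\lambda_2)=L$; take $\lambda^*:=\lambda_2$. (c) If $L_*<L<L^*$, strict monotonicity on the three branches yields unique $\lambda_*\in(0,\lambda_1)$, $\lambda^{**}\in(\lambda_1,\lambda_2)$, $\lambda^*\in(\lambda_2,+\infty)$ with $g=L$; tracing $g$ gives $\{g<L\}=(\lambda_*,\lambda^{**})\cup(\lambda^*,+\infty)$. (d) If $L\leqslant L_*$ then $g(\lambda)\geqslant L_*\geqslant L$ on $(0,\lambda_2]$, while on $(\lambda_2,+\infty)$ there is a unique $\lambda^*$ with $g(\lambda^*)=L$, so $\{g<L\}=(\lambda^*,+\infty)$. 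This establishes (a)--(d). For (e): $\lambda_*$ and $\lambda^*$ are the level‑$L$ crossings of $g$ on its two strictly decreasing branches and $\lambda^{**}$ the crossing on the strictly increasing branch, so by monotonicity of $g$ on each branch (or by differentiating $g(\lambda_*(L))=L$, etc., when $g\in C^1$) $\lambda_*,\lambda^*$ are strictly decreasing and $\lambda^{**}$ strictly increasing in $L$.

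\emph{Main obstacle.} There is no genuinely hard step internal to this theorem: the only nonroutine ingredients are the three properties of $T$ used in Step 1 — strict monotonicity in $\rho$, the blow‑up $T(\rho,\lambda)\to+\infty$ as $\rho\to0^+$ when $f'(0)=0$, and the endpoint limit $T(\rho,\lambda)\to g(\lambda)$ as $\rho\to\rho^*(\lambda)^-$ — all of which are part of the general time‑map machinery of Section 3 and are common to every Type IV result. Once these are available, the proof is essentially identical to that of Theorem \ref{thm:f=0t77new} (Type IV‑$\gamma_2$, $f'(0)=0$), with the $\gamma_2$‑shape of $g$ replaced throughout by the $\gamma_3$‑shape of Step 2; the only care needed is the bookkeeping of the half‑open level sets of $g$.
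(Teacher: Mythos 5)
Your proposal is correct and follows essentially the same route as the paper: reduce via the time map (strict monotonicity from Theorem \ref{T'<0}, the limit $T\to+\infty$ as $r\to0$ from Proposition \ref{cor:twokind}, and the endpoint value $g(\lambda)$) to the level sets $\{g(\lambda)<L\}$, and then read off the cases from the Type $\gamma_3$ shape of $g$. The paper in fact omits the details for this theorem, citing the analogous Type IV-$\beta_0$ argument, and your write-up is precisely that omitted argument, including the correct exclusion of the borderline $g(\lambda)=L$ (gradient blow-up, non-classical) case.
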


  \ifpdf
\begin{figure}
\centering
\includegraphics[totalheight=8.3in]{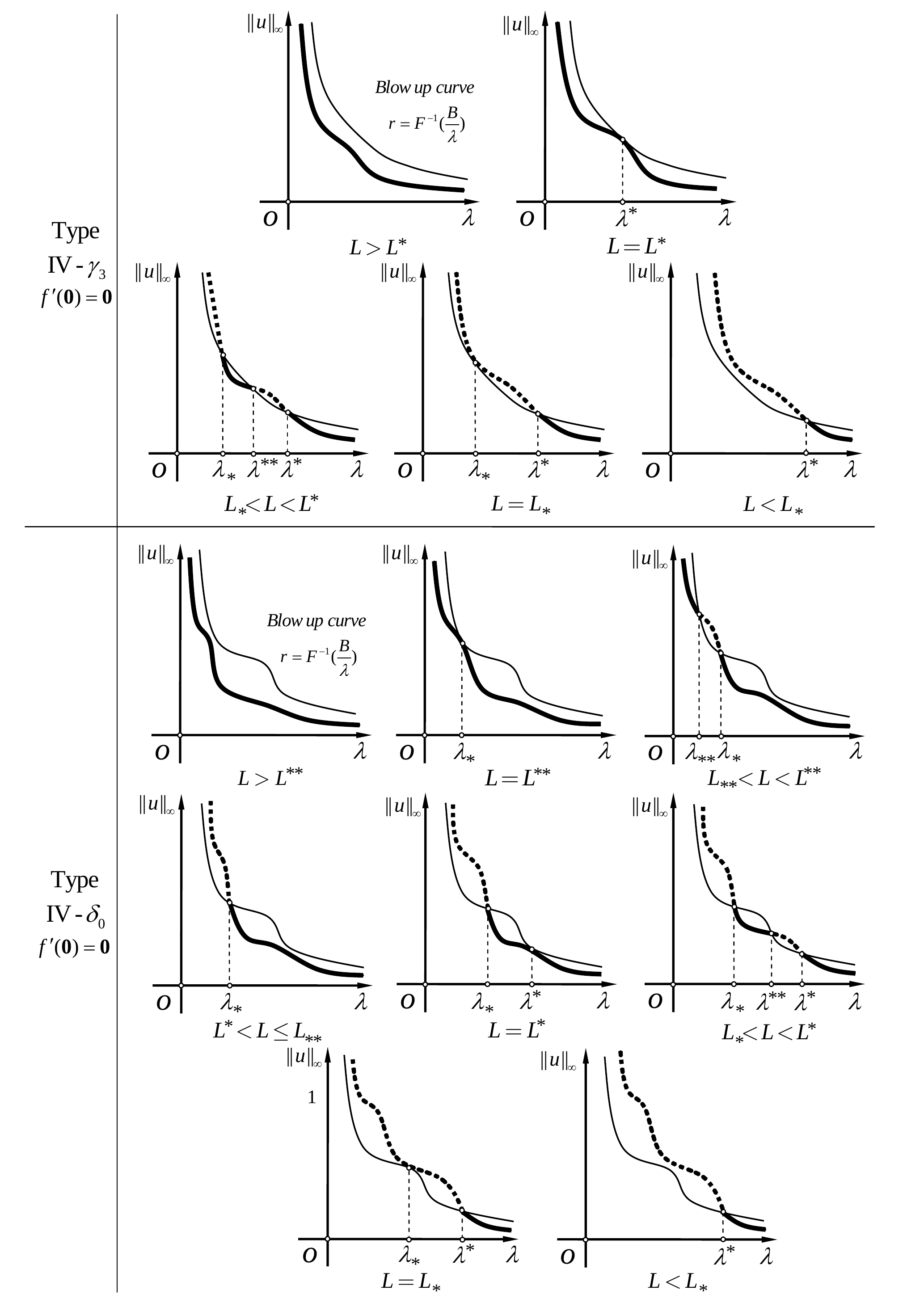}
\caption{Bifurcation Diagrams for Types IV-$\gamma_3$ and
IV-$\delta_0$ with $f(0)=0$ and $f'(0)=0$.}\label{fig:f00BIV45}
\end{figure}
\begin{figure}
\centering
\includegraphics[totalheight=8.3in]{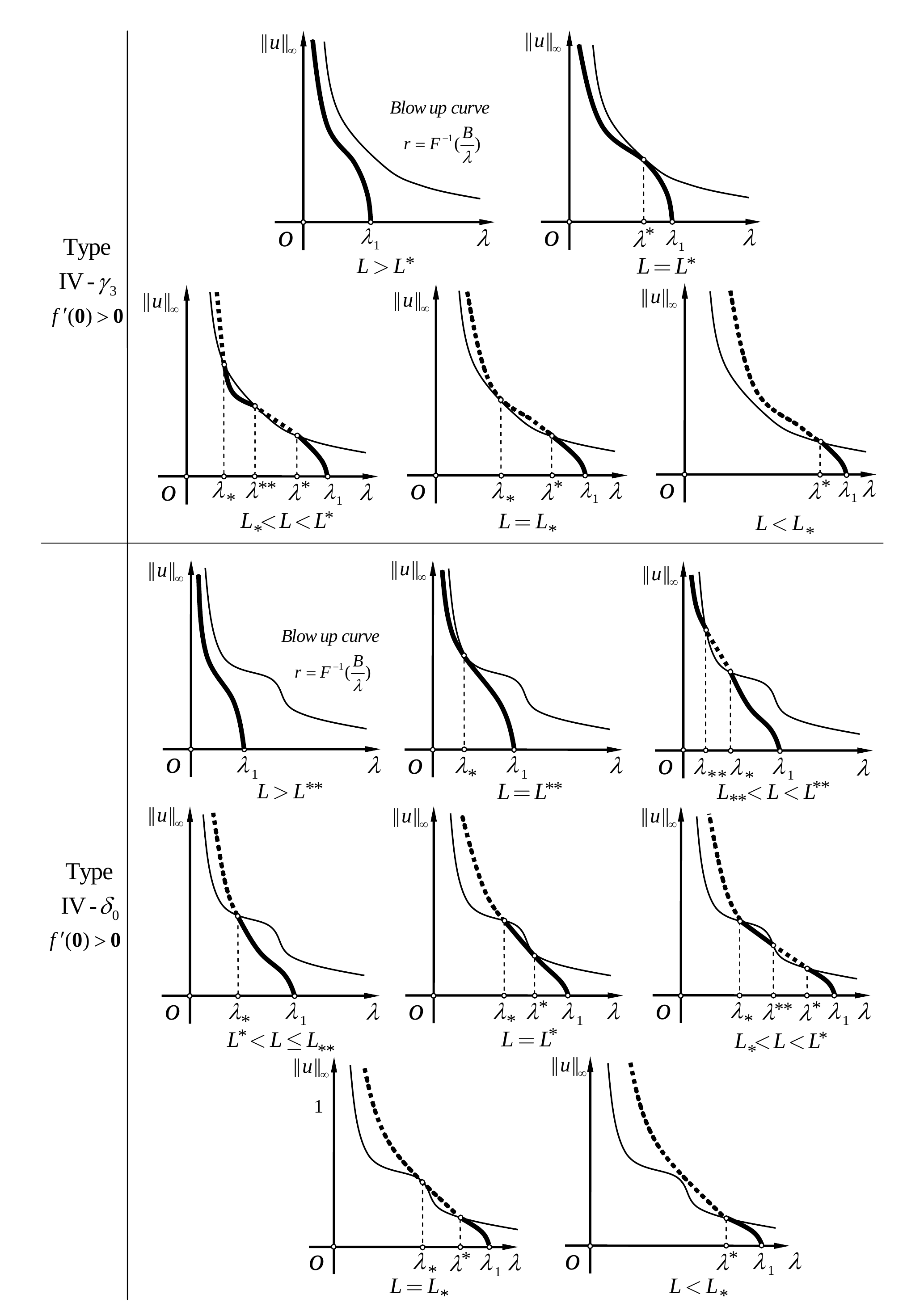}
\caption{Bifurcation Diagrams for Types IV-$\gamma_3$ and
IV-$\delta_0$ with $f(0)=0$ and $f'(0)>0$.}\label{fig:f00BIV5}
\end{figure}
  \fi

\begin{theorem}[Type IV-$\gamma_3$, $f'(0)>0$, see Fig.\ref{fig:f00BIV5}]\label{thm:f=0t777newplus}
Let $(\varphi, f)$ be of \emph{Type IV-$\gamma_3$}. Assume
conditions \eqref{con:phi}--\eqref{con:strict2} and $f'(0)>0$ hold.
Then there exist constants $L^*>L_*>0$ (see Fig.\ref{fig:gg1}) such
that the following assertions hold:

(a) If $L> L^{*}$, then \eqref{eq:1dmemsfr0} has exactly one
positive solution for $\lambda\in (0, \lambda_1)$ and none for
$\lambda\in[\lambda_1,+\infty)$.

(b) If $L= L^{*}$, then there exists $\lambda^*\in (0,\lambda_1)$
such that \eqref{eq:1dmemsfr0} has exactly one positive solution for
$(0, \lambda^*)\cup(\lambda^*,\lambda_1)$ and none for
$\lambda\in\{\lambda^*\}\cup[\lambda_1,+\infty)$.

(c) If $L_*<L< L^*$, then there exist three numbers
$0<\lambda_{*}<\lambda^{**}<\lambda^*<\lambda_1$ such that
\eqref{eq:1dmemsfr0} has exactly one positive solutions for
$\lambda\in  (\lambda_*, \lambda^{**})\cup(\lambda^{*}, \lambda_1)$
and none for $\lambda\in  (0, \lambda_{*}]\cup[\lambda^{**},
\lambda^{*}]\cup[\lambda_1, +\infty)$.

(d) If $L\leqslant L_*$, then there exists $\lambda^*\in
(0,\lambda_1)$ such that \eqref{eq:1dmemsfr0} has exactly one
positive solutions for $\lambda\in  (\lambda^{*}, \lambda_1)$ and
none for $\lambda\in  (0, \lambda^{*}]\cup[\lambda_1, +\infty)$.

(e) $\lambda_*$ and $\lambda^*$ are strictly decreasing while
$\lambda^{**}$ is strictly increasing with respect to $L$.
\end{theorem}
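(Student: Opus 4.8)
The plan is to run the time-map reduction developed in Section~3 and then read off the four diagrams from the prescribed shape of $g$. Fix $\lambda>0$. A positive classical solution of \eqref{eq:1dmemsfr0} with $u(0)=\rho$ exists if and only if $T_\lambda(\rho)=L$, where $T_\lambda(\rho)=\int_{0}^{\rho}\frac{1}{\Phi^{-1}\bigl(\lambda(F(\rho)-F(u))\bigr)}\,du$ is the time map of Section~3, defined for $\rho\in\bigl(0,F^{-1}(B/\lambda)\bigr)$. By the properties of the time map established in Section~3 (which in particular yield Theorem~\ref{thm:atmost12}), under \eqref{con:phi}--\eqref{con:strict2} the map $\rho\mapsto T_\lambda(\rho)$ is continuous and strictly decreasing on its interval of definition, with $\lim_{\rho\to0^{+}}T_\lambda(\rho)=\tfrac{\pi}{2}\sqrt{\varphi'(0)/(\lambda f'(0))}=:h(\lambda)$ (this is where $f'(0)>0$ enters; $h$ is continuous, strictly decreasing, and $h(\lambda_1)=L$) and $\lim_{\rho\to F^{-1}(B/\lambda)^{-}}T_\lambda(\rho)=g(\lambda)$. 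Hence $T_\lambda$ is a strictly decreasing bijection of its domain onto the open interval $(g(\lambda),h(\lambda))$, so \eqref{eq:1dmemsfr0} has exactly one positive solution when $g(\lambda)<L<h(\lambda)$ and none otherwise. Since $h(\lambda)>L\iff\lambda<\lambda_1$, the theorem reduces to describing the set $S_L:=\{\lambda\in(0,\lambda_1):g(\lambda)<L\}$.

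Two elementary remarks make this transparent. (i) The domain $\bigl(0,F^{-1}(B/\lambda)\bigr)$ is a nonempty interval on which $T_\lambda$ is strictly monotone, so $g(\lambda)<h(\lambda)$ for \emph{every} $\lambda>0$; consequently, if $g(\lambda_0)=L$ then $L<h(\lambda_0)$, i.e.\ $\lambda_0<\lambda_1$. (ii) By Lemma~\ref{prop:g} and Section~3, $(\varphi,f)$ being of Type IV-$\gamma_3$ means that $g$ is continuous with $\lim_{\lambda\to+\infty}g(\lambda)=0$, $\lim_{\lambda\to0^{+}}g(\lambda)=:L^{*}\in(0,+\infty)$, and exactly two local extreme points $0<a<b$; the definition of Type $\gamma_3$ then forces $g$ to be strictly decreasing on $(0,a)$ from $L^{*}$ to the local minimum value $L_{*}:=g(a)$, strictly increasing on $(a,b)$ from $L_{*}$ back to $g(b)=L^{*}$, and strictly decreasing on $(b,+\infty)$ from $L^{*}$ to $0$, with $0<L_{*}<L^{*}$. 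In particular $\sup_{(0,+\infty)}g=L^{*}$, attained only at $\lambda=b$, and the numbers $a,b,L_{*},L^{*}$ do not depend on $L$.

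It then remains to compute $S_L$ in the four ranges of $L$, and I identify $L_*$ and $L^*$ above with the constants in the statement. If $L>L^{*}$, then $g(\lambda)\le L^{*}<L$ everywhere, so $S_L=(0,\lambda_1)$: this is (a). If $L=L^{*}$, then $\{g<L\}=(0,+\infty)\setminus\{b\}$ and, by (i), $b<\lambda_1$, so $S_L=(0,b)\cup(b,\lambda_1)$ with $\lambda^{*}:=b$: this is (b). If $L_{*}<L<L^{*}$, then $g$ meets the level $L$ exactly once on each monotone branch, at $\lambda_{*}\in(0,a)$, $\lambda^{**}\in(a,b)$, $\lambda^{*}\in(b,+\infty)$ with $\lambda_{*}<\lambda^{**}<\lambda^{*}$, so $\{g<L\}=(\lambda_{*},\lambda^{**})\cup(\lambda^{*},+\infty)$; since $g(\lambda^{*})=L$, (i) gives $\lambda^{*}<\lambda_1$, whence $S_L=(\lambda_{*},\lambda^{**})\cup(\lambda^{*},\lambda_1)$: this is (c). If $L\le L_{*}$, then $g(\lambda)\ge L_{*}\ge L$ on $(0,b]$ while $g$ decreases strictly from $L^{*}$ to $0$ on $(b,+\infty)$, crossing $L$ once at $\lambda^{*}>b$ with $\lambda^{*}<\lambda_1$ by (i), so $S_L=(\lambda^{*},\lambda_1)$: this is (d). Finally (e) is immediate: $\lambda_{*}$ and $\lambda^{*}$ are the values at $L$ of the (strictly decreasing) inverses of $g$ restricted to the branches $(0,a)$ and $(b,+\infty)$, hence strictly decreasing in $L$, while $\lambda^{**}$ is the value at $L$ of the inverse of $g|_{(a,b)}$, hence strictly increasing in $L$.

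The mathematical substance sits entirely in Section~3: the strict monotonicity of $\rho\mapsto T_\lambda(\rho)$ (essentially Theorem~\ref{thm:atmost12}), the two boundary limits of $T_\lambda$, and the asymptotics of $g$ in Lemma~\ref{prop:g}. Granting these, the present argument is bookkeeping; the only slightly delicate point is the pointwise inequality $g<h$, which is exactly what pins the cut-points $\lambda_{*},\lambda^{**},\lambda^{*}$ strictly below $\lambda_1$ --- equivalently, it shows that the $f'(0)>0$ diagrams are obtained from the $f'(0)=0$ diagrams of Theorem~\ref{thm:f=0t77newplus} by simply deleting the tail $[\lambda_1,+\infty)$, on which no solution survives. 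I do not anticipate any genuine obstacle beyond carefully matching these intervals with the statement.
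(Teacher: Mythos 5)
Your proposal is correct and follows essentially the same route as the paper: the time-map reduction with $T'(r)<0$ (Theorem \ref{T'<0}), the limit $\lim_{r\to0}T(r,\lambda)=\frac{\pi}{2}\sqrt{\varphi'(0)/(\lambda f'(0))}$ from Proposition \ref{cor:twokind}, and the Type $\gamma_3$ shape of $g$ to read off the solution count in each range of $L$ (the paper proves this case by declaring it analogous to Theorems \ref{thm:classical43320}--\ref{thm:classical43321} and the figures). Your explicit observation that $g(\lambda)<h(\lambda)$ pointwise, which places all cut-points strictly below $\lambda_1$, is exactly the fact the paper uses implicitly via $\lim_{r\to0}T(r,\lambda_1)=L$.
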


\begin{example}\label{eg:gamma3}
Let $\varphi=\varphi_3$. By Lemma \ref{prop:g} and a comparison
between Examples \ref{eg:gamma1} and \ref{eg:gamma2} (also see
Figs.\ref{fig:gg2} and \ref{fig:gg3}), we conjecture that there
exist $k_1\in (1,12)$ and $k_2\in (1,8)$ such that
$(\varphi_3,u^7e^{k_1u}+u^2)$ and $(\varphi_3,u^5e^{k_2u}+u)$ are of
Type IV-$\gamma_3$, i.e., $g$ has exactly two local extreme points
and the local maximum value is equal to $\lim_{\lambda\rightarrow
0}g(\lambda)$.
\end{example}


\begin{theorem}[Type IV-$\delta_0$, $f'(0)=0$, see Fig.\ref{fig:f00BIV45}]\label{thm:f=0t8}
Let $(\varphi, f)$ be of \emph{Type IV-$\delta_0$}. Assume
conditions \eqref{con:phi}--\eqref{con:strict2} and $f'(0)=0$ hold.
Then there exist constants $L^{**}>L_{**}>L^*>L_*>0$ (see
Fig.\ref{fig:gg1}) such that the following assertions hold:

(a) If $L> L^{**}$, then \eqref{eq:1dmemsfr0} has exactly one
positive solution for any $\lambda\in (0,+\infty)$.

(b) If $L= L^{**}$, then there exists $\lambda_*>0$ such that
\eqref{eq:1dmemsfr0} has exactly one positive solution for $(0,
\lambda_{*})\cup(\lambda_{*},+\infty)$ and none for
$\lambda=\lambda_{*}$.

(c) If $L_{**}<L< L^{**}$, then there exist two numbers
$\lambda_*>\lambda_{**}>0$ such that \eqref{eq:1dmemsfr0} has
exactly one positive solutions for $\lambda\in  (0,
\lambda_{**})\cup(\lambda_*, +\infty)$ and none for $\lambda\in
[\lambda_{**}, \lambda_*]$.

(d) If $L^{*}<L\leqslant L_{**}$, then there exists $\lambda_*>0$
such that \eqref{eq:1dmemsfr0} has exactly one positive solutions
for $\lambda\in  (\lambda_*, +\infty)$ and none for $\lambda\in  (0,
\lambda_*]$.

(e) If $L= L^*$, then there exist two numbers
$\lambda^*>\lambda_*>0$ such that \eqref{eq:1dmemsfr0} has exactly
one positive solutions for $\lambda\in  (\lambda_*,
\lambda^{*})\cup(\lambda^*, +\infty)$ and none for $\lambda\in  (0,
\lambda_*]\cup\{\lambda^*\}$.

(f) If $L_*<L< L^*$, then there exist three numbers
$\lambda^*>\lambda^{**}>\lambda_*>0$ such that \eqref{eq:1dmemsfr0}
has exactly one positive solutions for $\lambda\in  (\lambda_*,
\lambda^{**})\cup(\lambda^*, +\infty)$ and none for $\lambda\in  (0,
\lambda_*]\cup[\lambda^{**}, \lambda^*]$.

(g) If $L\leqslant L_*$, then there exists $\lambda^*>0$ such that
\eqref{eq:1dmemsfr0} has exactly one positive solutions for
$\lambda\in  (\lambda^*, +\infty)$ and none for $\lambda\in  (0,
\lambda^*]$.

(h) $\lambda_*$ and $\lambda^*$ are strictly decreasing while
$\lambda_{**}$ and $\lambda^{**}$ are strictly increasing with
respect to $L$.
\end{theorem}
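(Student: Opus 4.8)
The plan is to follow the same route as for the other cases in this section: first translate the number of classical positive solutions of \eqref{eq:1dmemsfr0} into the geometry of the horizontal line $y=L$ against the graph of $g$ by means of the time map, and then read off the seven regimes (a)--(g) from the known shape of $g$ for Type~$\delta_0$. For the first step I would recall that, with $\lambda>0$ fixed, a classical positive solution $u$ is even about $x=0$, decreasing on $(0,L)$, and determined by $\rho:=u(0)=\max u$; from the first integral $\Phi(u')+\lambda F(u)\equiv\lambda F(\rho)$, separating variables shows that such a $u$ exists --- and is then unique --- iff some $\rho\in\bigl(0,F^{-1}(B/\lambda)\bigr)$ satisfies
\[
  T(\rho,\lambda):=\int_{0}^{\rho}\frac{du}{\Phi^{-1}\bigl(\lambda(F(\rho)-F(u))\bigr)}=L .
\]
I would then invoke from Section~3 that $T(\cdot,\lambda)$ is continuous, that $T(\rho,\lambda)\to+\infty$ as $\rho\to0^{+}$ (this is where the hypothesis $f'(0)=0$ enters) and $T(\rho,\lambda)\to g(\lambda)$ as $\rho\to F^{-1}(B/\lambda)^{-}$, and that, by Theorem~\ref{thm:atmost12}, $T(\cdot,\lambda)$ is injective, hence strictly decreasing with range exactly $\bigl(g(\lambda),+\infty\bigr)$. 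Consequently \eqref{eq:1dmemsfr0} has exactly one classical positive solution when $g(\lambda)<L$ and none when $g(\lambda)\geqslant L$ (at $g(\lambda)=L$ only the gradient-blow-up solution of Remark~\ref{rk:varioussolutions} survives). So it suffices to describe the super-level set $N_L:=\{\lambda>0:\,g(\lambda)\geqslant L\}$ for each $L>0$.

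By Definition~\ref{df:g2} and the shape of Type~$\delta_0$ drawn in Fig.\ref{fig:gg1} (cf.\ \cite{Pan2014a}), $g$ is continuous on $(0,+\infty)$ with $\lim_{\lambda\to+\infty}g(\lambda)=0$ and $\lim_{\lambda\to0^{+}}g(\lambda)=c\in(0,+\infty)$ (the value $c$ being obtainable from Lemma~\ref{prop:g}), and $g$ has exactly three critical points $0<\lambda_A<\lambda_B<\lambda_C$ --- a local maximum $g(\lambda_A)=M_1$, a local minimum $g(\lambda_B)=m_1$, a local maximum $g(\lambda_C)=M_2$ --- with $m_1<M_2<c<M_1$; so $g$ is strictly monotone on the four arcs $(0,\lambda_A)$, $(\lambda_A,\lambda_B)$, $(\lambda_B,\lambda_C)$, $(\lambda_C,+\infty)$, being increasing, decreasing, increasing, decreasing respectively, and $\sup g=M_1$. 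I would put $L^{**}:=M_1$, $L_{**}:=c$, $L^{*}:=M_2$, $L_{*}:=m_1$; these depend only on $(\varphi,f)$ and satisfy $L^{**}>L_{**}>L^{*}>L_{*}>0$. A routine case analysis, intersecting $y=L$ with the four monotone arcs, then produces $N_L$ in the seven ranges of $L$ determined by $L_{*}<L^{*}<L_{**}<L^{**}$, hence (a)--(g). For instance, when $L_{**}<L<L^{**}$ one has $g<L$ near $0$ (since $c<L$), a unique upward crossing $\lambda_{**}\in(0,\lambda_A)$, a unique downward crossing $\lambda_{*}\in(\lambda_A,\lambda_B)$ (because $g$ drops below $m_1<L$ there), and no further crossing since $M_2<L$; thus $N_L=[\lambda_{**},\lambda_{*}]$, which is exactly assertion (c). The other six ranges go the same way: $N_L=\emptyset$ for $L>L^{**}$ (giving (a)); $N_L=\{\lambda_A\}$ for $L=L^{**}$ ((b)); $N_L=(0,\lambda_{*}]$ with $\lambda_{*}\in(\lambda_A,\lambda_B)$ for $L^{*}<L\leqslant L_{**}$ ((d)); $N_L=(0,\lambda_{*}]\cup\{\lambda_C\}$ for $L=L^{*}$, the right-hand maximum grazing the line ((e)); $N_L=(0,\lambda_{*}]\cup[\lambda^{**},\lambda^{*}]$ for $L_{*}<L<L^{*}$, with $\lambda^{**}\in(\lambda_B,\lambda_C)$, $\lambda^{*}\in(\lambda_C,+\infty)$ ((f)); and $N_L=(0,\lambda^{*}]$ for $L\leqslant L_{*}$ ((g)). Uniqueness of the crossing points and the open/closed description of the $N_L$-intervals come from strict monotonicity of $g$ on the open arcs (its only critical points being $\lambda_A,\lambda_B,\lambda_C$) and from attainment of each local extreme value at a single point; at the four thresholds $L\in\{L^{**},L_{**},L^{*},L_{*}\}$ one uses $\lim_{\lambda\to0^{+}}g(\lambda)=c$ to check whether $N_L$ gains or loses exactly an isolated point or an initial segment.

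Part~(h) is then immediate and the obstacles are few. Since $g$ does not depend on $L$, the numbers $L^{**},L_{**},L^{*},L_{*}$ are fixed, and in every regime $\lambda_{**},\lambda_{*},\lambda^{**},\lambda^{*}$ are the unique preimages of $L$ under $g$ restricted, respectively, to the arcs $(0,\lambda_A)$, $(\lambda_A,\lambda_B)$, $(\lambda_B,\lambda_C)$, $(\lambda_C,+\infty)$; the inverse of $g$ on an increasing arc is increasing and on a decreasing arc decreasing, so $\lambda_{**},\lambda^{**}$ are strictly increasing in $L$ and $\lambda_{*},\lambda^{*}$ strictly decreasing, which is (h). The only genuinely non-bookkeeping inputs are the time-map facts quoted in the first step --- the blow-up $T(\rho,\lambda)\to+\infty$ as $\rho\to0^{+}$ forced by $f'(0)=0$, the continuity and (via Theorem~\ref{thm:atmost12}) strict monotonicity of $T(\cdot,\lambda)$, and the identification of its right-endpoint limit with $g(\lambda)$ --- all developed in Section~3; granting these, Theorem~\ref{thm:f=0t8} is a verification against the prescribed graph of $g$. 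The one point to be careful about is that only classical ($C^{2}[-L,L]$) solutions are counted, so at $\lambda$ with $g(\lambda)=L$ one must be sure the solution genuinely degenerates to the gradient-blow-up type and is therefore excluded --- which is precisely why the $N_L$-intervals are taken closed at the relevant endpoints.
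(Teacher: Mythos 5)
Your proposal is correct and follows essentially the same route as the paper: reduce counting classical solutions to comparing $L$ with $g(\lambda)$ via the time map (strictly decreasing by Theorem~\ref{T'<0}, blowing up at $r\to0$ by Proposition~\ref{cor:twokind} since $f'(0)=0$, with right-endpoint value $g(\lambda)$ and the blow-up point excluded from the classical count), and then read off (a)--(h) from the max--min--max shape of the Type~$\delta_0$ graph of $g$ with $m_1<M_2<c<M_1$ — exactly the argument the paper sketches for Type IV-$\beta_0$ and declares applicable verbatim here. Your only deviation is cosmetic: you deduce strict monotonicity of $T(\cdot,\lambda)$ from injectivity via Theorem~\ref{thm:atmost12} instead of quoting $T'<0$ directly, which is equivalent.
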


\begin{theorem}[Type IV-$\delta_0$, $f'(0)>0$, see Fig.\ref{fig:f00BIV5}]\label{thm:f=0t9}
Let $(\varphi, f)$ be of \emph{Type IV-$\delta_0$}. Assume
conditions \eqref{con:phi}--\eqref{con:strict2} and $f'(0)>0$ hold.
Then there exist constants $L^{**}>L_{**}>L^*>L_*>0$ (see
Fig.\ref{fig:gg1}) such that the following assertions hold:

(a) If $L> L^{**}$, then \eqref{eq:1dmemsfr0} has exactly one
positive solution for $\lambda\in (0, \lambda_1)$ and none for
$\lambda\in[\lambda_1,+\infty)$.

(b) If $L= L^{**}$, then there exists $\lambda_*\in (0,\lambda_1)$
such that \eqref{eq:1dmemsfr0} has exactly one positive solution for
$(0, \lambda_*)\cup(\lambda_*,\lambda_1)$ and none for
$\lambda\in\{\lambda_*\}\cup[\lambda_1,+\infty)$.

(c) If $L_{**}<L< L^{**}$, then there exist two numbers
$0<\lambda_{**}<\lambda_*<\lambda_1$ such that \eqref{eq:1dmemsfr0}
has exactly one positive solution for $\lambda\in  (0,
\lambda_{**})\cup(\lambda_*, \lambda_1)$ and none for
$\lambda\in[\lambda_{**},\lambda_*]\cup[\lambda_1,+\infty)$.

(d) If $L^{*}<L\leqslant L_{**}$, then there exists $\lambda_*\in
(0,\lambda_1)$ such that \eqref{eq:1dmemsfr0} has exactly one
positive solutions for $\lambda\in  (\lambda_*, \lambda_1)$ and none
for $\lambda\in  (0, \lambda_*]\cup[\lambda_1, +\infty)$.

(e) If $L= L^*$, then there exist two numbers
$0<\lambda_{*}<\lambda^*<\lambda_1$ such that \eqref{eq:1dmemsfr0}
has exactly one positive solutions for $\lambda\in  (\lambda_*,
\lambda^{*})\cup(\lambda^{*}, \lambda_1)$ and none for $\lambda\in
(0, \lambda_{*}]\cup\{\lambda^{*}\}\cup[\lambda_1, +\infty)$.

(f) If $L_*<L< L^*$, then there exist three numbers
$0<\lambda_{*}<\lambda^{**}<\lambda^*<\lambda_1$ such that
\eqref{eq:1dmemsfr0} has exactly one positive solutions for
$\lambda\in  (\lambda_*, \lambda^{**})\cup(\lambda^{*}, \lambda_1)$
and none for $\lambda\in  (0, \lambda_{*}]\cup[\lambda^{**},
\lambda^{*}]\cup[\lambda_1, +\infty)$.


(g) If $L\leqslant L_*$, then there exists $\lambda^*\in
(0,\lambda_1)$ such that \eqref{eq:1dmemsfr0} has exactly one
positive solutions for $\lambda\in  (\lambda^{*}, \lambda_1)$ and
none for $\lambda\in  (0, \lambda^{*}]\cup[\lambda_1, +\infty)$.

(h) $\lambda_*$ and $\lambda^*$ are strictly decreasing while
$\lambda^{**}$ are strictly increasing with respect to $L$.
\end{theorem}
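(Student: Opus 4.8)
The plan is to reduce the theorem, exactly as in \cite{Pan2014a}, to the shape of the function $g$ via the time map, and then to extract the bifurcation diagrams by an elementary analysis of the level sets of $g$. \emph{Step 1 (time-map reduction).} Every positive solution of \eqref{eq:1dmemsfr0} is even, strictly increasing on $(-L,0)$, and attains its maximum $\rho:=u(0)=\|u\|_\infty$ at $x=0$; the first integral $\Phi(u'(x))=\lambda\bigl(F(\rho)-F(u(x))\bigr)$ forces $\rho\in\bigl(0,F^{-1}(B/\lambda)\bigr)$ for a classical solution and gives
\[
T(\lambda,\rho):=\int_{0}^{\rho}\frac{du}{\Phi^{-1}\bigl(\lambda(F(\rho)-F(u))\bigr)}=L.
\]
Hence the number of classical positive solutions at a parameter value $\lambda$ equals the number of $\rho\in\bigl(0,F^{-1}(B/\lambda)\bigr)$ with $T(\lambda,\rho)=L$. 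I would then invoke the general properties of $T$ developed in Section 3: under \eqref{con:phi}--\eqref{con:strict2}, $T(\lambda,\cdot)$ is continuous and strictly decreasing on $\bigl(0,F^{-1}(B/\lambda)\bigr)$ (this strict monotonicity underlies Theorem \ref{thm:atmost12}), with $\lim_{\rho\to0^+}T(\lambda,\rho)=T_0(\lambda):=\tfrac{\pi}{2}\sqrt{\varphi'(0)/(\lambda f'(0))}$ (here the hypothesis $f'(0)>0$ is used) and $\lim_{\rho\uparrow F^{-1}(B/\lambda)}T(\lambda,\rho)=g(\lambda)$, with $g$ as in \eqref{form:0g}.

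\emph{Step 2 (the local count).} Since $T(\lambda,\cdot)$ decreases strictly from $T_0(\lambda)$ down to $g(\lambda)$, the equation $T(\lambda,\rho)=L$ has a (necessarily unique) solution in $\bigl(0,F^{-1}(B/\lambda)\bigr)$ precisely when $g(\lambda)<L<T_0(\lambda)$, that is, precisely when $g(\lambda)<L$ and $\lambda<\lambda_1$ (because $T_0$ is strictly decreasing with $T_0(\lambda_1)=L$). Moreover $g(\lambda)<T_0(\lambda)$ for every $\lambda>0$, so any $\lambda$ solving $g(\lambda)=L$ automatically satisfies $\lambda<\lambda_1$. Therefore the set of $\lambda$ for which \eqref{eq:1dmemsfr0} has a classical positive solution is $\{\lambda>0:g(\lambda)<L\}\cap(0,\lambda_1)$, this solution is unique, no classical positive solution exists for $\lambda\in[\lambda_1,+\infty)$, and intersecting with $(0,\lambda_1)$ only truncates the rightmost (unbounded) component of $\{g<L\}$ at $\lambda_1$.

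\emph{Step 3 (bookkeeping from the shape of $g$, and monotonicity).} For $(\varphi,f)$ of Type IV-$\delta_0$ the function $g$ on $(0,+\infty)$ is continuous, satisfies $g(0^+)=\ell_0\in(0,+\infty)$ and $g(+\infty)=0$, and is strictly increasing up to a global maximum $M_1$, then strictly decreasing down to a local minimum $m_1$, then strictly increasing up to a second local maximum $M_2$, then strictly decreasing to $0$, with $0<m_1<M_2<\ell_0<M_1$; thus the four relevant levels are exactly $L_*=m_1<L^*=M_2<L_{**}=\ell_0<L^{**}=M_1$ (these limits and monotone branches being provided by Section 3 and Lemma \ref{prop:g}). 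In each of the seven regimes $L>L^{**}$, $L=L^{**}$, $L_{**}<L<L^{**}$, $L^*<L\leqslant L_{**}$, $L=L^*$, $L_*<L<L^*$, $L\leqslant L_*$, I would write $\{g<L\}$ as an explicit union of intervals (using $g\to0$ at $+\infty$ and the fact that the finitely many solutions of $g(\lambda)=L$ lie on the corresponding strictly monotone branches), intersect with $(0,\lambda_1)$ as in Step 2, and read off exactly assertions (a)--(g). For (h): in the regime where it occurs, each of $\lambda_*,\lambda^*,\lambda^{**}$ is the unique root of $g(\lambda)=L$ on a fixed, $L$-independent, strictly monotone branch of $g$; solving that relation shows $\lambda_*$ and $\lambda^*$ (roots on strictly decreasing branches) are strictly decreasing in $L$, while $\lambda^{**}$ (a root on a strictly increasing branch) is strictly increasing in $L$.

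\emph{Main obstacle.} The real content sits in the time-map machinery of Steps 1--2 and in the shape of $g$ used in Step 3 --- that Type IV-$\delta_0$ genuinely produces exactly three critical points with $m_1<M_2<\ell_0<M_1$; the former is developed once and for all in Section 3, and the latter is precisely the standing hypothesis of the theorem (and is the object of separate analytic/numerical investigation, cf. \cite{Pan2014c,Pan2014a}). Granting these, the only genuine care required is the interval bookkeeping of Step 3 and --- the one place where $f'(0)>0$ alters the analysis relative to the $f'(0)=0$ statement in Theorem \ref{thm:f=0t8} --- the observation in Step 2 that all roots of $g(\lambda)=L$ lie to the left of $\lambda_1$, which is immediate from the strict inequality $g<T_0$.
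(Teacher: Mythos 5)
Your proposal is correct and follows essentially the same route as the paper: the time-map reduction with $T(\lambda,\cdot)$ strictly decreasing (Theorem \ref{T'<0}), the endpoint limits $T_0(\lambda)=\frac{\pi}{2}\sqrt{\varphi'(0)/(\lambda f'(0))}$ (Proposition \ref{cor:twokind}) and $g(\lambda)$ at $r^*$, and then reading the diagrams off the Type $\delta_0$ shape of $g$ — which is exactly how the paper argues for Type IV-$\beta_0$ and then declares the $\delta_0$ case "similar" and omits it. Your explicit criterion $g(\lambda)<L<T_0(\lambda)$ (hence $\{g<L\}\cap(0,\lambda_1)$, with all roots of $g=L$ automatically below $\lambda_1$) and the identification $L_*=m_1<L^*=M_2<L_{**}=\ell_0<L^{**}=M_1$ correctly supply the bookkeeping the paper leaves implicit.
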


  \ifpdf
\begin{figure}
\centering
\includegraphics[totalheight=8.3in]{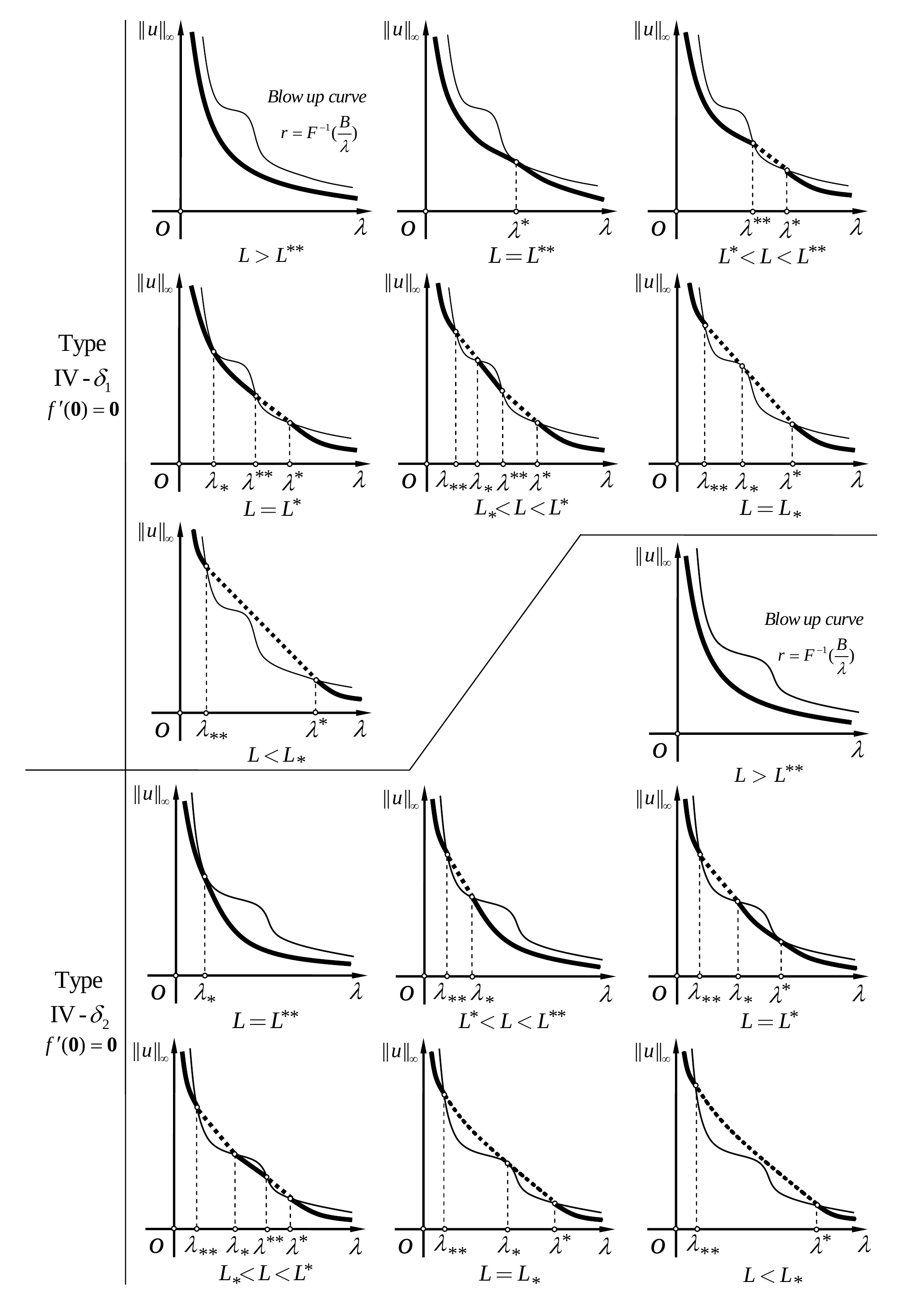}
\caption{Bifurcation Diagrams for Types IV-$\delta_1$ and
IV-$\delta_2$ with $f(0)=0$ and $f'(0)=0$.}\label{fig:f00BIV6}
\end{figure}
\begin{figure}
\centering
\includegraphics[totalheight=8.3in]{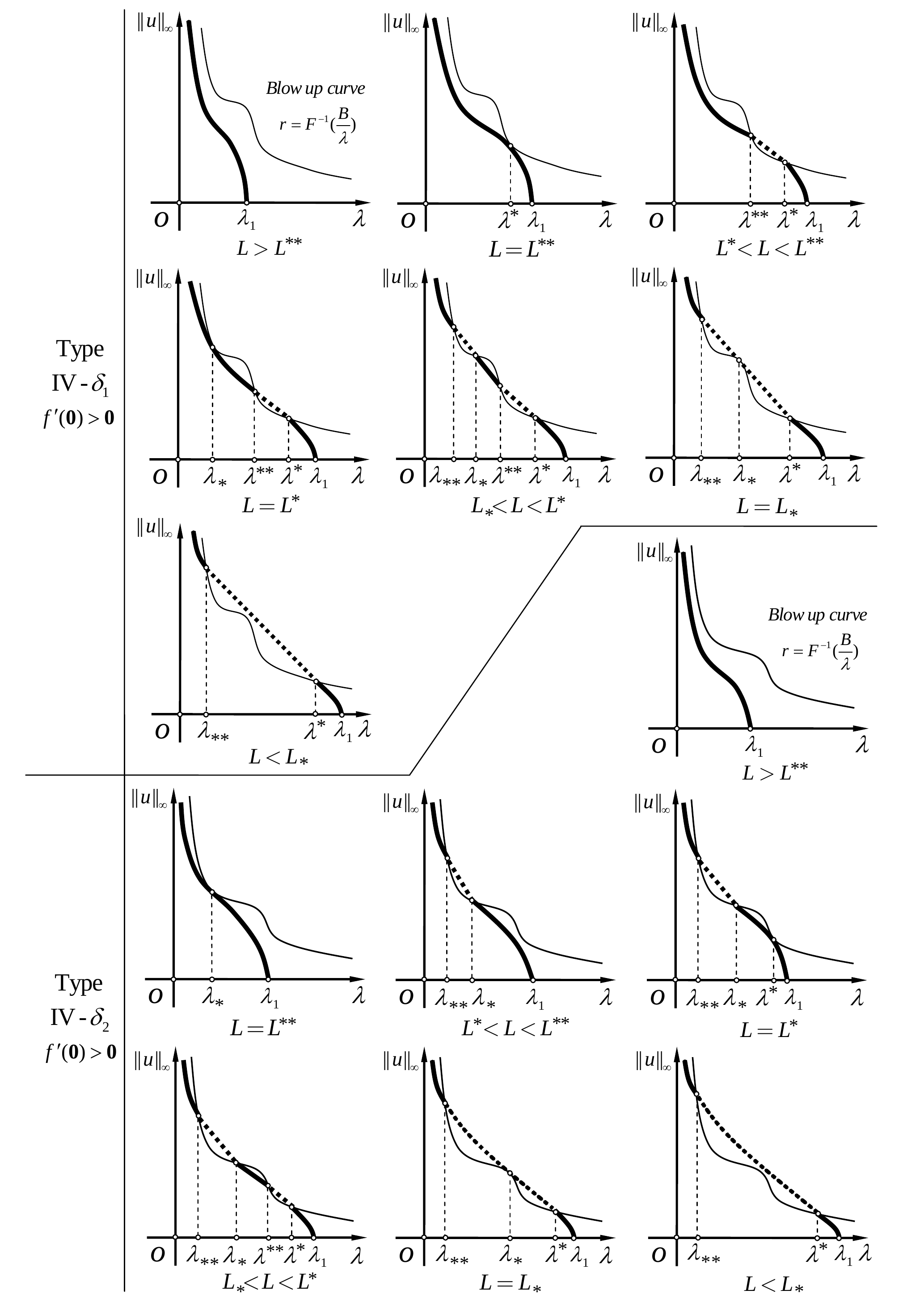}
\caption{Bifurcation Diagrams for Types IV-$\delta_1$ and
IV-$\delta_2$ with $f(0)=0$ and $f'(0)>0$.}\label{fig:f00BIV7}
\end{figure}
  \fi

\begin{example}\label{eg:4delta0}
Let $\varphi=\varphi_3$ and $f(u)=e^u+u^8-u-1$ or $f(u)=e^u+u^8-1$.
Then $\lim_{\lambda\rightarrow +\infty} g(\lambda)=0$ and
$\lim_{\lambda\rightarrow 0} g(\lambda)=\frac{\pi}{2}$ (by Lemma
\ref{prop:g} below). Numerical simulation indicates that
$g(\lambda)$ has exactly three local extreme points in $(0,
+\infty)$ and $\lim_{\lambda\rightarrow 0} g(\lambda)$ is in between
the local maximum values (see Remark \ref{rk:g(r)} and
Fig.\ref{fig:gg3} below), which suggests that $g$ is of   Type
$\delta_0$ and both $(\varphi_3,e^u+u^8-u-1)$ and
$(\varphi_3,e^u+u^8-1)$ are of Type IV-$\delta_0$.
\end{example}

\begin{theorem}[Type IV-$\delta_1$, $f'(0)=0$, see Fig.\ref{fig:f00BIV6}]\label{thm:f=0t10}
Let $(\varphi, f)$ be of \emph{Type IV-$\delta_1$}. Assume
conditions \eqref{con:phi}--\eqref{con:strict2} and $f'(0)=0$ hold.
Then there exist constants $L^{**}>L^*>L_*>0$ (see
Fig.\ref{fig:gg1}) such that the following assertions hold:

(a) If $L> L^{**}$, then \eqref{eq:1dmemsfr0} has exactly one
positive solution for any $\lambda\in (0,+\infty)$.

(b) If $L= L^{**}$, then there exists $\lambda^*>0$ such that
\eqref{eq:1dmemsfr0} has exactly one positive solution for $(0,
\lambda^{*})\cup(\lambda^{*},+\infty)$ and none for
$\lambda=\lambda^{*}$.

(c) If $L^*<L< L^{**}$, then there exist two numbers
$\lambda^*>\lambda^{**}>0$ such that \eqref{eq:1dmemsfr0} has
exactly one positive solutions for $\lambda\in  (0,
\lambda^{**})\cup(\lambda^*, +\infty)$ and none for $\lambda\in
[\lambda^{**}, \lambda^*]$.

(d) If $L= L^*$, then there exist three numbers
$\lambda^*>\lambda^{**}>\lambda_{*}>0$ such that
\eqref{eq:1dmemsfr0} has exactly one positive solutions for
$\lambda\in  (0, \lambda_{*})\cup(\lambda_*,
\lambda^{**})\cup(\lambda^*, +\infty)$ and none for $\lambda\in
\{\lambda_*\}\cup[\lambda^{**}, \lambda^*]$.

(e) If $L_*<L< L^*$, then there exist four numbers
$\lambda^*>\lambda^{**}>\lambda_*>\lambda_{**}>0$ such that
\eqref{eq:1dmemsfr0} has exactly one positive solutions for
$\lambda\in  (0, \lambda_{**})\cup(\lambda_*,
\lambda^{**})\cup(\lambda^*, +\infty)$ and none for $\lambda\in
[\lambda_{**}, \lambda_*]\cup[\lambda^{**}, \lambda^*]$.

(f) If $L\leqslant L_*$, then there exist two numbers
$\lambda^*>\lambda_{**}>0$ such that \eqref{eq:1dmemsfr0} has
exactly one positive solutions for $\lambda\in  (0,
\lambda_{**})\cup(\lambda^*, +\infty)$ and none for $\lambda\in
[\lambda_{**}, \lambda^*]$.

(g) $\lambda_*$ and $\lambda^*$ are strictly decreasing while
$\lambda_{**}$ and $\lambda^{**}$ are strictly increasing with
respect to $L$.
\end{theorem}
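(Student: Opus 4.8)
The plan is to reduce Theorem~\ref{thm:f=0t10} to the shape of the time map $T_\lambda(\cdot)$ in the amplitude variable and of the auxiliary function $g$ from \eqref{form:0g}, both analysed in Section~3; the theorem is then essentially a bookkeeping consequence. For fixed $\lambda>0$, a positive classical solution of \eqref{eq:1dmemsfr0} is even, has maximum $r:=u(0)\in(0,r_\lambda)$ with $r_\lambda:=F^{-1}(B/\lambda)$ (finite here, since we are in Case~IV: $A=C=+\infty$, $B<+\infty$), and $r$ must satisfy $T_\lambda(r)=L$ with
\[
T_\lambda(r)=\int_{0}^{r}\frac{du}{\Phi^{-1}\bigl(\lambda(F(r)-F(u))\bigr)},
\]
each such $r$ giving exactly one solution. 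By Theorem~\ref{thm:atmost12} the equation $T_\lambda(\cdot)=L$ has at most one root for every $L$, so $T_\lambda$ is injective, hence (being continuous) strictly monotone on $(0,r_\lambda)$; together with the boundary limits supplied by Section~3, namely $T_\lambda(r)\to+\infty$ as $r\to0^{+}$ (this is precisely where $f'(0)=0$ is used) and $T_\lambda(r)\to g(\lambda)$ as $r\to r_\lambda^{-}$, this forces $T_\lambda$ to be strictly decreasing with range $(g(\lambda),+\infty)$. Hence for $f'(0)=0$ problem \eqref{eq:1dmemsfr0} has exactly one positive solution when $g(\lambda)<L$ and none when $g(\lambda)\geqslant L$.

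Next I would feed in the shape of $g$. By the definition of Type~$\delta_1$ in \cite{Pan2014a}, $g$ is continuous on $(0,\infty)$, $\lim_{\lambda\to0^{+}}g(\lambda)=\lim_{\lambda\to+\infty}g(\lambda)=0$, and $g$ has exactly three local extreme points $0<\lambda_{1}^{g}<\lambda_{2}^{g}<\lambda_{3}^{g}$; since $g\geqslant0$ and $g\to0$ at $+\infty$ the pattern of extreme values is necessarily maximum--minimum--maximum, and for Type~$\delta_1$ the left maximum value is the smaller one, so with $L_*<L^*<L^{**}$ the increasingly ordered extreme values we have $L_*=g(\lambda_{2}^{g})<L^*=g(\lambda_{1}^{g})<L^{**}=g(\lambda_{3}^{g})$. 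The theorem is then the description of the sublevel set $S(L):=\{\lambda>0:\ g(\lambda)<L\}$ (and of the points where $g(\lambda)=L$) as $L$ runs through the six regimes $L>L^{**}$, $L=L^{**}$, $L^*<L<L^{**}$, $L=L^*$, $L_*<L<L^*$, $L\leqslant L_*$. For example, when $L_*<L<L^*$ the line $\{g=L\}$ meets each of the four branches of monotonicity of $g$ exactly once, in points $0<\lambda_{**}<\lambda_*<\lambda^{**}<\lambda^{*}$, so $S(L)=(0,\lambda_{**})\cup(\lambda_*,\lambda^{**})\cup(\lambda^{*},+\infty)$, which is exactly assertion~(e); the remaining regimes are read off in the same way, the isolated excluded points in (b) and (d) being the $\lambda$ at which $g$ meets $L$ at a local maximum, where $g(\lambda)=L$ (not $<L$) and hence no classical solution exists (only the gradient blow-up solution $r=r_\lambda$ survives there).

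For the monotonicity assertion~(g): the critical points and critical values of $g$ are independent of $L$; each of $\lambda_{*}(L),\lambda^{*}(L)$ is the unique root of $g(\lambda)=L$ on a fixed branch on which $g$ is strictly decreasing, and each of $\lambda_{**}(L),\lambda^{**}(L)$ lies on a fixed branch on which $g$ is strictly increasing. Since the inverse of a strictly decreasing (resp. increasing) continuous function is itself strictly decreasing (resp. increasing), the four marked quantities vary in $L$ exactly as claimed.

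The substantive analytic input is the material I am quoting from Section~3 and from the proof of Theorem~\ref{thm:atmost12}: the strict monotonicity of $r\mapsto T_\lambda(r)$ on $(0,r_\lambda)$ and, above all, the boundary identity $T_\lambda(r_\lambda^{-})=g(\lambda)$, which is an improper-integral convergence statement and uses $g(\lambda)<+\infty$ (itself part of being of Type~$\delta_1$). Granting these, the only real work left in the present proof is keeping the six $L$-regimes, the four curves $\lambda_{*},\lambda^{*},\lambda_{**},\lambda^{**}$, and the strict versus weak inequalities straight, and correctly invoking the precise definition of Type~$\delta_1$; I expect that case bookkeeping to be the main, though routine, obstacle.
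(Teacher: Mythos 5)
Your proposal is correct and follows essentially the same route as the paper: reduce to the time map, use its strict monotonicity in $r$ (Theorem \ref{T'<0}/Theorem \ref{thm:atmost12}) together with $\lim_{r\to 0}T=+\infty$ for $f'(0)=0$ (Proposition \ref{cor:twokind}) and the endpoint value $T(r^*,\lambda)=g(\lambda)$, so that classical solutions exist exactly when $g(\lambda)<L$, and then read off the six $L$-regimes and the monotonicity in (g) from the max--min--max shape of a Type $\delta_1$ function $g$. This is precisely the bookkeeping the paper performs for Types IV-$\beta_0$/IV-$\beta_1$ and then declares "similar" for IV-$\delta_1$, so your write-up just makes explicit what the paper omits.
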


\begin{theorem}[Type IV-$\delta_1$, $f'(0)>0$, see Fig.\ref{fig:f00BIV7}]\label{thm:f=0t11}
Let $(\varphi, f)$ be of \emph{Type IV-$\delta_1$}. Assume
conditions \eqref{con:phi}--\eqref{con:strict2} and $f'(0)>0$ hold.
Then there exist constants $L^{**}>L^*>L_*>0$ (see
Fig.\ref{fig:gg1}) such that the following assertions hold:

(a) If $L> L^{**}$, then \eqref{eq:1dmemsfr0} has exactly one
positive solution for $\lambda\in (0, \lambda_1)$ and none for
$\lambda\in[\lambda_1,+\infty)$.

(b) If $L= L^{**}$, then there exists $\lambda^*\in (0,\lambda_1)$
such that \eqref{eq:1dmemsfr0} has exactly one positive solution for
$(0, \lambda^*)\cup(\lambda^*,\lambda_1)$ and none for
$\lambda\in\{\lambda^*\}\cup[\lambda_1,+\infty)$.

(c) If $L^*<L< L^{**}$, then there exist two numbers
$0<\lambda^{**}<\lambda^*<\lambda_1$ such that \eqref{eq:1dmemsfr0}
has exactly one positive solution for $\lambda\in  (0,
\lambda^{**})\cup(\lambda^*, \lambda_1)$ and none for
$\lambda\in[\lambda^{**},\lambda^*]\cup[\lambda_1,+\infty)$.

(d) If $L= L^*$, then there exist three numbers
$0<\lambda_{*}<\lambda^{**}<\lambda^*<\lambda_1$ such that
\eqref{eq:1dmemsfr0} has exactly one positive solutions for
$\lambda\in  (0, \lambda_{*})\cup(\lambda_*,
\lambda^{**})\cup(\lambda^{*}, \lambda_1)$ and none for $\lambda\in
\{\lambda_{*}\}\cup[\lambda_{**}, \lambda_*]\cup[\lambda_1,
+\infty)$.

(e) If $L_*<L< L^*$, then there exist four numbers
$0<\lambda_{**}<\lambda_*<\lambda^{**}<\lambda^*<\lambda_1$ such
that \eqref{eq:1dmemsfr0} has exactly one positive solutions for
$\lambda\in  (0, \lambda_{**})\cup(\lambda_*,
\lambda^{**})\cup(\lambda^*, \lambda_1)$ and none for $\lambda\in
[\lambda_{**}, \lambda_*]\cup[\lambda^{**},
\lambda^*]\cup[\lambda_1, +\infty)$.

(f) If $L\leqslant L_*$, then there exist two numbers two numbers
$0<\lambda_{**}<\lambda^*<\lambda_1$ such that \eqref{eq:1dmemsfr0}
has exactly one positive solution for $\lambda\in  (0,
\lambda_{**})\cup(\lambda^*, \lambda_1)$ and none for
$\lambda\in[\lambda_{**},\lambda^*]\cup[\lambda_1,+\infty)$.

(g) $\lambda_*$ and $\lambda^*$ are strictly decreasing while
$\lambda_{**}$ and $\lambda^{**}$ are strictly increasing with
respect to $L$.
\end{theorem}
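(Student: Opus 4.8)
The plan is to follow the scheme used for Theorem~\ref{thm:f=0t10}, the new feature being that $f'(0)>0$ produces a finite right endpoint $\lambda_1$ for the range of admissible~$\lambda$. First I recall the time--map reduction from Section~3. For $\lambda>0$ put $\rho^*(\lambda)=F^{-1}(B/\lambda)$, which is finite and positive since $C=+\infty$; then $u$ is a positive classical solution of \eqref{eq:1dmemsfr0} if and only if $u$ is symmetric about $x=0$ and $\rho:=\|u\|_\infty\in(0,\rho^*(\lambda))$ satisfies $T_\lambda(\rho)=L$, where
\[
T_\lambda(\rho)=\int_0^\rho\frac{du}{\Phi^{-1}(\lambda(F(\rho)-F(u)))}.
\]
By the results of Section~3, $T_\lambda$ is continuous on $(0,\rho^*(\lambda))$ with $\lim_{\rho\to0^+}T_\lambda(\rho)=h(\lambda):=\frac{\pi}{2}\sqrt{\varphi'(0)/(\lambda f'(0))}$ (finite because $f'(0)>0$) and $\lim_{\rho\to\rho^*(\lambda)^-}T_\lambda(\rho)=g(\lambda)$, $g$ being the function \eqref{form:0g}. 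By Theorem~\ref{thm:atmost12}, $T_\lambda$ is injective, hence strictly monotone; comparing its behaviour at the two endpoints and using the convexity in \eqref{f-c1}, \eqref{con:phi2} shows it is strictly decreasing, so $0<g(\lambda)<h(\lambda)$ for every $\lambda>0$. Thus the range of $T_\lambda$ is exactly $(g(\lambda),h(\lambda))$, so \eqref{eq:1dmemsfr0} has exactly one positive solution when $g(\lambda)<L<h(\lambda)$ and none otherwise. Since $h$ is a strictly decreasing bijection of $(0,+\infty)$ onto itself with $h(\lambda_1)=L$, the inequality $L<h(\lambda)$ is equivalent to $\lambda<\lambda_1$; hence \eqref{eq:1dmemsfr0} has exactly one positive solution iff $\lambda\in(0,\lambda_1)$ and $g(\lambda)<L$, and none otherwise.

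Next I locate $\lambda_1$ relative to the graph of $g$. If $g(\lambda_0)=L$ for some $\lambda_0>0$, then $L=g(\lambda_0)<h(\lambda_0)=h(\lambda_1)$ forces $\lambda_0<\lambda_1$, because $h$ is decreasing. So every solution of $g(\lambda)=L$ lies in $(0,\lambda_1)$, and therefore $\lambda_1$ lies in the unbounded connected component of $\{\lambda>0:g(\lambda)<L\}$. Consequently the set of $\lambda$ for which \eqref{eq:1dmemsfr0} has a (unique) positive solution equals $\{\lambda\in(0,\lambda_1):g(\lambda)<L\}$, and its connected components coincide with those of $\{\lambda>0:g(\lambda)<L\}$, except that the unbounded one $(\gamma,+\infty)$ is replaced by $(\gamma,\lambda_1)$, where $g(\gamma)=L$ (or $\gamma=0$ if $g<L$ throughout $(0,+\infty)$).

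It remains to read off (a)--(g). Because $(\varphi,f)$ is of Type IV-$\delta_1$, the graph of $g$ (Fig.~\ref{fig:gg1}) has $\lim_{\lambda\to+\infty}g(\lambda)=0$, $\lim_{\lambda\to0^+}g(\lambda)\in(0,+\infty)$ (both also obtainable from Lemma~\ref{prop:g}), and exactly three local extreme points $0<a_1<a_2<a_3$, with $g$ strictly increasing on $(0,a_1)$ and $(a_2,a_3)$ and strictly decreasing on $(a_1,a_2)$ and $(a_3,+\infty)$, and $\lim_{\lambda\to0^+}g(\lambda)$ below the local minimum value; write $L_*=g(a_2)<L^*=\min\{g(a_1),g(a_3)\}<L^{**}=\max\{g(a_1),g(a_3)\}$. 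For each of the six ranges $L>L^{**}$, $L=L^{**}$, $L^*<L<L^{**}$, $L=L^*$, $L_*<L<L^*$, $L\leqslant L_*$, a direct inspection of this graph gives the zero set of $g(\cdot)-L$, hence the components of $\{g<L\}$; intersecting with $(0,\lambda_1)$ as above yields precisely the interval decompositions of (a)--(f). The open/closed character of the endpoints is forced by the level set: at a point $\lambda_0$ with $g(\lambda_0)=L$ the only candidate amplitude is $\rho^*(\lambda_0)$, which gives a function with $u'(\pm L)=\mp\infty$, not a classical solution, while $\lambda=\lambda_1$ carries no positive solution since there $L=h(\lambda_1)\notin(g(\lambda_1),h(\lambda_1))$. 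Finally, for (g): on each strictly decreasing branch $(a_1,a_2)$, $(a_3,+\infty)$ the function $g$ is a strictly decreasing homeomorphism onto an interval, so the crossings $\lambda_*,\lambda^*$ (its inverse evaluated at $L$) are strictly decreasing in $L$; likewise the crossings $\lambda_{**},\lambda^{**}$ on the increasing branches $(0,a_1)$, $(a_2,a_3)$ are strictly increasing in $L$.

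The only point beyond bookkeeping is the reduction above: that $T_\lambda$ is strictly decreasing (equivalently $g(\lambda)<h(\lambda)$ for all $\lambda$), which rests on the convexity hypotheses and the structural lemmas of Section~3 and is essentially the content of Theorem~\ref{thm:atmost12}, together with the elementary observation that every solution of $g(\lambda)=L$ lies to the left of $\lambda_1$. Granting these, assertions (a)--(g) follow by the same, purely combinatorial, case analysis of the six positions of the horizontal line $\{g=L\}$ relative to $L_*<L^*<L^{**}$ that was used for Types IV-$\gamma_1$ and IV-$\delta_1$ with $f'(0)=0$ in Theorems~\ref{thm:f=0t777} and \ref{thm:f=0t10}.
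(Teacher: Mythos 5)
Your overall scheme — reduce to $g(\lambda)<L<h(\lambda)$ with $h(\lambda)=\frac{\pi}{2}\sqrt{\varphi'(0)/(\lambda f'(0))}$, note $h(\lambda_1)=L$ so the admissible $\lambda$ are cut off at $\lambda_1$, observe that every root of $g(\lambda)=L$ lies to the left of $\lambda_1$ because $g<h$, and then read the components of $\{g<L\}$ off the graph of $g$ — is exactly the paper's method (time map strictly decreasing in $r$, endpoint values $h$ and $g$, case analysis on the shape of $g$). However, there is a genuine error in the one place where the specific Type matters: you have mis-stated the shape of a Type $\delta_1$ function $g$. You assume $\lim_{\lambda\to0^+}g(\lambda)\in(0,+\infty)$, lying below the local minimum value. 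In the paper's classification (see the scheme of Definition \ref{df:g1}, Example \ref{eg:4delta1}, and Lemma \ref{prop:g} with $D=\lim_{t\to A}F(t)/f(t)=0$ for the $\delta_1$ examples), Type $\delta_1$ has $\lim_{\lambda\to0^+}g(\lambda)=0$ together with three local extreme points and the \emph{left} local maximum strictly smaller than the right one; it is Type $\delta_0$ that has a positive finite limit at $0$ (and there it lies \emph{between} the two maxima, not below the minimum). This is not merely cosmetic: under your assumed shape, for $0<L\leqslant\lim_{\lambda\to0^+}g$ (which is allowed in case (f), $L\leqslant L_*$) the first increasing branch never crosses the level $L$, so $\{g<L\}$ has no component adjacent to $0$ and conclusion (f) — exactly one solution on $(0,\lambda_{**})$ — would be false. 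With the correct $\delta_1$ shape ($g(0^+)=0$) the crossing $\lambda_{**}$ on the first branch exists for every $L\leqslant L_*$ and your case analysis goes through. Relatedly, hedging with $L^*=\min\{g(a_1),g(a_3)\}$, $L^{**}=\max\{g(a_1),g(a_3)\}$ blurs the $\delta_1$/$\delta_2$ distinction; the arrangement of the intervals in (c)–(e) (and hence which crossings are increasing/decreasing in $L$) depends on knowing $g(a_1)<g(a_3)$, so this must be fixed, not left to a minimum/maximum.

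A smaller point: deducing that $T_\lambda$ is strictly decreasing from the injectivity furnished by Theorem \ref{thm:atmost12} and then ``comparing the endpoints'' is circular, since deciding the direction of monotonicity is equivalent to knowing $g(\lambda)<h(\lambda)$, which is what you want to conclude. The paper's Theorem \ref{T'<0} gives $T'(r)<0$ directly under \eqref{con:phi}--\eqref{con:strict2}, and that is what should be cited here; with it, the inequality $g(\lambda)<h(\lambda)$ and the rest of your reduction are sound.
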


\begin{example}\label{eg:4delta1}
Let $\varphi=\varphi_3$ and $f(u)=e^{u^2}+u^8+u-1$. Then
$\lim_{\lambda\rightarrow +\infty}
g(\lambda)=0=\lim_{\lambda\rightarrow 0} g(\lambda)$ (by Lemma
\ref{prop:g} below). Numerical simulation indicates that
$g(\lambda)$ has exactly three local extreme points in $(0,
+\infty)$ and the left local maximum value is less than the right
one (see Remark \ref{rk:g(r)} and Fig.\ref{fig:gg4} below), which
suggests that $g$ is of   Type $\delta_1$ and
$(\varphi_3,e^{u^2}+u^8+u-1)$ is of Type IV-$\delta_1$.
\end{example}

\begin{theorem}[Type IV-$\delta_2$, $f'(0)=0$, see Fig.\ref{fig:f00BIV6}]\label{thm:f=0t12}
Let $(\varphi, f)$ be of \emph{Type IV-$\delta_2$}. Assume
conditions \eqref{con:phi}--\eqref{con:strict2} and $f'(0)=0$ hold.
Then there exist constants $L^{**}>L^*>L_*>0$ (see
Fig.\ref{fig:gg1}) such that the following assertions hold:

(a) If $L> L^{**}$, then \eqref{eq:1dmemsfr0} has exactly one
positive solution for any $\lambda\in (0,+\infty)$.

(b) If $L= L^{**}$, then there exists $\lambda_*>0$ such that
\eqref{eq:1dmemsfr0} has exactly one positive solution for $(0,
\lambda_{*})\cup(\lambda_{*},+\infty)$ and none for
$\lambda=\lambda_{*}$.

(c) If $L^*<L< L^{**}$, then there exist two numbers
$\lambda_*>\lambda_{**}>0$ such that \eqref{eq:1dmemsfr0} has
exactly one positive solutions for $\lambda\in  (0,
\lambda_{**})\cup(\lambda_*, +\infty)$ and none for $\lambda\in
[\lambda_{**}, \lambda_*]$.

(d) If $L= L^*$, then there exist three numbers
$\lambda^*>\lambda_*>\lambda_{**}>0$ such that \eqref{eq:1dmemsfr0}
has exactly one positive solutions for $\lambda\in  (0,
\lambda_{**})\cup(\lambda_*, \lambda^{*})\cup(\lambda^*, +\infty)$
and none for $\lambda\in  [\lambda_{**},
\lambda_*]\cup\{\lambda^*\}$.

(e) If $L_*<L< L^*$, then there exist five numbers
$\lambda^*>\lambda^{**}>\lambda_*>\lambda_{**}>0$ such that
\eqref{eq:1dmemsfr0} has exactly one positive solutions for
$\lambda\in  (0, \lambda_{**})\cup(\lambda_*,
\lambda^{**})\cup(\lambda^*, +\infty)$ and none for $\lambda\in
[\lambda_{**}, \lambda_*]\cup[\lambda^{**}, \lambda^*]$.

(f) If $L\leqslant L_*$, then there exist two numbers
$\lambda^*>\lambda_{**}>0$ such that \eqref{eq:1dmemsfr0} has
exactly one positive solutions for $\lambda\in  (0,
\lambda_{**})\cup(\lambda^*, +\infty)$ and none for $\lambda\in
[\lambda_{**}, \lambda^*]$.

(g) $\lambda_*$ and $\lambda^*$ are strictly decreasing while
$\lambda_{**}$ and $\lambda^{**}$ are strictly increasing with
respect to $L$.
\end{theorem}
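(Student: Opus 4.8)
The proof follows the same scheme as the companion theorems for the other Type~IV cases: the entire bifurcation picture is read off the time map $T(\cdot,\lambda)$ together with the graph of $g$. First I would invoke the general properties of the time map established in Section~3. For a fixed $\lambda>0$, every positive (classical) solution of \eqref{eq:1dmemsfr0} is even about the midpoint, with maximum value $\rho=\|u\|_\infty\in\bigl(0,F^{-1}(B/\lambda)\bigr)$, and $u$ is determined by $\rho$ through the quadrature $L=T(\rho,\lambda):=\int_0^\rho\frac{du}{\Phi^{-1}(\lambda(F(\rho)-F(u)))}$; furthermore $\rho\mapsto T(\rho,\lambda)$ is continuous on $\bigl(0,F^{-1}(B/\lambda)\bigr)$ with $\lim_{\rho\uparrow F^{-1}(B/\lambda)}T(\rho,\lambda)=g(\lambda)$, and it is injective by Theorem~\ref{thm:atmost12}, hence strictly monotone. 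Here $f'(0)=0$ enters decisively: since \eqref{f-c1} makes $f(u)/u$ nondecreasing with $f(u)/u\to f'(0)=0$ as $u\to0^+$, one obtains $\lim_{\rho\downarrow0}T(\rho,\lambda)=+\infty$ (the relevant lemma of Section~3). Hence $\rho\mapsto T(\rho,\lambda)$ is strictly decreasing from $+\infty$ to $g(\lambda)$, with range $\bigl(g(\lambda),+\infty\bigr)$; therefore, for every $\lambda>0$, problem \eqref{eq:1dmemsfr0} has exactly one positive solution if $L>g(\lambda)$ and none if $L\leqslant g(\lambda)$.

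Next I would pin down the shape of $g$ from the definition of Type~$\delta_2$. The function $g$ is continuous and strictly positive on $(0,+\infty)$ with $g(0^+)=g(+\infty)=0$; since $g>0$ near each endpoint, $g$ must initially increase and ultimately decrease, so its three local extreme points occur, from left to right, as a local maximum at some $\lambda^{(1)}$, a local minimum at $\lambda^{(2)}$, and a local maximum at $\lambda^{(3)}$, and $g$ is increasing on $(0,\lambda^{(1)})$, decreasing on $(\lambda^{(1)},\lambda^{(2)})$, increasing on $(\lambda^{(2)},\lambda^{(3)})$, and decreasing on $(\lambda^{(3)},+\infty)$. Strictness on each branch is forced by the hypothesis of exactly three extreme points: a continuous function that is non-monotone between consecutive extreme points would acquire another interior extremum, and one constant on a subinterval would have infinitely many. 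Setting $L^{**}=g(\lambda^{(1)})$, $L_*=g(\lambda^{(2)})$, $L^*=g(\lambda^{(3)})$, the four-branch structure gives $0<L_*<L^*$ and $0<L_*<L^{**}$, and the defining inequality (left local maximum value greater than the right one) gives $L^*<L^{**}$; thus $L^{**}>L^*>L_*>0$.

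Now fix $L>0$. By the first step, the set of parameters with a solution is $\{\lambda:g(\lambda)<L\}$ and that with none is $\{\lambda:g(\lambda)\geqslant L\}$, so it remains only to describe the super-level set $\{g\geqslant L\}$ in the six regimes $L>L^{**}$, $L=L^{**}$, $L^*<L<L^{**}$, $L=L^*$, $L_*<L<L^*$, $L\leqslant L_*$. On each of the four strictly monotone branches the equation $g(\lambda)=L$ has at most one root, and $\{g\geqslant L\}$ is the union of the corresponding closed sub-arcs; tracking these branch by branch yields, respectively: the empty set in (a); the single abscissa $\lambda^{(1)}$ in (b); one closed interval straddling $\lambda^{(1)}$ in (c); that interval together with the isolated point $\lambda^{(3)}$ in (d); two disjoint closed intervals, one straddling $\lambda^{(1)}$ and one straddling $\lambda^{(3)}$, in (e); and one closed interval containing $[\lambda^{(1)},\lambda^{(3)}]$ in (f). Passing to complements produces exactly the lists of one-solution and no-solution intervals in (a)--(f). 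Finally, the critical values $\lambda_{**},\lambda^{**}$ are the roots of $g(\lambda)=L$ on the increasing branches $(0,\lambda^{(1)})$ and $(\lambda^{(2)},\lambda^{(3)})$, while $\lambda_*,\lambda^*$ are the roots on the decreasing branches $(\lambda^{(1)},\lambda^{(2)})$ and $(\lambda^{(3)},+\infty)$; since the inverse of $g$ restricted to a monotone branch is monotone in the same direction, $\lambda_{**}$ and $\lambda^{**}$ are strictly increasing and $\lambda_*,\lambda^*$ strictly decreasing in $L$, which is assertion (g).

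The only genuinely analytic ingredients are the Section~3 properties of $T$: continuity, injectivity (i.e.\ Theorem~\ref{thm:atmost12}), the boundary limit $T(\rho,\lambda)\to g(\lambda)$ as $\rho\uparrow F^{-1}(B/\lambda)$, and---crucial for the present case---the blow-up $T(\rho,\lambda)\to+\infty$ as $\rho\downarrow0$, which is exactly where $f'(0)=0$ intervenes and is what makes every bifurcation component open to the left (no small-amplitude branch). Granting these, the rest is bookkeeping of a level set. I expect the two points needing care to be: (i) the proof that $f'(0)=0$ genuinely forces $T(\rho,\lambda)\to+\infty$ (via the sublinearity of $f$ at $0$ and the quadratic behaviour of $\Phi$ near $0$), and (ii) the two borderline regimes $L=L^{**}$ and $L=L^*$, where $\{g=L\}$ meets the graph at a local maximum and one must check it contributes an isolated value of $\lambda$ with no solution rather than a nondegenerate interval.
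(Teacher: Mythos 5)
Your proposal is correct and follows essentially the same route as the paper, which proves this theorem by referring back to the Type IV-$\beta_0$ argument: shape of $T(\cdot,\lambda)$ from Theorem \ref{T'<0} and Proposition \ref{cor:twokind} (where $f'(0)=0$ gives $T\to+\infty$ as $r\to 0$), the boundary value $T(r^*,\lambda)=g(\lambda)$, and then reading the solution count off the level sets of the Type $\delta_2$ graph of $g$. Your explicit reduction ``exactly one classical solution iff $L>g(\lambda)$, none iff $L\leqslant g(\lambda)$'' and the branch-by-branch bookkeeping (including the borderline cases $L=L^{**}$, $L=L^*$ and the monotonicity in $L$) is just a more detailed writing-out of what the paper leaves to Figs.\ref{fig:gg1}, \ref{fig:gg2} and \ref{fig:four32}--\ref{fig:four34}.
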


\begin{theorem}[Type IV-$\delta_2$, $f'(0)>0$, see Fig.\ref{fig:f00BIV7}]\label{thm:f=0t13}
Let $(\varphi, f)$ be of \emph{Type IV-$\delta_2$}. Assume
conditions \eqref{con:phi}--\eqref{con:strict2} and $f'(0)>0$ hold.
Then there exist constants $L^{**}>L^*>L_*>0$ (see
Fig.\ref{fig:gg1}) such that the following assertions hold:

(a) If $L> L^{**}$, then \eqref{eq:1dmemsfr0} has exactly one
positive solution for $\lambda\in (0, \lambda_1)$ and none for
$\lambda\in[\lambda_1,+\infty)$.

(b) If $L= L^{**}$, then there exists $\lambda_*\in (0,\lambda_1)$
such that \eqref{eq:1dmemsfr0} has exactly one positive solution for
$(0, \lambda_*)\cup(\lambda_*,\lambda_1)$ and none for
$\lambda\in\{\lambda_*\}\cup[\lambda_1,+\infty)$.

(c) If $L^*<L< L^{**}$, then there exist two numbers
$0<\lambda_{**}<\lambda_*<\lambda_1$ such that \eqref{eq:1dmemsfr0}
has exactly one positive solution for $\lambda\in  (0,
\lambda_{**})\cup(\lambda_*, \lambda_1)$ and none for
$\lambda\in[\lambda_{**},\lambda_*]\cup[\lambda_1,+\infty)$.

(d) If $L= L^*$, then there exist three numbers
$0<\lambda_{**}<\lambda_*<\lambda^*<\lambda_1$ such that
\eqref{eq:1dmemsfr0} has exactly one positive solutions for
$\lambda\in  (0, \lambda_{**})\cup(\lambda_*,
\lambda^{*})\cup(\lambda^{*}, \lambda_1)$ and none for $\lambda\in
[\lambda_{**}, \lambda_*]\cup\{\lambda^{*}\}\cup[\lambda_1,
+\infty)$.

(e) If $L_*<L< L^*$, then there exist four numbers
$0<\lambda_{**}<\lambda_*<\lambda^{**}<\lambda^*<\lambda_1$ such
that \eqref{eq:1dmemsfr0} has exactly one positive solutions for
$\lambda\in  (0, \lambda_{**})\cup(\lambda_*,
\lambda^{**})\cup(\lambda^*, \lambda_1)$ and none for $\lambda\in
[\lambda_{**}, \lambda_*]\cup[\lambda^{**},
\lambda^*]\cup[\lambda_1, +\infty)$.

(f) If $L\leqslant L_*$, then there exist two numbers two numbers
$0<\lambda_{**}<\lambda^*<\lambda_1$ such that \eqref{eq:1dmemsfr0}
has exactly one positive solution for $\lambda\in  (0,
\lambda_{**})\cup(\lambda^*, \lambda_1)$ and none for
$\lambda\in[\lambda_{**},\lambda^*]\cup[\lambda_1,+\infty)$.

(g) $\lambda_*$ and $\lambda^*$ are strictly decreasing while
$\lambda_{**}$ and $\lambda^{**}$ are strictly increasing with
respect to $L$.
\end{theorem}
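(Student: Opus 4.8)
The plan is to reduce, exactly as in Section~3, the counting of positive solutions to the shape of the time map and then to the graph of $g$, following the same route as the proof of Theorem~\ref{thm:f=0t12} (same function $g$, but with $f'(0)=0$); the only new feature here is that for $f'(0)>0$ the time map stays bounded as the amplitude tends to $0$, and this boundedness produces the right-hand cut-off $\lambda_1$. Concretely, I would first recall that for each $\lambda>0$ the time map $T_\lambda(\rho)$ is defined for $\rho\in\bigl(0,F^{-1}(B/\lambda)\bigr)$ — the right endpoint being finite because $B<+\infty$ and $C=+\infty$ — is continuous, and, by the monotonicity lemmas of Section~3 under \eqref{con:phi}--\eqref{con:strict2}, is \emph{strictly} decreasing in $\rho$; moreover a positive solution of \eqref{eq:1dmemsfr0} corresponds to a root of $T_\lambda(\rho)=L$, necessarily unique (Theorem~\ref{thm:atmost12}). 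Using $f'(0)>0$ one has the boundary values
\[
\lim_{\rho\to0^+}T_\lambda(\rho)=\frac{\pi}{2}\sqrt{\frac{\varphi'(0)}{\lambda f'(0)}},\qquad \lim_{\rho\to F^{-1}(B/\lambda)^-}T_\lambda(\rho)=g(\lambda),
\]
so the range of $T_\lambda$ is the open interval $\bigl(g(\lambda),\,\frac{\pi}{2}\sqrt{\varphi'(0)/(\lambda f'(0))}\bigr)$; hence \eqref{eq:1dmemsfr0} has exactly one positive solution precisely when $g(\lambda)<L<\frac{\pi}{2}\sqrt{\varphi'(0)/(\lambda f'(0))}$ and none otherwise, the second inequality being equivalent to $\lambda<\lambda_1$.

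Next I would isolate the single observation that separates this case from the $f'(0)=0$ one: since $T_\lambda$ is \emph{strictly} decreasing, $g(\lambda)=\lim_{\rho\to F^{-1}(B/\lambda)^-}T_\lambda(\rho)<\lim_{\rho\to0^+}T_\lambda(\rho)=\frac{\pi}{2}\sqrt{\varphi'(0)/(\lambda f'(0))}$ for every $\lambda>0$. Consequently $g(\lambda_0)=L$ forces $\lambda_0<\lambda_1$; that is, every abscissa at which the horizontal line of height $L$ meets the graph of $g$ already lies in $(0,\lambda_1)$. Therefore the solution set is just $\{\lambda>0:g(\lambda)<L\}$ intersected with $(0,\lambda_1)$, and — crucially — no interval of $\{g<L\}$ gets truncated by $\lambda_1$.

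Then I would read off $\{\lambda>0:g(\lambda)<L\}$ from the Type~$\delta_2$ profile. Writing $0<a_1<a_2<a_3$ for the three critical points, $g$ increases on $(0,a_1)$ to $g(a_1)$, decreases on $(a_1,a_2)$ to $g(a_2)$, increases on $(a_2,a_3)$ to $g(a_3)$, and decreases on $(a_3,+\infty)$ to $0$, with $g(a_1)>g(a_3)>g(a_2)>0$ by the definition of Type~$\delta_2$ (and $g>0$ on $(0,+\infty)$). Put $L^{**}=g(a_1)$, $L^*=g(a_3)$, $L_*=g(a_2)$, so that $L^{**}>L^*>L_*>0$. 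A straightforward case analysis on the position of $L$ among $L^{**},L^*,L_*$ expresses $\{g<L\}$ as a union of one, two or three intervals — with a single point deleted at the three exceptional heights $L\in\{L^{**},L^*,L_*\}$, where the line is tangent to $g$ — and, after intersecting with $(0,\lambda_1)$ by the second step and labelling the crossing abscissas $\lambda_{**},\lambda_*,\lambda^{**},\lambda^*$ according to the branch of $g$ on which they sit, this gives assertions (a)--(f). Part (g) then follows since on any branch where $g$ is strictly monotone the solution $\mu$ of $g(\mu)=L$ depends strictly monotonically on $L$ — increasingly on the two ascending branches (where $\lambda_{**},\lambda^{**}$ live) and decreasingly on the two descending branches (where $\lambda_*,\lambda^*$ live).

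The hard part is not conceptual but organizational: pushing the case analysis through the six regimes of $L$ while keeping the labelling consistent with the statement, and correctly handling the transitional values $L=L_*,L^*,L^{**}$, where two crossing points coalesce (as $L\downarrow L_*$, the crossings near $a_2$ merge into a tangency) or an interior crossing degenerates into a tangency at $a_1$ or $a_3$. The only place where more than monotonicity of $g$ is used is in ruling out truncation of the listed intervals by $\lambda_1$, and that is precisely the strict inequality $g(\lambda)<\frac{\pi}{2}\sqrt{\varphi'(0)/(\lambda f'(0))}$ of the second step, which itself rests on the \emph{strict} monotonicity of the time map coming from \eqref{con:strict2}.
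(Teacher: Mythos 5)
Your proposal is correct and follows essentially the same route as the paper: reduce to the time map, use Theorem \ref{T'<0} (strict decrease in $r$) together with Proposition \ref{cor:twokind} (limit $\frac{\pi}{2}\sqrt{\varphi'(0)/(\lambda f'(0))}$ at $r=0$, giving the $\lambda_1$ cut-off) and the value $g(\lambda)$ at the blow-up endpoint, so that classical solutions exist exactly when $g(\lambda)<L$ and $\lambda<\lambda_1$, and then read off the six regimes of $L$ from the Type $\delta_2$ shape of $g$ — which is precisely the argument the paper invokes by referring back to the proofs of Theorems \ref{thm:classical43320}--\ref{thm:classical43321}. Your explicit observation that $g(\lambda)<\frac{\pi}{2}\sqrt{\varphi'(0)/(\lambda f'(0))}$ (so every crossing abscissa lies in $(0,\lambda_1)$ and only the unbounded tail of $\{g<L\}$ is capped at $\lambda_1$) is a point the paper leaves implicit in its figures, and it is stated correctly, apart from the slightly loose phrase that ``no interval gets truncated.''
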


\begin{example}\label{eg:ivdelta3}
Let $\varphi=\varphi_3$ and $f(u)=e^{u^2}+u^8-1$. Then
$\lim_{\lambda\rightarrow +\infty}
g(\lambda)=0=\lim_{\lambda\rightarrow 0} g(\lambda)$ (by Lemma
\ref{prop:g} below). Numerical simulation indicates that
$g(\lambda)$ has exactly three local extreme points in $(0,
+\infty)$ and the left local maximum value is greater than the right
one  (see Remark \ref{rk:g(r)} and Fig.\ref{fig:gg4} below), which
suggests that $g$ is of   Type $\delta_2$ and
$(\varphi_3,e^{u^2}+u^8-1)$ is of Type IV-$\delta_2$.
\end{example}

\begin{remark} We conjecture that there exist suitable $\varphi$ and $f$ such that the corresponding $g$ is of Type $\delta_3$.
Since it is difficult to find some examples, we omit it.
\end{remark}

For Cases V and VI, so far, we only know the existence of two types
of $g$: Type $\beta_0$ for Case V and Type $\gamma_0$ for Case VI
(see Fig.\ref{fig:gg1}), which have been defined in \cite{Pan2014a}.

\vskip 2mm {\bf Case V: $B<+\infty$, $A<+\infty$ and $C=+\infty$}

For Type V-$\beta_0$,
the statements of results are the same as Type IV-$\beta_0$ (see
Theorems \ref{thm:classical43320} and  \ref{thm:classical43321}), so
we omit them. The bifurcation diagrams are sketched in
Figs.\ref{fig:last0} and \ref{fig:last}, which are slightly
different from those of Type IV-$\beta_0$ (see
Figs.\ref{fig:f00BIV4} and \ref{fig:f01BIV4} ).
  \ifpdf
\begin{figure}
\centering
\includegraphics[totalheight=1.8in]{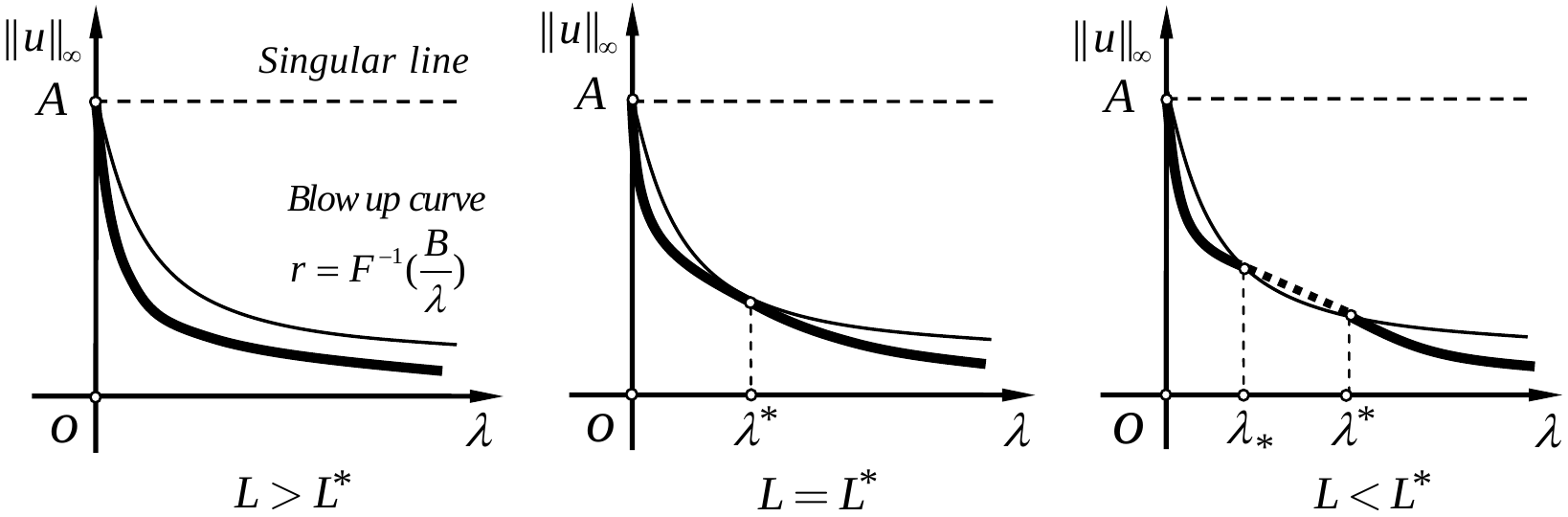}
\caption{Bifurcation Diagrams for Type V-$\beta_0$ with $f(0)=0$ and
$f'(0)=0$.}\label{fig:last0}
\end{figure}
  \fi
  \ifpdf
\begin{figure}
\centering
\includegraphics[totalheight=8.3in]{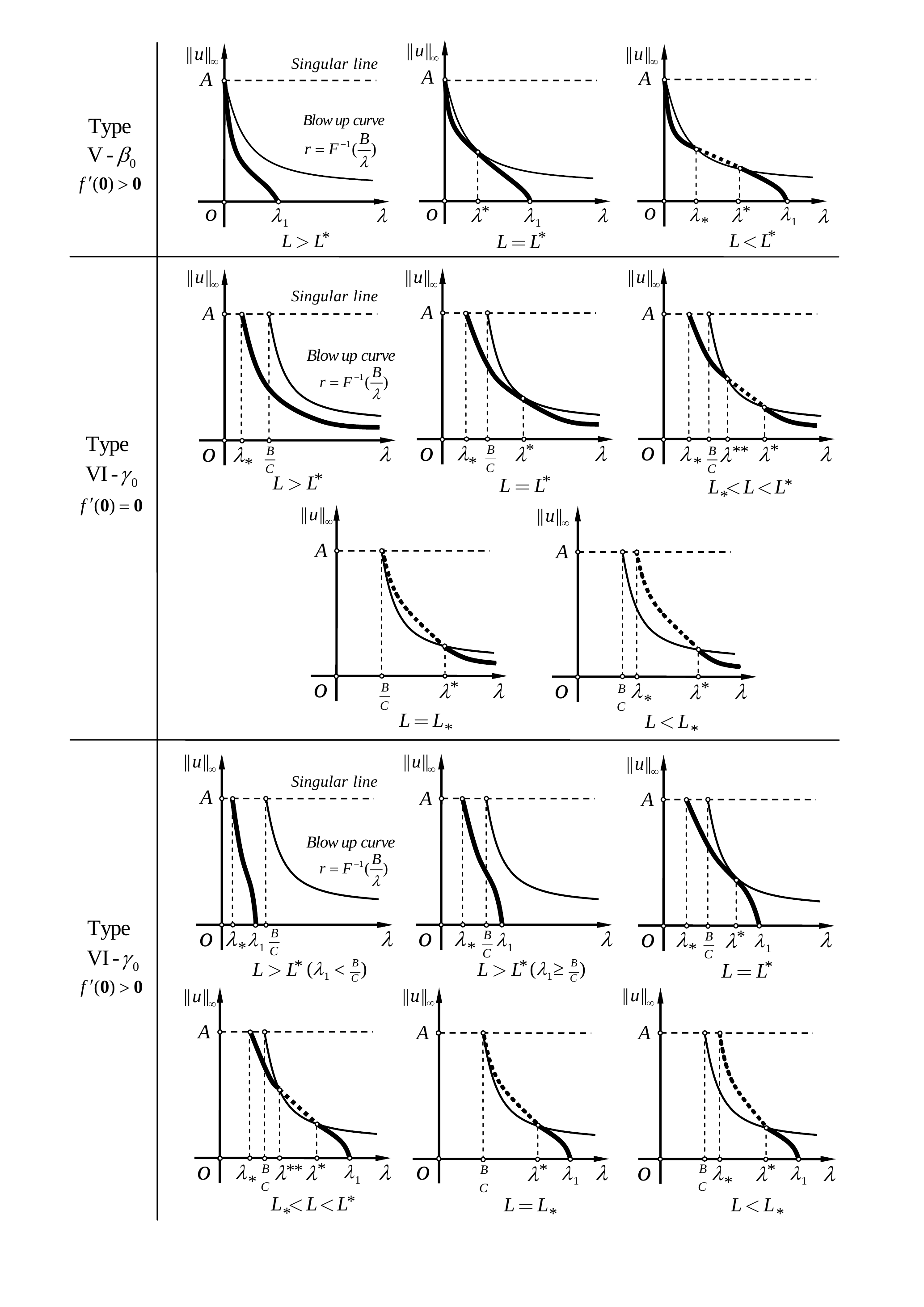}
\caption{Bifurcation Diagrams for Types V-$\beta_0$ and
VI-$\gamma_0$ with $f(0)=0$.}\label{fig:last}
\end{figure}
  \fi

\begin{example}\label{eg:vbeta0}
Let $\varphi=\varphi_3$ and $f$ is one of the following\\
\indent (1) $\tan u$, $u\in (0,\frac{\pi}{2})$;\; (2) $\tan u^2$,
$u\in (0,\sqrt{\frac{\pi}{2}})$;\;
(3) $\frac{u}{1-u}$, $u\in (0,1)$;\; (4) $\frac{u^2}{1-u^2}$, $u\in (0,1)$.\\
Then $\lim_{\lambda\rightarrow +\infty}
g(\lambda)=0=\lim_{\lambda\rightarrow 0} g(\lambda)$ (by Lemma
\ref{prop:g} below). Numerical simulation indicates that
$g(\lambda)$ has exactly one critical point in $(0, A)$ (see Remark
\ref{rk:g(r)} and Fig.\ref{fig:gg4} below), which suggests that $g$
is of Type $\beta_0$ and all of $(\varphi_3,f)$ are of Type
V-$\beta_0$.
\end{example}

\vskip 2mm {\bf Case VI: $B<+\infty$, $A<+\infty$ and $C<+\infty$}

For Type VI-$\gamma_0$,
the statements of results are the same as Type V-$\gamma_0$ (see
Theorems \ref{thm:f=0t6} and  \ref{thm:f=0t7}), so we omit them. The
bifurcation diagrams are sketched in Fig.\ref{fig:last}, which has
some important differences from those of Type IV-$\gamma_0$ (see
Figs.\ref{fig:f00BIV4} and \ref{fig:f01BIV4}).

\begin{example}\label{eg:vigamma0}
Let $\varphi=\varphi_3$ and $f(u)=\frac{1}{\sqrt{1-u}}-1$ or
$f(u)=\frac{1}{\sqrt{1-u^2}}-1$ with $u\in (0,1)$. Then
$\lim_{\lambda\rightarrow 0} g(\lambda)=+\infty$,
$\lim_{\lambda\rightarrow +\infty} g(\lambda)=0$,
$\lim_{\lambda\rightarrow 1} g(\lambda)>0$ or
$\lim_{\lambda\rightarrow \frac{2}{\pi-2}} g(\lambda)>0$ (by Lemmas
\ref{prop:g} and \ref{a-prop:twosidee23333b} below), and
$g(\lambda)$ is strictly decreasing in $(0,1)$ or
$(0,\frac{2}{\pi-2})$ (by Lemma \ref{a-prop:twosidee23333}).
Moreover, numerical simulation indicates that $g(\lambda)$ has
exactly one local maximum point in $(1, +\infty)$ or
$(\frac{2}{\pi-2}, +\infty)$ (see Remark \ref{rk:g(r)} and
Fig.\ref{fig:gg4}), which suggests that $g$ is of   Type $\delta_2$
and both $(\varphi_3,\frac{1}{\sqrt{1-u}}-1)$ and
$(\varphi_3,\frac{1}{\sqrt{1-u^2}}-1)$ are of Type VI-$\gamma_0$.
\end{example}

\begin{remark}
In Examples \ref{eg:gamma1}--\ref{eg:vigamma0}, we only give
numerical results on the number of local extreme points of $g$. It
is still lack of strictly analytic proofs. Thus we leave some open
problems. Moreover, it is also an interesting open problem to find
out a complete list of all possible types of $g$, corresponding to
$\varphi_3$ and convex, increasing $f$ in Cases IV, V and VI.
\end{remark}

\begin{remark}\label{rk:varioussolutions}
Although we here focus on classical solutions of
\eqref{eq:1dmemsfr0}, we also deviate a little from the topics of
discussion and gives some additional information on ``non-classical
solutions'' in Figs.\ref{fig:f1234},
\ref{fig:f00BIV4}--\ref{fig:last}.
In fact, the thick dashed lines represent discontinuous
non-classical solutions (because of limitations of space, about
non-classical solutions refer to \cite{xp2011a,Pan2012}, also see
\cite{BHOO07,BHOO072,BG10,Obersnel2007}). We also note that the
intersection points of the bifurcation curve and the singular line
in Figs.\ref{fig:f1234}, \ref{fig:last0} and \ref{fig:last}
represent ``singular solutions'' (see e.g. \cite{Levine1989} for the
semilinear case).
\end{remark}

\section{Time map and general properties}

In this section, we recall and investigate various properties of the
so-called time map $T$ for problem \eqref{eq:1dmemsfr0} under
suitable conditions.
These properties will be used to obtain the shape of the time map
and prove the main theorems.

\subsection{Some well known results about time map}

Since $f(u)$ does not contain $x$ explicitly and the problem is
autonomous, positive solutions of \eqref{eq:1dmemsfr0} are always
symmetric (see e.g. \cite[Lem 2.1]{KL10}).
Hence $u'(0)=0$.

Due to the symmetry, we consider the equation
\begin{equation*} \label{eq:achange}
        -\varphi'(u')u''={\lambda}f(u),  \qquad x\in (0, L),
\end{equation*}
with the initial conditions
\begin{equation*}\label{con:ainitial}
u(0)= r>0,\; u'(0)=0.
\end{equation*}
From the energy conservation relation
\begin{equation*}\label{eq:aconversionlaw}
\Phi(u')  + \lambda F(u) =\lambda F( r),
\end{equation*}
%
we obtain the following time map for positive solutions
\begin{align}\label{eq:time}
T(r, \lambda)=\int_{0}^ r \frac{1}{\Phi^{-1}(\lambda\left[F(
r)-F(u)\right]) }du,\qquad r\in I,\; \lambda>0,
\end{align}
where $\Phi^{-1}$ is always taken to be positive and
\begin{equation}\label{eq:alpha}
I:= \left\{
 \begin{array}{ll}
  (0, A), & \hbox{if } B=+\infty; \\
  (0,  F^{-1}(\frac{B}{\lambda})], & \hbox{if } B<+\infty \hbox{ and } C=+\infty;\\
 (0, A), & \hbox{if } B<+\infty,  C<+\infty, \hbox{ and } \lambda\leqslant\frac{B}{C}; \\
 (0,  F^{-1}(\frac{B}{\lambda})], & \hbox{if } B<+\infty,  C<+\infty, \hbox{ and }  \lambda> \frac{B}{C}.
 \end{array}
\right.
\end{equation}
Denote by $r^*$ the right endpoint of $I$. Under conditions
\eqref{con:phi} and \eqref{con:positive}, $T(r, \lambda)$ is well
defined and continuous for all $( r, \lambda)\in I\times (0,
+\infty)$.
 When $B<+\infty$, the curve $ r=F^{-1}(\frac{B}{\lambda})$, due to $|u'(\pm L;  r)|=+\infty$,
 is known as ``(gradient or derivative) blow-up curve'';
when $A<+\infty$, the straight line $r=A$ is referred as ``singular
line''. See \cite{Pan2014a} for details.


%
%
%

Since the solutions of \eqref{eq:1dmemsfr0} correspond to the
bifurcation curve which is determined by $T(r, \lambda)=L$, this
leads us to investigate the graph of $T(r, \lambda)$.



About the gradient blow-up curve, the following result is well known
(define $\frac{B}{C}=0$ if $C=+\infty$).
\begin{lemma}[\cite{Pan2014a}]\label{thm:blowupcurve}
  Assume condition \eqref{con:positive} holds. Let $B<+\infty$ and $r(\lambda)=F^{-1}(\frac{B}{\lambda})$. Then
the following assertions hold:

(a) The function $r(\lambda)$ is well defined on $(\frac{B}{C},
+\infty)$ and strictly decreasing.

(b) $\lim_{\lambda\rightarrow \frac{B}{C}} r(\lambda)=A$ and
$\lim_{\lambda\rightarrow \infty} r(\lambda)=0$.
%
\end{lemma}


\begin{lemma}[\cite{Pan2014a}]\label{a-prop:twosidee1}
Assume conditions \eqref{con:phi} and \eqref{con:positive} hold.
Let $T(r, \lambda)$ be defined in \eqref{eq:time}. Then for fixed $
r\in I$, $T(r, \lambda)$ is strictly decreasing in $\lambda$ and
$\lim_{\lambda\rightarrow0}T(r, \lambda)=+\infty$.
\end{lemma}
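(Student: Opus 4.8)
The statement to prove is Lemma~\ref{a-prop:twosidee1}: for fixed $r\in I$, the map $\lambda\mapsto T(r,\lambda)$ is strictly decreasing, and $T(r,\lambda)\to+\infty$ as $\lambda\to 0$. The natural approach is to look directly at the defining integral
\[
T(r,\lambda)=\int_0^r \frac{du}{\Phi^{-1}\!\bigl(\lambda[F(r)-F(u)]\bigr)},
\]
and exploit monotonicity of $\Phi^{-1}$. Since $\varphi'(t)>0$ on $\mathbb{R}$ by \eqref{con:phi}, we have $\Phi'(z)=z\varphi'(z)>0$ for $z>0$, so $\Phi$ is strictly increasing on $[0,+\infty)$ and hence $\Phi^{-1}$ is strictly increasing on $[0,B)$. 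Also, for $0<u<r$ we have $F(r)-F(u)>0$ by \eqref{con:positive} (since $f>0$ on $(0,A)$ makes $F$ strictly increasing), so the argument $\lambda[F(r)-F(u)]$ is a strictly increasing function of $\lambda$ at each such $u$. Therefore for $\lambda_1<\lambda_2$ the integrand at $\lambda_2$ is strictly smaller than at $\lambda_1$ for every $u\in(0,r)$, which gives $T(r,\lambda_2)<T(r,\lambda_1)$ once we know the integrals are finite — and finiteness is exactly the assertion (already granted in the excerpt) that $T$ is well defined and continuous on $I\times(0,+\infty)$ under \eqref{con:phi} and \eqref{con:positive}.

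**The limit as $\lambda\to 0$.** For the second claim I would push the same monotone-convergence idea. As $\lambda\to 0^+$, for each fixed $u\in(0,r)$ the argument $\lambda[F(r)-F(u)]\to 0^+$, so $\Phi^{-1}(\lambda[F(r)-F(u)])\to\Phi^{-1}(0)=0$, hence the integrand tends to $+\infty$ pointwise on $(0,r)$. Moreover the integrand is monotone increasing as $\lambda$ decreases (by the previous paragraph), so the monotone convergence theorem applies and $T(r,\lambda)=\int_0^r(\text{integrand})\,du\to\int_0^r(+\infty)\,du=+\infty$. One should be slightly careful that monotone convergence is being applied to an increasing family as the parameter decreases to $0$; this is fine since we may take any sequence $\lambda_n\downarrow 0$ and the integrands increase to $+\infty$ a.e.\ on $(0,r)$. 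Alternatively, a cleaner self-contained argument avoiding measure theory: for any $M>0$, restrict the integral to a subinterval $[\epsilon,r/2]$ say, where $F(r)-F(u)$ is bounded below by a positive constant $c>0$; then on this subinterval the integrand is at least $1/\Phi^{-1}(\lambda c\,)$, which is a constant (in $u$) tending to $+\infty$ as $\lambda\to 0$, so $T(r,\lambda)\ge (r/2-\epsilon)/\Phi^{-1}(\lambda c)\to+\infty$.

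**Main obstacle.** There is no deep obstacle here; this is essentially a bookkeeping lemma about the time map. The only point that needs a little care is justifying the interchange of limit and integral near the singular endpoints $u=0$ and $u=r$ (where $F(r)-F(u)$ vanishes or where the integrand might be large), but this is handled either by the monotone convergence theorem or, in the elementary version, by simply discarding those pieces and estimating on a compact subinterval bounded away from $0$ and $r$ — a lower bound on a subinterval suffices for a limit that is $+\infty$. I would present the monotonicity part via the pointwise comparison of integrands and the limit via the compact-subinterval lower bound, both of which are short and transparent, noting explicitly that well-definedness of $T$ (granted in the text) is what lets us talk about the value of the integral in the first place.
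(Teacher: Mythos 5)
Your overall strategy is the natural one (this lemma is only quoted here from \cite{Pan2014a}, so there is no proof in the present paper to compare against): monotonicity of $\lambda\mapsto T(r,\lambda)$ follows from pointwise strict monotonicity of the integrand via $\Phi^{-1}$ increasing and $F(r)-F(u)>0$, together with finiteness of $T$, and the divergence as $\lambda\to0$ follows from the pointwise blow-up of the integrand plus monotone convergence. That part of your write-up is correct and complete.

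However, the ``cleaner self-contained argument'' that you say you would actually present for the limit has the comparison inequality reversed. If $F(r)-F(u)\geqslant c>0$ on a subinterval, then, since $\Phi^{-1}$ is increasing, $\Phi^{-1}(\lambda[F(r)-F(u)])\geqslant\Phi^{-1}(\lambda c)$, so the integrand satisfies $\frac{1}{\Phi^{-1}(\lambda[F(r)-F(u)])}\leqslant\frac{1}{\Phi^{-1}(\lambda c)}$ there --- an upper bound, not the lower bound you claim. To get a lower bound on the integrand you need an \emph{upper} bound on $F(r)-F(u)$, and the simplest choice works on all of $(0,r)$: since $F(r)-F(u)\leqslant F(r)$, you get
\begin{equation*}
T(r,\lambda)\;\geqslant\;\frac{r}{\Phi^{-1}\bigl(\lambda F(r)\bigr)}\;\longrightarrow\;+\infty\qquad(\lambda\to0^+),
\end{equation*}
because $\Phi^{-1}(\lambda F(r))\to\Phi^{-1}(0)=0$. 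With this one-line correction your elementary argument goes through (and no subinterval truncation is even needed); alternatively, the monotone convergence version you sketched is already a valid proof of the limit.
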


\begin{lemma}[\cite{Pan2014a}]\label{a-prop:twosidee12}
Assume conditions \eqref{con:phi} and \eqref{con:positive} hold. Let
$T(r, \lambda)$ be defined in \eqref{eq:time}. If $B=+\infty$, then
$\lim_{\lambda\rightarrow+\infty}T(r, \lambda)=0$ for any $r\in
(0,A)$.
\end{lemma}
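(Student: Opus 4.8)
The plan is to regard $T(r,\lambda)$, for fixed $r\in(0,A)$, as an integral depending on the parameter $\lambda$ and to pass to the limit $\lambda\to+\infty$ inside the integral by dominated convergence. Following \eqref{eq:time}, write
$$T(r,\lambda)=\int_0^r g_\lambda(u)\,du,\qquad g_\lambda(u):=\frac{1}{\Phi^{-1}(\lambda[F(r)-F(u)])}.$$
Two elementary observations set things up. First, by \eqref{con:positive} the function $f$ is continuous and strictly positive on the compact set $[0,r]$, so $F$ is strictly increasing there and $F(r)-F(u)>0$ for all $u\in[0,r)$. Second, by \eqref{con:phi} the map $\Phi$ is a strictly increasing homeomorphism of $[0,+\infty)$ onto $[0,B)$; since $B=+\infty$ here, $\Phi^{-1}$ is defined on all of $[0,+\infty)$ and $\Phi^{-1}(y)\to+\infty$ as $y\to+\infty$.

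From these facts the pointwise limit is immediate: for each fixed $u\in[0,r)$ one has $\lambda[F(r)-F(u)]\to+\infty$, hence $\Phi^{-1}(\lambda[F(r)-F(u)])\to+\infty$, so $g_\lambda(u)\to0$ as $\lambda\to+\infty$ (the single point $u=r$ is irrelevant). For the domination, fix $\lambda_0>0$: for $\lambda\geqslant\lambda_0$ the monotonicity of $\Phi^{-1}$ and of $\lambda\mapsto\lambda[F(r)-F(u)]$ gives $0\leqslant g_\lambda(u)\leqslant g_{\lambda_0}(u)$ on $[0,r)$, and $g_{\lambda_0}\in L^1(0,r)$ precisely because $T(r,\lambda_0)<+\infty$, which is the content of the fact, recorded after \eqref{eq:alpha}, that $T$ is well defined on $I\times(0,+\infty)$.

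Now the conclusion follows: for any sequence $\lambda_n\to+\infty$ with $\lambda_n\geqslant\lambda_0$, the dominated convergence theorem yields $T(r,\lambda_n)=\int_0^r g_{\lambda_n}\to\int_0^r0=0$; and since by Lemma \ref{a-prop:twosidee1} the function $\lambda\mapsto T(r,\lambda)$ is monotone and nonnegative, the full limit $\lim_{\lambda\to+\infty}T(r,\lambda)$ exists and equals $0$.

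I do not anticipate a genuine obstacle: the only place where the hypothesis $B=+\infty$ is essential is in keeping $\Phi^{-1}$ defined for every argument and forcing it to blow up, and it is exactly the failure of this when $B<+\infty$ (where the integrand becomes singular through $\Phi^{-1}(B-\cdot)$ near the blow-up curve) that makes that case genuinely different. If instead a quantitative bound were wanted (it is not needed for the statement), the natural route is to split $T=\int_0^{r/2}+\int_{r/2}^r$, bound the first term by $(r/2)/\Phi^{-1}(\lambda[F(r)-F(r/2)])$, and in the second use $F(r)-F(u)\geqslant(r-u)\min_{[r/2,r]}f$, the substitution $w=\lambda\,(\min_{[r/2,r]}f)(r-u)$, the identity $\int_0^M\frac{dw}{\Phi^{-1}(w)}=\varphi(\Phi^{-1}(M))$ (obtained from $w=\Phi(z)$, $dw=z\varphi'(z)\,dz$), and the fact that $\varphi(z)/\Phi(z)\to0$ as $z\to+\infty$, which follows from $\Phi(z)=z\varphi(z)-\int_0^z\varphi$ together with the monotonicity of $\varphi$.
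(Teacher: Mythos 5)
Your argument is correct: the pointwise limit uses exactly that $B=+\infty$ makes $\Phi^{-1}$ globally defined with $\Phi^{-1}(y)\to+\infty$, the domination $0\leqslant g_\lambda\leqslant g_{\lambda_0}$ for $\lambda\geqslant\lambda_0$ is immediate from monotonicity, integrability of $g_{\lambda_0}$ is precisely the recorded finiteness of $T(r,\lambda_0)$, and this limit--integral interchange is essentially the same device the paper attributes to \cite{Pan2014a} (cf.\ the appeal to its Lemma 2.1 in the proof of Proposition \ref{T-left}), so your route matches the intended one. One harmless slip: under \eqref{con:positive} alone $f$ need not be strictly positive at $u=0$ (indeed $f(0)=0$ throughout this paper), but your argument only uses $f>0$ on $(0,r)$ to get $F(r)-F(u)>0$ for $u\in[0,r)$, so nothing is affected.
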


The following lemma, which is a generalization of Habets and Omari
\cite[Lem 3.1]{HO07}, gives smoothness of $T$ with respect to $r$.
\begin{lemma}[\cite{Pan2014a}]\label{lem:first}
 If for any $ r\in I$ there exists a locally bounded function $K(r)>0$ such that for
every $s\in (\frac{1}{2}, 1)$,
\begin{equation}\label{con:lip}
\Big|\frac{f( r)-sf( r s)}{F( r)-F( r s)}\Big|\leqslant K(r),
\end{equation}
then $T$ is differentiable at each point $ r\in I$, with derivative
\begin{align}\nonumber
\pd{T}{ r}=\int_{0}^1&\frac{1}{\Phi^{-1}(\lambda\left[F( r)-F( r
s)\right]) }ds\\\label{eq:1.5} &-\lambda r \int_0^1
\frac{f(r)-sf(rs)}{\left[\Phi^{-1}(\lambda\left[F( r)-F( r
s)\right])\right]^2 \Phi'(\Phi^{-1}(\lambda\left[F(
r)-F(rs)\right])) } ds
\end{align}
and $\pd{T}{ r}(r,\lambda)$ is continuous at each point $
(r,\lambda)\in I\times(0,\infty)$.
\end{lemma}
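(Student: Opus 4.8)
The plan is to prove the formula by differentiating under the integral sign, after first removing the $r$–dependence from the limits of integration. Substituting $u=rs$ in \eqref{eq:time} gives $T(r,\lambda)=\int_0^1 G(r,s)\,ds$ with $G(r,s)=r\,\big[\Phi^{-1}\!\big(\lambda(F(r)-F(rs))\big)\big]^{-1}$. For each fixed $s\in(0,1)$ the map $r\mapsto G(r,s)$ is $C^1$ on $I$: by \eqref{con:positive} one has $F(r)-F(rs)>0$, so $\Phi^{-1}$ is evaluated where it is $C^1$ with $(\Phi^{-1})'(y)=1/\Phi'(\Phi^{-1}(y))$, and the chain rule (together with $\frac{d}{dr}F(rs)=s\,f(rs)$) yields exactly the integrand of \eqref{eq:1.5}: writing $\psi=\lambda(F(r)-F(rs))$,
\[
\partial_r G(r,s)=\frac{1}{\Phi^{-1}(\psi)}-\frac{\lambda r\,(f(r)-s f(rs))}{[\Phi^{-1}(\psi)]^2\,\Phi'(\Phi^{-1}(\psi))}.
\]

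Second, I would produce an $L^1(0,1)$ majorant for $|\partial_r G(r,\cdot)|$ uniform for $r$ in a compact subinterval $J\subset I$ containing an arbitrary $r_0\in I$ (one-sided at an endpoint of $I$). On $(0,\tfrac{1}{2}]$ there is nothing to do, since there $F(r)-F(rs)$ is bounded below by a positive constant and $f,F$ are continuous, so $\partial_r G$ is bounded. The delicate region is $(\tfrac{1}{2},1)$, where both terms of $\partial_r G$ are singular at $s=1$. Because $f$ is continuous and strictly positive on a fixed compact subinterval of $(0,A)$ containing all the relevant values $rs$, one has $F(r)-F(rs)\geq c\,r(1-s)\geq c'(1-s)$; and since $\Phi(z)=\int_0^z t\varphi'(t)\,dt\leq\tfrac{1}{2}\big(\max_{[0,W]}\varphi'\big)z^2$ on any bounded interval $[0,W]$, its inverse satisfies $\Phi^{-1}(y)\geq c_0\sqrt{y}$ there, whence $w:=\Phi^{-1}(\psi)\geq c''\sqrt{1-s}$ (and $w$ stays in a fixed bounded interval for $r\in J$). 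This bounds the first term of $\partial_r G$ by $C(1-s)^{-1/2}$. For the second term, using $\Phi(w)=\psi$ and $\Phi'(w)=w\varphi'(w)$,
\[
\frac{\lambda r\,(f(r)-s f(rs))}{[\Phi^{-1}(\psi)]^2\,\Phi'(\Phi^{-1}(\psi))}=\lambda r\cdot\frac{f(r)-s f(rs)}{F(r)-F(rs)}\cdot\frac{\Phi(w)}{\lambda\,w^{3}\varphi'(w)},
\]
where the middle factor is $\leq K(r)$ by hypothesis \eqref{con:lip} (and $K$ is bounded on $J$), while $\Phi(w)/(w^3\varphi'(w))\leq\tfrac{\max\varphi'}{2\min\varphi'}\,w^{-1}\leq C(1-s)^{-1/2}$ again by the quadratic bound on $\Phi$ and $\varphi'>0$. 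Hence $|\partial_r G(r,s)|\leq H(s):=C_J(1-s)^{-1/2}\in L^1(0,1)$ uniformly in $r\in J$.

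Third, the classical theorem on differentiation under the integral sign (pointwise $r$–differentiability of the integrand plus a uniform integrable majorant) gives that $T(\cdot,\lambda)$ is differentiable on $I$ with $\partial_r T=\int_0^1\partial_r G\,ds$, which is \eqref{eq:1.5}. For the asserted continuity of $\partial_r T$ on $I\times(0,\infty)$ I would rerun the same estimates with all constants taken uniform over a compact neighbourhood of a fixed $(r_0,\lambda_0)$ — there $\psi$, hence $w$, ranges in a fixed bounded interval, and both $w\geq c''\sqrt{1-s}$ and the bound \eqref{con:lip} survive with $\lambda$–uniform constants — and then invoke dominated convergence for $\int_0^1\partial_r G(r,s)\,ds$ along sequences $(r_n,\lambda_n)\to(r_0,\lambda_0)$, using that $\partial_r G(r,s)$ depends continuously on $(r,\lambda)$ for each fixed $s\in(0,1)$.

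The main obstacle is precisely this near-$s=1$ analysis: the fact that the singular integrand has only an integrable $(1-s)^{-1/2}$ singularity rests on combining two ingredients — the square-root behaviour of $\Phi^{-1}$ near $0$, forced by $\varphi'(0)>0$ and the definition $\Phi(z)=\int_0^z t\varphi'(t)\,dt$, and the hypothesis \eqref{con:lip}, which is exactly what offsets the growth produced by differentiating $F(r)-F(rs)$ in the numerator. Everything else — the estimates away from $s=1$, the joint continuity of the integrand, and the passage from $\partial_r G$ to $\partial_r T$ — is routine once this domination is secured.
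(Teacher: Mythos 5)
Your route — substitute $u=rs$, differentiate under the integral sign, and dominate the differentiated integrand near $s=1$ by $C(1-s)^{-1/2}$ using the quadratic behaviour of $\Phi$ at $0$ (so $\Phi^{-1}(y)\geqslant c_0\sqrt{y}$ on bounded ranges) together with hypothesis \eqref{con:lip} — is the natural one and is surely the argument behind the quoted lemma (this paper only cites it from \cite{Pan2014a}, following Habets–Omari); your identity rewriting the second term as $\lambda r\cdot\frac{f(r)-sf(rs)}{F(r)-F(rs)}\cdot\frac{\Phi(w)}{\lambda w^{3}\varphi'(w)}$ and the resulting $(1-s)^{-1/2}$ majorant on $(\tfrac12,1)$, uniformly over a compact $J\subset I$, are correct, as is the continuity argument for interior points.

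There is, however, a gap at exactly the place you wave away: the claim that on $s\in(0,\tfrac12]$ ``$\partial_r G$ is bounded'' tacitly assumes that $w=\Phi^{-1}(\lambda[F(r)-F(rs)])$ stays in a fixed compact subset of $(0,+\infty)$, so that $\Phi'(w)=w\varphi'(w)$ is bounded below. This is true when $J$ lies in the interior of $I$, but the set $I$ in \eqref{eq:alpha} is closed at its right endpoint $r^{*}=F^{-1}(\frac{B}{\lambda})$ when $B<+\infty$, and you explicitly claim to treat endpoints one-sidedly. At $r=r^{*}$ one has $\lambda[F(r^{*})-F(r^{*}s)]\to B^{-}$ as $s\to 0^{+}$, hence $w\to+\infty$, and the denominator $w^{2}\Phi'(w)=w^{3}\varphi'(w)$ of the second term need not be bounded below: for instance for $\varphi_{5}$ of Example \ref{table:fi} one has $z^{3}\varphi_5'(z)\sim z^{-2}\to 0$, and with $f'(0)>0$ a short computation ($B-\Phi(w)\sim c\,w^{-3}$, $\lambda F(r^{*}s)\sim c's^{2}$, so $w\sim c''s^{-2/3}$) shows the second integrand behaves like $s^{-4/3}$ near $s=0$, so no integrable majorant exists and the second integral in \eqref{eq:1.5} actually diverges at $r=r^{*}$. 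So your argument (and indeed the formula itself, for such $\varphi$) only covers interior points of $I$; the endpoint $r^{*}$ on the gradient blow-up curve either has to be excluded from the differentiability claim or treated by a separate argument, and your proof as written does not do this.
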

Notice that
the interval $(\frac{1}{2},1)$ can be replaced with any $(c,1), c\in
(0,1)$.

\subsection{More properties on time maps}

For simplicity, in what follows, we usually denote $T(r, \lambda)$
by $T( r)$, $\pd{T}{ r} $ by $T'$,  and $\pd{T}{\lambda} $ by
$T_\lambda$.
\begin{theorem}\label{T'<0}
Assume that \eqref{con:phi}--\eqref{con:strict2} hold. Then
$T'(r)<0$ for all $r\in I$.
\end{theorem}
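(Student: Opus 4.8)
The plan is to show $T'(r)<0$ by using the formula for $\pd{T}{r}$ in Lemma \ref{lem:first}, after first checking that its hypothesis \eqref{con:lip} is satisfied under our assumptions. The first step is to verify \eqref{con:lip}: since $f\in C^1$ on $[0,A)$ with $f(0)=0$, the map $s\mapsto f(rs)$ is $C^1$, and the Cauchy mean value theorem applied to the numerator $f(r)-sf(rs)$ and denominator $F(r)-F(rs)$ (both vanishing at $s=1$) expresses the quotient in \eqref{con:lip} in terms of $f$, $f'$ evaluated at intermediate points; using \eqref{f-c1} (so that $f'(u)u\geqslant f(u)\geqslant0$) one bounds this locally, giving a locally bounded $K(r)$. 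Hence Lemma \ref{lem:first} applies and
\begin{align*}
\pd{T}{r}=\int_{0}^1\frac{ds}{\Phi^{-1}(\lambda[F(r)-F(rs)])}-\lambda r\int_0^1\frac{f(r)-sf(rs)}{[\Phi^{-1}(\lambda[F(r)-F(rs)])]^2\,\Phi'(\Phi^{-1}(\lambda[F(r)-F(rs)]))}\,ds.
\end{align*}

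The core of the argument is to combine the two integrals into a single nonpositive integrand. Writing $w:=\Phi^{-1}(\lambda[F(r)-F(rs)])$, note $\Phi'(w)=w\varphi'(w)$ from the definition $\Phi(z)=\int_0^z t\varphi'(t)\,dt$, so the second integrand is $\dfrac{\lambda r(f(r)-sf(rs))}{w^3\varphi'(w)}$. The idea is to rewrite the first integral by an integration by parts in $s$, or equivalently to differentiate the energy relation $\Phi(w)=\lambda[F(r)-F(rs)]$ with respect to $s$, which gives $w\varphi'(w)\,\dfrac{dw}{ds}=-\lambda r f(rs)$. This lets me transform $\int_0^1 w^{-1}\,ds$ into an expression involving $\int_0^1 s f(rs)\,(\cdot)\,ds$-type terms, so that both integrals become integrals of the \emph{same} kernel against the difference $f(r)-sf(rs)$ versus $sf(rs)$ data. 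After this manipulation the claim reduces to showing an integrand of the form (positive factor)$\times\big[$ something controlled by $f'(u)u-f(u)\geqslant0$ and by $-z\varphi''(z)\geqslant0\big]$ is $\leqslant 0$, with strict inequality on a set of positive measure thanks to \eqref{con:strict2}.

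More concretely, I expect the cleanest route is the substitution $u=rs$ turning $\pd{T}{r}$ into an integral over $u\in(0,r)$, then a further change of variables to the energy variable, exactly as in the monotonicity arguments of time-map theory (Laetsch-type computations, cf. \cite{HO07}). Under that change the sign of $T'$ is governed pointwise by the sign of a combination that, using $\Phi$ convex-type information from \eqref{con:phi2} (i.e. $z\varphi''(z)\leqslant0$, equivalently $\Phi$ has a concavity property in the relevant variable) together with $f(u)/u$ nondecreasing from \eqref{f-c1}, is manifestly $\leqslant0$; the strictness clause \eqref{con:strict2} upgrades this to $<0$ after integration. This is essentially the $f(0)=0$ analogue of the corresponding $T'<0$ result in \cite{Pan2014a} for $f(0)>0$, and the structure of the proof should parallel it.

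The main obstacle is twofold. First, the behavior near $s=1$ (equivalently $u=r$): there $w\to0$ and $\Phi^{-1}$, $1/\Phi'$ blow up, so the two integrals are each only conditionally convergent-looking and must be combined \emph{before} estimating; getting the integration by parts / change of variables to produce an integrand that is genuinely integrable and of one sign requires care with the boundary term at $s=1$, which one checks vanishes using $f(r)-sf(rs)\to 0$ as $s\to1$ at a controlled rate (again from \eqref{f-c1}). Second, near $s=0$ when $f'(0)=0$ or $f(0)=0$ the factor $f(rs)$ degenerates, but since we only need an inequality, not sharp asymptotics, this is harmless. So the real work is the algebraic identity that merges the two terms and the verification that the resulting single integrand has a sign forced by \eqref{con:phi2} and \eqref{f-c1}, with \eqref{con:strict2} giving strictness.
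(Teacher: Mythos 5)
You have the right skeleton (apply Lemma \ref{lem:first}, put the two terms over the common denominator $[\Phi^{-1}]^3\varphi'(\Phi^{-1})$ using $\Phi'(w)=w\varphi'(w)$, and aim to get a signed quantity out of \eqref{con:phi2} and \eqref{f-c1} with strictness from \eqref{con:strict2}), but the decisive step is missing: you never produce the identity that actually forces the sign, and you explicitly defer "the real work" to an unspecified integration by parts / change to the energy variable, asserting that afterwards the integrand is "manifestly $\leqslant 0$." That assertion is not justified as stated: the combined numerator is
\begin{equation*}
H(s)=\bigl[\Phi^{-1}(\lambda[F(r)-F(rs)])\bigr]^2\,\varphi'\bigl(\Phi^{-1}(\lambda[F(r)-F(rs)])\bigr)-\lambda r\bigl[f(r)-sf(rs)\bigr],
\end{equation*}
and the conditions $f'(u)u\geqslant f(u)$ and $z\varphi''(z)\leqslant 0$ do not give a pointwise comparison between $w^2\varphi'(w)$ and $\lambda r[f(r)-sf(rs)]$ at a fixed $s$; the sign only emerges after an argument that integrates the superlinearity from $s$ to $1$. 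The paper's proof supplies exactly this missing ingredient with no integration by parts at all: observe $H(1)=0$, differentiate in $s$ (using $\frac{d}{ds}\Phi^{-1}(\lambda[F(r)-F(rs)])=-\lambda r f(rs)/(w\varphi'(w))$), and find
\begin{equation*}
H'(s)=-\lambda r f(rs)\,\frac{w\,\varphi''(w)}{\varphi'(w)}+\lambda r\bigl[f'(rs)rs-f(rs)\bigr]>0 \quad\text{(a.e., by \eqref{con:phi2}, \eqref{f-c1}, \eqref{con:strict2})},
\end{equation*}
so $H(s)<0$ on $[0,1)$ and hence $T'(r)<0$. Until you either reproduce this "vanishing at $s=1$ plus monotonicity of $H$" argument or actually carry out your proposed change of variables and exhibit the signed integrand together with the vanishing boundary term, the proof is incomplete.

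Two smaller points. First, your concern that the two integrals are "only conditionally convergent-looking" is unfounded: since $\varphi'(0)>0$, near $s=1$ one has $\Phi^{-1}(\lambda[F(r)-F(rs)])\sim c\sqrt{1-s}$ and $f(r)-sf(rs)=O(1-s)$, so each integrand is $O((1-s)^{-1/2})$ and both integrals converge absolutely; no delicate cancellation or combination before estimating is required. Second, verifying \eqref{con:lip} is indeed immediate from $f\in C^1$ (the paper dispatches it in one line), so that part of your plan is fine.
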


\begin{proof}
Since  $f\in C^1$ 
implies \eqref{con:lip}, $T$ is differentiable.
By \eqref{eq:1.5}, we have
\begin{align}\label{eq:1.7}
T'(r) &=\int_0^1 \frac{\left[\Phi^{-1}(\lambda\left[F( r)-F( r
s)\right])\right]^2 \varphi'(\Phi^{-1}(\lambda\left[F( r)-F( r
s)\right]))  -  \lambda   r      \left[f( r)-sf( r
s)\right]}{\left[\Phi^{-1}(\lambda\left[F( r)-F( r
s)\right])\right]^3 \varphi'(\Phi^{-1}(\lambda\left[F( r)-F( r
s)\right])) } ds.
\end{align}
Denote the numerator of the integrand by
\begin{align*}
H(s)=\left[\Phi^{-1}(\lambda\left[F( r)-F( r s)\right])\right]^2
\varphi'(\Phi^{-1}(\lambda\left[F( r)-F( r s)\right]))  -  \lambda r
\left[f( r)-sf( r s)\right].
\end{align*}
Clearly, $H(1)=0$. Moreover, for $s\in (0, 1)$, we have
\begin{align*}
H'(s)=&-2\lambda f(rs)r- \lambda f(rs)r
\frac{\Phi^{-1}(\lambda\left[F( r)-F( r s)\right])
\varphi''(\Phi^{-1}(\lambda\left[F( r)-F( r s)\right]))   }{
\varphi'(\Phi^{-1}(\lambda\left[F( r)-F( r s)\right]))   }
+\lambda f(rs)r + \lambda f'(rs)r^2s\\
= &- \lambda f(rs)r \frac{\Phi^{-1}(\lambda\left[F( r)-F( r
s)\right]) \varphi''(\Phi^{-1}(\lambda\left[F( r)-F( r s)\right]))
}{ \varphi'(\Phi^{-1}(\lambda\left[F( r)-F( r s)\right]))   }  +
\lambda r [f'(rs)rs - f(rs)]> 0,
\end{align*}
where the last inequality holds due to
\eqref{con:phi}--\eqref{con:strict2}. Hence, $H(s)< 0$ on $[0,1)$.
Then $T'(r)<0$ on $I$.
\end{proof}

%

The following two propositions give some important information of
$T(r)$ at the left endpoint of $I$.


\begin{proposition}\label{T-left}
Let $\lambda>0$ be fixed. Assume conditions \eqref{con:phi} and
\eqref{con:positive} hold. If $f(0)=0$ and
$0<\lim\limits_{r\rightarrow 0} \frac{f(r)}{r^\alpha}=E <+\infty$
for some $\alpha>0$, then
    \begin{align*}
    \lim\limits_{r\rightarrow 0} T(r) =
    \left\{
    \begin{array}{ll}
    0, & \qquad\text{if}~~0<\alpha<1;\\
    \frac{\pi}{ 2}  \sqrt{  \frac{ \varphi'(0)}{  {\lambda {E}} }}, & \qquad\text{if}~~\alpha=1;\\
   +\infty, & \qquad\text{if}~~\alpha>1.
    \end{array}
    \right.
    \end{align*}
\end{proposition}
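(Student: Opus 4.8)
The plan is to analyze the time-map integral
\[
T(r,\lambda)=\int_0^r\frac{du}{\Phi^{-1}\!\bigl(\lambda[F(r)-F(u)]\bigr)}
\]
in the limit $r\to 0^+$ by a rescaling substitution. Writing $u=rs$, $s\in(0,1)$, gives
\[
T(r)=r\int_0^1\frac{ds}{\Phi^{-1}\!\bigl(\lambda[F(r)-F(rs)]\bigr)}.
\]
The hypothesis $f(r)\sim E r^\alpha$ as $r\to0$ together with $f(0)=0$ gives $F(r)\sim \frac{E}{\alpha+1}r^{\alpha+1}$, hence $F(r)-F(rs)\sim \frac{E}{\alpha+1}r^{\alpha+1}(1-s^{\alpha+1})$, so the argument of $\Phi^{-1}$ is of order $r^{\alpha+1}\to0$. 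Since $\Phi(z)=\int_0^z t\varphi'(t)\,dt$ and $\varphi'(0)>0$, near $z=0$ we have $\Phi(z)\sim \tfrac12\varphi'(0)z^2$, and therefore $\Phi^{-1}(y)\sim \sqrt{2y/\varphi'(0)}$ as $y\to0^+$. Substituting the asymptotics into the integrand, $\Phi^{-1}(\lambda[F(r)-F(rs)])\sim r^{(\alpha+1)/2}\sqrt{\tfrac{2\lambda E}{(\alpha+1)\varphi'(0)}}\,\sqrt{1-s^{\alpha+1}}$, so
\[
T(r)\sim r^{1-\frac{\alpha+1}{2}}\sqrt{\frac{(\alpha+1)\varphi'(0)}{2\lambda E}}\int_0^1\frac{ds}{\sqrt{1-s^{\alpha+1}}}
= r^{\frac{1-\alpha}{2}}\sqrt{\frac{(\alpha+1)\varphi'(0)}{2\lambda E}}\int_0^1\frac{ds}{\sqrt{1-s^{\alpha+1}}}.
\]
The power of $r$ is positive, zero, or negative according to $\alpha<1$, $\alpha=1$, $\alpha>1$, which yields the three cases; and when $\alpha=1$ the Beta-type integral $\int_0^1(1-s^2)^{-1/2}ds=\pi/2$ produces the stated constant $\frac{\pi}{2}\sqrt{\varphi'(0)/(\lambda E)}$.

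To make this rigorous rather than heuristic I would proceed as follows. First, establish the two one-variable asymptotic facts precisely: (i) $\lim_{y\to0^+}\Phi^{-1}(y)/\sqrt{2y/\varphi'(0)}=1$, which follows by inverting $\lim_{z\to0}\Phi(z)/z^2=\tfrac12\varphi'(0)$ (a consequence of $\varphi\in C^2$, $\varphi$ odd so $\varphi(0)=0$, and $\varphi'(0)>0$); and (ii) $\lim_{r\to0}F(r)/r^{\alpha+1}=\frac{E}{\alpha+1}$, from L'Hôpital or direct integration of $f(r)/r^\alpha\to E$. Second, for fixed $s\in(0,1)$ show pointwise that
\[
\frac{\Phi^{-1}\!\bigl(\lambda[F(r)-F(rs)]\bigr)}{r^{(\alpha+1)/2}}\longrightarrow
\sqrt{\frac{2\lambda E}{(\alpha+1)\varphi'(0)}}\,\sqrt{1-s^{\alpha+1}}\qquad(r\to0^+),
\]
by combining (i) and (ii). Third, justify passing the limit inside $\int_0^1$: here one needs a dominating function, and the natural bound comes from monotonicity. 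Since $\Phi^{-1}$ is increasing and, for $r$ small and $s$ bounded away from $1$, $F(r)-F(rs)\geq c_s r^{\alpha+1}$ for a constant $c_s$, the integrand $r^{(\alpha+1)/2}/\Phi^{-1}(\cdots)$ is dominated near $s=1$ using the sharp constant; more carefully, for any $\varepsilon>0$ there is $\delta>0$ with $(1-\varepsilon)\frac{E}{\alpha+1}r^{\alpha+1}(1-s^{\alpha+1})\leq F(r)-F(rs)\leq(1+\varepsilon)\frac{E}{\alpha+1}r^{\alpha+1}(1-s^{\alpha+1})$ for $0<r<\delta$, and then squeeze $T(r)r^{(\alpha-1)/2}$ between two integrals that both converge to $\sqrt{\tfrac{(\alpha+1)\varphi'(0)}{2\lambda E}}\int_0^1(1-s^{\alpha+1})^{-1/2}ds$ as $\varepsilon\to0$.

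The main obstacle is the endpoint $s\to1$: the integrand has an integrable singularity $(1-s^{\alpha+1})^{-1/2}$ there, and one must control $\Phi^{-1}$ of a quantity that tends to $0$ not only because $r\to0$ but also because $s\to1$ — precisely where the small-argument asymptotic for $\Phi^{-1}$ is being used in a non-uniform way. The clean way around this is the two-sided squeeze just described: replace $F(r)-F(rs)$ by its upper and lower linear-in-$r^{\alpha+1}$ bounds uniformly in $s\in(0,1)$ for small $r$, use monotonicity of $\Phi^{-1}$ to get matching bounds on the integrand, and then reduce everything to the explicit integral $\int_0^1(1-s^{\alpha+1})^{-1/2}ds=\tfrac{1}{\alpha+1}B\!\bigl(\tfrac1{\alpha+1},\tfrac12\bigr)$, which is finite and equals $\pi/2$ when $\alpha=1$. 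The cases $\alpha<1$ and $\alpha>1$ then follow immediately from the prefactor $r^{(1-\alpha)/2}$, with the divergence in the case $\alpha>1$ obtained simply by noting the integral stays bounded below by a positive constant while $r^{(1-\alpha)/2}\to+\infty$.
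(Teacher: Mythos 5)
Your proof is correct and follows essentially the same route as the paper's: the substitution $u=rs$, the asymptotics $F(r)-F(rs)\sim \frac{E}{\alpha+1}r^{\alpha+1}(1-s^{\alpha+1})$ and $\Phi^{-1}(y)\sim\sqrt{2y/\varphi'(0)}$ as the arguments tend to $0$, and the prefactor $r^{(1-\alpha)/2}$ deciding the three cases. The only deviation is that where the paper justifies interchanging the limit with the integral by invoking the proof of Lemma 2.1 of \cite{Pan2014a} and then merely bounds the limiting integral above (for $\alpha<1$) or below (for $\alpha>1$), you give a self-contained uniform two-sided squeeze and evaluate the limiting integral exactly as a Beta-type integral, which is a valid and arguably more transparent way to handle the singularity at $s=1$.
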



\begin{proof}
Letting $u =  r s$ in \eqref{eq:time}, we obtain
\begin{align}\label{r-2}
T( r)&= r \int_{0}^1
\frac{1}{\Phi^{-1}(\lambda\left[F(r)-F(rs)\right]) }ds
=r^{\frac{1-\alpha}{2}}\int_{0}^1 \frac{ r^{\frac{1+\alpha}{2}} }{
\Phi^{-1}(\lambda\left[F( r)-F( r s)\right]) }\text{d}s .
\end{align}

Since $f(0)=0$ and $\lim\limits_{r\rightarrow 0}
\frac{f(r)}{r^\alpha}=E$, we obtain
\begin{align}\label{f-r}
   \frac{ F(r)-F(rs)}{r^{\alpha+1}}=\frac{1}{r^{\alpha+1}} \int_{rs}^{r}f(z)\, \text{d}z=
   \int_{s}^{1}\frac{f(r\,\tau)}{{(r\tau})^{\alpha}}\tau^{\alpha}\, \text{d}\tau
   \rightarrow \frac{E(1-s^{\alpha+1})}{\alpha+1} \qquad \text{as} ~~r\rightarrow 0.
\end{align}
Moreover, we know
\begin{align}\label{f_rr}
    &\lim_{y\rightarrow 0} \frac{y}{ (\Phi^{-1}(y))^2} =  \lim_{z\rightarrow 0} \frac{\Phi(z)}{z^2 } =
 \lim_{z\rightarrow 0} \frac{z \varphi '(z)}{2z }=\frac{\varphi'(0)}{2}>0.
\end{align}
By the proof of Lemma 2.1 of \cite{Pan2014a}, we have
\begin{align} \nonumber
\lim\limits_{r\rightarrow 0} \int_{0}^1
\frac{r^{\frac{1+\alpha}{2}}}{\Phi^{-1}(\lambda\left[F( r)-F(
rs)\right]) }\text{d}s &=\int_{0}^1 \lim_{r\rightarrow 0}\frac{
r^{\frac{1+\alpha}{2}}}{\Phi^{-1}(\lambda\left[F( r)-F( rs)\right])
}\text{d}s.
\end{align}
This, together with \eqref{f-r} and \eqref{f_rr}, implies that
\begin{align} \nonumber
\lim\limits_{r\rightarrow 0} \int_{0}^1 \frac{
r^{\frac{1+\alpha}{2}}}{\Phi^{-1}(\lambda\left[F( r)-F( rs)\right])
}\text{d}s &= \int_{0}^1\lim_{r\rightarrow 0} \frac{\sqrt{\lambda
[F(r)-  F(rs)]} }{\Phi^{-1}(\lambda\left[F( r)-F( r s)\right]) }
 \frac{r^{\frac{1+\alpha}{2}}}{ \sqrt{\lambda [F(r)-F(rs)]}}
\text{d}s\\\label{gg}
&=\int_{0}^{1} \sqrt{\frac{{\varphi'(0)(\alpha+1)}}{2\lambda
{E(1-s^{\alpha+1})}}}\text{d}s.
\end{align}

If $\alpha\in(0,1)$, we have
\begin{align*}
\int_{0}^{1} \sqrt{\frac{{\varphi'(0)(\alpha+1)}}{2\lambda
{E(1-s^{\alpha+1})}}}\text{d}s \leqslant \int_{0}^{1}
\sqrt{\frac{{\varphi'(0)(\alpha+1)}}{2\lambda {E(1-s)}}}\text{d}s=
 \sqrt{\frac{2\varphi'(0)(\alpha+1)}{\lambda {E}}}.
\end{align*}
Then  \eqref{r-2} and \eqref{gg} imply $\lim\limits_{r\rightarrow 0}
T(r) = 0$.

If $\alpha=1$, we get
\begin{align*}
\int_{0}^{1} \sqrt{\frac{{\varphi'(0)(\alpha+1)}}{2\lambda
{E(1-s^{\alpha+1})}}}\text{d}s = \int_{0}^{1}
\sqrt{\frac{\varphi'(0)}{{\lambda {E(1-s^2)}}}}\text{d}s=
\frac{\pi}{ 2}  \sqrt{  \frac{ \varphi'(0)}{  {\lambda {E}} }}.
\end{align*}
Then  \eqref{r-2} and \eqref{gg} imply that
$\lim\limits_{r\rightarrow 0} T(r) = \frac{\pi}{ 2}  \sqrt{  \frac{
\varphi'(0)}{  {\lambda {E}} }}$.

If $\alpha>1$, we obtain
\begin{align*}
\int_{0}^{1} \sqrt{\frac{{\varphi'(0)(\alpha+1)}}{2\lambda
{E(1-s^{\alpha+1})}}}\text{d}s \geqslant \int_{0}^{1}
\sqrt{\frac{{\varphi'(0)(\alpha+1) }}{2\lambda {E}}}\text{d}s=
\sqrt{\frac{{\varphi'(0)(\alpha+1)}}{2\lambda {E}}}.
\end{align*}
Thus  $\lim\limits_{r\rightarrow 0} T(r) = +\infty$ follows from
 \eqref{r-2} and \eqref{gg}.
\end{proof}

\begin{proposition}\label{cor:twokind}
Let $\lambda>0$ be fixed. Assume conditions \eqref{con:phi},
\eqref{con:positive} and \eqref{f-c1} hold. Then
    \begin{align*}
    \lim_{r\rightarrow 0} T(r) =
    \left\{
    \begin{array}{ll}
       \frac{\pi}{ 2}  \sqrt{  \frac{ \varphi'(0)}{  {\lambda f'(0)} }}, & \qquad\text{if } f'(0)>0;\\
   +\infty, & \qquad\text{if } f'(0)=0.
    \end{array}
    \right.
    \end{align*}
\end{proposition}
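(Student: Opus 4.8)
The plan is to deduce Proposition~\ref{cor:twokind} directly from Proposition~\ref{T-left}, by checking that condition \eqref{f-c1} forces $f$ to have a well-behaved leading-order term at $0$. Since \eqref{con:positive} and \eqref{f-c1} together give that $\frac{f(u)}{u}$ is nondecreasing on $(0,A)$, the limit $\ell:=\lim_{u\to 0^+}\frac{f(u)}{u}$ exists in $[0,+\infty)$ (it is bounded above by $\frac{f(u_0)}{u_0}$ for any fixed $u_0\in(0,A)$, and monotone). Because $f\in C^1$ on $[0,A)$ with $f(0)=0$, we also have $\ell=f'(0)$ by the mean value theorem. So the proposition splits into two cases according to whether $f'(0)>0$ or $f'(0)=0$.

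If $f'(0)>0$, then $\lim_{r\to 0}\frac{f(r)}{r}=f'(0)\in(0,+\infty)$, which is exactly the hypothesis of Proposition~\ref{T-left} with $\alpha=1$ and $E=f'(0)$. The case $\alpha=1$ of that proposition immediately yields $\lim_{r\to 0}T(r)=\frac{\pi}{2}\sqrt{\frac{\varphi'(0)}{\lambda f'(0)}}$, which is the first line of the claimed formula.

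If $f'(0)=0$, Proposition~\ref{T-left} does not apply directly (there is no $\alpha>0$ with a finite nonzero limit of $f(r)/r^\alpha$ in general — e.g. $f(u)=e^{u}-u-1$), so I would argue by comparison. The idea is to bound $T(r)$ below by a time map with a majorizing linear nonlinearity. Fix any small $\varepsilon>0$. Since $f'(0)=0$, there is $\rho>0$ such that $\frac{f(u)}{u}\leqslant\varepsilon$ for $0<u\leqslant\rho$, hence $F(r)-F(rs)=\int_{rs}^{r}f(z)\,\mathrm dz\leqslant \varepsilon\int_{rs}^{r}z\,\mathrm dz=\frac{\varepsilon}{2}r^2(1-s^2)$ for $0<r\leqslant\rho$. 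Because $\Phi^{-1}$ is increasing, this gives, from \eqref{r-2} with the substitution $u=rs$,
\begin{align*}
T(r)=r\int_0^1\frac{1}{\Phi^{-1}\!\big(\lambda[F(r)-F(rs)]\big)}\,\mathrm ds\geqslant r\int_0^1\frac{1}{\Phi^{-1}\!\big(\tfrac{\lambda\varepsilon}{2}r^2(1-s^2)\big)}\,\mathrm ds.
\end{align*}
Now I would mimic the $\alpha=1$ computation of Proposition~\ref{T-left}: write the right-hand side as $\int_0^1\frac{\sqrt{\tfrac{\lambda\varepsilon}{2}(1-s^2)}}{\Phi^{-1}(\tfrac{\lambda\varepsilon}{2}r^2(1-s^2))}\cdot\frac{1}{\sqrt{\tfrac{\lambda\varepsilon}{2}(1-s^2)}}\,\mathrm ds$, and use \eqref{f_rr} (i.e. $\Phi^{-1}(y)\sim\sqrt{2y/\varphi'(0)}$ as $y\to 0$) together with the dominated-convergence argument from the proof of Lemma~2.1 of \cite{Pan2014a} to pass the limit $r\to 0$ inside, obtaining $\liminf_{r\to 0}T(r)\geqslant\int_0^1\sqrt{\frac{\varphi'(0)}{\lambda\varepsilon(1-s^2)}}\,\mathrm ds=\frac{\pi}{2}\sqrt{\frac{\varphi'(0)}{\lambda\varepsilon}}$. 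Letting $\varepsilon\to 0^+$ forces $\lim_{r\to 0}T(r)=+\infty$, which is the second line of the formula.

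\textbf{Main obstacle.} The only delicate point is the passage to the limit under the integral sign in the $f'(0)=0$ case — the integrand has an integrable singularity at $s=1$ that must be controlled uniformly in $r$ near $0$. I expect this is handled exactly as in the cited Lemma~2.1 of \cite{Pan2014a} (or alternatively by Fatou's lemma, which suffices since we only need a lower bound on $\liminf T(r)$), so the argument is routine once the comparison inequality $F(r)-F(rs)\leqslant\tfrac{\varepsilon}{2}r^2(1-s^2)$ is in place. Everything else is a direct specialization of Proposition~\ref{T-left}.
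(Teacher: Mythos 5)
Your proposal is correct and follows essentially the same route as the paper: the $f'(0)>0$ case is the identical specialization of Proposition~\ref{T-left} with $\alpha=1$, $E=f'(0)$, and the $f'(0)=0$ case is the same lower-bound comparison argument, with your estimate $F(r)-F(rs)\leqslant\tfrac{\varepsilon}{2}r^{2}(1-s^{2})$ playing the role of the paper's bound $\tfrac{r^{2}}{F(r)-F(rs)}\geqslant\tfrac{r^{2}}{F(r)}>M$, both then combined with the asymptotics \eqref{f_rr} before letting the auxiliary parameter tend to its extreme. The limit passage you flag as the only delicate point is handled in the paper by the same appeal to the argument of Lemma~2.1 of \cite{Pan2014a}, and your observation that Fatou's lemma suffices for the lower bound is a valid (slightly more economical) justification.
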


\begin{proof} Notice that conditions \eqref{con:positive} and \eqref{f-c1} imply $f(0)=0$.

If $f'(0)>0$,  then $\lim_{r\rightarrow 0} \frac{f(r)}{r}=f'(0)>0$.
By Proposition \ref{T-left}, we have
$$\lim_{r\rightarrow 0} T(r) =\frac{\pi}{ 2}  \sqrt{  \frac{ \varphi'(0)}{  {\lambda f'(0)} }}.$$

We next consider the case $f'(0)=0$. Since
  $$\lim_{r\rightarrow 0}\frac{r^{2}}{F(r)}
  =\lim_{r\rightarrow 0}\frac{2r}{f(r)}=\lim_{r\rightarrow 0}\frac{2}{f'(r)}=+\infty,$$
it follows that  for any $M>0$, there exists $\delta_M>0$ such that
\begin{equation*}
\frac{r^{2}}{F(r)}>M,\qquad \hbox{for }\; 0<r<\delta_M.
\end{equation*}
Then for all $s\in[0,1]$, we have
\begin{equation}\label{eq:middle0}
\frac{r^{2} }{F(r)-F(rs)}\geqslant\frac{r^{2}}{F(r)}>M,\qquad
\hbox{for }\; 0<r<\delta_M.
\end{equation}
This, together with \eqref{r-2}, implies
\begin{align}\nonumber
T( r)&=\int_{0}^1 \frac{\sqrt{\lambda [F(r)-  F(rs)]}
}{\Phi^{-1}(\lambda\left[F( r)-F( r s)\right]) }
 \frac{r}{ \sqrt{\lambda [F(r)-F(rs)]}}ds\\\label{eq:middle1}
&\geqslant\sqrt{\frac{M}{\lambda}} \int_{0}^1\frac{\sqrt{\lambda
[F(r)-  F(rs)]} }{\Phi^{-1}(\lambda\left[F( r)-F( r s)\right]) }ds.
\end{align}
From \eqref{f_rr}, it follows that
$$\lim_{r\rightarrow 0}\int_{0}^1 \frac{\sqrt{\lambda [F(r)-  F(rs)]} }{\Phi^{-1}(\lambda\left[F( r)-F( r s)\right]) }
 ds=\sqrt{\frac{\varphi'(0)}{2}}.$$
This, together with \eqref{eq:middle0} and \eqref{eq:middle1},
implies $\lim_{r\rightarrow 0} T(r)=+\infty$.
\end{proof}

\subsection{The function $g$}\label{sec:3.3}

  \ifpdf
\begin{figure}
\centering
\includegraphics[totalheight=2.0in]{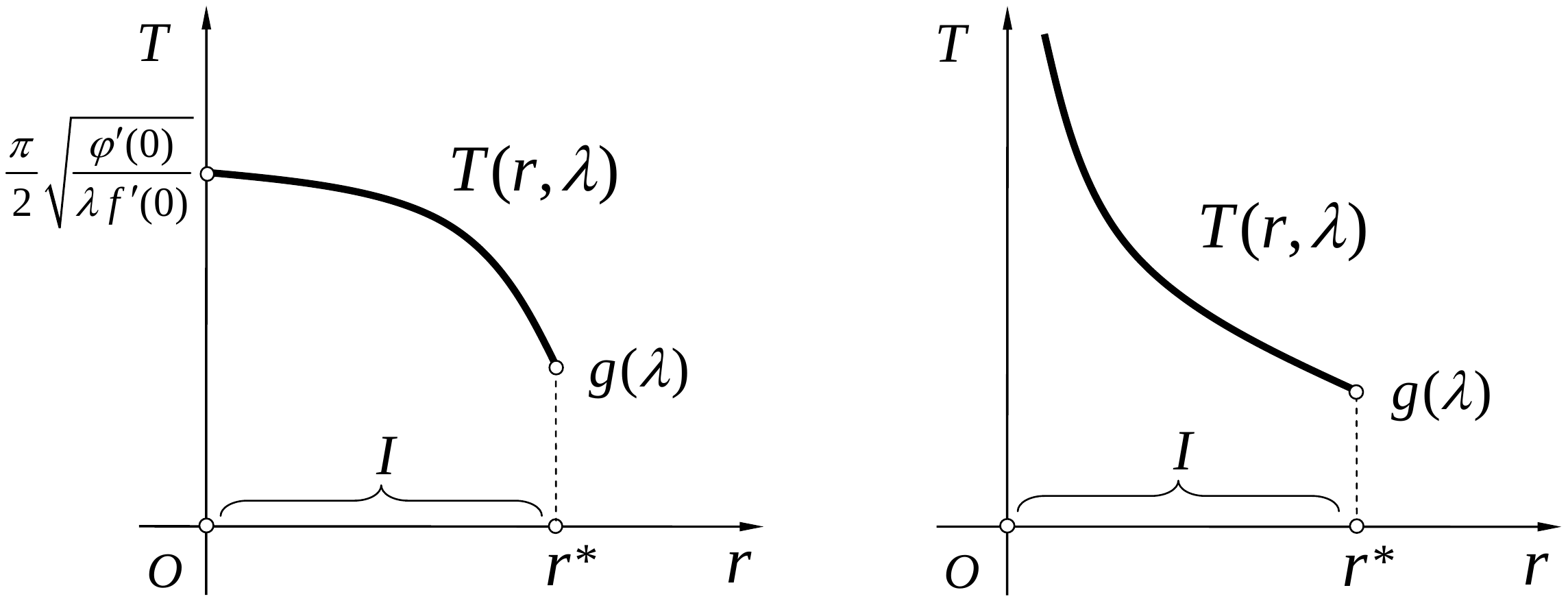}
\caption{The graphs of time map $T$ for $f(0)=0$ when $\lambda$ is
fixed. Left: $f'(0)>0$. Right: $f'(0)=0$.}\label{fig:twotimemaps}
\end{figure}
  \fi
By Theorem \ref{T'<0} and Proposition \ref{cor:twokind}, we know
that the graph of $T(r,\lambda)$ looks like the left or the right in
Fig.\ref{fig:twotimemaps}. From which, we find that the position of
$T$ at the right endpoint $r^*$ of interval $I$ is crucial in the
existence of solutions for $T(r,\lambda)=L$. We next discuss how the
position of $T(r^*,\lambda)$ changes as the parameter $\lambda$
varies through positive values.
To analyze the behaviors of $T$ at $r^*$, the same as in
\cite{Pan2014a}, we define
$$g(\lambda)=\lim_{ r\rightarrow  r^*}T(r, \lambda).$$
From \eqref{eq:alpha}, it follows that
 \begin{equation}\label{eq:rstar2}
g(\lambda)= \left\{
 \begin{array}{ll}
   \lim_{ r\rightarrow A^-}T(r, \lambda), & \hbox{if } B=+\infty; \\
  T( F^{-1}(\frac{B}{\lambda}), \lambda), & \hbox{if } B<+\infty \hbox{ and } C=+\infty;\\
  \lim_{ r\rightarrow A^-}T(r, \lambda), & \hbox{if } B<+\infty,  C<+\infty, \hbox{ and } \lambda\leqslant\frac{B}{C}; \\
 T( F^{-1}(\frac{B}{\lambda}), \lambda), & \hbox{if } B<+\infty,  C<+\infty, \hbox{ and }  \lambda> \frac{B}{C}.
 \end{array}
\right.
\end{equation}

The graph of $g$ has been well investigated in \cite[Sec
5]{Pan2014a}. Let us recall some useful results.

\vskip 2mm {\bf Case I:  $\mathbf{A,B,C=+\infty}$}

In this case,  from Theorem 4.2 of \cite{Pan2014a}, we know that if
condition \eqref{con:phif02} holds, then
\begin{align*} 
g(\lambda)=
  \lim_{ r\rightarrow +\infty}T(r, \lambda)\equiv 0.
\end{align*}
In particular,  similar to Corollary 4.4 of \cite{Pan2014a}, we also
have that if the range of $\varphi$ is bounded or
$\frac{F(z)}{f(z)}$ is bounded for sufficiently large $z$, then
$g\equiv 0$.

\vskip 2mm {\bf Case II:  $\mathbf{A<+\infty}$, $\mathbf{B=+\infty}$
and $\mathbf{C=+\infty}$}

In this case, from Corollary 4.3 of \cite{Pan2014a}, we obtain that
$g(\lambda)=\lim_{ r\rightarrow A^-}T(r, \lambda)\equiv 0.$

\vskip 2mm {\bf Case III:  $\mathbf{A<+\infty}$,
$\mathbf{B=+\infty}$ and $\mathbf{C<+\infty}$}

In this case, we have
\begin{equation*} 
g(\lambda)=  \lim_{ r\rightarrow A^-}T(r, \lambda)=\int_{0}^{{C}}
\frac{1}{\Phi^{-1}(\lambda y) } \frac{1}{f{\circ}
F^{-1}(C-y)}\text{d}y.
\end{equation*}

By Proposition 5.1 of \cite{Pan2014a}, it is easy to see
\begin{lemma}\label{a-prop:twosidee23}
Let $A<+\infty$, $B=+\infty$ and $C<+\infty$. Assume conditions
\eqref{con:phi}--\eqref{f-c1} hold.
Let $g(\lambda)$ be defined in \eqref{eq:rstar2}. Then the following assertions hold:\\
\indent (a) $g(\lambda)>0$ for all $\lambda>0$ and $g$ is strictly decreasing in $\lambda$.\\
\indent (b) $\lim_{\lambda\rightarrow0}g(\lambda)=+\infty$ and $\lim_{\lambda\rightarrow+\infty }g(\lambda)=0$.\\
\indent (c) For every $L\in (0, +\infty)$ there exists a unique
$\lambda_*$ such that $g(\lambda_*)=L$. Moreover, $\lambda_*$ is
strictly decreasing with respect to $L$.
\end{lemma}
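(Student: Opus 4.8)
\emph{Proof sketch.} The plan is to exploit two descriptions of $g$ in tandem. On the one hand, by Theorem~\ref{T'<0} the map $r\mapsto T(r,\lambda)$ is strictly decreasing on $I=(0,A)$, whence for any fixed $r_0\in(0,A)$
\[
g(\lambda)=\lim_{r\to A^-}T(r,\lambda)=\inf_{r\in(0,A)}T(r,\lambda)\leqslant T(r_0,\lambda)<+\infty ;
\]
in particular $g$ is finite-valued, and we shall reuse the inequality $g(\lambda)\leqslant T(r_0,\lambda)$. On the other hand we shall use the integral formula $g(\lambda)=\int_0^C\big(\Phi^{-1}(\lambda y)\,f\circ F^{-1}(C-y)\big)^{-1}dy$ recorded just above, whose integrand is strictly positive and jointly continuous on $(0,+\infty)\times(0,C)$.

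(a) Positivity is immediate from the integral formula, a finite integral of a strictly positive function. For strict monotonicity in $\lambda$, observe that $z\mapsto\Phi^{-1}(z)$ is strictly increasing, being the inverse of the strictly increasing function $\Phi$; hence $0<\lambda_1<\lambda_2$ gives $\Phi^{-1}(\lambda_1y)<\Phi^{-1}(\lambda_2y)$ for every $y\in(0,C)$, and subtracting the two finite integrals yields $g(\lambda_1)-g(\lambda_2)=\int_0^C\big[(\Phi^{-1}(\lambda_1y))^{-1}-(\Phi^{-1}(\lambda_2y))^{-1}\big]\big(f\circ F^{-1}(C-y)\big)^{-1}dy>0$.

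(b) Since $B=+\infty$, Lemma~\ref{a-prop:twosidee12} gives $T(r_0,\lambda)\to0$ as $\lambda\to+\infty$, so the squeeze $0<g(\lambda)\leqslant T(r_0,\lambda)$ forces $g(\lambda)\to0$. For the limit at $0$ I would bound $T$ from below without the formula: since $0<F(r)-F(u)\leqslant F(r)\leqslant C$ for $0<u<r<A$ and $\Phi^{-1}$ is increasing, the integrand of $T(r,\lambda)$ is at least $\big(\Phi^{-1}(\lambda C)\big)^{-1}$, so $T(r,\lambda)\geqslant r/\Phi^{-1}(\lambda C)$; letting $r\to A^-$ gives $g(\lambda)\geqslant A/\Phi^{-1}(\lambda C)\to+\infty$ as $\lambda\to0^+$, because $\Phi^{-1}(\lambda C)\to\Phi^{-1}(0)=0$.

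(c) I would first obtain continuity of $g$ on $(0,+\infty)$ by dominated convergence: on any compact $[\lambda_1,\lambda_2]$ the integrand of $g(\lambda)$ is dominated by $\big(\Phi^{-1}(\lambda_1y)\,f\circ F^{-1}(C-y)\big)^{-1}$, which is integrable because it is exactly the (finite) integrand of $g(\lambda_1)$, and it is continuous in $\lambda$ for each $y$. By (a) and (b), $g$ is then a continuous, strictly decreasing bijection of $(0,+\infty)$ onto $(0,+\infty)$; hence every $L>0$ is attained at exactly one $\lambda_*=g^{-1}(L)$, and $L\mapsto\lambda_*$, the inverse of a strictly decreasing bijection, is itself strictly decreasing. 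I expect no genuine obstacle — the claim is, as the paper says, ``easy to see'' from Proposition~5.1 of \cite{Pan2014a}; the only point meriting a second glance is integrability of the formula near $y=C$, where $f\circ F^{-1}(C-y)\to f(0)=0$, but the substitution $w=F^{-1}(C-y)$ turns that tail into $\int_{0^+}\big(\Phi^{-1}(\lambda(C-F(w)))\big)^{-1}dw$ with a bounded integrand, so the hypothesis $f(0)=0$ introduces nothing new and the argument of the cited proposition carries over.
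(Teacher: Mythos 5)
Your argument is correct in substance, and it is necessarily a different route from the paper's, because the paper proves this lemma by a one-line appeal to Proposition 5.1 of \cite{Pan2014a} rather than by any displayed computation. What you do instead is self-contained: positivity and strict monotonicity read off the representation $g(\lambda)=\int_0^C\bigl(\Phi^{-1}(\lambda y)\,f\circ F^{-1}(C-y)\bigr)^{-1}dy$ (using that $\Phi^{-1}$ is strictly increasing), the limit at $+\infty$ by squeezing $0<g(\lambda)\leqslant T(r_0,\lambda)$ against Lemma \ref{a-prop:twosidee12}, the limit at $0$ by the elementary lower bound $T(r,\lambda)\geqslant r/\Phi^{-1}(\lambda C)$ (which uses $A,C<+\infty$ in an essential and correct way), and then continuity by dominated convergence plus the intermediate value theorem for (c). This buys a proof that a reader can check inside this paper without consulting \cite{Pan2014a}, at the cost of a slightly longer write-up; the paper's citation buys brevity but hides exactly the tail analysis near $y=C$ that you rightly single out as the only point where $f(0)=0$ could have mattered.

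One hypothesis mismatch should be repaired. You invoke Theorem \ref{T'<0} to get that $r\mapsto T(r,\lambda)$ is strictly decreasing, and hence $g(\lambda)=\inf_r T(r,\lambda)\leqslant T(r_0,\lambda)<+\infty$; but Theorem \ref{T'<0} assumes condition \eqref{con:strict2}, which the present lemma does not. This is not a genuine obstruction: under \eqref{con:phi}--\eqref{f-c1} alone, the same computation as in the proof of Theorem \ref{T'<0} gives $H'(s)\geqslant 0$, $H(1)=0$, hence $H\leqslant 0$ and $T'(r)\leqslant 0$, and non-strict monotonicity is all you use (both for $\lim_{r\to A^-}T=\inf_r T$ and for $g\leqslant T(r_0,\lambda)$). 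Alternatively, you can bypass $T$ entirely and get finiteness of $g$ straight from the formula: near $y=C$ your substitution $w=F^{-1}(C-y)$ shows the tail is $\int_{0^+}\bigl(\Phi^{-1}(\lambda(C-F(w)))\bigr)^{-1}dw$ with bounded integrand, and near $y=0$ the integrand is $O\bigl(1/\Phi^{-1}(\lambda y)\bigr)=O(y^{-1/2})$ by \eqref{f_rr} (indeed tempered further by $1/f\to 0$ as $y\to 0$, since $f(u)\to+\infty$ as $u\to A^-$), so the integral converges. With either repair the proof is complete.
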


\vskip 2mm
{\bf Cases IV and V:  $\mathbf{B<+\infty}$ and $\mathbf{C=+\infty}$}

In these cases, we have
\begin{align}\label{form:gfucntion1}
g(\lambda) = T\left(F^{-1}(\frac{B}{\lambda})\right)
&=\int_{0}^{F^{-1}(\frac{B}{\lambda})}\frac{1}{\Phi^{-1}( B -\lambda
F(u)) }du\\\label{form:gfucntion2} &=\int_{0}^{B}
\frac{1}{\Phi^{-1}(B- y) }\frac{1}{ \lambda
f(F^{-1}(\frac{y}{\lambda}))}\text{d}y.
\end{align}
Due to the separation of $\Phi^{-1}$ and $F$, it is usually more
convenient to use the expression \eqref{form:gfucntion2} to analyze
the shape of $g$.

The next lemma is very useful for computing the limits of $g$ at
$+\infty$ and $0$.
\begin{lemma}[\cite{Pan2014a}]\label{prop:g}
Let $B<+\infty$ and $C=+\infty$. Assume conditions \eqref{con:phi},
\eqref{con:positive} and $K:=\int_{0}^{B} \frac{1}{y \Phi^{-1}(B- y)
} dy<+\infty$ hold.
Then the following assertions hold:

(a) If $   \lim\limits_{t\rightarrow 0} \frac{F(t)}{f(t)} =0,$ then
$\lim\limits_{\lambda\rightarrow +\infty} g(\lambda)=0$.

(b) If $
   \lim\limits_{t\rightarrow A} \frac{F(t)}{f(t)}=D\in[0, +\infty],
$ then $\lim\limits_{\lambda\rightarrow 0}g(\lambda)=DK$.
\end{lemma}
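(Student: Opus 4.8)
The plan is to start from the representation \eqref{form:gfucntion2} and rewrite its inner factor so that the dependence on $\lambda$ is exhibited through a single auxiliary function. Putting $t=F^{-1}(y/\lambda)$, so that $F(t)=y/\lambda$ and hence $\lambda=y/F(t)$, one gets $\lambda f\bigl(F^{-1}(y/\lambda)\bigr)=y\,f(t)/F(t)$. Therefore, with
\[
\psi(s):=\frac{F(F^{-1}(s))}{f(F^{-1}(s))}=\frac{s}{f(F^{-1}(s))},\qquad s\in(0,C)=(0,+\infty),
\]
formula \eqref{form:gfucntion2} takes the clean form
\[
g(\lambda)=\int_{0}^{B}\frac{\psi(y/\lambda)}{y\,\Phi^{-1}(B-y)}\,dy .
\]
Under \eqref{con:phi}--\eqref{con:positive}, $F$ is a continuous increasing bijection of $[0,A)$ onto $[0,+\infty)$, so $\psi$ is continuous and positive on $(0,+\infty)$; moreover $\psi(s)\to D$ as $s\to+\infty$ (this is precisely the hypothesis of (b)), while $\psi(s)\to 0$ as $s\to 0^+$ under the extra hypothesis of (a). Finally, $K<+\infty$ says exactly that $1/(y\,\Phi^{-1}(B-y))$ is integrable on $(0,B)$, with integral $K>0$.

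For part (a) I would use dominated convergence. As $\lambda\to+\infty$ the argument $y/\lambda\to 0$ for each fixed $y$, so the integrand tends to $0$ pointwise; and choosing $\delta,M>0$ with $\psi\le M$ on $(0,\delta)$, for every $\lambda>B/\delta$ one has $y/\lambda\in(0,\delta)$ for all $y\in(0,B)$, hence the uniform bound $\psi(y/\lambda)/(y\,\Phi^{-1}(B-y))\le M/(y\,\Phi^{-1}(B-y))$, which is integrable. Thus $g(\lambda)\to 0$.

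For part (b) with $D<+\infty$, the plan is to fix $\varepsilon>0$, choose $S$ with $|\psi-D|<\varepsilon$ on $(S,+\infty)$, and split $g(\lambda)=\int_{0}^{\lambda S}+\int_{\lambda S}^{B}$ (valid once $\lambda<B/S$). On the outer piece $y/\lambda>S$, so it differs from $D\int_{\lambda S}^{B}\frac{dy}{y\Phi^{-1}(B-y)}$ by at most $\varepsilon K$, and the latter integral tends to $DK$ as $\lambda\to 0$ since $K<+\infty$. For the inner piece, the substitution $w=y/\lambda$ turns $\int_{0}^{\lambda S}\frac{\psi(y/\lambda)}{y\Phi^{-1}(B-y)}\,dy$ into $\int_{0}^{S}\frac{dw}{f(F^{-1}(w))\,\Phi^{-1}(B-\lambda w)}$, which is at most $\frac{1}{\Phi^{-1}(B-\lambda S)}\int_{0}^{S}\frac{dw}{f(F^{-1}(w))}$; the remaining integral equals $F^{-1}(S)$ after the back-substitution $u=F^{-1}(w)$, while $\Phi^{-1}(B-\lambda S)\to+\infty$ as $\lambda\to 0$ because $B=\sup\Phi$ is not attained, so the inner piece vanishes. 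Letting $\lambda\to 0$ and then $\varepsilon\to 0$ yields $g(\lambda)\to DK$. In the case $D=+\infty$, Fatou's lemma applied along any sequence $\lambda_n\to 0$, using $\psi(y/\lambda_n)\to+\infty$ pointwise together with the positivity of $1/(y\Phi^{-1}(B-y))$, immediately gives $\liminf g(\lambda_n)=+\infty$, i.e.\ $g(\lambda)\to+\infty=DK$.

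The change of variables, the continuity and limit behaviour of $\psi$, and the dominated-convergence step in (a) are all routine. The one genuinely delicate point I anticipate is the ``small-$y$'' contribution in part (b): there $y/\lambda$ need not be large, so $\psi$ cannot be compared with $D$, and one must instead exploit both the blow-up of $\Phi^{-1}$ near $B$ and the identity $\int_{0}^{S}dw/f(F^{-1}(w))=F^{-1}(S)<+\infty$ to force that part to disappear; this is where I expect the main effort to lie.
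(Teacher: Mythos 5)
Your argument is correct. A small caveat on the comparison: the present paper only quotes this lemma from \cite{Pan2014a} and contains no proof of it, so there is nothing in this text to match your proof against line by line; what the paper does provide is exactly the representation \eqref{form:gfucntion2} that you take as your starting point, and your rewriting $\frac{1}{\lambda f(F^{-1}(y/\lambda))}=\frac{\psi(y/\lambda)}{y}$ with $\psi(s)=F(F^{-1}(s))/f(F^{-1}(s))$ is the natural way to exploit it. Checking your steps: (a) is a clean dominated-convergence argument, and the hypothesis $\lim_{t\to0}F(t)/f(t)=0$ is used correctly both for the pointwise limit and for the uniform bound $\psi\leqslant M$ on $(0,\delta)$ once $\lambda>B/\delta$, with dominating function $M/(y\,\Phi^{-1}(B-y))$, integrable precisely because $K<+\infty$. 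In (b) the delicate point you identified is handled correctly: after $w=y/\lambda$ the inner piece becomes $\int_0^S \frac{dw}{f(F^{-1}(w))\,\Phi^{-1}(B-\lambda w)}\leqslant \frac{F^{-1}(S)}{\Phi^{-1}(B-\lambda S)}$, and since $\Phi$ is strictly increasing with unattained supremum $B$, $\Phi^{-1}(B-\lambda S)\to+\infty$ as $\lambda\to0$, so this contribution vanishes for each fixed $S$; the outer piece differs from $D\int_{\lambda S}^{B}\frac{dy}{y\,\Phi^{-1}(B-y)}$ by at most $\varepsilon K$ and the latter tends to $DK$ by $K<+\infty$. The Fatou argument for $D=+\infty$ (together with $K>0$) is also fine, and it is a nice observation that part (b) requires no hypothesis on $F/f$ near $0$. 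So the proposal stands as a complete, self-contained proof of the quoted lemma.
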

About some concrete functions $\varphi$ satisfying $K<+\infty$, see
Examples \ref{table:fi} and \ref{eg:integral}.

The next result gives a sufficient condition for the monotonicity of
$g$.

\begin{lemma}[\cite{Pan2014a}]\label{prop:monotoneg}
Let $B<+\infty$ and $C=+\infty$. Assume conditions \eqref{con:phi}
and \eqref{con:positive} hold. If $f$ is of class $C^1$ on $(0, A)$
satisfying
\begin{align}\label{ineq:3f}
f'(t)F(t) \leqslant (\lesssim) f^2(t)  \qquad \text{for}\quad t\in
(0,A),
\end{align}
then $g(\lambda)$ is (strictly) decreasing in $\lambda$.
\end{lemma}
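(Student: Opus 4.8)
The plan is to work from the representation \eqref{form:gfucntion2}, in which $\Phi^{-1}$ and $F$ appear separately, and to show that for each fixed $y\in(0,B)$ the factor $\dfrac{1}{\lambda\, f(F^{-1}(y/\lambda))}$ is a (strictly) decreasing function of $\lambda$. Since the weight $\dfrac{1}{\Phi^{-1}(B-y)}$ is positive on $(0,B)$ — because under \eqref{con:phi} the map $\Phi$ is a strictly increasing bijection of $[0,+\infty)$ onto $[0,B)$, so $\Phi^{-1}(B-y)\in(0,+\infty)$ for $y\in(0,B)$ — integrating this pointwise monotonicity over $y$ then yields the assertion for $g$.

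First I would reduce monotonicity in $\lambda$ to monotonicity of the ratio $F/f$. Fix $y\in(0,B)$ and set $w=F^{-1}(y/\lambda)$, so that $F(w)=y/\lambda$, hence $\lambda=y/F(w)$ and
\[
\frac{1}{\lambda\, f(F^{-1}(y/\lambda))}=\frac{F(w)}{y\, f(w)}=\frac{1}{y}\cdot\frac{F(w)}{f(w)} .
\]
Since $C=+\infty$, $F^{-1}$ is defined on all of $[0,+\infty)$, and as $F$ is strictly increasing the map $\lambda\mapsto w(\lambda)\in(0,A)$ is strictly decreasing. Therefore the displayed factor is decreasing in $\lambda$ exactly when $t\mapsto F(t)/f(t)$ is increasing on $(0,A)$.

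Next I would verify that \eqref{ineq:3f} is precisely this condition. Using \eqref{con:positive} (so $f>0$ on $(0,A)$) and $f\in C^1$,
\[
\Big(\frac{F}{f}\Big)'(t)=\frac{f(t)^2-F(t)f'(t)}{f(t)^2},
\]
which is $\geqslant 0$ on $(0,A)$ whenever $f'(t)F(t)\leqslant f^2(t)$ there; and if $f'(t)F(t)\lesssim f^2(t)$, then $(F/f)'\geqslant 0$ everywhere and $>0$ off a finite set, so $F/f$ is strictly increasing. Combining the two steps, for $0<\lambda_1<\lambda_2$ and every $y\in(0,B)$,
\[
\frac{1}{\Phi^{-1}(B-y)}\cdot\frac{1}{\lambda_1 f(F^{-1}(y/\lambda_1))}\;\geqslant\;\frac{1}{\Phi^{-1}(B-y)}\cdot\frac{1}{\lambda_2 f(F^{-1}(y/\lambda_2))},
\]
with strict inequality for every $y\in(0,B)$ in the ``$\lesssim$'' case; integrating over $y\in(0,B)$ gives $g(\lambda_1)\geqslant g(\lambda_2)$, respectively $g(\lambda_1)>g(\lambda_2)$. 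For the strict case one notes that $g$ is already weakly decreasing, so finiteness at one point propagates to the right, and on any finite range a strict pointwise inequality between positive integrands forces a strict inequality of the integrals.

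The genuine work — though it is bookkeeping rather than a real difficulty — lies in justifying the change of variable $y=\lambda F(u)$ (with $\mathrm{d}y=\lambda f(u)\,\mathrm{d}u$) that carries \eqref{form:gfucntion1} into \eqref{form:gfucntion2}, together with the positivity and measurability checks ($\Phi^{-1}(B-y)>0$ and $f(F^{-1}(y/\lambda))>0$ on $(0,B)$, both consequences of \eqref{con:phi} and \eqref{con:positive}) that make the ``integrate a pointwise inequality'' step rigorous. I do not anticipate any obstacle beyond this.
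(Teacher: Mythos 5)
Your argument is correct: rewriting the factor as $\frac{1}{\lambda f(F^{-1}(y/\lambda))}=\frac{1}{y}\cdot\frac{F(w)}{f(w)}$ with $w=F^{-1}(y/\lambda)$ strictly decreasing in $\lambda$, and observing that \eqref{ineq:3f} is exactly $(F/f)'\geqslant 0$ (respectively $>0$ off a finite set, hence $F/f$ strictly increasing), gives the pointwise monotonicity of the integrand in \eqref{form:gfucntion2}, and the integration step is unproblematic since $g(\lambda)$ is finite for every $\lambda>0$ in this case. The paper itself only cites \cite{Pan2014a} for this lemma, but the separated representation \eqref{form:gfucntion2} is introduced there precisely for this purpose, so your proof is essentially the intended one.
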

See Example \ref{eg:Fconcave} for some concrete functions $f$
satisfying \eqref{ineq:3f}.


When $g$ is not monotone, the situation for $g$ in Case IV may be
quite complicated. In Section 2, we have given a brief introduction
to $g$. In particular, we establish a classification of $g$ based on
the number of local extreme points in $(0,+\infty)$, the local
extreme values, and the limits at $0$ and $+\infty$ (see Definition
\ref{df:g1}). Various types of $g$ are illustrated in
Fig.\ref{fig:gg1}.

Letting $\lambda=\frac{B}{F(r)}$ in \eqref{form:gfucntion1}, we
transform $g(\lambda)$ to
\begin{align}\label{tr:g(r)}
\tilde{g}(r)=g(\lambda)|_{\lambda=\frac{B}{F(r)}}=\int_{0}^r\frac{1}{\Phi^{-1}(
B -B \frac{F(u)}{F(r)}) }du.
 \end{align}
Notice that when we plot the graph of $T$ under the coordinate
system $(r, T)$, we actually need the shape of $\tilde{g}(r)$ (not
$g(\lambda)$!), because the curve of $\tilde{g}(r)$ (i.e.
$g(\frac{B}{F(r)})$) is just the path along which the right endpoint
of $T$ moves as $\lambda$ varies through positive values.
See Figs.\ref{fig:gg1} and \ref{fig:gg2} for a comparison.


  \ifpdf %
\begin{figure}
\centering
\includegraphics[totalheight=8.3in]{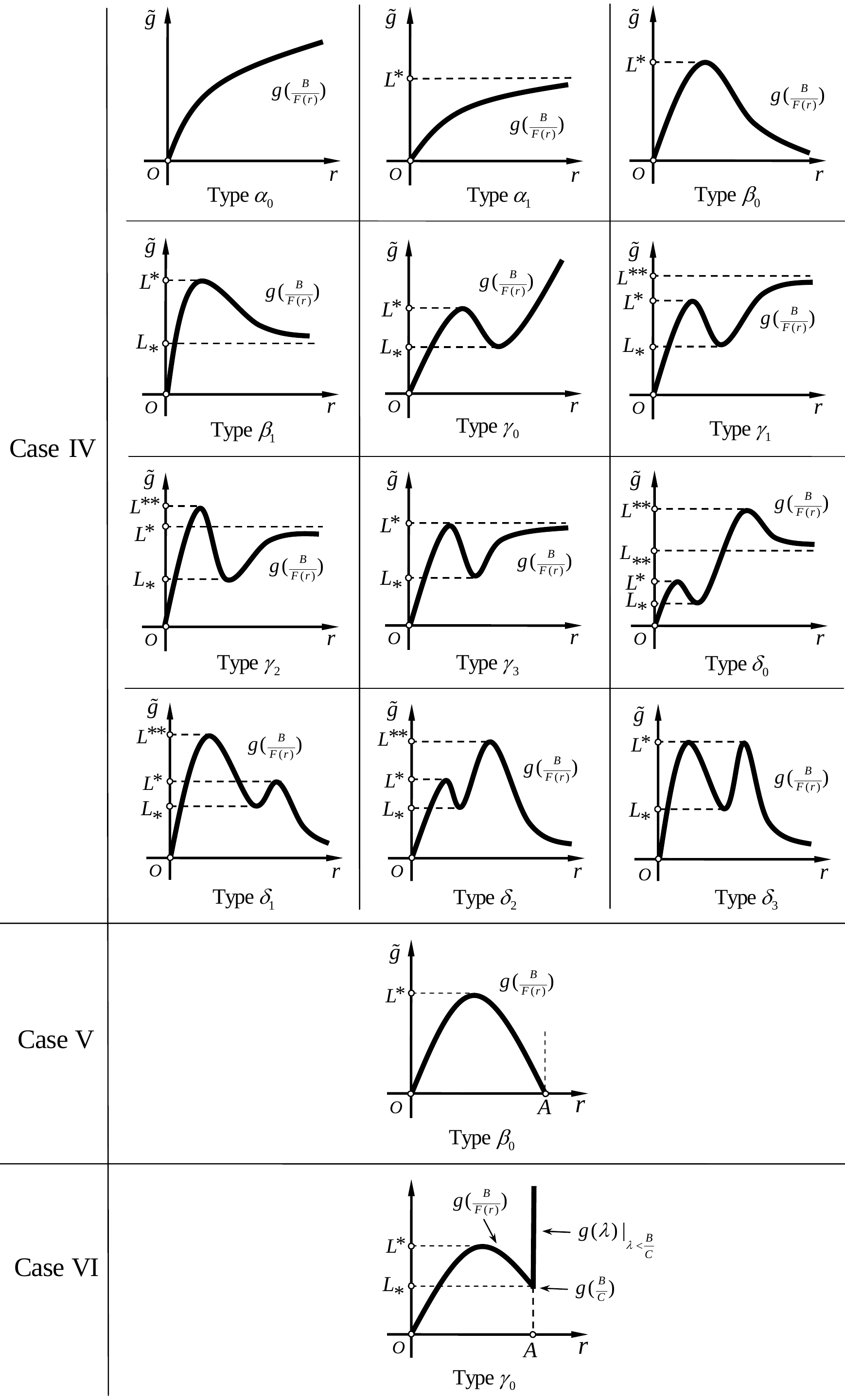}
\caption{Some shapes of $g(\lambda)$ in the coordinate system
$(r,\tilde{g})$, i.e. the shapes of $g(\frac{B}{F(r)})$. See
Fig.\ref{fig:gg1} for a comparison.}\label{fig:gg2}
\end{figure}
   \fi %

 \begin{remark}\label{rk:g(r)}
Consider an important example $\varphi=\varphi_3$, i.e., the mean
curvature equation \eqref{eq:mcemems}. Then $B=1$ and we obtain from
\eqref{form:gfucntion2} and \eqref{tr:g(r)}
\begin{align}\label{form:g}
g(\lambda)=\int_{0}^1 \frac{y}{\sqrt{1-y^2}}\frac{1}{\lambda f(
F^{-1}(\frac{y}{\lambda}))}\text{d}y
\end{align}
and
\begin{align}\label{fo:g(r)}
\tilde{g}(r):=g(\lambda)|_{\lambda=\frac{1}{F(r)}}=\int_{0}^r
\frac{F(u)}{\sqrt{F(r)^2-F(u)^2}}du.
 \end{align}
In Figs.\ref{fig:gg3} and \ref{fig:gg4}, we give some numerical
simulations of $\tilde{g}(r)$ in Cases IV and V.
For these $\tilde{g}(r)$, the corresponding functions $g(\lambda)$ arise in Examples \ref{eg:alpha}--\ref{eg:vbeta0}. 
From Figs.\ref{fig:gg3} and \ref{fig:gg4}, using the monotonicity of
$\lambda=\frac{1}{F(r)}$, one can indirectly obtain the numerical
results of $g(\lambda)$. As shown in Figs.\ref{fig:f1234},
\ref{fig:f00BIV4}--\ref{fig:f00BIV7}, the various types of $g$ lead
to the rich diversity of bifurcation diagrams for problem
\eqref{eq:1dmemsfr0}.
 \end{remark}

  \ifpdf %
\begin{figure}
\centering
\includegraphics[totalheight=8.2in]{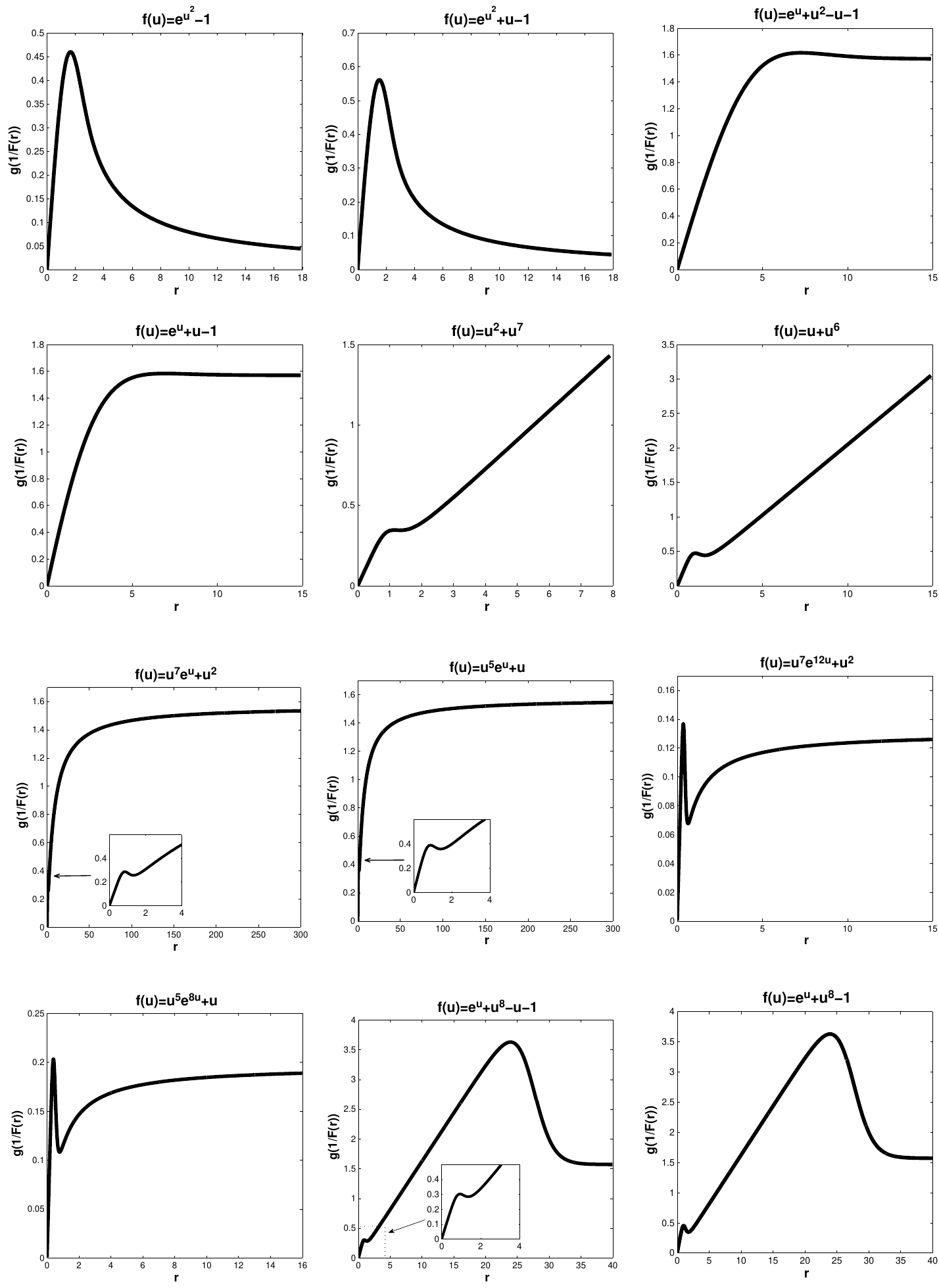}
\caption{Some numerical simulations of $g(\frac{1}{F(r)})$ for
$(\varphi_3,f)$, i.e., the mean curvature equation, in Examples
\ref{eg:beta00}--\ref{eg:4delta0}.}\label{fig:gg3}
\end{figure}
\begin{figure}
\centering
\includegraphics[totalheight=8.2in]{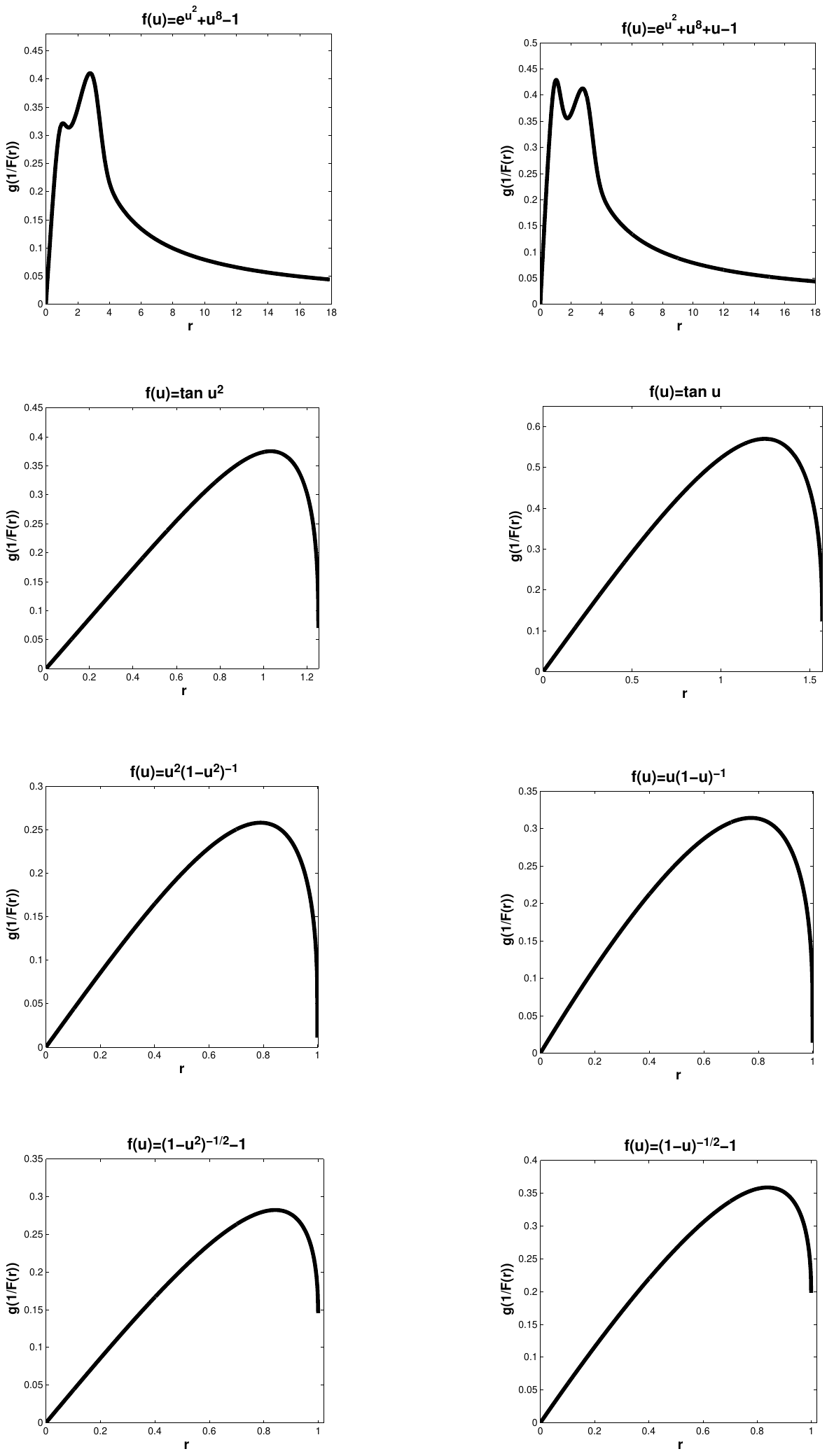}
\caption{Some numerical simulations of $g(\frac{1}{F(r)})$ for
$(\varphi_3,f)$, i.e., the mean curvature equation, in Examples
\ref{eg:4delta1}, \ref{eg:ivdelta3} and \ref{eg:vigamma0}. Left:
$f'(0)=0$. Right: $f'(0)>0$.}\label{fig:gg4}
\end{figure} 
   \fi %


%

\vskip 2mm {\bf Case VI:  $\mathbf{A,B,C<+\infty}$}

In this case, we have
\begin{equation}\label{eq:gsmall000}
g(\lambda)= \left\{
 \begin{array}{lll}
 \lim_{ r\rightarrow A^-}T(r, \lambda)=\int_{0}^{{C}}
\frac{1}{\Phi^{-1}(\lambda y) } \frac{1}{f{\circ}
F^{-1}(C-y)}\text{d}y, & & \hbox{if }
\lambda\leqslant \frac{B}{C}; \\
T\left( F^{-1}(\frac{B}{\lambda}),\lambda\right)=\int_{0}^{B}
\frac{1}{\Phi^{-1}(B- y) } \frac{1}{ \lambda f(
F^{-1}(\frac{y}{\lambda}))}\text{d}y, & & \hbox{if } \lambda>
\frac{B}{C}.
 \end{array}
\right.
\end{equation}

\begin{lemma}[\cite{Pan2014a}]\label{a-prop:twosidee23333b}
Let $A,B,C<+\infty$. Assume conditions \eqref{con:phi} and
\eqref{con:positive} hold. Then $g(\lambda)>0$ and $g$ is continuous
for all $\lambda>0$.
\end{lemma}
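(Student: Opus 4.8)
The plan is to treat the two branches of the piecewise formula \eqref{eq:gsmall000} separately and then reconcile them at the breakpoint $\lambda_0:=B/C$. On the branch $\lambda>\lambda_0$ everything is immediate: there $g(\lambda)=T\left(F^{-1}(B/\lambda),\lambda\right)$, and the argument $F^{-1}(B/\lambda)$ is exactly the right endpoint $r^*$ of $I$, which lies in $I$; since $T$ is well defined, finite and continuous on its domain (as recalled before Lemma \ref{thm:blowupcurve}) and is obviously positive (positive integrand in \eqref{eq:time}), and since $\lambda\mapsto F^{-1}(B/\lambda)$ is continuous on $(\lambda_0,+\infty)$ (by Lemma \ref{thm:blowupcurve}(a) together with the continuity of the inverse of the strictly increasing continuous function $F$), composition shows that $g$ is positive and continuous on $(\lambda_0,+\infty)$.

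On the branch $\lambda\le\lambda_0$ I would work with the explicit expression $g(\lambda)=\int_0^{C}\frac{1}{\Phi^{-1}(\lambda y)}\cdot\frac{1}{f\circ F^{-1}(C-y)}\,dy$ taken from \eqref{eq:gsmall000}. Positivity is clear since the integrand is strictly positive on $(0,C)$ (it can vanish only at the single point $y=C$ when $\lambda=\lambda_0$, where $\Phi^{-1}(B)=+\infty$). For finiteness I would split $[0,C]$ into a right-neighbourhood of $0$, a compact middle piece, and a left-neighbourhood of $C$: near $y=0$ the factor $1/[f\circ F^{-1}(C-y)]$ stays bounded because $f(A^-)=+\infty$, while $1/\Phi^{-1}(\lambda y)=O(y^{-1/2})$ by \eqref{f_rr}, hence integrable there; near $y=C$ the substitution $u=F^{-1}(C-y)$ cancels $f$ against its Jacobian and turns this piece into $\int_0^{\eta}\frac{1}{\Phi^{-1}(\lambda(C-F(u)))}\,du$ for a small $\eta>0$, whose integrand is bounded. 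Continuity on $(0,\lambda_0]$ would then follow by dominated convergence on the \emph{fixed} interval $[0,C]$, dominating the integrand for $\lambda$ near any $\lambda_*\in(0,\lambda_0]$ by its value at $\lambda_*/2$ (monotonicity of $\Phi^{-1}$), which is integrable by the finiteness just shown; in particular $g$ is left-continuous at $\lambda_0$.

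The remaining and hardest point is the matching at $\lambda_0$: showing $g(\lambda)\to g(\lambda_0)$ as $\lambda\downarrow\lambda_0$, i.e. that the two formulas of \eqref{eq:gsmall000} agree in the limit. I would write the $\lambda>\lambda_0$ formula as $g(\lambda)=\int_0^{B}\frac{1}{\Phi^{-1}(B-y)}\cdot\frac{1}{\lambda f(F^{-1}(y/\lambda))}\,dy$, perform the substitution $y\mapsto B-y$, check that the integrand converges pointwise as $\lambda\downarrow\lambda_0$ to the integrand of a reparametrization of $g(\lambda_0)$, and pass to the limit by splitting off a left-neighbourhood of the endpoint $y=B$: on the complement the argument of $f\circ F^{-1}$ stays in a compact subinterval of $(0,A)$, so (using that $f$ is continuous and positive) the integrand is dominated uniformly for $\lambda\in(\lambda_0,2\lambda_0)$ and dominated convergence applies, while the tail, after the substitution $u=F^{-1}((B-y)/\lambda)$, is bounded by $F^{-1}(\delta C/B)/\Phi^{-1}(B-\delta)$, which tends to $0$ as the neighbourhood shrinks ($\delta\to0$), uniformly in $\lambda>\lambda_0$; a $3\varepsilon$ argument then closes the matching, and combining the three steps yields $g>0$ and $g$ continuous on $(0,+\infty)$. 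The genuine difficulty throughout the last two steps is this endpoint analysis at the derivative-blow-up line, where both $\Phi^{-1}$ and $f\circ F^{-1}$ degenerate simultaneously and one must rely on the cancellation produced by the change of variables $u=F^{-1}(\cdot)$, together with $f(A^-)=+\infty$ and compactness of subintervals of $(0,A)$, since $f$ is assumed only continuous and positive --- not monotone.
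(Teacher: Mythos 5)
Your proposal is correct. Note first that this paper does not actually prove the lemma — it is quoted from the companion paper \cite{Pan2014a} — so there is no in-text proof to compare against; what the present paper supplies is exactly the raw material you use: the piecewise formula \eqref{eq:gsmall000}, the well-definedness and continuity of $T$ on $I\times(0,+\infty)$ (including the right endpoint $r^*=F^{-1}(B/\lambda)$ when $B<+\infty$), and the blow-up curve Lemma \ref{thm:blowupcurve}. Your treatment of the branch $\lambda\le B/C$ (splitting $[0,C]$, using \eqref{f_rr} for the $y^{-1/2}$ behaviour at $y=0$, cancelling $f$ by the substitution $u=F^{-1}(C-y)$ near $y=C$, then dominated convergence with the $\lambda_*/2$ majorant) is sound, and the genuinely nontrivial step — right-continuity at $\lambda_0=B/C$, where the gradient blow-up curve meets the singular line — is handled correctly: after the flip $y\mapsto B-y$ the reparametrized $g(\lambda_0)$ is indeed the pointwise limit, and your uniform tail bound $F^{-1}(\delta C/B)/\Phi^{-1}(B-\delta)\to 0$, obtained from the substitution $u=F^{-1}((B-y)/\lambda)$, together with DCT on $[0,B-\delta]$, closes the $3\varepsilon$ argument. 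One small wording slip: on $[0,B-\delta]$ the argument of $f\circ F^{-1}$ does \emph{not} stay in a compact subinterval of $(0,A)$ uniformly in $\lambda$ (it approaches $A$ as $y\to0$, $\lambda\downarrow\lambda_0$); what you actually need, and what your own final sentence supplies, is that $\inf_{[u_1,A)}f>0$ for any $u_1>0$, which follows from continuity, positivity on $(0,A)$ and $\lim_{u\to A^-}f(u)=+\infty$ in \eqref{con:positive}. With that phrasing fixed, the argument is complete and uses no hypotheses beyond \eqref{con:phi} and \eqref{con:positive}, as required by the lemma.
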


By Proposition 5.1 of \cite{Pan2014a}, we know
\begin{lemma}\label{a-prop:twosidee23333}
Let $A,B,C<+\infty$. 
Assume conditions \eqref{con:phi}--\eqref{f-c1} hold. Then $g$ is
strictly decreasing on $(0,\frac{B}{C})$ and
$\lim_{\lambda\rightarrow0}g(\lambda)=+\infty$.
\end{lemma}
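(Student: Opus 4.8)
The plan is to observe that on the interval $(0,B/C)$ the function $g$ is given by the first branch of \eqref{eq:gsmall000}, i.e.
\begin{equation*}
g(\lambda)=\int_{0}^{C}\frac{1}{\Phi^{-1}(\lambda y)}\,\frac{1}{f\circ F^{-1}(C-y)}\,dy ,
\end{equation*}
which is \emph{exactly} the expression appearing in Case III. Consequently the two assertions follow verbatim from the Case III analysis (Lemma \ref{a-prop:twosidee23}), equivalently from Proposition 5.1 of \cite{Pan2014a} applied to this formula; below I sketch the direct argument. By Lemma \ref{a-prop:twosidee23333b} the integral above is finite and positive for every $\lambda\in(0,B/C)$, so the only point is to track its dependence on $\lambda$.

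\textbf{Strict monotonicity.} From \eqref{con:phi} we have $\Phi'(z)=z\varphi'(z)>0$ for $z>0$, so $\Phi$ is a strictly increasing homeomorphism of $[0,+\infty)$ onto $[0,B)$; hence $\Phi^{-1}$ is strictly increasing on $[0,B)$ with $\Phi^{-1}(0)=0$. Therefore, for each fixed $y\in(0,C)$, the map $\lambda\mapsto 1/\Phi^{-1}(\lambda y)$ is strictly decreasing, while the factor $1/(f\circ F^{-1}(C-y))$ is positive, finite, and independent of $\lambda$ (conditions \eqref{con:positive} and \eqref{f-c1} are needed here only so that $f>0$ on $(0,A)$ with $f(0)=0$, making $F^{-1}$ well defined on $[0,C)$ and the integrand a positive measurable function). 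Integrating this strict pointwise inequality over $(0,C)$ and using the finiteness of $g$ shows that $g$ is strictly decreasing on $(0,B/C)$.

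\textbf{The limit at $0$.} Fix any closed interval $[a,b]\subset(0,C)$. On $[a,b]$ the continuous positive function $y\mapsto 1/(f\circ F^{-1}(C-y))$ has a positive lower bound $m$, so, using once more that $\Phi^{-1}$ is increasing,
\begin{equation*}
g(\lambda)\ \geqslant\ m\int_{a}^{b}\frac{dy}{\Phi^{-1}(\lambda y)}\ \geqslant\ \frac{m\,(b-a)}{\Phi^{-1}(\lambda b)}\ \longrightarrow\ +\infty\qquad\text{as }\ \lambda\rightarrow 0^{+},
\end{equation*}
because $\Phi^{-1}(\lambda b)\to\Phi^{-1}(0)=0$. (Equivalently, the integrand increases monotonically to $+\infty$ as $\lambda\downarrow 0$, so the monotone convergence theorem gives $g(\lambda)\to+\infty$; this is also consistent with $\lim_{\lambda\to 0}T(r,\lambda)=+\infty$ from Lemma \ref{a-prop:twosidee1}.) There is no serious obstacle here: the only mildly delicate point is that the integrand of $g$ is singular at both ends of $(0,C)$ — it blows up near $y=0$ because $\Phi^{-1}(0)=0$, and near $y=C$ because $f(0)=0$ forces $f\circ F^{-1}(C-y)\to0$ — but the resulting integrability is precisely what Lemma \ref{a-prop:twosidee23333b} already records, after which everything reduces to the monotonicity of $\Phi^{-1}$.
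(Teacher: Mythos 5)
Your argument is correct. A remark on the comparison: the paper gives no in-text proof of this lemma at all — it simply cites Proposition 5.1 of \cite{Pan2014a} — whereas you work directly from the representation \eqref{eq:gsmall000} valid for $\lambda\leqslant\frac{B}{C}$, using only that $\Phi^{-1}$ is a strictly increasing homeomorphism of $[0,B)$ with $\Phi^{-1}(0)=0$, the finiteness recorded in Lemma \ref{a-prop:twosidee23333b}, and a lower bound over a compact subinterval $[a,b]\subset(0,C)$ for the blow-up as $\lambda\rightarrow 0^+$. This self-contained route is sound: for $\lambda\in(0,\frac{B}{C})$ one has $\lambda y\leqslant \lambda C<B$, so the integrand is well defined, strictly decreasing in $\lambda$ for each fixed $y$, and your estimate $g(\lambda)\geqslant m(b-a)/\Phi^{-1}(\lambda b)$ does force the limit $+\infty$. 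Two minor points of precision: Lemma \ref{a-prop:twosidee23} cannot literally be invoked ``verbatim'', since it is stated under $B=+\infty$ (where $\Phi^{-1}$ is defined on all of $[0,+\infty)$), so your direct argument is what actually carries the proof rather than that citation; and the hypotheses \eqref{con:phi2} and most of \eqref{f-c1} are not needed for your argument (only positivity of $f$ so that $F^{-1}$ is defined on $[0,C)$), which is harmless — your proof simply establishes the conclusion under weaker assumptions than stated.
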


Moreover, we know
\begin{lemma}[\cite{Pan2014a}]\label{prop:g3333}
Let $A,B,C<+\infty$. Assume conditions \eqref{con:phi},
\eqref{con:positive}, $\lim_{t\rightarrow 0} \frac{F(t)}{f(t)} =0$
and $\int_{0}^{B} \frac{1}{y \Phi^{-1}(B- y) } dy<+\infty$ hold.
Then $\lim_{\lambda\rightarrow +\infty} g(\lambda)=0$.
\end{lemma}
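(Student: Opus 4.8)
The plan is to reduce the limit, as $\lambda\to+\infty$, of $g(\lambda)$ in Case~VI to the limit already established in Case~IV/V. Since $A,B,C<+\infty$ and $\lambda>\frac{B}{C}$ for all large $\lambda$, formula \eqref{eq:gsmall000} gives, for such $\lambda$,
\[
g(\lambda)=\int_{0}^{B}\frac{1}{\Phi^{-1}(B-y)}\,\frac{1}{\lambda f\!\left(F^{-1}(\frac{y}{\lambda})\right)}\,\mathrm{d}y,
\]
which is exactly the expression \eqref{form:gfucntion2} analyzed for Cases~IV and V. Thus the quantity $\lim_{\lambda\to+\infty}g(\lambda)$ in Case~VI is literally the same integral as the one appearing in Lemma~\ref{prop:g}(a). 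So the first step is simply to observe this coincidence of formulas and to note that Lemma~\ref{prop:g}(a) requires only conditions \eqref{con:phi}, \eqref{con:positive} and $K<+\infty$ together with $\lim_{t\to 0}\frac{F(t)}{f(t)}=0$ — all of which are hypotheses here.

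Concretely, I would argue as follows. Fix any $\lambda>\frac{B}{C}$ and substitute $\tau=\frac{y}{\lambda}$, or keep the variable $y$ and set $r_\lambda:=F^{-1}(\frac{y}{\lambda})$; either way write the integrand as
\[
\frac{1}{\Phi^{-1}(B-y)}\cdot\frac{1}{y}\cdot\frac{F(F^{-1}(\frac{y}{\lambda}))}{f(F^{-1}(\frac{y}{\lambda}))}
\]
so that $g(\lambda)=\int_0^B \frac{1}{y\,\Phi^{-1}(B-y)}\cdot\frac{F(s_{y,\lambda})}{f(s_{y,\lambda})}\,\mathrm{d}y$ with $s_{y,\lambda}=F^{-1}(\frac{y}{\lambda})\to 0$ as $\lambda\to+\infty$ for each fixed $y\in(0,B)$. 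By the hypothesis $\lim_{t\to 0}\frac{F(t)}{f(t)}=0$, the inner factor tends to $0$ pointwise in $y$; moreover $\frac{F(t)}{f(t)}$ is bounded on a neighborhood of $0$ (again by that limit), and on all of $(0,A)$ it is bounded because $A<+\infty$ and $f$ is continuous and positive on any compact subinterval away from $0$ while $\lim_{t\to A^-}f(t)=+\infty$. Hence $\frac{F(s_{y,\lambda})}{f(s_{y,\lambda})}$ is uniformly bounded by some constant $M$, and $\frac{M}{y\,\Phi^{-1}(B-y)}$ is integrable by the assumption $K<+\infty$. Dominated convergence then yields $\lim_{\lambda\to+\infty}g(\lambda)=0$.

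The only genuinely delicate point — and the place where I expect to spend the most care — is the uniform boundedness of $\frac{F(t)}{f(t)}$ over the full range of relevant arguments, i.e. over $t\in(0,A)$ (or at least over an interval $(0,\eta]$ that eventually captures all $s_{y,\lambda}$): near $t=0$ this is immediate from $\lim_{t\to 0}\frac{F(t)}{f(t)}=0$, but one should check it does not blow up elsewhere. In fact, since $F(t)\le F(A^-)=C<+\infty$ and $f$ is continuous and strictly positive on any $[\delta,A)$ with $\lim_{t\to A^-}f=+\infty$, the ratio $\frac{F(t)}{f(t)}$ is bounded on $[\delta,A)$, so combined with the behavior near $0$ it is bounded on $(0,A)$. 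Alternatively, one can sidestep the issue entirely by invoking Lemma~\ref{prop:g}(a) verbatim, since the hypotheses of Lemma~\ref{prop:g} are exactly those listed here; I would present the direct dominated-convergence argument but remark that it is the same computation underlying Lemma~\ref{prop:g}(a) restricted to the regime $\lambda>\frac{B}{C}$.
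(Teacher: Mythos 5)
Your argument is correct, and it is essentially the intended route: the paper states Lemma~\ref{prop:g3333} without proof, importing it from \cite{Pan2014a}, and your reduction --- for $\lambda>\frac{B}{C}$ the second branch of \eqref{eq:gsmall000} coincides with \eqref{form:gfucntion2}, so one rewrites the integrand as $\frac{1}{y\,\Phi^{-1}(B-y)}\cdot\frac{F(F^{-1}(y/\lambda))}{f(F^{-1}(y/\lambda))}$ and lets $\lambda\to+\infty$ --- is exactly the computation underlying Lemma~\ref{prop:g}(a). Your domination step is sound: for $y\in(0,B)$ and $\lambda>\frac{B}{C}$ one has $y/\lambda<C$, so $F^{-1}(y/\lambda)$ is well defined and tends to $0$ pointwise, $\frac{F}{f}$ is bounded on $(0,A)$ (near $0$ by hypothesis, on $[\delta,A)$ because $F\leqslant C$ and $\inf_{[\delta,A)}f>0$), and $K<+\infty$ makes $\frac{M}{y\,\Phi^{-1}(B-y)}$ an integrable majorant; in fact, since all arguments $F^{-1}(y/\lambda)$ lie in $\bigl(0,F^{-1}(\frac{B}{\lambda})\bigr)$, the even simpler bound $g(\lambda)\leqslant K\sup_{0<t\leqslant F^{-1}(B/\lambda)}\frac{F(t)}{f(t)}\to 0$ already suffices, with no need for dominated convergence. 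One small caveat: your closing remark that Lemma~\ref{prop:g}(a) applies \emph{verbatim} is inaccurate, since that lemma assumes $C=+\infty$ while here $C<+\infty$; what is true (and what you actually use) is that its proof only involves the formula \eqref{form:gfucntion2}, valid here for $\lambda>\frac{B}{C}$, and the behavior of $\frac{F}{f}$ near $0$, so your self-contained estimate is the correct way to phrase the reduction.
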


See e.g., Type $\gamma_0$ of Case VI in Fig.\ref{fig:gg1}. Also see
the bottom of Fig.\ref{fig:gg4} for numerical examples of
$g(\tilde{r})$ in Case VI,
 which arise in Example \ref{eg:vigamma0}.

\section{Proofs of Main Theorems}
In this section, we prove the main results which are stated in
Section 2.

Note that when $f'(0)>0$,
$\lambda_1:=\frac{\varphi'(0)}{f'(0)}(\frac{\pi}{2L})^2$ is strictly
decreasing with respect to $L$, and further from Proposition
\ref{cor:twokind}, it follows that $\lim_{r\rightarrow 0}
T(r,\lambda_1)=L$ .

{\bf The proof of Theorem \ref{thm:atmost12} } Since positive
solutions of \eqref{eq:1dmemsfr0} are always symmetric, it implies
that $u'(0)=0$. Thus the existence of solutions of
\eqref{eq:1dmemsfr0} is equivalent to that of
$$ -\varphi'(u')u''={\lambda}f(u),\quad u'(0)=0,\,\,u(L)=0.$$
By the definition of the time map, for given $L,\lambda>0$, the
number of positive solutions of \eqref{eq:1dmemsfr0}
 is precisely the number of solutions of $T(\lambda,r)=L$ for $r\in (0, r^*)$.
 Here, $r^*$ is the right endpoint of $I$.

From Theorem \ref{T'<0}, we get that $T'<0$ for all $r$.
 Hence for any $\lambda>0$, $T(\lambda,r)=L$ has at most one solutions in $(0, r^*)$.
 This completes the proof.
\hfill$\square$

{\bf The proof of Corollary \ref{thm:atmost1} } Notice that if $f$
is of class $C^2$ on $[0, A)$ satisfying conditions
\eqref{con:positive} and $f(0)=0$, then the convex condition
$f''(u)\geqslant (>) 0$ implies the superlinear condition
$f'(u)u\geqslant (>) f(u)$.

Indeed, let $\psi(u)=f'(u)u-f(u)$, then $\psi(0)=0$ and
$$\psi'(u)=f''(u)u+f'(u)-f'(u)=f''(u)u\geqslant (>)0.$$ So we can
replace \eqref{f-c1} and \eqref{con:strict2} by \eqref{con:convex}
and \eqref{con:strict}. Applying Theorem \ref{T'<0}, we get the
result. \hfill$\square$


{\bf The proof of Theorem \ref{thm:doubleinfinit02}, Corollary
\ref{cor:doubleinfinit02} and Theorem \ref{thm:f=0t1} } By Theorem
\ref{T'<0} and  Proposition \ref{cor:twokind}, we obtain the shape
of $T(r,\lambda)$ for fixed $\lambda>0$.
Further, since $g\equiv 0$ (see Section \ref{sec:3.3}), by Lemmas
\ref{a-prop:twosidee1} and \ref{a-prop:twosidee12},
 we obtain the behavior of $T(r,\lambda)$ when $\lambda$ varies through positive values
(see Fig.\ref{fig:ffI}).
  \ifpdf
\begin{figure}
\centering
\includegraphics[totalheight=8.3in]{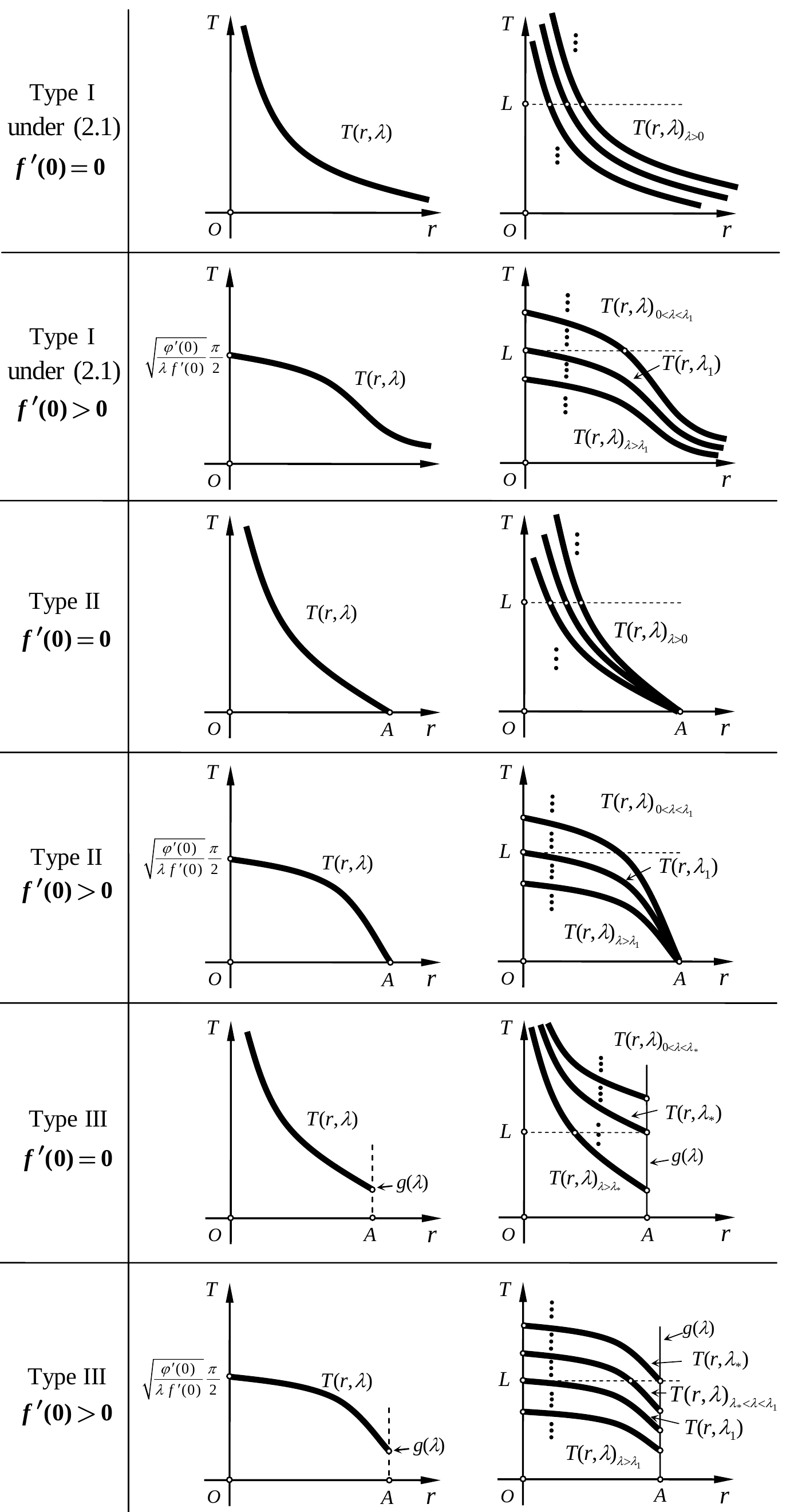}
\caption{Time maps for Types I-III with $f(0)=0$  when $\lambda$
varies.}\label{fig:ffI}
\end{figure}
  \fi

In particular,  for given $L>0$, we have

(1) If $f'(0)= 0$, then for any $\lambda>0$ there exists a unique
$r\in (0,r^*)$ such that $T(r,\lambda)=L$,

(2) If $f'(0)> 0$, then $\lim_{r\rightarrow 0} T(r,\lambda_1)=L$ and
for any $\lambda\in (0, \lambda_1)$ there exists a unique $r\in
(0,r^*)$ such that $T(r,\lambda)=L$.

Thus we completes the proof. \hfill$\square$


{\bf The proof of Theorem \ref{thm:f=0t2}} The proof is similar to
the previous one, a key difference is that $g$ is strictly
decreasing (Lemma \ref{a-prop:twosidee23}). This leads to that for
given $L>0$, there exists a unique $\lambda_*>0$ such that such that
$g(\lambda_*)=L$. The monotonicity of $g$ implies  $\lambda_*$ is
strictly decreasing with respect to $L$ (see Fig.\ref{fig:ffI}).

In particular, for given $L>0$, we have

(1) There exists a unique $\lambda_*>0$ such that
$T(r,\lambda_*)=L$.

(2) If $f'(0)= 0$, then for any $\lambda>\lambda_*$ there exists a
unique $r\in (0,r^*)$ such that $T(r,\lambda)=L$, while if $f'(0)>
0$, then $\lim_{r\rightarrow 0} T(r,\lambda_1)=L$ and for any
$\lambda\in (\lambda_*, \lambda_1)$ there exists a unique $r\in
(0,r^*)$ such that $T(r,\lambda)=L$.

Thus we completes the proof.\hfill$\square$

{\bf The proof of Theorem \ref{thm:f=0t3} } Since $(\varphi, f)$ is
of  \emph{Type IV-$\alpha_0$}, it follows that $g$ is strictly
decreasing, $\lim_{\lambda\rightarrow0}g(\lambda)=+\infty$ and
$\lim_{\lambda\rightarrow+\infty }g(\lambda)=0$ (see
Figs.\ref{fig:gg1} and \ref{fig:gg2}). Therefore the proof is the
same as that of Theorem \ref{thm:f=0t2}, we omit it (see
Fig.\ref{fig:fIV}). \hfill$\square$
  \ifpdf
\begin{figure}
\centering
\includegraphics[totalheight=8.2in]{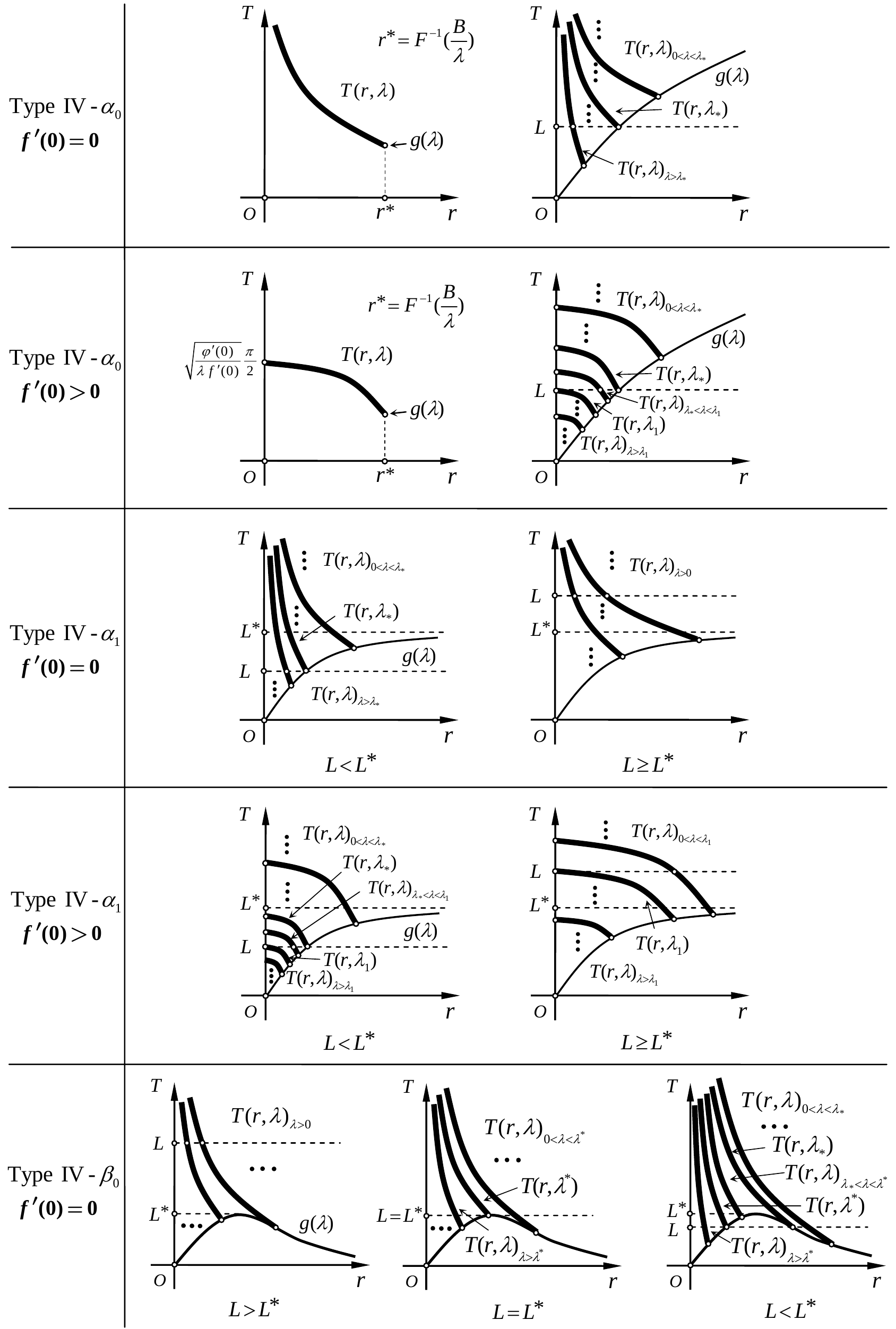}
\caption{Time maps for Types IV-$\alpha_0$, IV-$\alpha_1$ and
IV-$\beta_0$ with $f(0)=0$ when $\lambda$ varies.}\label{fig:fIV}
\end{figure}
  \fi

{\bf The proof of Corollary \ref{cor:f=0t3} } From Lemmas
\ref{prop:g} and \ref{prop:monotoneg}, it follows that $(\varphi,
f)$ is of   Type IV-$\alpha_0$ and hence the conclusions of Theorem
\ref{thm:f=0t3} hold. \hfill$\square$

  \ifpdf
\begin{figure}
\centering
\includegraphics[totalheight=8.2in]{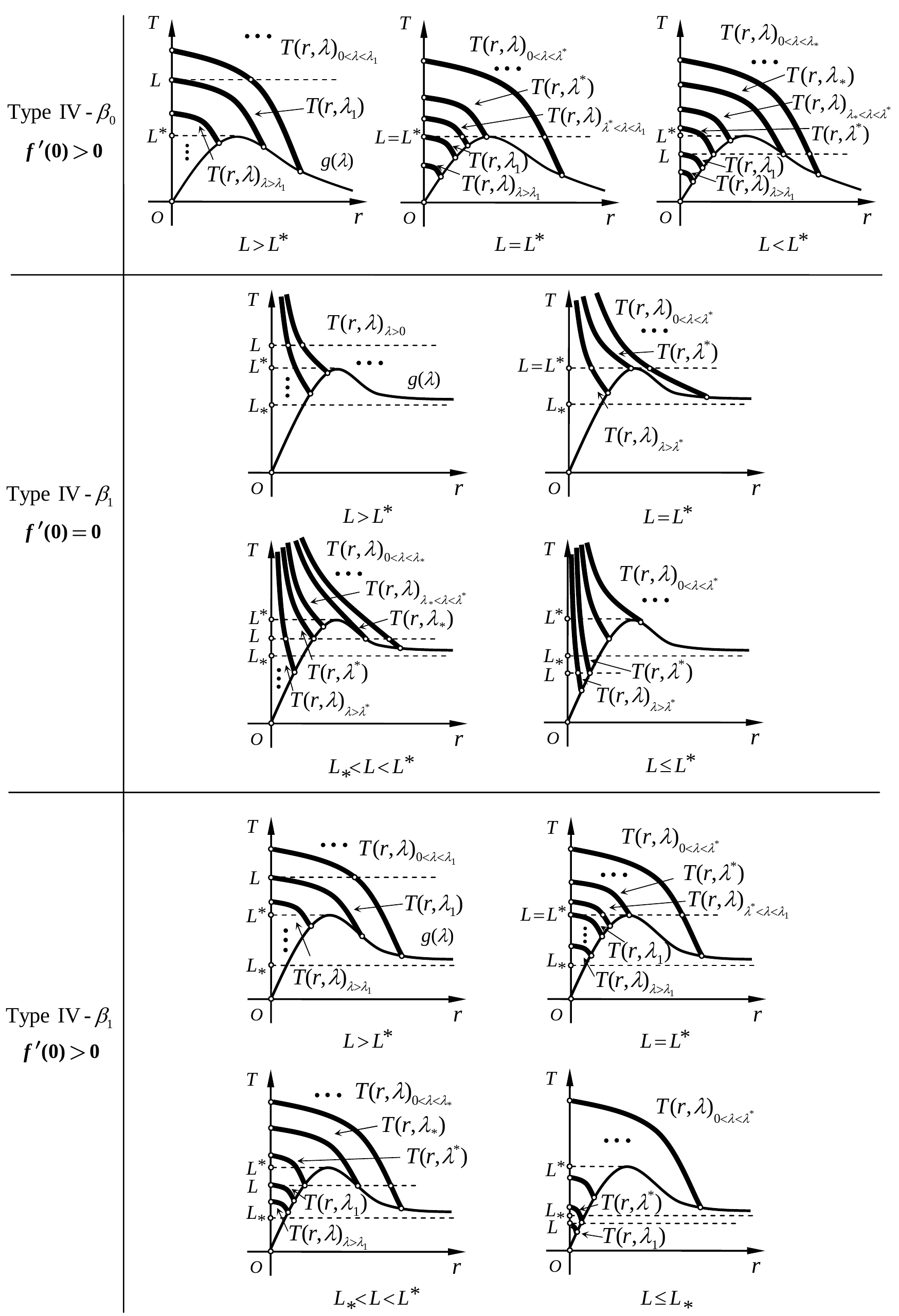}
\caption{Time maps for Types IV-$\beta_0$ and IV-$\beta_1$ with
$f(0)=0$ when $\lambda$ varies.}\label{fig:four32}
\end{figure}
\begin{figure}
\centering
\includegraphics[totalheight=8.3in]{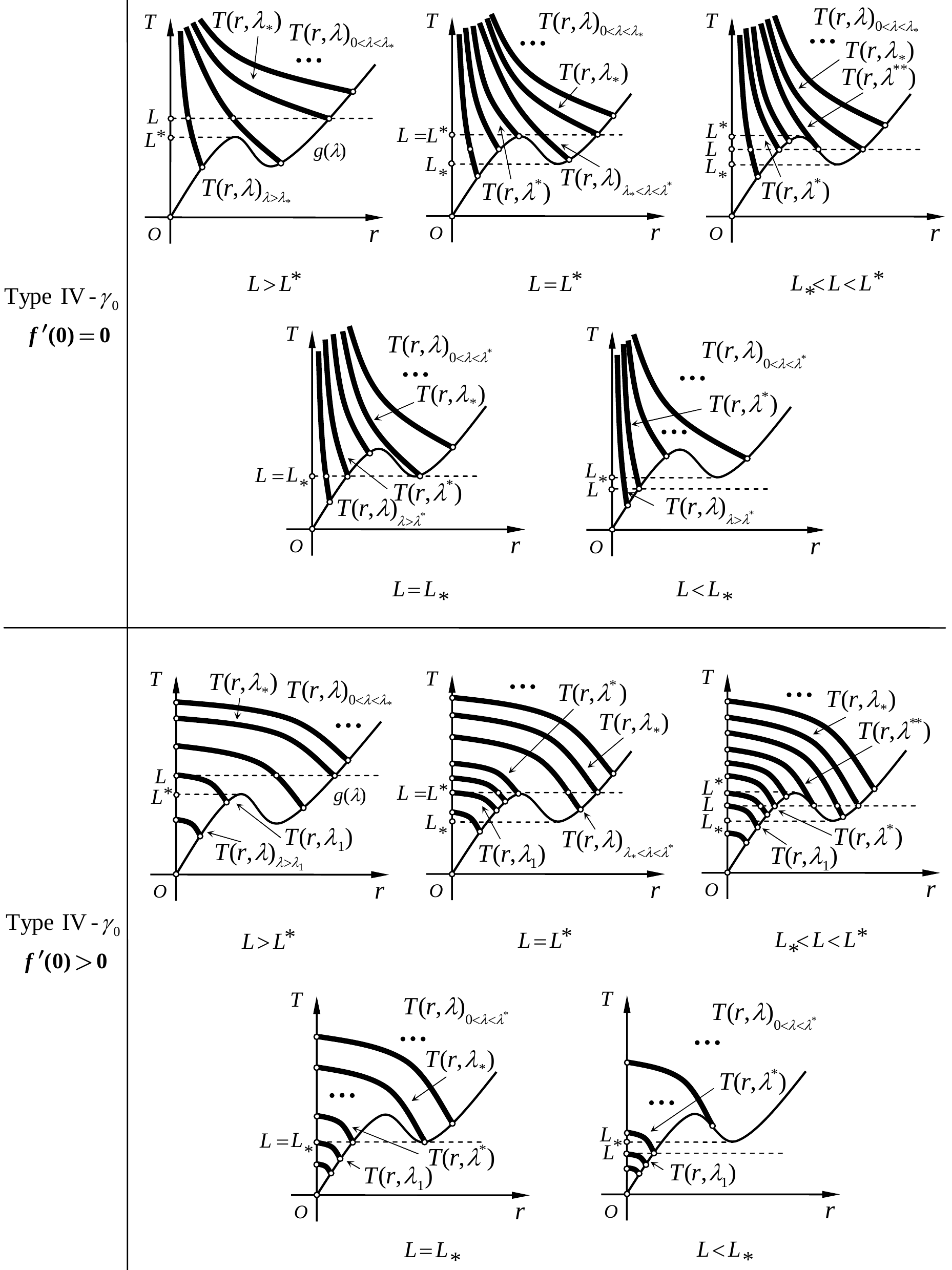}
\caption{Time maps for Type IV-$\gamma_0$ with $f(0)=0$ when
$\lambda$ varies.}\label{fig:four323}
\end{figure}
\begin{figure}
\centering
\includegraphics[totalheight=8.3in]{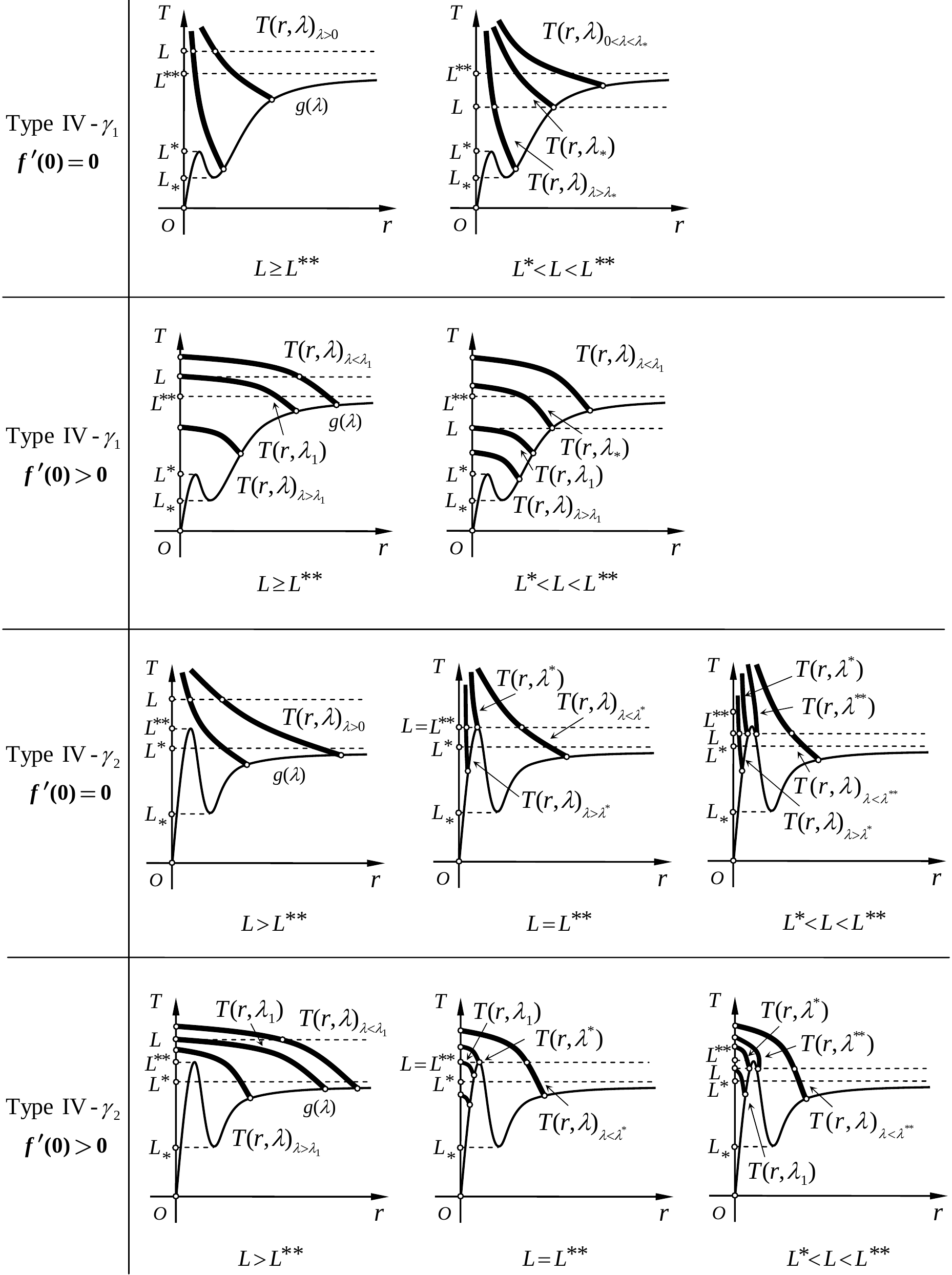}
\caption{Time maps for Types IV-$\gamma_1$ and IV-$\gamma_2$ with
$f(0)=0$ when $\lambda$ varies. The remaining cases $L=L^*$,
$L_*<L<L^*$, $L=L_*$ and $L<L^*$ are the same as Types
IV-$\gamma_0$.}\label{fig:four33}
\end{figure}
\begin{figure}
\centering
\includegraphics[totalheight=8.2in]{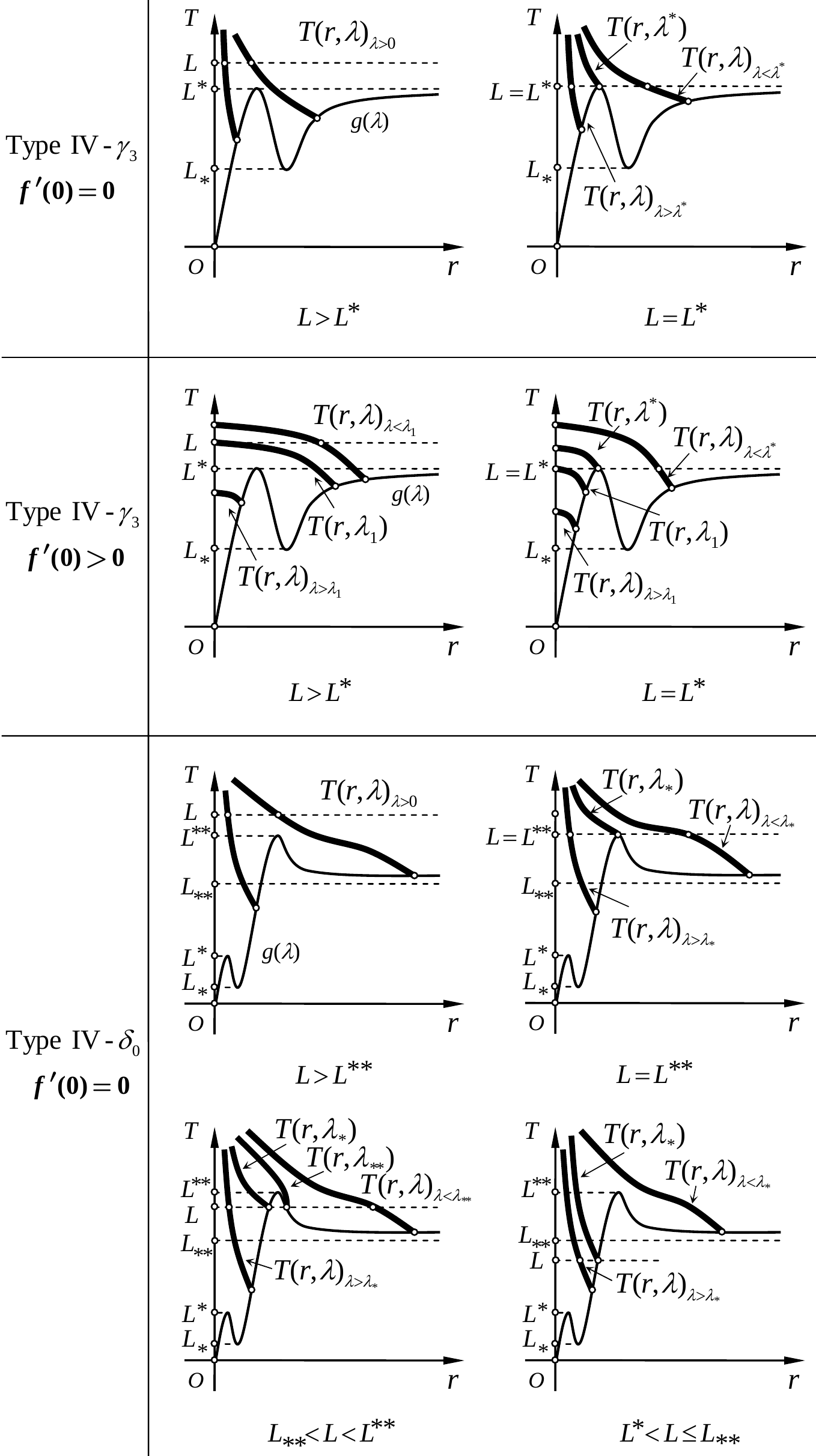}
\caption{Time maps for Types IV-$\gamma_3$ and IV-$\delta_0$ with
$f(0)=0$ when $\lambda$ varies. The remaining cases $L=L^*$,
$L_*<L<L^*$, $L=L_*$ and $L<L^*$ are the same as Types
IV-$\gamma_0$.}\label{fig:four34}
\end{figure}
  \fi

{\bf The proof of Theorems \ref{thm:f=0t4} and \ref{thm:f=0t5}} By
Theorem \ref{T'<0} and  Proposition \ref{cor:twokind}, we obtain the
shape of $T(r,\lambda)$  for fixed $\lambda>0$ (see
Fig.\ref{fig:twotimemaps} or the left of Type IV-$\alpha_0$ in
Fig.\ref{fig:fIV}).

Since $(\varphi, f)$ is of  Type IV-$\alpha_1$, it follows that $g$
is strictly decreasing, $\lim_{\lambda\rightarrow0}g(\lambda)\in
(0,+\infty)$ and $\lim_{\lambda\rightarrow+\infty}$ $g(\lambda)=0$
(see Figs.\ref{fig:gg1} and \ref{fig:gg2}). This, together with
Lemma \ref{a-prop:twosidee1}, gives the behavior of $T(r,\lambda)$
when $\lambda$ varies through positive values (see the right of
Fig.\ref{fig:fIV}). In particular, letting $L^*=\sup g$, we have

(1) For given $L\in (0,L^*)$, there exists a unique $\lambda_*>0$
such that $g(\lambda_*)=L$. Moreover, the monotonicity of $g$
implies that of $\lambda_*$ with respect to $L$.


(2) For given $L\in [L^*,+\infty)$, 
if $f'(0)= 0$, then for any $\lambda>0$ there exists a unique $r\in
(0,r^*)$ such that $T(r,\lambda)=L$, while if $f'(0)> 0$, then
$\lim_{r\rightarrow 0} T(r,\lambda_1)=L$ and for any $\lambda\in (0,
\lambda_1)$ there exists a unique $r\in (0,r^*)$ such that
$T(r,\lambda)=L$.

Thus we completes the proof.\hfill$\square$

{\bf The proof of Corollary \ref{cor:f=0t5} } From Lemmas
\ref{prop:g} and \ref{prop:monotoneg}, it follows that $(\varphi,
f)$ is of   Type IV-$\alpha_1$ and hence the conclusions of Theorems
\ref{thm:f=0t4} and \ref{thm:f=0t5} hold. \hfill$\square$

{\bf The proof of Theorems \ref{thm:classical43320} and
\ref{thm:classical43321}} By Theorem \ref{T'<0} and  Proposition
\ref{cor:twokind}, we obtain the shape of $T(r,\lambda)$  for fixed
$\lambda>0$ (see Fig.\ref{fig:twotimemaps} or the left of Type
IV-$\alpha_0$ in Fig.\ref{fig:fIV}).

Since $(\varphi, f)$ is of  Type IV-$\beta_0$, we know the graph of
$g$ (see Figs.\ref{fig:gg1} and \ref{fig:gg2}). This, together with
Lemma \ref{a-prop:twosidee1}, gives
 the behavior of $T(r,\lambda)$ when $\lambda$ varies through positive values
(see Figs.\ref{fig:fIV} and \ref{fig:four32}). In particular,
letting $L^*=\sup g$, we have

(1) For given $L=L^*$, there exists a unique $\lambda_*>0$ such that
$g(\lambda_*)=L$.

(2) For given $L\in (0,L^*)$, there exist $\lambda^*>\lambda_{*}>0$
such that $g(\lambda^{*})=g(\lambda_*)=L^*$. Moreover, the shape of
$g$ implies the monotonous relations of $\lambda_*$ and
$\lambda^{*}$ with respect to $L$.

(3) For given $L>0$, 
if $f'(0)> 0$, then $\lim_{r\rightarrow 0} T(r,\lambda_1)=L$.

(4) For $L>L^*$, the situation is similar to Type IV-$\alpha_1$.


Thus we completes the proof.\hfill$\square$

{\bf The proofs of Theorems
\ref{thm:classical43330}--\ref{thm:f=0t13}} All proofs are similar
to that of Theorems \ref{thm:classical43320} and
\ref{thm:classical43321}, we omit them. Notice that for fixed
$\lambda>0$, the graphs of all $T(r,\lambda)$ in Case IV are similar
to the left of Type IV-$\alpha_0$ in Fig.\ref{fig:fIV}, various
different types of $g$ (see Fig.\ref{fig:gg2}) essentially lead to
the differences of bifurcation diagrams.

In Figs.\ref{fig:four32} and \ref{fig:four323}, we give the graphs
of Time maps for Types IV-$\beta_1$ and IV-$\gamma_0$ when $\lambda$
varies. In Figs.\ref{fig:four33} and \ref{fig:four34}, we do not
give all graphs of Time maps for Types IV-$\gamma_1$, IV-$\gamma_2$,
IV-$\gamma_3$ and IV-$\delta_0$ because the remaining cases $L=L^*$,
$L_*<L<L^*$, $L=L_*$ and $L<L^*$ are the same as Types
IV-$\gamma_0$. Types IV-$\delta_1$, IV-$\delta_2$ and IV-$\delta_3$
can be discussed in the same way, we omit them.

Besides, the discussions for Types V-$\beta_0$ and VI-$\gamma_0$ are
also completely similar to those of Types IV-$\beta_0$ and
IV-$\gamma_0$ (see Figs.\ref{fig:twotimemaps} and \ref{fig:gg2}), we
omit them. \hfill$\square$

\vskip 2mm {\bf Acknowledgement}
The first author is supported by Pearl River Nova Program on Science and Technology
of Guangzhou City (2014J2200010) and NSF of Guangdong Province.
The second author is supported by NSF of China (11471339, 11001277).
\bibliographystyle{plain}
\bibliography{quench}
\end{document}